\numberwithin{equation}{section}
\newcommand{\DDD}{D3}
\renewcommand\d{\partial}
\def\eps{\varepsilon }
\renewcommand\d{\partial}
\newcommand\R{\mathbb R}
\newcommand\C{\mathbb C}
\def\eps{\varepsilon}
\newcommand\errfn{\textrm{errfn}}
\newcommand\br{\begin{remark}}
\newcommand\er{\end{remark}}
\newcommand\bp{\begin{pmatrix}}
\newcommand\ep{\end{pmatrix}}
\newcommand\be{\begin{equation}}
\newcommand\ee{\end{equation}}
\newcommand\ba{\begin{equation}\begin{aligned}}
\newcommand\ea{\end{aligned}\end{equation}}
\newcommand{\bap}{\begin{app}}
\newcommand{\eap}{\end{app}}
\newcommand{\begs}{\begin{exams}}
\newcommand{\eegs}{\end{exams}}
\newcommand{\beg}{\begin{example}}
\newcommand{\eeg}{\end{exaplem}}
\newcommand{\bpr}{\begin{proposition}}
\newcommand{\epr}{\end{proposition}}
\newcommand{\bt}{\begin{theorem}}
\newcommand{\et}{\end{theorem}}
\newcommand{\bc}{\begin{corollary}}
\newcommand{\ec}{\end{corollary}}
\newcommand{\bl}{\begin{lemma}}
\newcommand{\el}{\end{lemma}}
\newcommand{\bd}{\begin{definition}}
\newcommand{\ed}{\end{definition}}
\newcommand{\brs}{\begin{remarks}}
\newcommand{\ers}{\end{remarks}}
\newtheorem{theo}{Theorem}[section]
\newtheorem{prop}[theo]{Proposition}
\newtheorem{cor}[theo]{Corollary}
\newtheorem{lem}[theo]{Lemma}
\newtheorem{exams}[theo]{Examples}
\numberwithin{equation}{section}
\newcommand{\RR}{{\mathbb R}}
\newcommand{\const}{\text{\rm constant}}
\newcommand{\Id}{{\rm Id }}
\newtheorem{theorem}{Theorem}[section]
\newtheorem{proposition}[theorem]{Proposition}
\newtheorem{corollary}[theorem]{Corollary}
\newtheorem{lemma}[theorem]{Lemma}
\newtheorem{definition}[theorem]{Definition}
\newtheorem{example}[theorem]{Example}
\newtheorem{remark}[theorem]{Remark}
\newcommand\cR{{\mathcal  R}}
\newcommand\cN{{\mathcal  N}}
\newcommand\cQ{{\mathcal Q}}
\newcommand\cT{{\mathcal T}}
\newcommand\cS{{\mathcal S}}
\newtheorem{corr}{Corollary}
\newcommand{\RM}{\mathbb{R}}
\newcommand{\ZM}{\mathbb{Z}}
\newcommand{\QM}{\mathbb{Q}}
\newcommand{\CM}{\mathbb{C}}
\newcommand{\NM}{\mathbb{N}}
\newcommand{\tr}{\,\mbox{\rm tr}}
\newcommand{\cn}{\operatorname{cn}}
\newcommand{\qut}[1]{\textquotedblleft #1\textquotedblright}
\renewcommand{\Re}{\hbox{\rm Re}}
\renewcommand{\Im}{\hbox{\rm Im}}
\title{
Nonlinear modulational stability of periodic traveling-wave solutions of
the generalized Kuramoto--Sivashinsky equation}
\author{\sc \small
Blake Barker\thanks{Indiana University, Bloomington, IN 47405;
bhbarker@indiana.edu: Research of B.B. was partially supported
under NSF grants no. DMS-0300487 and DMS-0801745.}
~~~
Mathew A. Johnson\thanks{Department of Mathematics, University of Kansas, 1460 Jayhawk Boulevard, 
Lawrence, KS 66045; matjohn@math.ku.edu: Research of M.J. was partially supported by an NSF Postdoctoral
Fellowship under DMS-0902192.}
%\thanks{Indiana University, Bloomington, IN 47405;
%matjohn@indiana.edu: Research of M.J. was partially supported by an NSF Postdoctoral Fellowship under NSF grant DMS-0902192.}
~~~
Pascal Noble\thanks{Universit\'e de Lyon, Universit\'e Lyon I, Institut Camille Jordan, UMR CNRS 5208, 43 bd du 11 novembre 1918, F - 69622 Villeurbanne Cedex, France; noble@math.univ-lyon1.fr:
Research of P.N. was partially supported by the French ANR Project no.
ANR-09-JCJC-0103-01.}
~~~
\\
\sc \small
L. Miguel Rodrigues\thanks{Universit\'e de Lyon, Universit\'e Lyon 1,
Institut Camille Jordan, UMR CNRS 5208, 43 bd du 11 novembre 1918,
F - 69622 Villeurbanne Cedex, France; rodrigues@math.univ-lyon1.fr: Stay of M.R. in Bloomington was supported by %Pascal Noble's French ANR grant SWECF.}
the French ANR Project no. ANR-09-JCJC-0103-01.}
~~~
Kevin Zumbrun\thanks{Indiana University, Bloomington, IN 47405;
kzumbrun@indiana.edu:
Research of K.Z. was partially supported
under NSF grant no. DMS-0300487.}
}
\begin{document}

\maketitle

%%%%%%%%%%%%%%%%%%%%%%%%%%%%%%%%%%%%%%%%%%%%%%%%%%%%%%%%%%%%%%%%%%%%%%%%%%%%%%%%%%%%%%%%%%%%%

\begin{center}
{\bf Keywords}: 
Periodic traveling waves; Kuramoto-Sivashinsky equation; Nonlinear stability.
\end{center}

%\begin{center}
%{\bf 2000 MR Subject Classification}: 35B35, 35B10.
%\end{center}
\begin{center}
{\bf 2010 MR Subject Classification}: 35B35, 35B10.
\end{center}

%%%%%%%%%%%%%%%%%%%%%%%%%%%%%%%%%%%%%%%%%%%%%%%%%%%%%%%%%%%%%%%%%%%%%%%%%%%%%%%%%%%%%%%%%%%%%

\begin{abstract}
In this paper we consider the spectral and nonlinear stability of periodic traveling wave solutions of a generalized Kuramoto-Sivashinsky equation. In particular, we resolve the long-standing question of nonlinear modulational stability by demonstrating that 
spectrally stable waves are 
nonlinearly stable when subject to small localized (integrable) perturbations.  Our
analysis is based upon detailed estimates of the linearized solution operator, 
which are complicated by the fact that the
(necessarily essential) spectrum of the associated linearization intersects the imaginary axis at the origin. 
We carry out a numerical Evans function
study of the spectral problem and find bands of 
spectrally stable periodic traveling waves, 
in close agreement with previous numerical studies of 
Frisch--She--Thual, 
Bar--Nepomnyashchy, Chang--Demekhin--Kopelevich, and others carried
out by other techniques.
We also compare predictions of the associated Whitham modulation equations, which formally describe the dynamics
of weak large scale perturbations of a periodic wave train, with numerical time evolution studies,
demonstrating their effectiveness at a practical level.
For the reader's convenience,  we include in an appendix 
the corresponding treatment of the Swift-Hohenberg equation,
a nonconservative counterpart of the generalized Kuramoto-Sivashinsky
equation for which the nonlinear stability analysis is considerably simpler,
together with numerical Evans function analyses extending
spectral stability analyses of Mielke and Schneider.
\end{abstract}

%\clearpage
%\newpage
\tableofcontents
%\clearpage
%%%%%%%%%%%%%%%%%

\bigbreak

\section{Introduction}\label{intro}
Localized coherent structures such as 
solitary waves play an essential role as elementary processes in 
nonlinear phenomena. 
%CHANGED, added periodics! (didn't quite make sense to me the other way)
%An example of this is multi-bump solutions in reaction 
Examples of this are multi-bump solutions in reaction 
diffusion equations, which are constructed by piecing together 
%well-separated solitary waves \cite{E}.
well-separated solitary waves \cite{E}, or the limiting case of infinite,
periodic wave trains.
A similar situation occurs in nonlinear dispersive media described by a KdV equation where exact multi-bump 
%CHANGEDKZ
and periodic
solutions exist. In this paper, we consider 
{\it periodic solutions} of an {\it unstable 
%CHANGEDKZ- dissipativity is stable actualy, so previous version %confusing..:
%dissipative}-dispersive nonlinear equation, namely a generalized Kuramoto-Sivashinsky (gKS) equation 
dissipative-
dispersive} nonlinear equation, namely a generalized Kuramoto-Sivashinsky (gKS) equation 
%ENDCHANGED
\be\label{e:KS}
u_t+\gamma \partial_x^4u+\eps \partial_x^3 u +
\delta \partial_x^2u+\partial_x f(u) =0,
\quad
\gamma,\delta >0,
\ee
where $f(u)$ is an appropriate nonlinearity and $\eps,\gamma\in\RM$ are arbitrary constants with $\gamma> 0$.  In the case $f(u)=\frac{u^2}{2}$, equation \eqref{e:KS} is a canonical model for pattern formation that has been used to describe, variously, plasma instabilities, flame front propagation, turbulence in 
reaction-diffusion systems and
nonlinear waves in fluid mechanics \cite{S1,S2,K,KT,CD,PSU}. 

Equation (\ref{e:KS}) may be derived formally either from shallow water 
equations \cite{YY} 
or from the full Navier-Stokes system \cite{W} for $0<\gamma=\delta\ll 1$. Here $\delta$ measures the deviation of the Reynolds number from the critical Reynolds number above which large scale weak perturbations are spectrally unstable. For this latter application, what we have in mind is the description of chaotic motions in thin film flows down an incline \cite{CD}. Indeed, periodic traveling waves are some of the few simple solutions in the attractor for 
%CHANGEDKZ, added new references:
the classic ($\eps=0$) 
%classic ($\eps=0$) 
%Kuramoto-Sivashinsky equation (which is a generic equation for chaotic dynamics) and there is now a huge literature on these solutions (and their bifurcations, in particular period doubling cascades) and their stability;  see \cite{FST, KE, K, CD}.
Kuramoto-Sivashinsky equation, 
%which is 
a generic equation for chaotic 
dynamics, and there is now a huge literature on these solutions (and their bifurcations, in particular period doubling cascades) and their stability;  see \cite{FST, KE, K, CD}.
As $\eps$ increases, the set of stable periodic waves, and presumably
also their basin of attraction appears (numerically) to enlarge \cite{CDK,BaN},
until, in the 
%CHANGED-MR : ()
$|(\gamma, \delta)|\to 0$ 
limit, they and 
other approximate superpositions of solitary waves
appear to dominate asymptotic behavior \cite{CD,PSU,BJRZ,BJNRZ2}.
%ENDCHANGED

Since $\delta>0$ it is easily seen via Fourier analysis that all constant solutions of \eqref{e:KS} are unstable, from which it follows
that all asymptotically constant solutions (such as the solitary waves) are also unstable.  Nevertheless,
one can still construct multi-bump solutions to (gKS) on asymptotically large time $O(\delta^{-1})$ by 
%CHANGEDKZ:
%glueing 
gluing together 
solitary waves,
provided that the distance between them is not too large \cite{PSU}. One possible interpretation of this is that there exist stable periodic wave trains nearby the 
solitary wave.
Indeed, it has been known, almost since the introduction of the classical Kuramoto-Sivashinsky equation \eqref{e:KS} ($\eps=0$) in 
1975,
that there exists a spectrally stable band of periodic solutions in parameter space; see for example the numerical studies in 
\cite{CKTR,FST}. 
These stable periodic wave trains may be heuristically viewed as a superposition of infinitely many well separated
solitary waves.  In \cite{EMR}, the existence of such a band of stable periodic traveling waves was justified for 
the equation \eqref{e:KS} with {\it periodic boundary conditions} and in the 
singular KdV limit 
%CHANGED-MR : uniformity
$|(\gamma, \delta)|\to 0$.
%$\gamma^2+\delta^2\to 0$.
%

Although numerical time-evolution experiments suggest 
%CHANGEDKZ
that
these spectrally stable waves are 
nonlinearly stable as well (see \cite{CD}), up to now this conjecture had not been rigorously verified.  In this paper, relying heavily on the recent infusion of new tools in \cite{JZ1,JZN,BJRZ,BJNRZ1,BJNRZ2} in the context of 
general conservation laws and the St. Venant (shallow water) equations,
we prove the 
%CHANGEDKZ slightly:
%result announced in \cite{BJNRZ3} 
result, previously announced in \cite{BJNRZ3}, 
that 
{\it spectral modulational stability of periodic solutions of
\eqref{e:KS}}, defined in the standard sense, 
{\it implies linear and nonlinear modulational stability to small localized} (integrable) 
{\it perturbations}; that is, a localized perturbation of a periodic traveling wave converges to 
a periodic traveling wave that is modulated in phase. 
%spectral modulational stability, defined in the standard sense, implies linear and nonlinear modulational stability to small localized (integrable) perturbations; that is, a localized perturbation of a periodic traveling wave converges to a periodic traveling wave that is modulated in phase. 
%As pointed out in \cite{BJNRZ3},
The first such nonlinear result for any version of \eqref{e:KS},
this closes in particular the 35-year old open question of
nonlinear stability of spectrally stable periodic waves of
the classical Kuramoto-Sivashinsky equation ($\eps=0$) found
numerically
 in \cite{FST}.
%ENDCHANGED

With 
%CHANGEDKZ:
%the 
these
improvements in nonlinear theory, we find this 
%CHANGED-MR
%als
also 
an opportune moment to make a definitive discussion of 
the
generalized Kuramoto-Sivashnisky equation (and Swift-Hohenberg equations) in terms of existence, nonlinear theory, and numerical spectral stability studies, all three, across all parameters, 
%CHANGEDKZ: couple new refs...
both connecting to and greatly generalizing the 
variety of prior works \cite{FST, KE,K,CD,M1,Sc,CDK,BaN}. 
%In this paper, we thus carry out a 
We thus carry out also a
%ENDCHANGED
numerical analysis of the spectrum in order to check the spectral assumptions made in our main theorem. Since the spectrum always contains the origin, 
%CHANGED AGAINKZ... one of the points stressed in our announcement was that
%conservation and not galillean invariance leads to higher-order e-space...
%CHANGED-MJ changed to reflect that zero eigenspace is more than translation
%due to the Lie point symmetries of the equation, we note that
%due to translational invariance, 
due to both translational invariance and the 
conservative form of the equations, 
%ENDCHANGED
this stability is at best marginal, 
%added
a circumstance that greatly complicates both 
numerical and analytical
stability analyses. 
%

%CHANGDKZ
%Here, our 
Our numerical  approach is based on complementary tools; 
namely Hills method and the Evans function. On the one hand, we use 
SpectrUW numerical software \cite{CDKK} based on Hills method, 
which is a Galerkin method of approximation, in order to obtain a good overview of location of the spectrum: the periodic coefficients and eigenvectors  are expanded using Fourier  series, and then a frequency cutoff is used to reduce the problem to finding 
eigenvalues of a finite dimensional matrix.  
%CHANGED-MJ where is this proved?
It is known 
%It is proved 
%ENDCHANGED
that Hills method converges 
faster than any algebraic order \cite{JZ5};
%globally very fast 
moreover, in practice, it gives quickly a reliable global
qualitative picture.
%RATHER, it converges on bdd domains.. NOT globally...KZ
However, the associated error bounds are of abstract nature,
with coefficients whose size is not a priori guaranteed.
%and do not come with an a priori guarantee;
Further, near the critical zone around $\lambda=0$,
%not particularly guaranteed in any one area, especially around the critical zone $\lambda=0$. 
the resolution of this method is not in practice sufficient to guess
at stability, let alone obtain satisfactory numerical verification.
%Then 

Thus,
in order to get more reliable pictures near the origin and guarantee the spectral stability of periodic wave, we use
%to match first sentence KZ
on the other hand
 an Evans function approach, computing a winding number to 
prove that there is or is not 
unstable spectrum in the part of the unstable (positive real part)
complex half plane excluding a small neighborhood of the origin,
then using Cauchy's integral formula to 
determine the Taylor expansions 
of the spectral curves passing through the origin. 
This method, though cumbersome for approximating global spectrum,
is excellent for excluding the existence of spectra on a given region,
and comes with error bounds that can in a practical sense
be prescribed via the tolerance of the Runga--Kutta 4-5 scheme 
used to evaluate the Evans function by solution of an appropriate ODE;
see \cite{Br,Z1,BJNRZ1} for general discussion of convergence
of Evans function methods.  Furthermore, under generic assumptions, the numerical protocol introduced in Section \ref{sec:lowfreqanal} below
detects sideband stability and instability of the underlying periodic wave train without 
the need of lengthy spectral perturbation expansion calculations, thus adding
what we believe is a valuable new method to the numerical toolbox for analyzing the spectral
stability of periodic wave trains.
%ENDCHANGED
For relations between Hill's method, Fredholm determinants, and
the periodic Evans function of Gardner, see \cite{JZ5,Z2}.

In order to validate our 
%CHANGEDKZ added:
numerical
method, we check benchmarks from the Kuramoto-Sivashinsky equation ($\eps=0, f(u)=u^2/2$) and the Swift Hohenberg equation. We obtain very good agreement with existing numerical works.
In particular, we recover and extend 
stability boundaries found numerically for KS in \cite{FST}
and analytically for Swift-Hohenberg in \cite{M1,Sc}. Then we perform an all-parameters study linking in particular the Kuramoto-Sivashinsky equation $\eps\to 0$ $\gamma=\delta=1$ to KdV equation $\eps=0, \gamma=\delta\to 0$  by 
%CHANGED added refsKZ:
%homotopy.
homotopy, again obtaining excellent agreement with existing results of
\cite{CDK,BaN}.
A rigorous proof of spectral stability of periodic traveling wave solutions of \eqref{e:KS}, in the spirit of \cite{M1,Sc}, or a numerical proof (interval arithmetic) would be a very interesting direction for future work, 
%CHANGEDKZ:
%even in some specific parameter regime; like the KDV limit studied in \cite{PSU}.
particularly in an interesting
parameter regime such as the canonical KDV limit 
%CHANGED-MR : ()
$|(\gamma, \delta)|\to 0$ 
studied in \cite{PSU}.
In this regard, we point to the recent singular perturbation
analysis of this limit in \cite{BaN} and \cite{JNRZ4}, the second 
relying on and completing the first, reducing the problem of rigorous
validation of stability/instability as 
%CHANGED-MR : ()
$|(\gamma, \delta)|\to 0$ 
to computer-verified evaluation of the signs of certain elliptic integrals
associated with KdV.

%ENDCHANGED
 
%CHANGED-MJ It does not state this anywhere.... but it is implied somehow 
As we will see, our main theorem indicates that the long time dynamics of a localized perturbation of a given
periodic traveling wave of \eqref{e:KS} is governed by a space-time dependent 
%CHANGED-MR
phase 
modulation $\psi(x,t)$, which in turn
satisfies in some sense a set of modulation equations.  Moreover, it is well known that, in the neighborhood 
of the origin, the spectrum of the linearization about a given periodic wave train is strongly related to some local well-posedness
properties of the associated linearized Whitham system; this set of modulation equations may be derived via a nonlinear optics
expansion (WKB), and formally govern the evolution
of weak large scale perturbations of the underlying wave train.  

In \cite{FST}, the authors derive for the classic
Kuramoto-Sivashinsky equation ($\eps=0$) a second order modulation equation in the phase $\psi$ of the form
$\psi_{tt}-a\psi_{xx}=b\psi_{txx}$, which is indeed an alternative formulation of the linearized Whitham equations;
%CHANGEDKZ, rewording:
%in principle, the Whitham equations in this case consist of a set of two conservation laws \cite{NR2}.
more generally, the Whitham equations for \eqref{e:KS}
consist of a system of two first-order conservation laws \cite{NR2}.
%ENDCHANGED
The signs of the coefficients $a$ and $b$ depend implicitly on the underlying wave train, and 
%CHANGEDKZ
%determines
determine
the spectral stability in the low frequency/low wave number regime; see again \cite{NR2}.
%Note that our main theorem states that the large time dynamic should 
%be governed by some modulation equations. Moreover, it is well known that, 
%in the neighbourhood of $0$, the spectrum is strongly related to some well-posedness 
%properties of modulation equations, so called Whitham's equations \cite{W} which entail the evolution of large scale perturbations. 
%In \cite{FST}, the authors derived from periodic KS ($\eps=0$) a second order modulation equation for the phase $\psi$: $\psi_{tt}-
%a\psi_{xx}=b\psi_{txx}$ which is, indeed, an alternative formulation of linearized Whitham's equations; since in principle, Whitham's equations 
%in this case is a set of two conservation laws \cite{NR2} . 
%The sign of $a$ and $b$ depend on the wave train and determines its spectral stability in the low frequency/low wave number regime.  
%ENDCHANGED
The fact that Whitham's equations determines (a part of) the spectral stability of periodic wave trains in the low frequency regime 
has also been established for viscous conservation laws in \cite{Se}, for generalized KdV equations in \cite{JZ4} and \cite{BrJZ}, and for
the shallow water equations with a source term in \cite{NR1}. 

%CHANGED-MJ
Here, in terms of the modulation equations
%Here, 
%ENDCHANGED
we have two requirements for low-frequency modulational stability: 
%CHANGED-MJ
%namely, 
%ENDCHANGED
reality of the characteristics of the first order 
part (in \cite{FST}, positivity of sound speed squared; more generally, positivity of a certain 
discriminant); and positivity of the various diffusion parameters
associated with different characteristic modes (which are equal 
in the classic case \cite{FST},
but in general distinct \cite{NR1,NR2}).  
%CHANGED-MJ

We point out that the justification of such Whitham equations has become an important direction
of research in the last decade, with important and fundamental results being given, for example, in the context
of the reaction diffusion equations \cite{DSSS} and the Shallow water equations \cite{NR1}.
%ENDCHANGED
In this paper, we carry out numerical 
%CHANGED-MJ
time evolution studies that
illustrate this correspondence with the Whitham equations; see section \ref{num:whitham}.
%experiments 
%(see section \ref{num:whitham}) which illustrates this correspondance with Whitham's equations. 
%ENDCHANGED
In the forthcoming paper \cite{JNRZ1}, the authors provide a rigorous justification of Whitham's equations in 
the context of viscous conservations laws, which applies in particular to equation \eqref{e:KS}. 
The approach proposed 
there extends readily to other second and 
higher order parabolic equations such as the Cahn-Hilliard equation, general fourth order thin film models,
and to general systems of quasilinear $2r$-parabolic equations 
such as the Swift Hohenberg equation. 
 
The paper is organized as follows: 
%CHANGED-MJ
in the current section, we introduce the assumptions of our main theorem, both on the set of periodic traveling wave solutions of
\eqref{e:KS} and on the spectrum of the linearization of \eqref{e:KS} about such a wave.  After stating our main theorem
 we continue
with section \ref{s:num} where we, among other things, check numerically that the spectral stability assumptions
of our main theorem are satisfied for some families of periodic traveling wave solutions of \eqref{e:KS}.  In particular,
we show that there exist bands in parameter space of spectrally stable periodic traveling waves and verify that these
bands agree in particular asymptotic limits with previous numerical studies 
%CHANGEDKZ:
%(most notably, that in \cite{FST}.
\cite{FST,CDK,BaN}.
%ENDCHANGED
Section \ref{s:proof} is dedicated to the proof of the main theorem of our 
paper: that is, that spectral modulational
stability implies linear and nonlinear modulational stability to small localized perturbations.  The proof is an adaptation of the
analysis recently given for the St. Venant equations and the case of general, second order, viscous conservation laws; see \cite{JZN,JZ1}.

For completeness, we present in Appendix \ref{a:survey} a survey of the existence theory in the periodic context
in a variety of asymptotic regimes; small amplitude, near the classic Kuramoto-Sivashinsky equation ($\eps\to 0$, $\gamma=\delta=1$), and
near the KdV equation ($\eps=1$, $\gamma=\delta\to 0$).  
%CHANGEDKZ
%Finally, in Appendix \ref{a:sh}, we prove a nonlinear stability result, 
%analogous to our main theorem, in the case of the Swift-Hohenberg equation; the nonconservative nature of this equation
In Appendix \ref{a:sh}, we prove a nonlinear stability result, 
analogous to our main theorem, in the case of the Swift-Hohenberg equation.
The nonconservative nature of this equation
%ENDCHANGE
makes the nonlinear stability analysis considerably simpler than that presented in Section \ref{s:proof}, and the reader unfamiliar
with the techniques of Section \ref{s:proof} may find it helpful to read the Swift-Hohenberg analysis as a 
%CHANGED
%precurser 
precursor
to Section \ref{s:proof}.  Also in Appendix \ref{a:sh} we conduct a numerical study similar to but 
%CHANGEDK
%not in as much detail as, that
less detailed than that
given in Section \ref{s:num}.  In particular, this numerical study complements and extends the analytical results of \cite{M1,Sc}.
In Appendices \ref{s:stats} and \ref{s:alg}, we describe, 
respectively, computational statistics and the shooting algorithm
\cite{BJNRZ1} used to 
estimate the 
%periodic 
Evans function at a given frequency pair $(\lambda, \xi)$.
Finally, in Section \ref{s:behavior}, we estimate the rate of growth
of perturbations of unstable waves lying near stability boundaries
of hyperbolic vs. diffusive type.

\subsection{Equations and assumptions}
Consider the generalized Kuramoto-Sivashinsky (gKS) equation given in \eqref{e:KS}, written here in the form
\be\label{e:gen}
\partial_t u+\partial_x^4u+ \eps \partial_x^3u+ \delta \partial_x^2u+\partial_x f(u)=0
\ee
with $f\in C^2(\RM)$ and $\delta>0$.  The main
goal of this paper is to establish that spectrally stable 
%CHANGEDKZ, made more precise:
%(in an appropriate sense) 
(in a sense made precise in (D1)--(D3), Section \ref{s:stabassumptions})
%ENDCHANGED
periodic traveling wave solutions to
\eqref{e:gen}
%the generalized Kuramoto-Sivashinsky equation, written here in the form
%\be\label{e:gen}
%u_t+\partial_x^4u+ \eps \partial_x^3u+ \delta \partial_x^2u+\partial_x f(u)=0
%\ee
%with $f\in C^2(\RM)$ and $\delta$ not necessarily positive,
are 
%CHANGEDKZ
%indeed 
%ENDCHANGED
nonlinearly stable with respect to small localized perturbations.  As \eqref{e:gen} is conservative,
the 
%CHANGEDKZ
%forthcoming 
stability analysis is an adaptation of that of \cite{JZ1,JZN}
%ENDCHANGED
in the second-order viscous conservation law and hyperbolic-parabolic system cases, respectively.  The
main new aspect here is to show that the principles contained in these previously considered
cases extend to equations with higher-order derivatives.

It should be noted that this analysis applies also, with slight modifications, to more general
quasilinear equations (see \cite{JZN}), to the Cahn-Hilliard equation, and to other fourth-order models for thin
film flow as discussed, for example, in \cite{LP,BMS,Ho}.\footnote{In these cases, which concerned
scalar equations, periodic waves were shown to be unstable in a wide
variety of circumstances.}  Indeed, the argument and results extend to arbitrary $2r$-order parabolic
systems, 
%CHANGEDKZ
%so is 
so are
%ENDCHANGED
essentially completely general for the diffusive case.
(As already seen in \cite{JZ1}, they can apply also to mixed-order and
relaxation type systems in some cases as well.)

We emphasize here as in the introduction above
that if one considers rather the 
%CHANGEDKZ
generalized
Swift-Hohenberg equation
\be\label{e:sh}
\partial_t u+\partial_x^4u+ \eps \partial_x^3u+ \delta \partial_x^2u+ f(u)=0,
\ee
considered here as a non-conservative counterpart of \eqref{e:gen}, the verification
of nonlinear stability of spectrally stable periodic traveling waves is considerably 
%CHANGEDKZ
less complicated.
%easier.
%ENDCHANGED
Indeed, this is reminiscent of the distinction between general second-order 
parabolic conservation
laws and systems of reaction diffusion equations; see \cite{JZ1} and \cite{JZ2}.  To aid in the understanding
of 
%CHANGEDKZ
%the forthcoming analysis then, 
the forthcoming analysis of the conservative equation \eqref{e:gen},
%ENDCHANGED
therefore,
we present in Appendix \ref{a:sh} the proof of the analogous
nonlinear stability result for periodic traveling wave solutions of \eqref{e:sh}. Our approach 
%CHANGEDKZ, added:
there
recovers and slightly extends the results of Schneider \cite{Sc}, yielding the same heat kernel rate of decay with respect to localized 
perturbations while removing the assumption  that nearby periodic waves have constant speed.

\subsubsection{Traveling-wave equation and structural assumptions}

Throughout, we consider traveling wave solutions of \eqref{e:gen} or, equivalently, stationary solutions of the traveling gKS equation
\begin{equation}\label{e:travks}
\partial_t u-c\partial_x u+\partial_x^4u+\eps\partial_x^3u+\delta\partial_x^2 u+\partial_xf(u)=0
\end{equation}
for some wave speed $c\in\RM$.  Such profiles are necessarily solutions of the traveling wave ODE
%Traveling-wave equation:
\be\label{e:tw}
-c u'+u''''+\eps u'''+ \delta u'' + f(u)' =0,
\ee
which, integrated once, 
%CHANGEDKZ
%now 
%ENDCHANGE
reads
\be\label{e:inttw}
-c u+u'''+\eps u''+ \delta u' + f(u) =q,
\ee
where $q\in \R$ is a constant of integration.
It follows 
%CHANGEDKZ:
%then 
%ENDCHANGED
that periodic solutions of \eqref{e:gen}
correspond to values
$(X,c,q,b)\in \RR^6$, where $X$, $c$, and $q$ denote period,
speed, and constant of integration, and $b=(b_1,b_2,b_3)$ denotes
the values of $(u,u',u'')$ at $x=0$, such that
the values of $(u,u',u'')$ at $x=X$ of the solution of
\eqref{e:tw} are equal to the initial values $(b_1,b_2,b_3)$.

Following \cite{Se,JZ1}, we make the following technical assumptions:
\begin{itemize}
%CHANGED-MR: \RM+5->\d_t\cS\in H^1
%\item[(H1)] $f\in C^{K+1}$, $K\ge 4$.
\item[(H1)] $f\in C^{K+1}(\RM)$, $K\ge 5$.
\item[(H2)] The map $H: \,
\R^6  \rightarrow \R^3$	
taking $(X,b,c,q) \mapsto (u,u',u'')(X; b,c,q)-b$
is full rank at $(\bar{X},\bar b, \bar c,\bar q)$,  where $(u,u',u'')(\cdot; b,c,q)$ is the solution of \eqref{e:inttw} such that 
%CHANGEDKZ, display to fix typesetting error:
$$
(u,u',u'')(0; b,c,q)=b.
$$
%ENDCHANGED
\end{itemize}
By the Implicit Function Theorem,
conditions (H1)--(H2) imply that the set of periodic solutions
in the vicinity of the $\bar X$-periodic function 
%CHANGED-MR: cdot
$\bar u=u(\cdot;\bar b, \bar c,\bar q)$ 
forms a smooth $3$-dimensional manifold
\be\label{e:manifold}
%CHANGED-MR
%\{U(x-\alpha-c(\beta)t;\beta)\},
%\;
%\hbox{\rm with $\alpha\in \RR$, $\beta\in \RR^{2}$}.
\left\{\ (x,t)\mapsto U(x-\alpha-c(\beta)t;\beta)\ \middle|\ (\alpha,\beta)\in\RM\times\mathcal{U}\ \right\},
\quad
\hbox{\rm with $\mathcal{U}\subset \RR^{2}$}.
\ee

\br\label{H2rmk}
\textup{
As noted in \cite{JZN}, the transversality assumption (H2)
is necessary for our notion of spectral stability; 
see (D3) in Section \ref{s:stabassumptions} below.
Hence, there is no loss of generality in making this assumption.
}
\er

\subsubsection{Linearized equations and the Bloch transform}\label{s:lin}
To begin our stability analysis we consider the linearization of \eqref{e:travks} about a fixed periodic
traveling wave solution 
%CHANGED-MR: not yet
%$\bar{u}=u(\cdot;\bar b, \bar c,\bar q)$, 
$\bar{u}=U(\cdot;\bar \beta)$, 
where we assume without loss of generality $\bar X=1$.
To this end, consider a nearby solution of \eqref{e:travks} of the form
%CHANGED-MR: small u
\[
u(x,t)=\bar{u}(x)+v(x,t)
\]
where $v(\cdot,t)\in L^2(\RM)$ is small in some topology (to be defined precisely later), corresponding to a
small localized perturbation of the underlying solution $\bar{u}$.  Directly
substituting this into \eqref{e:travks} and neglecting quadratic order terms in $v$ leads
us to the linearized equation
%CHANGED-MR: \bar c
\be\label{e:lin}
\partial_t v=Lv:= \partial_x\left((c-a) v\right) -\partial_x^4 v-\eps \partial_x^3v -\delta \partial_x^2 v,
\qquad a:= f'(\bar u),\ c:=\bar c.
\ee
Seeking separated solutions of the form $v(x,t)=e^{\lambda t}v(x)$, $v\in L^2(\RM)$ and $\lambda\in\C$ yields
the eigenvalue problem
\be\label{e:eig}
\lambda v=Lv
\ee
considered on the real Hilbert space $L^2(\RM)$.

As the coefficients of $L$ are $1$-periodic functions of $x$, Floquet theory implies that the
spectrum of the operator $L$, considered here as a densely defined operator on $L^2(\RM)$,
is purely continuous and, in particular, $\lambda\in\sigma(L)$ if and only if the spectral problem
\eqref{e:eig} has a bounded eigenfunction of the form
\be\label{e:ansatz}
v(x;\lambda,\xi)=e^{i\xi x}w(x;\lambda,\xi)
\ee
for some $\xi\in[-\pi,\pi)$ and 
%CHANGED-MR: [0,1]
%$w\in L^2_{\rm per}(\RM)$. 
$w\in L^2_{\rm per}([0,1])$. 
Following \cite{G1,S1,S2}, we find that substituting the ansatz \eqref{e:ansatz}
into \eqref{e:eig} leads one to consider the one-parameter family of Bloch operators $\{L_\xi\}_{\xi\in[-\pi,\pi)}$ acting on $L^2_{\rm per}([0,1])$
via
\be\label{e:Lxi}
\left(L_\xi w\right)(x):=e^{-i\xi x}L\Big[e^{i\xi \cdot}w(\cdot)\Big](x).
\ee
Since the Bloch operators have
compactly embedded domains $H^4_{\rm per}([0, 1])$ in $L^2_{\rm per}([0,1])$, their spectrum consists entirely of discrete eigenvalues
which, furthermore, depend continuously on the Bloch parameter $\xi$.  It follows then by standard considerations that
\[
\sigma_{L^2(\RM)}(L)=\bigcup_{\xi\in[-\pi,\pi)}\sigma_{L^2_{\rm per}([0,1])}\left(L_\xi\right);
\]
see \cite{G1} for details.  More precisely, the spectrum of $L$ can be characterized by the union of countably many surfaces $\lambda(\xi)$
such that $\lambda(\xi)\in\sigma(L_\xi)$ for each $\xi\in[-\pi,\pi)$.

Given a function $u\in L^2(\RM)$ we now recall its inverse Bloch-representation
\be\label{Bloch}
u(x)=
 \int_{-\pi}^{\pi}e^{i\xi\cdot x}\check u(\xi, x) d\xi,
\ee
where $\check u(\xi, x):=\sum_k e^{2\pi ikx}\hat u(\xi+ 2\pi k)$
are 1-periodic functions and $\hat u(\cdot)$
denotes the Fourier transform of $u$, defined here as
\[
\hat{u}(z)=\frac{1}{2\pi}\int_{\RM}e^{-i\omega z}u(\omega)d\omega.
\]
%CHANGED: added this logically necessary sentence, also moved footnote -KZ:
Denote by 
%CHANGED-MR: maps
$\mathcal{B}:u\mapsto \check u$ 
the Bloch transform operator 
taking $u\in L^2(\RM)$
to its Bloch transform 
%CHANGED-MRxi: per+ change of notation
%$\check u\in L^2(\xi;L^2[0,1]_{\rm per})$.\footnote{
$\check u\in L^2([-\pi,\pi);L^2_{\rm per}([0,1]))$.\footnote{
Here, and elsewhere, we are adopting the notation
%CHANGED-MRxi
$\|f\|_{L^q([-\pi,\pi),L^p([0,1]))}:=\Big(\int_{-\pi}^\pi\|f(\xi,\cdot)\|_{L^p([0,1])}^qd\xi\Big)^{1/q}$.}
%
%ENDCHANGED
%CHANGEDKZ:
%For a given linear operator $L$ with $1$-periodic coefficients, then, 
For a given linear operator $L$ with $1$-periodic coefficients, 
%ENDCHANGED
%it can be readily verified that 
it can readily be verified that 
$\mathcal{B}\left(Lu\right)(\xi,x)=L_\xi\left[\check{u}(\xi,\cdot)\right](x)$ 
%CHANGED-MR: twice the same ?
%%CHANGEDKZ:
%%$\mathcal{B}\left(Lu\right)(\xi,x)=L_\xi\left[\check{u}(\xi,\cdot)\right](x)$ 
%and 
%$\mathcal{B}\left(Lu\right)(\xi,x)=L_\xi\left[\check{u}(\xi,\cdot)\right](x)$, 
%%and
%%ENDCHANGED
hence the associated Bloch operators $L_\xi$ may be viewed as 
operator-valued symbols under $\mathcal{B}$
acting on $L^2_{\rm per}([0,1])$.   Furthermore, the identity
$\mathcal{B}\left(e^{Lt}u\right)(\xi,x)=e^{L_{\xi}t}\left[\check{u}(\xi,\cdot)\right](x)$ naturally
yields the inverse representation formula
%(TODO: $\mathcal B$ undefined..!?-KZ) DONE (fixed above).
%
%CHANGED-MR: cdot
\be\label{IBFT}
e^{Lt}u_0= \int_{-\pi}^{\pi}
e^{i\xi \cdot x}e^{L_\xi t}\left[\check u_0(\xi, \cdot)\right](x)
d\xi\
\ee
relating behavior of the linearized equation \eqref{e:eig} to
that of the diagonal Bloch operators $L_\xi$.

The representation formula \eqref{IBFT} suggests
that bounds on the linear solution operator $e^{Lt}$ acting
on some function in $L^2(\RM)$ may be obtained from bounds on the associated operators $e^{L_\xi t}$ acting
on $L^2_{\rm per}([0,1])$.  To facilitate these estimates, we notice by
Parseval's identity that the Bloch transform
$u\mapsto \mathcal{B}(u)=\check u$ is an isometry in $L^2$, i.e.
%CHANGED(moved to earlier instance above..-KZ
%\footnote{Here, and elsewhere, we are adopting the notation
%$\|f\|_{L^q(\xi,L^p([0,1]))}:=\Big(\int_{-\pi}^\pi\|f(\xi,\cdot)\|_{L^p([0,1])}^qd\xi\Big)^{1/q}$.}
%TODO-MR: check factor
%CHANGED-MRxi: + factor
\be\label{iso}
\|u\|^2_{L^2(\RM)}=2\pi\int_{-\pi}^\pi\int_0^1\left|\check{u}(\xi,x)\right|^2dx~d\xi=
2\pi\|\check u\|^2_{L^2([-\pi,\pi); L^2([0,1]))},
\ee
%
%where $L^2(x)$ is taken on $[0,1]$ and $L^2(\xi)$ on $[-\pi,\pi]$.
More generally, we have for any one-parameter family of $1$-periodic functions $f(\xi,\cdot)$, $\xi\in[-\pi,\pi)$,
the generalized Hausdorff-Young inequality
%TODO-MR: check factor
%CHANGED-MRxi: +factor
\be\label{hy}
\Big\|\int_{-\pi}^\pi e^{i\xi\cdot}f(\xi,\cdot)d\xi\Big\|_{L^p(\RM)}\leq (2\pi)^{\frac1p}\|f\|_{L^q([-\pi,\pi),L^p([0,1]))}
\ee
valid for any $q\leq 2\leq p$ and $\frac{1}{p}+\frac{1}{q}=1$;
%CHANGED-MR
% see \cite{JZ1}.
this is readily obtained interpolating Parseval's identity with Triangle Inequality (case $q=1$, $p=\infty$), see \cite{JZ1}. 
It 
%CHANGED KZ (wording plus typesetting):
%will be 
is
in the context of the above functional framework 
%(active vs. passive voice...-KZ:
%that our crucial linearized estimates will be obtained; 
that we will obtain our crucial linearized estimates;
%ENDCHANGED
see Section \ref{s:linbds} below.

\subsubsection{Spectral stability assumptions}\label{s:stabassumptions}

%By taking 
Taking 
variations in a neighborhood of $\bar{u}$ along the $3$-dimensional periodic solution 
%mainfold 
manifold
\eqref{e:manifold} in
directions for which the period does not change,
%CHANGED (grammar, agreement with starting verb phrase.)KZ:
%it follows that 
we find that
%ENDCHANGED
the generalized kernel of the Bloch operator $L_0$ is at least two-dimensional by
assumption (H2); 
%CHANGED-MR
%see \cite{JZ1} for more details.
see \cite{NR2} for more details.
Following
\cite{JZ1}, then, we assume along with (H1)--(H2) the following {\it spectral diffusive stability} conditions:
%Following \cite{JZ1}, we assume along with (H1)--(H2) the
%{\it strong spectral stability} conditions:
%CHANGED-MR: incorrect
%\begin{itemize}
%\item[(D1)] $\sigma(L_\xi) \subset \{ \hbox{\rm Re} \lambda <0 \} $ for $\xi\ne 0$.
%\item[(D2)] $\hbox{\rm Re} \,\sigma(L_{\xi}) \le -\theta |\xi|^2$, $\theta>0$,
%for $\xi\in \R$ and $|\xi|$ sufficiently small.
%\item[(\DDD)] $\lambda=0$ is an eigenvalue
%of $L_{0}$ of algebraic multiplicity two.%\footnote{
%%The zero eigenspace of $L_0$,
%%corresponding to variations along the $3$-dimensional manifold
%%of periodic solutions in directions for which period does
%%not change \cite{JZ1}, is at least $2$-dimensional
%%by (H2).
%%}
%\end{itemize}
\begin{itemize}
\item[(D1)] $\sigma(L) \subset \left\{\ \lambda\in\CM \ \middle|\ \hbox{\rm Re}(\lambda) <0\ \right\}\cup\{0\}$.
\item[(D2)] $\sigma(L_\xi) \subset \left\{\ \lambda\in\CM \ \middle|\ \hbox{\rm Re}(\lambda)\leq -\theta |\xi|^2\ \right\}$, for some $\theta>0$ and any $\xi\in[-\pi,\pi)$.
\item[(\DDD)] $\lambda=0$ is an eigenvalue of $L_{0}$ of algebraic multiplicity two.
\end{itemize}
%ENDCHANGED-MR
Assumptions (H1)-(H2) and
(D3)
imply that there exist two eigenvalues of the form
\be\label{e:surfaces}
\lambda_j(\xi)= -i a_j \xi +o(|\xi|)
\ee
of $L_\xi$ bifurcating from $\lambda=0$ at $\xi=0$;
see Lemma \ref{blochfacts} below.  Following \cite{JZ1}, we make the following non-degeneracy
hypothesis:
\begin{itemize}
\item[(H3)] The coefficients $ a_j$ in \eqref{e:surfaces} are distinct.
\end{itemize}
Hypothesis (H3) ensures the analyticity of the functions $\lambda_j(\cdot)$; again, see Lemma \ref{blochfacts} below for details.

Continuing, we notice that when $f(u)=\frac{u^2}{2}$
the Galilean invariance of \eqref{e:KS}, along with assumptions (H1)-(H2) and (D3), implies
that the zero-eigenspace of $L_0$ admits a non-trivial Jordan chain of height 
%CHANGEDKZ, unnecessary...:
%(necessarily) 
%CHANGED-MR
two \cite{NR2}. 
%
%NOTE: assumption (H4) follows immediately from (H1)-(H2) and (D3).
Indeed, in this case it is readily checked that the map
%CHANGED-MR: time
%\[
%\mathcal{G}_cu(x)=u(x-ct)+c
%\]
\[
\mathcal{G}_cu(x,t)=u(x-ct,t)+c
\]
sends solutions of 
%CHANGED-MR: reverse
%\eqref{e:KS} to solutions of \eqref{e:travks} 
\eqref{e:travks} to solutions of \eqref{e:KS} 
so that, in particular, variations in wave speed are 
%CHANGED
period-preserving.  
%period preserving.  
It follows 
%CHANGEDKZ (removed distracting ``then,'' here and elsewhere:
%then 
that the
periodic solution manifold \eqref{e:manifold} can be parameterized as
%CHANGED-MR
\be\label{e:manifold2}
%\left\{U(x+\alpha-ct;c,b)\right\},\; \hbox{\rm with $\alpha,c,b\in \RR$};
\left\{\ (x,t)\mapsto U(x-\alpha-ct;c,b)\ \middle|\ (\alpha,c,\beta)\in\R^2\times\mathcal{I}\ \right\},
\quad
\hbox{\rm with $\mathcal{I}\subset \RR$}.
\ee
that is, wave speed serves as a non-degenerate local coordinate on \eqref{e:manifold}.
%ENDCHANGED-MR
%CHANGED-MR: tried to clarify
%%CHANGEDKZ (grammar, must have agreement with person):
%%By noting that 
%Noting that
%%
%variations along the periodic manifold \eqref{e:manifold2} correspond to solutions
%of the (formal)
%%added, for typesetting and sense-KZ
%linearized
%%
%equation $(\partial_t-L)v=0$, 
%%assuming $\bar{u}=U(\cdot;0,b^*)$ 
%%it follows that
%we find,
%setting
%$\bar{u}=U(\cdot;0,b^*)$, 
%that
%%ENDCHANGED
%\[
%\left(\partial_t-L\right)\left(\partial_c\bar{u}(x-ct;c,b^*)\Big{|}_{c=0}\right)=0.%=\left(\partial_t-L\right)\left(-t\bar{u}_x+\bar{u}_c\right)=0
%\]
%Since $\partial_c\bar{u}(x-ct;c,b^*)\Big{|}_{c=0}=-t\bar{u}_x+\bar{u}_c$, 
%%CHANGEDKZ:
%%the above considerations verify
%this verifies
%%ENDCHANGED
%the existence of a Jordan chain of height two 
%%CHANGEDKZ, a bit unclear phrasing, better now I hope:
%%(necessarily by (D3)) 
%(recall, height is $\leq 2$ by (D3))
%%ENDCHANGED
%ascending above the translation mode $\bar{u}_x$. 
Noting that $(\d_t-(L-c\d_x))\mathcal{G}_c=\mathcal{G}_c(\d_t-L)$, we remark that variations along the periodic manifold \eqref{e:manifold2} correspond to solutions
of the (formal) linearized equation $(\partial_t-(L-c\d_x))v=0$ and find, setting $\bar{u}=U(\cdot;\bar c,\bar b)$, that
\[
\left(\partial_t-(L-\bar c\d_x)\right)\left(\d_c[\mathcal{G}_cU(\cdot;c,\bar b)]_{|c=\bar c}\right)=0.
\]
Since $\partial_c[\mathcal{G}_c\bar{u}(\cdot;c,\bar b)]_{|c=\bar c}=\mathcal{G}_{\bar c}(-t\bar{u}_x)+\mathcal{G}\d_cU_{|(c,b)=(\bar c,\bar b)}$ and $\d_cU_{|(c,b)=(\bar c,\bar b)}$ is periodic of period $1$, this verifies the existence of a Jordan chain of height two 
(recall, the height is at most by (D3)) ascending above the translation mode $\bar{u}_x$.
%ENDCHANGED-MR

For general nonlinearities $f$, however, the existence of such a Jordan block does not immediately follow
from assumptions (H1)-(H2) and (D3).  Indeed, setting $\beta=(\beta_1,\beta_2)\in\RM^2$ in \eqref{e:manifold}, we find, 
%CHANGEDKZ, deleted...
%by 
taking variations along \eqref{e:manifold} in $\beta_j$ near 
%CHANGED-MR
%$\bar{u}=U^{\beta^*}(\cdot)$, 
$\bar{u}=U(\cdot;\bar \beta)$, 
that
\be\label{ubetaj}
%CHANGED-MR
%L\bar{u}_{\beta_j}=\left(\frac{\partial c}{\partial \beta_j}(\beta^*)\right) \bar{u}_x.
L\d_{\beta_j}U|_{\beta=\bar\beta}=-\d_{\beta_j}c|_{\beta=\bar\beta}\bar{u}_x.
\ee
As the function 
%CHANGED-MR: *->\bar
$\d_{\beta_1}X(\bar\beta)\d_{\beta_2}U(\cdot;\bar\beta)-\d_{\beta_2}X(\bar\beta)\d_{\beta_1}U(\cdot;\bar\beta)$
is periodic of period  $X(\bar \beta)=1$\footnote{
Here, we notice by (D3) that $\bar\beta$ can not be a critical
point of the period $X$. Indeed, if $dX(\bar\beta)=0$ then $\d_{\beta_1}U(\cdot;\bar\beta)$ and $\d_{\beta_1}U(\cdot;\bar\beta)$
would both be 1-periodic, hence $\lambda=0$ would be an eigenvalue of $L_0$ of algebraic multiplicity three by \eqref{ubetaj}.},
%ENDCHANGED
we see that the existence of a non-trivial Jordan block requires the additional non-degeneracy hypothesis\footnote{For more general systems, this requirement
would be replaced with the condition that the gradients of the wave speed and period be linearly independent at 
%CHANGED-MR: bar
$\bar\beta$.}
\be\label{e:jordancondition}
\det\left(\frac{\partial (c,X)}{\partial (\beta_1,\beta_2)}(\bar\beta)\right)\neq 0.
\ee
%ENDCHANGED
%on the wave-speed as a function of the parameters $\beta_j$.
For a much more thorough and precise discussion on this topic in the context of general systems of conservation laws,
please see the 
%CHANGED-MR
%recent 
forthcoming 
work \cite{JNRZ1}; see also Remark \ref{r:linphasecouple} and Lemma \ref{blochfacts} below.
Throughout this work, we assume 
%CHANGED KZ (the usual...):
%such a Jordan block exists by enforcing
that such a Jordan block exists by enforcing
%ENDCHANGED
our final technical assumption: 
\begin{itemize}
\item[(H4)] The eigenvalue $0$ of $L_0$ is nonsemisimple, i.e., $\dim \ker L_0=1$.\footnote{The degenerate case where \eqref{e:jordancondition}, hence (H4), fails can be treated as in \cite{JZ3}.}
\end{itemize}

\br\label{condrmks}
\textup{
%CHANGED-MR
The coefficients $a_j$ are the characteristics
of an associated Whitham averaged system
formally governing slowly modulated solutions; see
 \cite{NR2}.
Thus, 
as discussed in \cite{OZ4,JZ1,JZN},
(D1) implies weak hyperbolicity of the Whitham averaged system (reality of $a_j$),
sometimes used as a formal definition of modulational stability;
condition (H4) holds generically, and corresponds to the assumption
that speed $c$ is non-stationary along the manifold of nearby
stationary solutions, see Lemma \ref{blochfacts}; 
and condition (D2) corresponds to ``diffusivity'' of
the large-time ($\sim$ small frequency) behavior of the linearized system,
and holds generically given (H1)--(H4), (D1), and (D3), see also \cite{S1,S2}.
}
\er
%ENDCHANGED

\br\label{r:linphasecouple}
\textup{ 
As 
%CHANGEDKZ, nonstd word choice:
%noticed 
noted
%ENDCHANGED
in 
%CHANGED-MR
\cite{NR2,JNRZ1}, 
the conservative structure of \eqref{e:KS} implies that, assuming (H1) and (D3), one can take $\beta=(X,M)$ in \eqref{e:manifold},
where 
%CHANGED-MR: mean + beta
$M=X^{-1}\int_{0}^XU(s;\beta)ds$ denotes the 
mean 
%
%ENDCHANGED
mass of the associated wave. Indeed, following the computations of \cite{OZ3}
we find under these assumptions that for $|\lambda|\ll 1$
%CHANGED-MR: \bar\beta
\[
D_0(\lambda)=\Gamma\det\left(\frac{\partial(k,M)}{\partial(\beta_1,\beta_2)}(\bar \beta)\right)\lambda^2
      +\mathcal{O}(|\lambda|^3),
\]
where $\Gamma\neq 0$ is a constant, 
%CHANGED-MR
%$k$ denotes the wave number of the underlying wave, 
$k=X^{-1}$ denotes wave number 
%ENDCHANGED-MR
and $D_0(\cdot)$ represents the Evans function\footnote{Recall that
the algebraic multiplicity of the root of the Evans
function at $\lambda=0$ agrees with the algebraic multiplicity of the eigenvalue $\lambda=0$ of
the associated linear operator: see \cite{G1}.}
for the Bloch operator $L_0$, from which our claim follows by assumption (D3).  Using this parameterization of the manifold
\eqref{e:manifold}, condition \eqref{e:jordancondition} reduces to
%CHANGED-MR: bar
\[
\d_M c\,(\bar\beta)\neq 0
\]
corresponding to \emph{linear phase coupling} in the language of \cite{JNRZ1}.  In particular, notice that in the case
$f(u)=\frac{u^2}{2}$ one can use Galilean invariance 
%CHANGED-MR
%, along with the periodicity of $\bar{u}$,
%
to show
%CHANGED-MR: mean+bar
\[
\d_M c\,(\bar\beta)=1,
\]
yielding another verification of (H4), assuming (H1)-(H2) and (D1)-(D3), in this case.
}
\er

\subsection{Main result}

With the spectral assumptions discussed in the previous section, we are now prepared to state the main theorem of
this paper.

\begin{theo}\label{main}
%CHANGED-MR
%Let $\bar u(x)$ be any steady periodic solution
%of \eqref{e:travks} and let $\tilde{u}(x,t)$ be a solution of
%%CHANGED-MJ changed to match wavespeeds!
%%\eqref{e:gen} with wave speed $c$
%\eqref{e:travks}
%such that $\|\tilde{u}(\cdot,0)-\bar{u}\|_{L^1(\RM)\cap H^K(\RM)}$ is sufficiently small, where $K$ is
%as in assumption (H1).  
%%CHANGEDKZ:
%%Then 
%Then,
%%ENDCHANGED 
%assuming (H1)--(H4) and (D1)--(D3), there exists a constant
%$C>0$ 
Let $\bar u$ be any steady 1-periodic solution of \eqref{e:travks} such that (H1)--(H4) and (D1)--(D3) hold. 
Then there exist constants $\varepsilon_0>0$ and $C>0$ such that for any $\tilde u_0$ with $\|\tilde{u}_0-\bar{u}\|_{L^1(\RM)\cap H^K(\RM)}\leq\varepsilon_0$, where $K$ is
as in assumption (H1), there exist $\tilde{u}$ a solution of \eqref{e:travks} satisfying $\tilde u(\cdot,0)=\tilde u_0$ 
%ENDCHANGED-MR
and a function $\psi(\cdot,t)\in W^{K,\infty}(\RM)$ such that for all $t\geq 0$ and $2\leq p\leq\infty$
we have the estimates
%CHANGED-MJ (removed wave speed since \tilde{u} defined in in co-miving frame)
%\begin{equation}\label{eq:smallsest}
\begin{equation}\label{eq:smallsest}
\begin{aligned}
\left\|\tilde{u}(\cdot+\psi(\cdot,t),t)-\bar{u}\right\|_{L^p(\RM)}
         &\leq C\left(1+t\right)^{-\frac{1}{2}\left(1-\frac{1}{p}\right)}\left\|\tilde{u}(\cdot,0)-\bar{u}(\cdot)\right\|_{L^1(\RM)\cap H^K(\RM)},\\
\left\|(\psi_t,\psi_x)(\cdot,t)\right\|_{L^p(\RM)}
         &\leq C\left(1+t\right)^{-\frac{1}{2}\left(1-\frac{1}{p}\right)}\left\|\tilde{u}(\cdot,0)-\bar{u}(\cdot)\right\|_{L^1(\RM)\cap H^K(\RM)},\\
\left\|\tilde{u}(\cdot+\psi(\cdot,t),t)-\bar{u}\right\|_{H^K(\RM)}
         &\leq C\left(1+t\right)^{-\frac{1}{4}}\left\|\tilde{u}(\cdot,0)-\bar{u}(\cdot)\right\|_{L^1(\RM)\cap H^K(\RM)},\\
\left\|(\psi_t,\psi_x)(\cdot,t)\right\|_{H^K(\RM)}
         &\leq C\left(1+t\right)^{-\frac{1}{4}}\left\|\tilde{u}(\cdot,0)-\bar{u}(\cdot)\right\|_{L^1(\RM)\cap H^K(\RM)}.
\end{aligned}
\end{equation}
Moreover, we have the $L^1(\RM)\cap H^K(\RM)\to L^\infty(\RM)$ nonlinear stability estimate
\begin{equation}\label{eq:stab}
\left\|\tilde{u}(\cdot,t)-\bar{u}\right\|_{L^\infty(\RM)},~\left\|\psi(\cdot,t)\right\|_{L^\infty(\RM)}\leq C\left\|\tilde{u}(\cdot,t)-\bar{u}\right\|_{L^1(\RM)\cap H^K(\RM)}
\end{equation}
valid for all $t\geq 0$.
\end{theo}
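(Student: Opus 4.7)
The strategy adapts that of \cite{JZ1,JZN} for second-order viscous conservation laws to the fourth-order equation \eqref{e:gen}, and splits into three main stages: sharp linearized decay estimates via the Bloch representation, introduction of a space-time phase modulation $\psi$ to absorb the marginally decaying translation mode, and a coupled nonlinear iteration closed by means of a high-order damping (energy) estimate.

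\emph{Step 1 (Linearized estimates).} Decompose $e^{Lt}$ using \eqref{IBFT} and split the Bloch parameter into low-frequency ($|\xi|\leq\xi_0$) and high-frequency ($|\xi|\geq\xi_0$) regions. For $|\xi|\geq\xi_0$, (D1)--(D2) give spectrum of $L_\xi$ bounded strictly away from the imaginary axis, and together with the parabolic smoothing afforded by the principal part $-\partial_x^4$ this produces an exponentially decaying contribution. For $|\xi|\leq\xi_0$, hypotheses (H3), (H4) and (D3), together with Lemma \ref{blochfacts}, imply that the two eigenvalues $\lambda_j(\xi)=-ia_j\xi-b_j\xi^2+\mathcal{O}(|\xi|^3)$ with $\Re\,b_j>0$ are analytic and uniformly isolated from the rest of $\sigma(L_\xi)$. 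Spectral projection onto the associated eigenspaces, carefully accounting for the nontrivial Jordan block at $\xi=0$ produced by (H4), yields a Green-kernel decomposition
\be\label{pf:green}
[e^{Lt}v_0](x)=s(x,t)\,\bar u_x(x)+\int_\R \tilde G(x,y,t)\,v_0(y)\,dy,
\ee
where the ``phase kernel'' $s(x,t)$ is of heat-kernel type and the remainder $\tilde G$ satisfies Gaussian bounds $\|\partial_x^\ell \tilde G(\cdot,y,t)\|_{L^p_x}\leq C(1+t)^{-\frac{1}{2}(1-1/p)-\ell/2}$, both obtained after Bloch inversion via \eqref{hy}.

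\emph{Step 2 (Ansatz and nonlinear perturbation equation).} The leading term $s(x,t)\bar u_x(x)$ in \eqref{pf:green} does not decay in $L^\infty$, so the naive perturbation $\tilde u-\bar u$ cannot close. Instead define $v(x,t):=\tilde u(x+\psi(x,t),t)-\bar u(x)$ and determine $\psi$ implicitly via
\be\label{pf:psi}
\psi(x,t)=-\int_\R s(x-y,t)(\tilde u_0-\bar u)(y)\,dy+\int_0^t\!\!\int_\R s(x-y,t-\tau)\mathcal{N}(\cdots)(y,\tau)\,dy\,d\tau,
\ee
so that the slow translation contribution produced by the linearized evolution acting on the data and on the nonlinearity is exactly cancelled. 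Substituting the ansatz into \eqref{e:travks} and applying the chain rule produces a perturbation equation of the schematic form $(\partial_t-L)v=\partial_x\mathcal{N}(v,\psi_x,\psi_t,\ldots,\partial_x^4\psi)$ with $\mathcal{N}$ at least quadratic; combined with \eqref{pf:green} and \eqref{pf:psi}, Duhamel's formula then yields a closed integral system for $(v,\psi)$ in which $v$ satisfies only the fast-decaying remainder equation.

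\emph{Step 3 (Iteration and damping).} Introduce the time-weighted norm
\[
\zeta(t):=\sup_{0\leq s\leq t,\,2\leq p\leq\infty}(1+s)^{\frac{1}{2}(1-1/p)}\bigl(\|v(\cdot,s)\|_{L^p}+\|(\psi_t,\psi_x)(\cdot,s)\|_{L^p}\bigr)+\sup_{0\leq s\leq t}(1+s)^{1/4}\|v(\cdot,s)\|_{H^K},
\]
and run a continuous-induction argument: \eqref{pf:green}, \eqref{pf:psi}, the Hausdorff--Young bound \eqref{hy}, and the quadratic structure of $\mathcal{N}$ give an inequality of the form $\zeta(t)\leq C\|\tilde u_0-\bar u\|_{L^1\cap H^K}+C\zeta(t)^2$, which closes for sufficiently small data and delivers \eqref{eq:smallsest} and \eqref{eq:stab}. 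The $H^K$ rate $(1+t)^{-1/4}$ is not available from parabolic smoothing of the Green kernel alone; it is obtained from a nonlinear damping identity of the form
\be\label{pf:damp}
\tfrac{d}{dt}\|\partial_x^Kv\|_{L^2}^2\leq -\theta\|\partial_x^{K+2}v\|_{L^2}^2+C\bigl(\|v\|_{H^{K-1}}^2+\|(\psi_t,\psi_x)\|_{H^K}^2\bigr),
\ee
bootstrapped with the already-established lower-order decay to yield the claimed rate. The principal obstacle, beyond the second-order analyses of \cite{JZ1,JZN}, is that the chain rule in the definition of $v$ combined with the fourth-order principal part generates terms of order up to four in $\psi$ in the equation for $v$; arranging the source in \eqref{pf:psi} so that $(\psi_t,\psi_x)$ inherits one full derivative of extra decay over $\psi$ itself, while keeping enough $\partial_x$'s in front of $\mathcal{N}$ so that the Green-kernel estimates against $L^1$ data remain time-integrable at the required rate, is the main technical difficulty.
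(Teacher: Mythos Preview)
Your plan matches the paper's strategy closely: Bloch decomposition of $e^{Lt}$ into an exponentially decaying piece and a critical piece, extraction of the translation mode $\bar u_x$ via a kernel $e(x,t;y)$ (your $s$), the perturbation variable $v=\tilde u(\cdot+\psi,\cdot)-\bar u$ leading to $(\partial_t-L)(v-\psi\bar u_x)=\mathcal{N}$ in divergence form, implicit definition of $\psi$ to absorb the $e$-part of Duhamel, a damping estimate, and continuous induction. One notational slip: the critical kernel is not of convolution type $s(x-y,t)$ but genuinely $e(x,t;y)$, periodic separately in $x$ and $y$; and because of the Jordan block it behaves like an \emph{integrated} heat kernel (an $\mathrm{errfn}$), not a heat kernel, which is exactly why $\psi$ itself is only bounded while $(\psi_t,\psi_x)$ decay.

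The one substantive gap is in your iteration norm $\zeta$: including the endpoint $p=\infty$ directly does not close. With $\|\mathcal{N}\|_{L^1\cap L^2}\lesssim\zeta^2(1+s)^{-1/2}$ coming from the $H^K$ piece, the Duhamel integral controlling $\|v(t)\|_{L^\infty}$ produces a $\log(1+t)$ factor (the paper notes this explicitly). The paper therefore runs the continuous induction with the simpler norm $\eta(t)=\sup_{s\leq t}(1+s)^{1/4}\|(v,\psi_t,\psi_x,\psi_{xx})(s)\|_{H^K}$, closes $\eta\leq C(E_0+\eta^2)$, and only afterward bootstraps in two stages: first deduce $L^p$ bounds for $p\in[2,4]$, then use those to obtain $\|(\mathcal{Q},\mathcal{R}_j,\mathcal{S})\|_{L^2}\lesssim E_0(1+t)^{-3/4}$, which is integrable enough to reach $p=\infty$ at the sharp rate via the $q=2$ Green-kernel bounds. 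You should restructure Step~3 accordingly.
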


\noindent
Theorem \ref{main} asserts
asymptotic $L^1\cap H^K\to L^p$
convergence of the modulated solution $\tilde u(\cdot+\psi(\cdot,t),t)$ toward $\bar u$,
but only bounded $L^1\cap H^K\to L^\infty$ nonlinear stability of the unmodulated solution $\tilde{u}(\cdot)$ about $\bar u$. 

\br\label{nat:coordinates}
%CHANGEDKZ, textup to get roman font- TODO, check elsewhere too...:
\textup{
It may seem more natural
%, and indeed is so, 
to introduce $\psi$ via 
$
\displaystyle
v(x,t)=\tilde{u}(x,t)-\bar{u}(x+\psi(x,t))
$
However, in doing so one introduces in the equation for $v$ terms involving only $\psi$ and thus not 
decaying in time. For this reason we work instead with 
%$$
$
v(x, t) =\tilde{u}(x+\psi(x, t), t)-\bar{u}(x), 
$
that is, 
$
\tilde{u}(x, t) =\bar{u}(Y(x, t))+v(Y(x, t), t),
$ 
where $Y$ is such that 
%CHANGED-MR: break
$Y (x, t) +\psi(Y (x, t), t) = x$, $Y (y +\psi(y, t), t) = y$.
Notice that we insure the existence of such a map $Y$ by keeping, for any
%CHANGED-MR: t+\cdot
 $t$, $\|\psi(\cdot,t)\|_{L^{\infty}(\R)}$ bounded and $\|\psi_x(\cdot,t)\|_{L^{\infty}(\R)}$. 
It should be stressed, however, that
$$
Y (x, t) = x-\psi(x, t)+\mathcal{O}\left( \|\psi(\cdot,t)\|_{L^{\infty}(\R)}\|\psi_x(\cdot,t)\|_{L^{\infty}(\R)}\right)
$$
%ENDCHANGED-MR
so that we are not so far from the natural (but inappropriate) approach;
%CHANGED-MR
see \cite{JNRZ1} for a detailed discussion. 
%
%Furthermore, notice also 
Notice, moreover, 
that introducing the map Y above enables one to go back to the original unknown $\tilde{u}(x,t)$. 
}
%ENDCHANGED
\er

%CHANGEDKZ, strange punctuation fixed:
%The proof of Theorem \ref{main} is presented in Section \ref{s:proof}: as stated previously, the proof
The proof of Theorem \ref{main} is presented in Section \ref{s:proof}.
As noted earlier,
%previously, 
the proof
of the corresponding theorem for periodic traveling wave solutions of the (non-conservative) Swift-Hohenberg
equation \eqref{e:sh} is considerably less complicated than that of 
%Theorem \ref{main}.  The reader unfamiliar
Theorem \ref{main}; the reader unfamiliar
with the methods of this paper may 
%then
thus
wish to consult Appendix \ref{a:sh} before proceeding to the
proof of Theorem \ref{main}.

While it is generally accepted that the structural hypotheses (H3)-(H4) and (D3) should  generically hold, it
is unclear 
%if there exists periodic traveling wave solutions 
a priori whether there exist periodic traveling wave solutions 
of \eqref{e:gen} 
%which 
that
satisfy the spectral diffusive
stability assumptions (D1)-(D2).  While an analytical verification of these conditions seems beyond the scope
of our current machinery, due to the complexity of the governing equations, we provide a numerical study of this topic in Section \ref{s:num} below giving convincing evidence of existence of stable bands of periodic waves.
%CHANGEDKZ, added this open problem statement to finish...:
We view the
rigorous numerical or analytical proof of spectral stability in some interesting
regime as the primary remaining open problem in this 
%CHANGED-MR: are
area 
of investigation.
See in particular \cite{BaN} and \cite{JNRZ4} 
for progress toward a numerical proof in the KdV limit 
%CHANGED-MR: ()
$|(\gamma,\delta)|\to 0$.
% 
%ENDCHANGED

\section{A numerical study: spectral stability and time evolution}\label{s:num}

In this section, we address 
%CHANGEDKZ
numerically
%ENDCHANGED
the issue of spectral stability of periodic traveling wave solutions 
of \eqref{e:KS}. 
%Our numerical study is based on complementary tools; namely
Our numerical study is based on complementary tools, namely,
Hill's method and Evans function computations; 
see \cite{BZ,BJNRZ1}. On the one hand, we use SpectrUW numerical software \cite{CDKK}, based on Hill's method, which is a Galerkin method of approximation. More precisely, the coefficients of the linear operators $L_{\xi}$ (defined by \eqref{e:Lxi}) together with periodic eigenvectors are expanded by using Fourier series and then a frequency cutoff is used to reduce the problem to finding eigenvalues of a finite matrix. 
%CHANGEDKZ, fixed grammar:
It is known that  Hill's method converges ``globally'' very fast, 
%but not particularly guaranteed in any one area: this method is used here to obtain a good localization of the spectrum. In order to test  rigorously the spectral stability, we use an Evans function approach:
but with rates not particularly guaranteed in 
%CHANGED-MR
%any one area; 
fixed areas; 
this method is used here to obtain a good localization of the spectrum. 
In order to test  rigorously the spectral stability, we use an Evans function approach:
after bounding the region where unstable spectra might exist, we compute a winding number along various contours
%to localize the spectrum: in some sense, Hill's method helps to find suitable contours to test the spectral stability. 
to localize the spectrum.
In some sense, Hill's method helps to find suitable contours to test the spectral stability. 
To analyze stability near the origin, we also compute a Taylor expansion of the critical eigenvalues bifurcating from
the origin at the $\xi=0$ state.  The method of moments (to be described in section \ref{meth:mom}) then guarantees the 
stability of these critical eigenvalues in the neighborhood of $(\lambda,\xi)=(0,0)$.
While most of the results of this section are numerical, we point out that the numerics are well-conditioned with
rigorous error estimates; see 
%changed for more consistent typesetting....
%\cite{BJZ},\cite{Z6},\cite{BHZ2} 
\cite{BJZ,Z1,BHZ2} 
for more details.
%ENDCHANGED

%CHANGEDKZ, grammar an std punctuation:
%Throughout this section and in order to compare with some previous numerical 
Throughout this section, both for definiteness and in order to compare with 
some previous numerical 
%results; see e.g. 
results (see e.g. 
%again, for consistency...KZ
\cite{BaN,FST,CDK}), we consider \eqref{e:KS} 
%added
unless otherwise specified
with the specific nonlinearity $f(u)=\Lambda\frac{u^2}{2}, \Lambda>0$,  i.e. the equation
%ENDCHANGED
\be\label{e:cks1}
\partial_t u+\delta \partial_x^2 u+\eps \partial_x^3 u+\gamma\partial_x^4 u+\Lambda u\partial_x u=0
\ee
with various values of $\eps, \delta, \gamma,\Lambda$. If one introduces the characteristic amplitude $U$, length scale $L$ and time scale $T=L/\Lambda U$, equation \eqref{e:cks1} is alternatively written as
\be\label{e:cksad}
\partial_{\tilde t} \tilde{u}+\tilde{u}\partial_{\tilde x} \tilde{u}+\frac{\eps}{\Lambda UL^2}\partial_{\tilde x}^3 \tilde{u}+\frac{\delta}{\Lambda UL}\partial_{\tilde x}^2 \tilde u+\frac{\gamma}{\Lambda UL^3}\partial_{\tilde x}^4 \tilde u=0.
\ee
As a result, one can always assume $\Lambda=1$ in \eqref{e:cks1}. Depending on  applications and 
%CHANGED-MR
%for 
%
numerical purposes, one can  remove two of the three other parameters in the study of \eqref{e:cks1}. We have chosen here two 
%CHANGED-MR
reduced 
%ENDCHANGED
parameterizations of the problem. On the one hand, 
%CHANGED-MR: tried to clarify
% by setting $\bar\eps=\eps/\Lambda UL^2$ fixed 
we first choose some $\bar \eps$ then, setting $\bar\eps=\eps/\Lambda UL^2$ 
and $\gamma=\Lambda UL^3$ into \eqref{e:cksad}, one finds that it is equivalent to choose $\gamma=1$ and $\eps$ is a fixed parameter and let $\delta$ vary as a free parameter in \eqref{e:cks1}. On the other hand, letting $ L^2=\gamma/\delta$ and $U^2=(\eps/\Lambda L^2)^2+(\delta/\Lambda L)^2$, one finds that it is equivalent to 
%CHANGED-MR: moved from below
choose\footnote{
Notice that, applying the transformation $(x,u)\mapsto(-x,-u)$ if necessary, we may always take $\eps\geq 0$ in \eqref{e:cks1}.
Thus, here  we set $\eps=\sqrt{1-\delta^2}$ or $\delta=\sqrt{1-\eps^2}$.}
%parameters
%ENDCHANGED
%
$\gamma=\delta$ and $\delta^2+\eps^2=1$ in \eqref{e:cks1}. This latter case is particularly of interest since it 
is found in thin film 
%CHANGED-MR
theory\footnote{Where the equation is designed to investigate formation of 
patterns of size 1 at a threshold of instability and therefore is scaled 
as $\gamma=\delta$ so that, for constant states, the most linearly-unstable perturbation has frequency 1.}
 and 
%CHANGEDKZ, I found it confusing the other way:
%that 
since
{\it all} equations \eqref{e:cks1} can be reduced to this particular form through rescaling; that is we cover {\it all} the values of 
%CHANGED-MR: moved above
%%CHANGED-MJ (same thing as for KdV...)
%parameters\footnote{
%%Notice by 
%Note:
%%
%%applying the transformation $(x,u)\mapsto(-x,-u)$ we may always assume that $\eps\geq 0$ in \eqref{e:cks1}.}
%applying the transformation $(x,u)\mapsto(-x,-u)$ if necessary,
%we may always take $\eps\geq 0$ in \eqref{e:cks1}.}
%%parameters
%%ENDCHANGED
parameters 
$\eps,\delta,\gamma$. However this form of the equation may lead to a singularly perturbed problem as $\delta\to 0$; 
%that's why we also focused on the first form of \eqref{e:cks1} with $\gamma=1$. 
that is why we have also focused on the first form of \eqref{e:cks1} with $\gamma=1$
%CHANGED-MR: added
even if, to cover all cases, we need then to introduce two families, one with $\eps$ fixed to $0$ and the other one with $\eps$ fixed to some arbitrary positive value. 
%

%The plan of the study is as follows: as a first (testing) step, we consider the spectral stability case $\gamma=1$ and $\eps$ fixed. 
The plan of the study is as follows.
As a first (testing) step, we consider the spectral stability case $\gamma=1$ and $\eps$ fixed. 
In this test case, we give all the details of our numerical approach:  we first compute 
a priori bounds on possible unstable eigenvalues and then describe our numerical 
Galerkin/Evans functions based approach. 
%CHANGED-MJ 
In order to compare our results with the existing literature, specifically the results in \cite{FST},
we first carry out a numerical study in the case of the classical Kuramoto-Sivashinsky equation ($\eps=0$, $\gamma=1$).
We follow this with a numerical study for
%We carry out a numerical study in the case of Kuramoto-Sivashinsky ($\eps=0,\gamma=1$); in order to 
%compare with existing literature; see e.g. \cite{FST}. 
%We also carry out our numerical study for 
the generalized KS equation when $\eps=0.2$ and $\gamma=1$.  
Next, we switch to the ``thin film scaling'' $\gamma=\delta$, $\delta^2+\eps^2=1$ and show numerically that there are (sometimes several) 
bands of stable  periodic traveling waves for all choices of the model parameters, comparing to previous numerical studies of \cite{BaN} and \cite{CDK}

We end this section with a discussion of time-evolution studies nearby various periodic traveling wave profiles 
numerically computed and compare with the dynamics predicted by the associated Whitham averaged system.

%\mathversion{bold}
\subsection{Spectral stability analysis: $\gamma=1$, $\eps$ fixed}
%\mathversion{normal}

\subsubsection{Continuation of periodic traveling waves}

Let us consider periodic traveling wave solutions of \eqref{e:cks1} in the simple case $\gamma=1$ and $\eps$ is fixed. After integrating once, traveling waves of the form $u(x,t)=u(x-ct)$ are seen to satisfy the first order nonlinear system of ODE's
\be\label{e:ckssystem}
\left(
  \begin{array}{c}
    u \\
    u' \\
    u''\\
  \end{array}
\right)'
   =\left(
      \begin{array}{c}
        u' \\
        u'' \\
        cu-\eps u''-\delta u' -\frac{u^2}{2}+q \\
      \end{array}
    \right)
\ee
for some constant of integration $q\in\RM$.  Noting that we can always take $c=0$ by Galilean invariance,  we find
two fixed points in the associated three-dimensional phase space given by
\[
U_-(q)=\left(
         \begin{array}{c}
           -\sqrt{2q}\ \\
           0 \\
           0 \\
         \end{array}
       \right)\quad\textrm{and}\quad
U_+(q)=\left(
         \begin{array}{c}
           \sqrt{2q}\ \\
           0 \\
           0 \\
         \end{array}
       \right).
\]
Following the computations of Section \ref{a:hopf}, we find that the fixed point $U_+(q)$ undergoes a Hopf
bifurcation when $\eps\delta_{\rm Hopf}(q)=\sqrt{2q}$; see Figure \ref{f:orbits}(a) when $\eps=0.2$.  Moreover, near this bifurcation point we find a 
%CHANGED-MJ (orbits are 2-dim... solutions are 3-dim)
two dimensional manifold of small amplitude periodic orbits of \eqref{e:ckssystem} parameterized by period and the integration
constant $q$.
%three dimensional manifold of small amplitude periodic orbits of \eqref{e:ckssystem} parameterized by translation, period,
%and the integration constant $q$.  
%ENDCHANGED
Observe that due to the presence of the destabilizing second order and stabilizing fourth order
terms in \eqref{e:gen}, an easy calculation shows that the equilibrium solution $U_+(q)=\sqrt{2q}$, and in fact all constant solutions,
are 
%CHANGED-MR
%spectrally 
linearly 
unstable to perturbations of the form $v(x,t)=e^{\lambda t+ikx}$, $\lambda\in\CM$, $k\in\RM$, with $|k|\ll 1$, while 
%CHANGED-MR
%it is spectrally 
linearly 
stable to such perturbations when $|k|\gg 1$.
As a result we find that all small amplitude periodic orbits of \eqref{e:ckssystem} emerging from the Hopf bifurcation correspond to spectrally unstable periodic
traveling waves of \eqref{e:cks1}.

Not surprisingly, we are able to numerically continue this 
%CHANGED-MJ
two parameter family of small amplitude periodic orbits
%three-parameter family of small amplitude periodic traveling
%ENDCHANGED
to obtain periodic 
%CHANGED-MJ
orbits of \eqref{e:cks1} with large amplitude.
%traveling wave solutions of \eqref{e:cks1} with large amplitude.  
%ENDCHANGED
%CHANGED-MJ
Furthermore, in the natural three-dimensional phase space of \eqref{e:ckssystem} and for $\eps$ fixed, we are able
to find a one-parameter family of periodic orbits of \eqref{e:ckssystem} with fixed period $X$ by continuing the periodic
orbits emerging from the Hopf point $U_+(q)$ with respect to the integration constant $q$; 
%Here, in the natural three-dimensional phase space of \eqref{e:ckssystem} and for $\eps$ fixed, we compute a one-parameter family (up to 
%translations) of periodic orbits of \eqref{e:ckssystem} by fixing the period $X$ and continuing the periodic solution emerging from the Hopf 
%point $U_+(q)$ with respect to the integration constant  $q$; 
%ENDCHANGED
see Figure \ref{f:orbits}(b) for $\eps=0.2$ and $X=6.3$.  Notice that in order to keep the period $X$ fixed, 
one naturally has to vary the modeling parameter $\delta$ 
with respect to $q$.  

%CHANGEDKZ, didn't quite make sense as it was:
%What is surprising, however, 
What is perhaps surprising, however, 
even though by now well known \cite{FST,CDK},
is that there 
%is 
appears numerically to be a small band of spectrally stable periodic traveling waves within this 
%one-parameter family; see Figure \ref{f:specplots} and Figure \ref{f:specplots-zoom}.
one-parameter family, outside of which all waves are
spectrally unstable; 
see Figure \ref{f:specplots} and Figure \ref{f:specplots-zoom}.
Our immediate goal is to substantiate this claim with careful numerical procedures 
%CHANGED-MR
%complete 
completed 
with rigorous error bounds.  
%CHANGEDKZ, added this slight clarification:
%Although the justifications we discuss do not 
Although, due to machine error, the justifications we discuss do not 
constitute a proof, given the rigorous error bounds involved, 
%our claims could be 
our claims could in principle be
translated into a numerical proof with the help of interval arithmetic. 
%We do not pursue this level of rigor here.
This would be an 
%extremely 
interesting direction for further investigation.
%ENDCHANGED

%CHANGED: added this rmk-KZ
\br\label{fixedX}
\textup{
As well explored in the literature (see for example \cite{CDK}),
there occur a number of period doubling bifurcations as parameters
are varied, which might in principle be difficult to follow by
continuation.  However, this difficulty is easily avoided by rescaling
so as to fix the period, while letting other parameters vary instead.
%CHANGED-MR
%This explains our reasons 
This is our reason 
for computing with fixed period as we do.
}
\er

\begin{figure}[htbp]
\begin{center}
(a)\includegraphics[scale=.35]{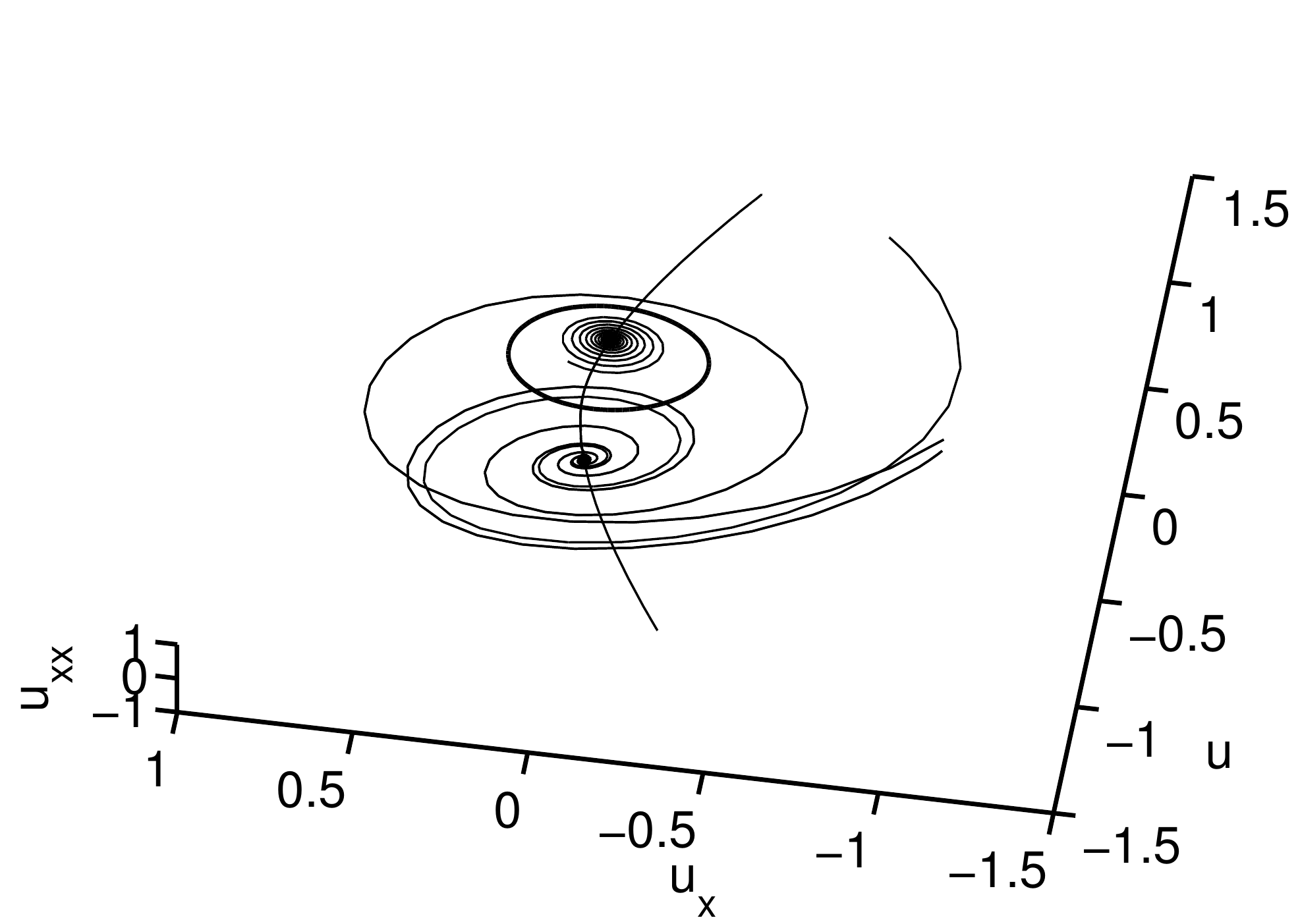}\quad (b)\includegraphics[scale=0.35]{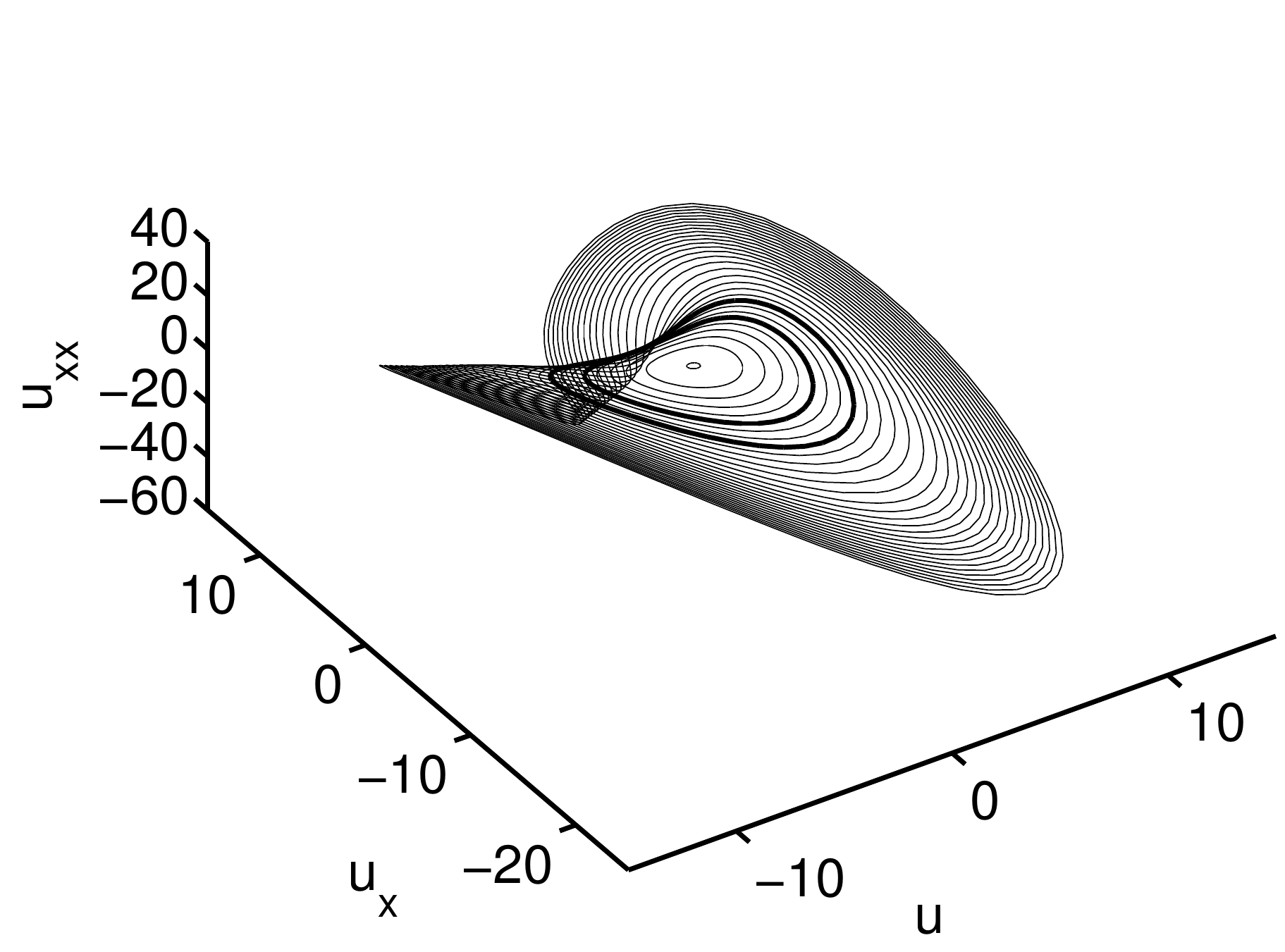}
\caption{(a) Here, we plot a few of the trajectories in the phase portrait of the \eqref{e:ckssystem} for $\eps=0.2$,
$q=0.04$, and $\delta\approx 1$.   The periodic trajectory bifurcating from the nearby unstable equilibrium solution $U_+(q)$
is plotted in bold, and has period $X=6.3$.  (b) This figure depicts the one-parameter family (up to translations) of periodic orbits
of period $X=6.3$ and wave speed $c=0$ emerging from the unstable equilibrium solution $U_+(q)$.  Here, $\eps=0.2$ and $q$ (and accordingly $\delta=\delta(q)$) varies from 1 (corresponding
to the orbits near the Hopf bifurcation) to 30 by unit steps. The bold periodic orbits 
%CHANGEDKZ: added for clarity, partly answering Pascal's TODO below..:
near the center
%ENDCHANGED
correspond to the lower and upper stability boundaries: in particular,
all periodic traveling wave solutions of \eqref{e:cks1} corresponding to periodic orbits of \eqref{e:ckssystem} between these bold orbits
are spectrally stable 
%CHANGEDKZ, added:
with respect
%ENDCHANGED
to small localized perturbations.
}\label{f:orbits}
\end{center}
\end{figure}
%TODO? (suggested by PN): figure (b): it is hard to see the two bold periodic orbits. One solution would be to represent $X$ as a function of the parameter and give the values for lower and upper bounds OR refer to the graph where thoses bounds are plotted for KdVKS
%DONE (in different way)-KZ.  OK?

%CHANGEDKZ, this remark does not make sense to me and even seems to
%contradict what we actually do.. so, I removed it...:
%\br\textup{
%While an analytical verification of the existence of spectrally stable periodic traveling wave solutions
%of \eqref{e:cks1} would be a very interesting direction for future research, this
%seems beyond the scope of current methods.  In particular, to the existence of such a transition
%one would like to identify the mechanism by which this transition occurs; for example, it is sometimes the case
%that transitions to stability are marked by an eigenvalue crossing through the origin, which
%in turn can be detected by use of an Evans function.
%However, analyzing the spectrum of the linearization near the transition to stability does
%not seem to yield an obvious mechanism which signals the transtion; see Figure \ref{f:specplots-zoom}.
%}
%\er
%ENDCHANGED

\begin{figure}[htbp]
\begin{center}
(a)\includegraphics[scale=.15]{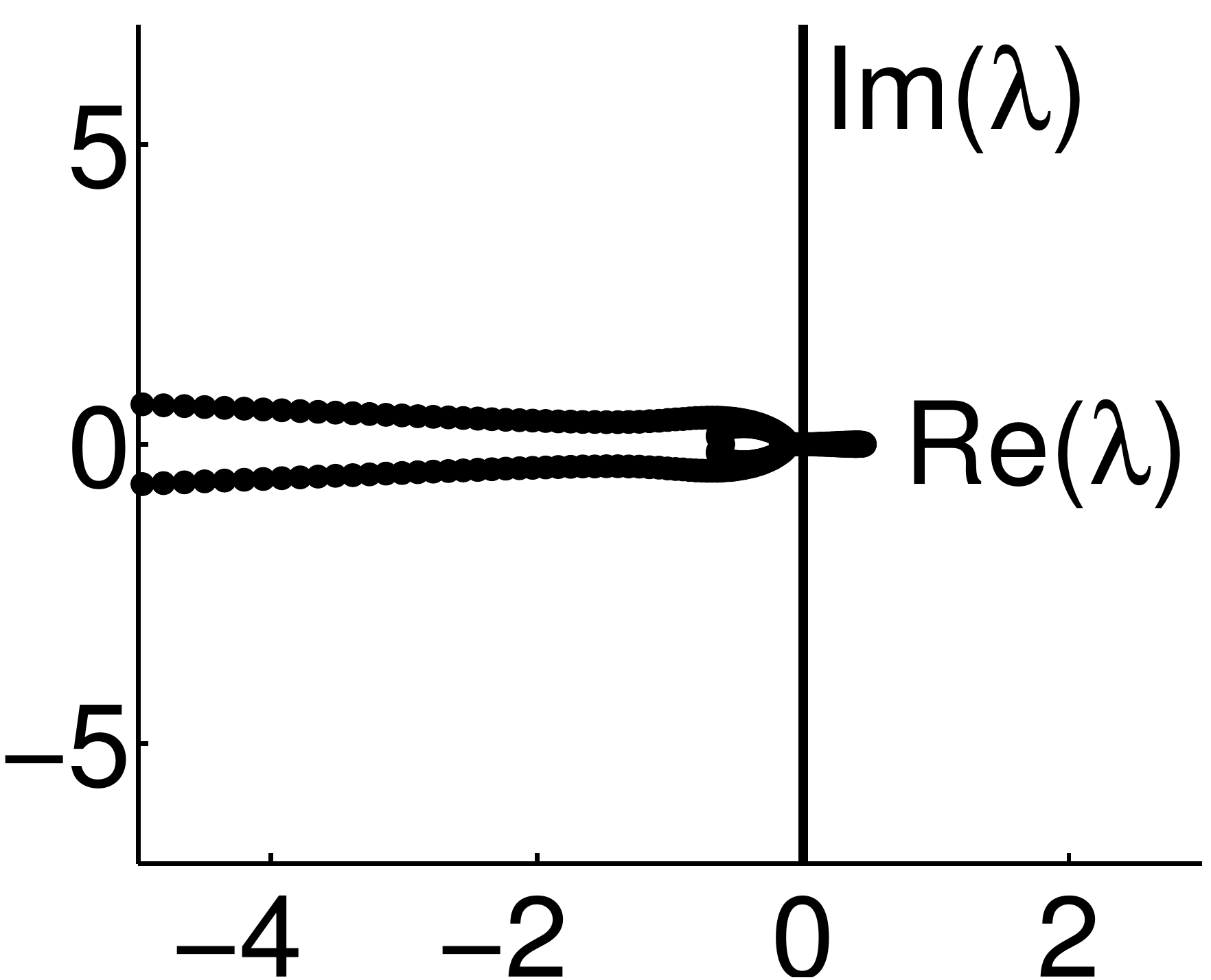}\quad (b)\includegraphics[scale=0.15]{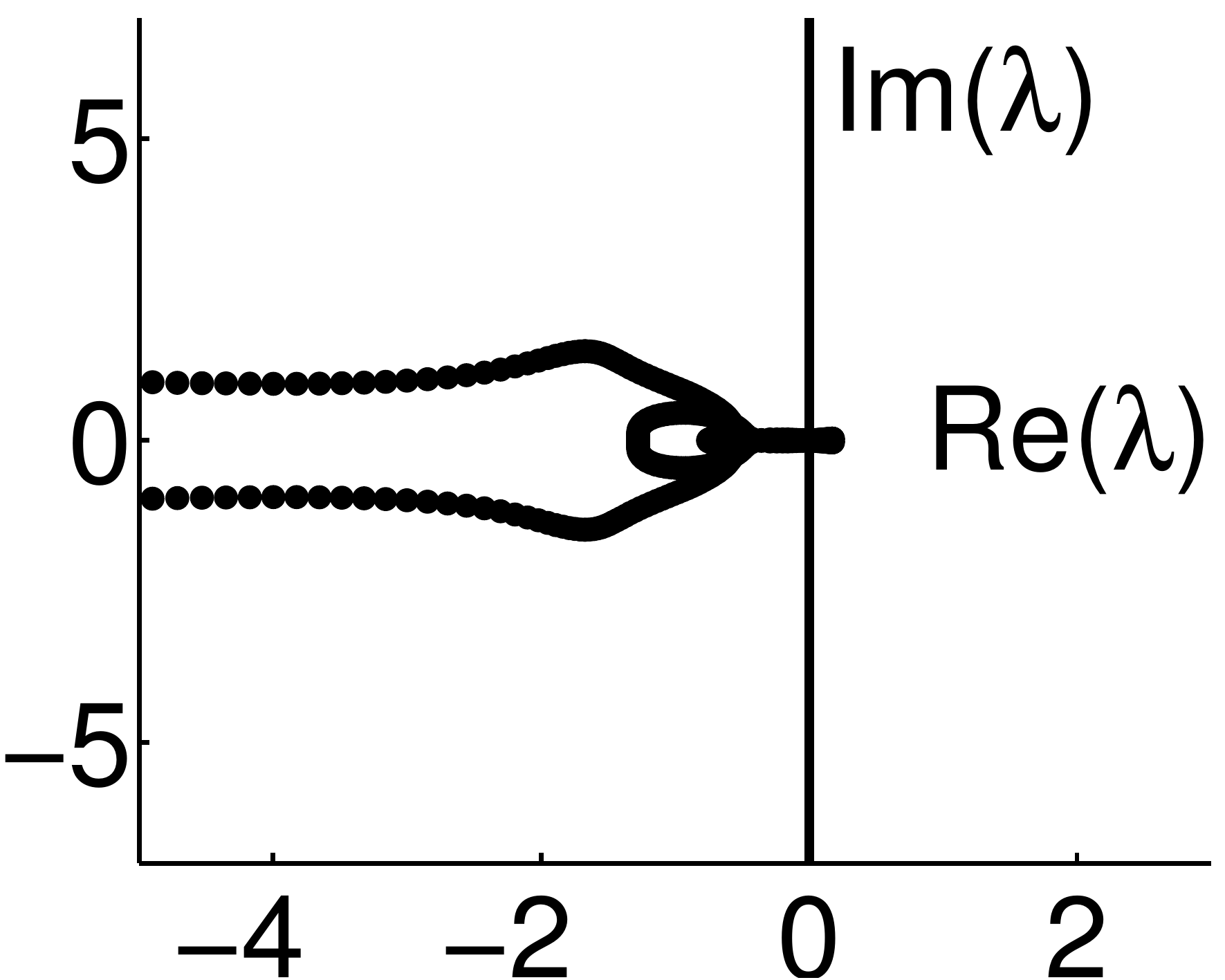} (c)\includegraphics[scale=0.15]{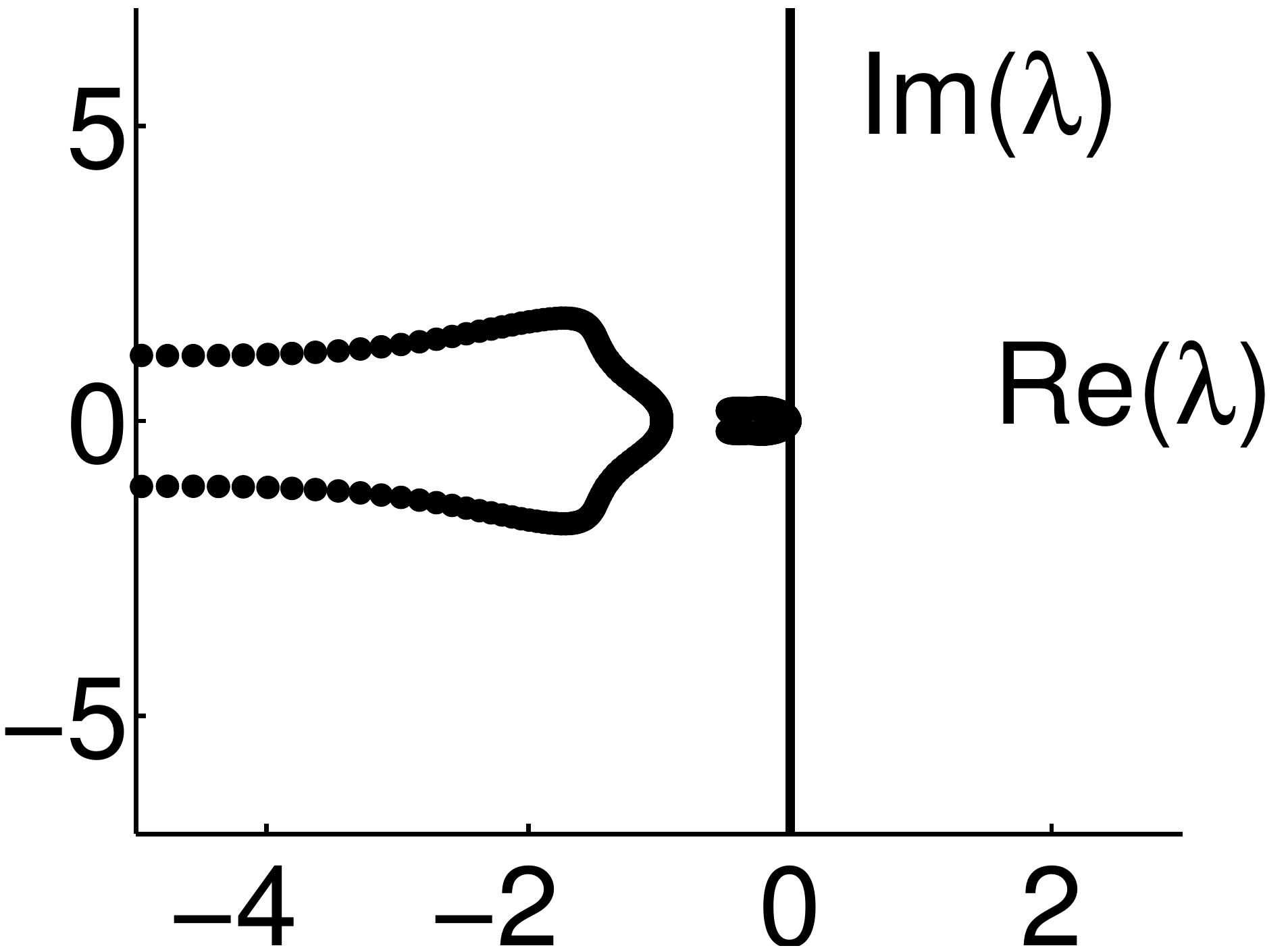}\\
(d)\includegraphics[scale=0.15]{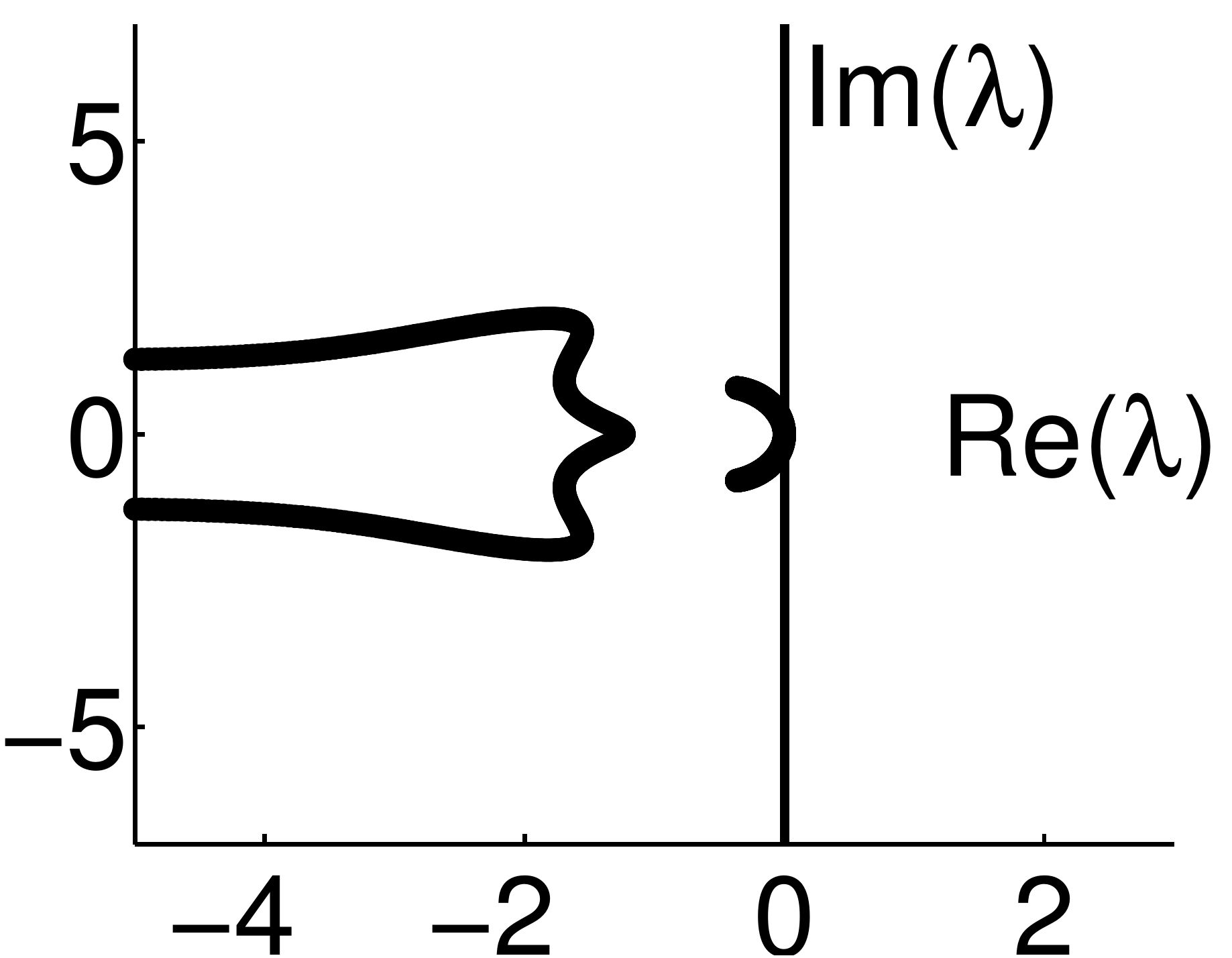}\quad (e)\includegraphics[scale=0.15]{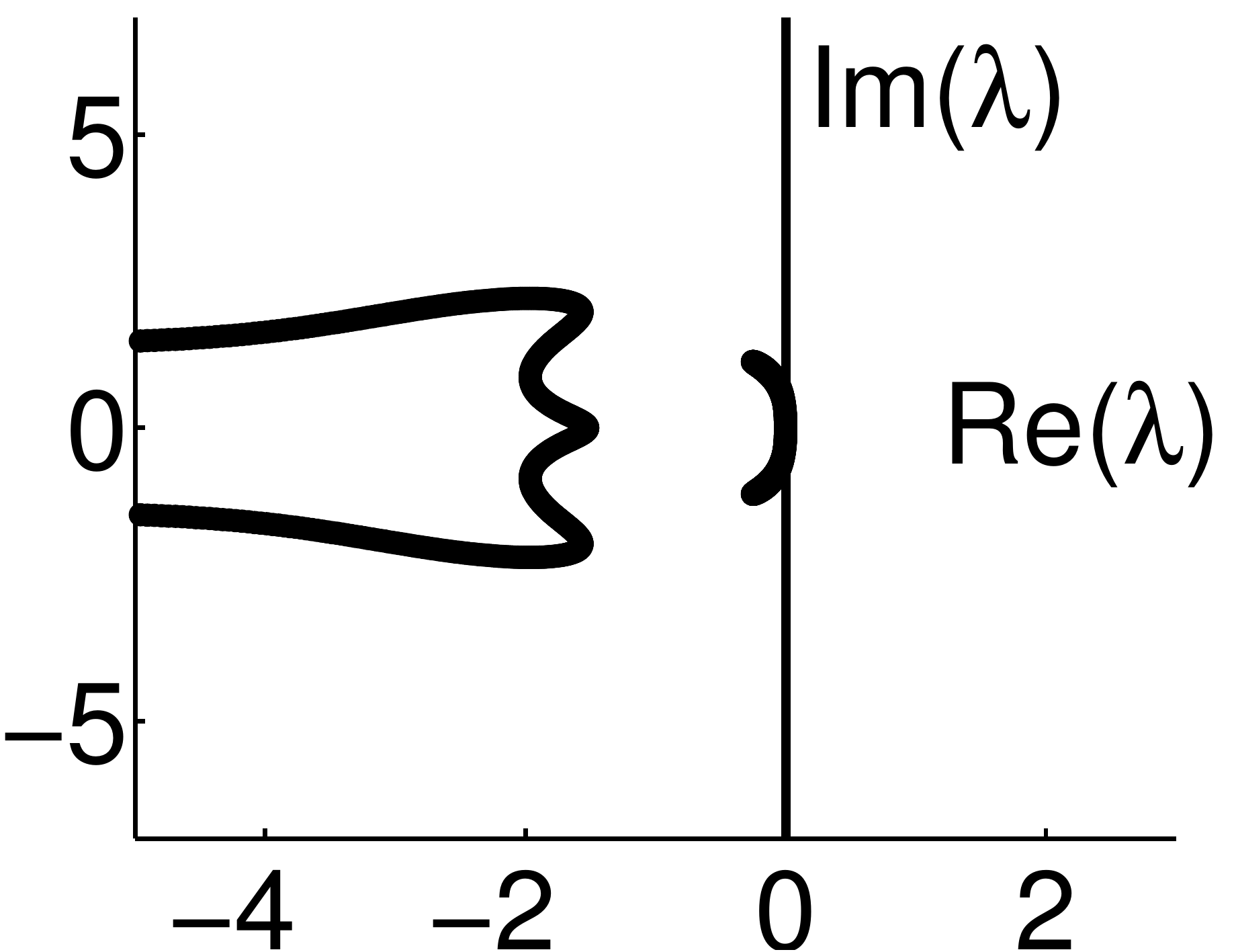}\quad (f)\includegraphics[scale=0.15]{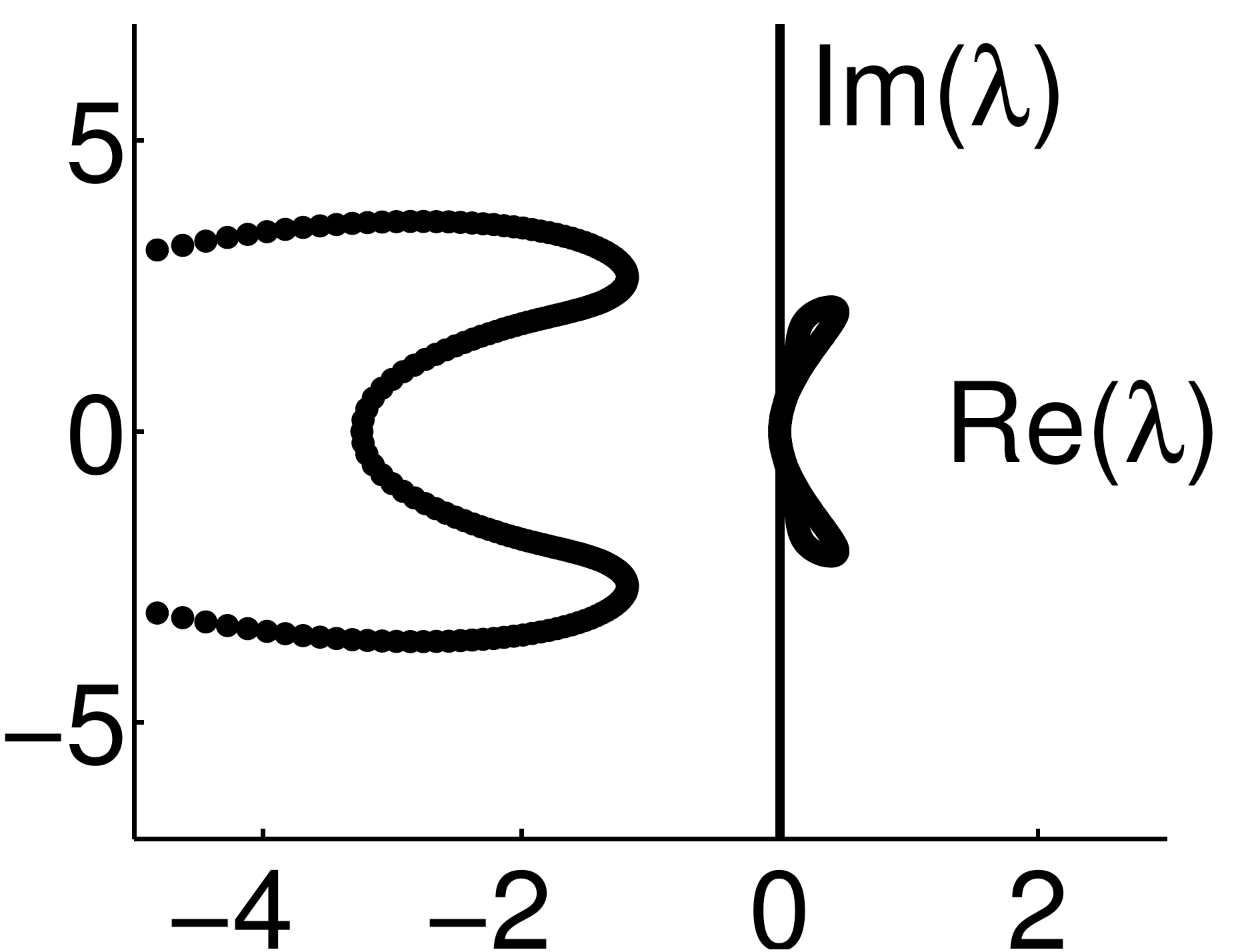}\\
(g)\includegraphics[scale=0.15]{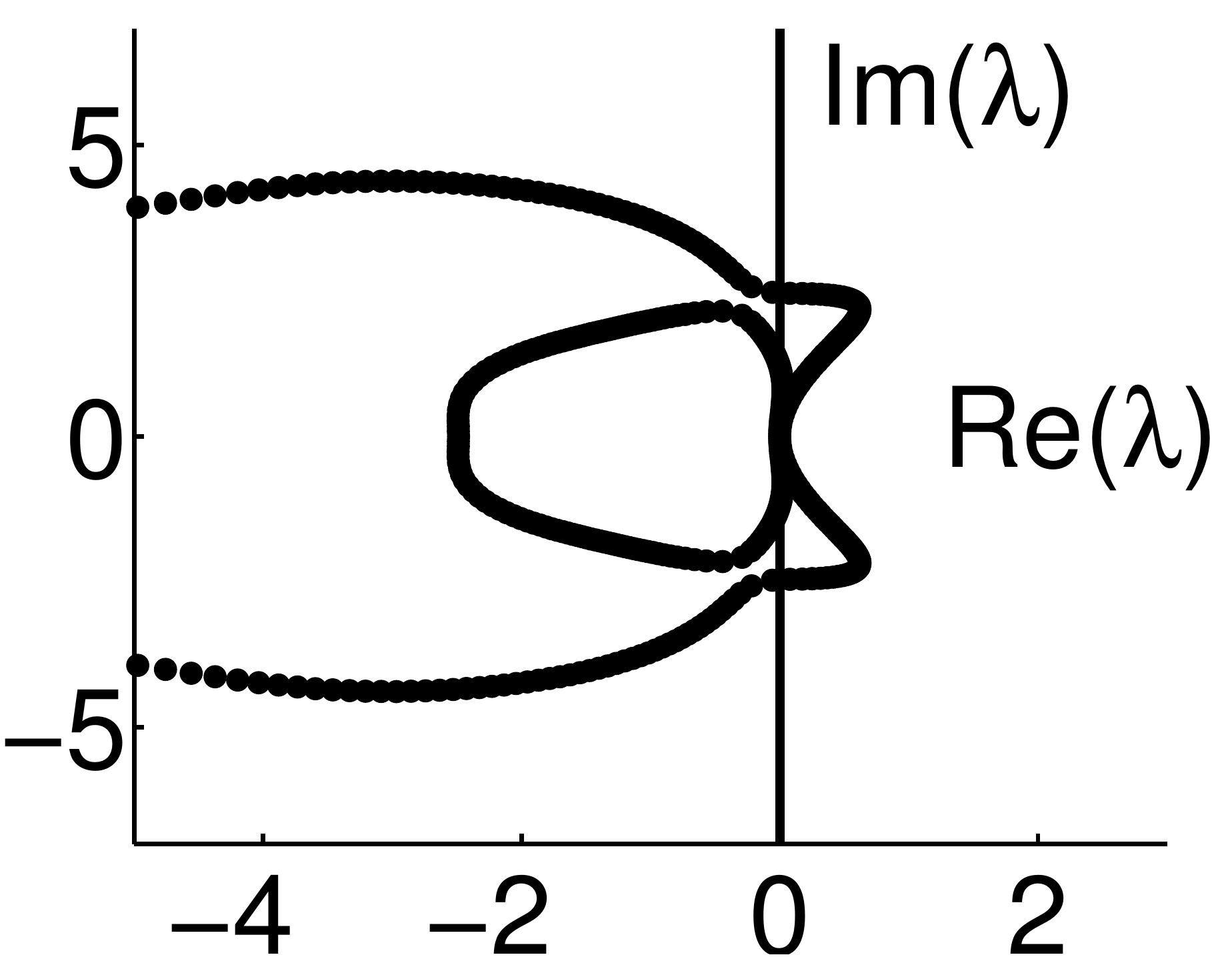}\quad (h)\includegraphics[scale=0.15]{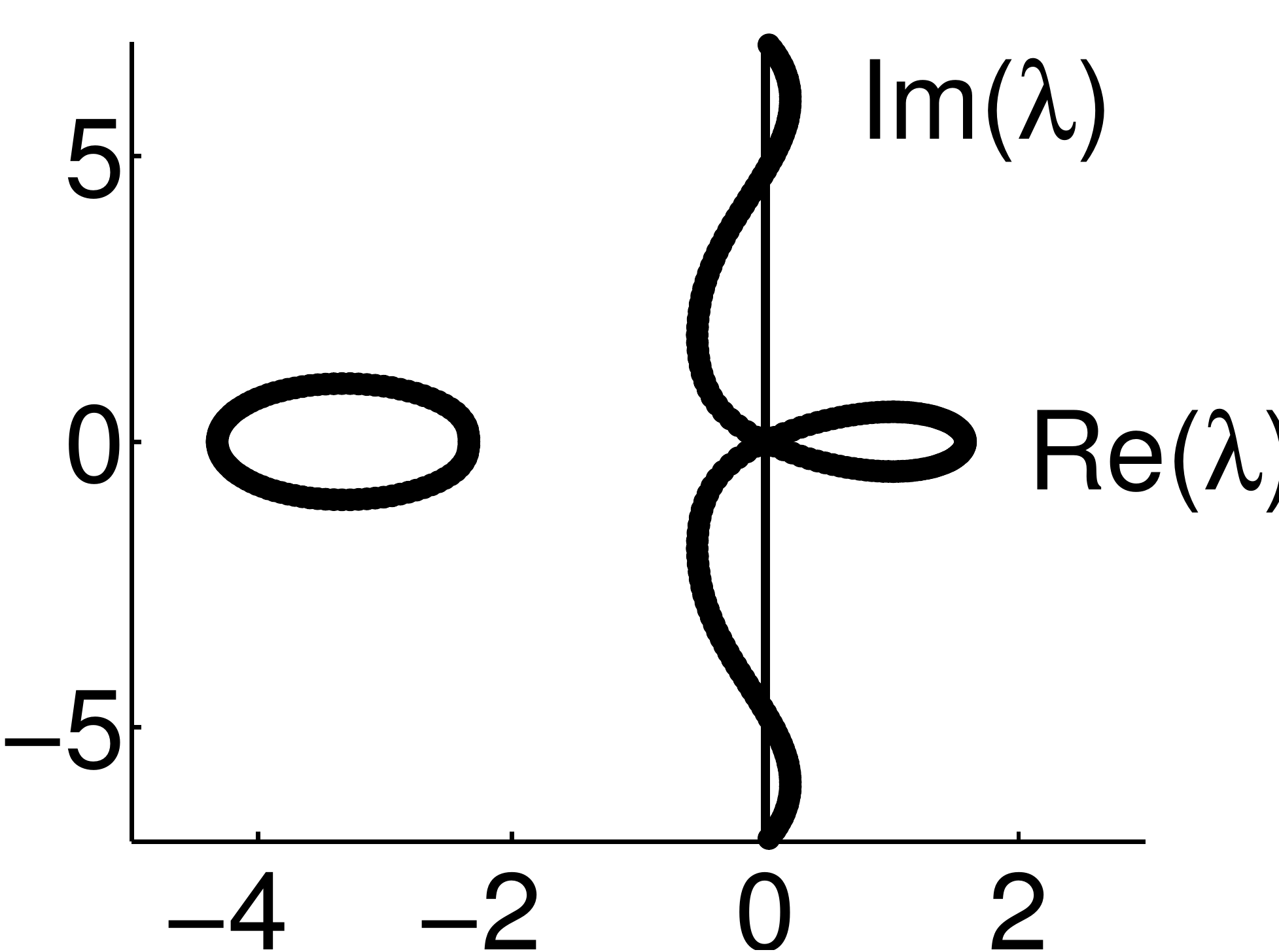}\quad(i)\includegraphics[scale=0.15]{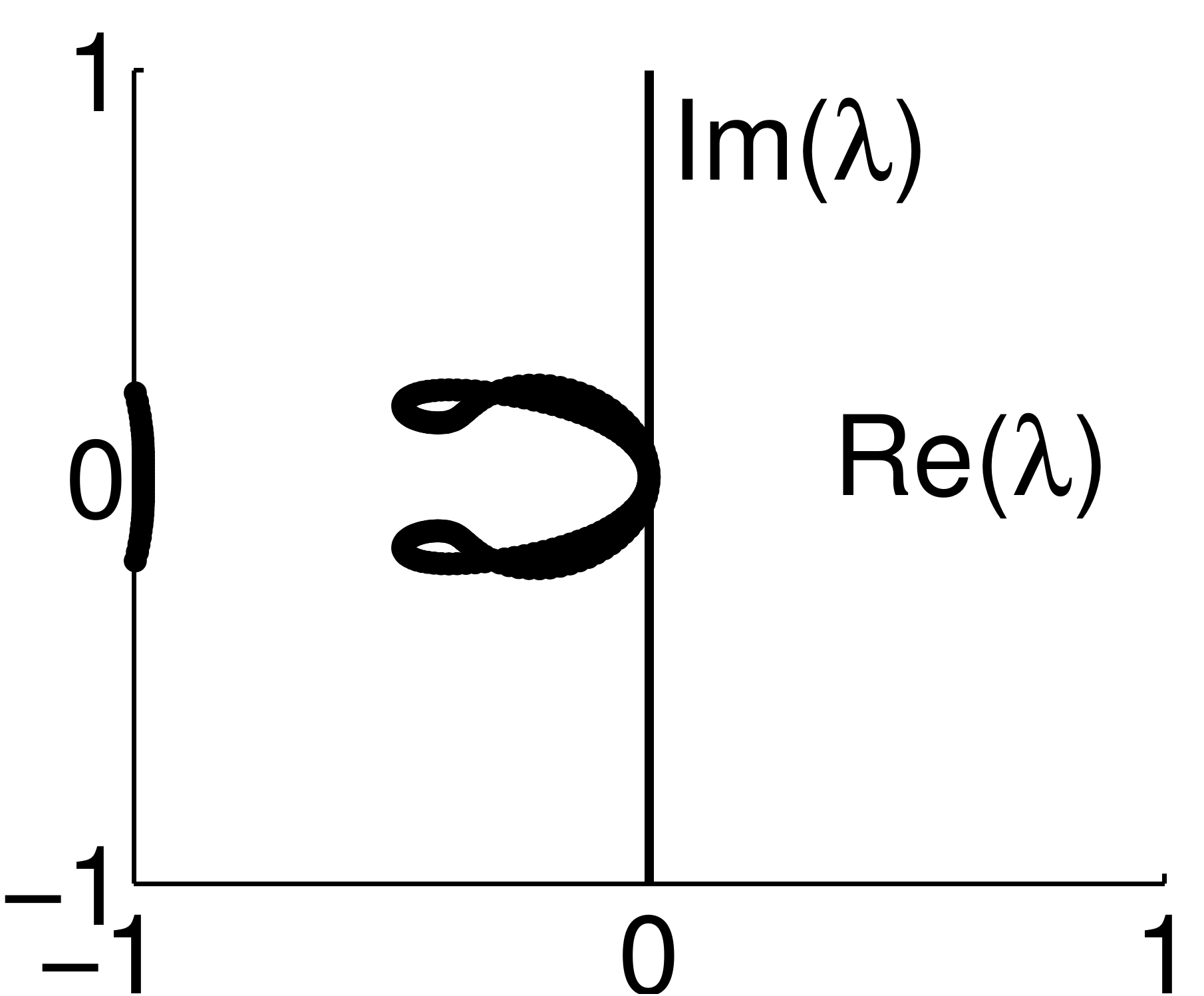}
\caption{A numerical sampling of the continuous spectrum of the linearization of \eqref{e:cks1} when $\eps=0.2$, plotted here as $\Re(\lambda)$ vs. $\Im(\lambda)$, about the periodic traveling wave profiles
on the periodic manifold depicted in Figure \ref{f:orbits}(b) for (a) $q=1$, (b) $q=4$, (c) $q=5$, (d) $q=6$, (e) $q=7$, (f) $q=12$, (g) $q=13.8$,
and (h) $q=30$.  Moreover in (i) we zoom in near the origin in the spectral plot given in (c).  All pictures here were generated with the Galerkin based
SpectrUW package developed at the University of Washington \cite{CDKK}.
}\label{f:specplots}
\end{center}
\end{figure}

\begin{figure}[htbp]
\begin{center}
(a)\includegraphics[scale=.2]{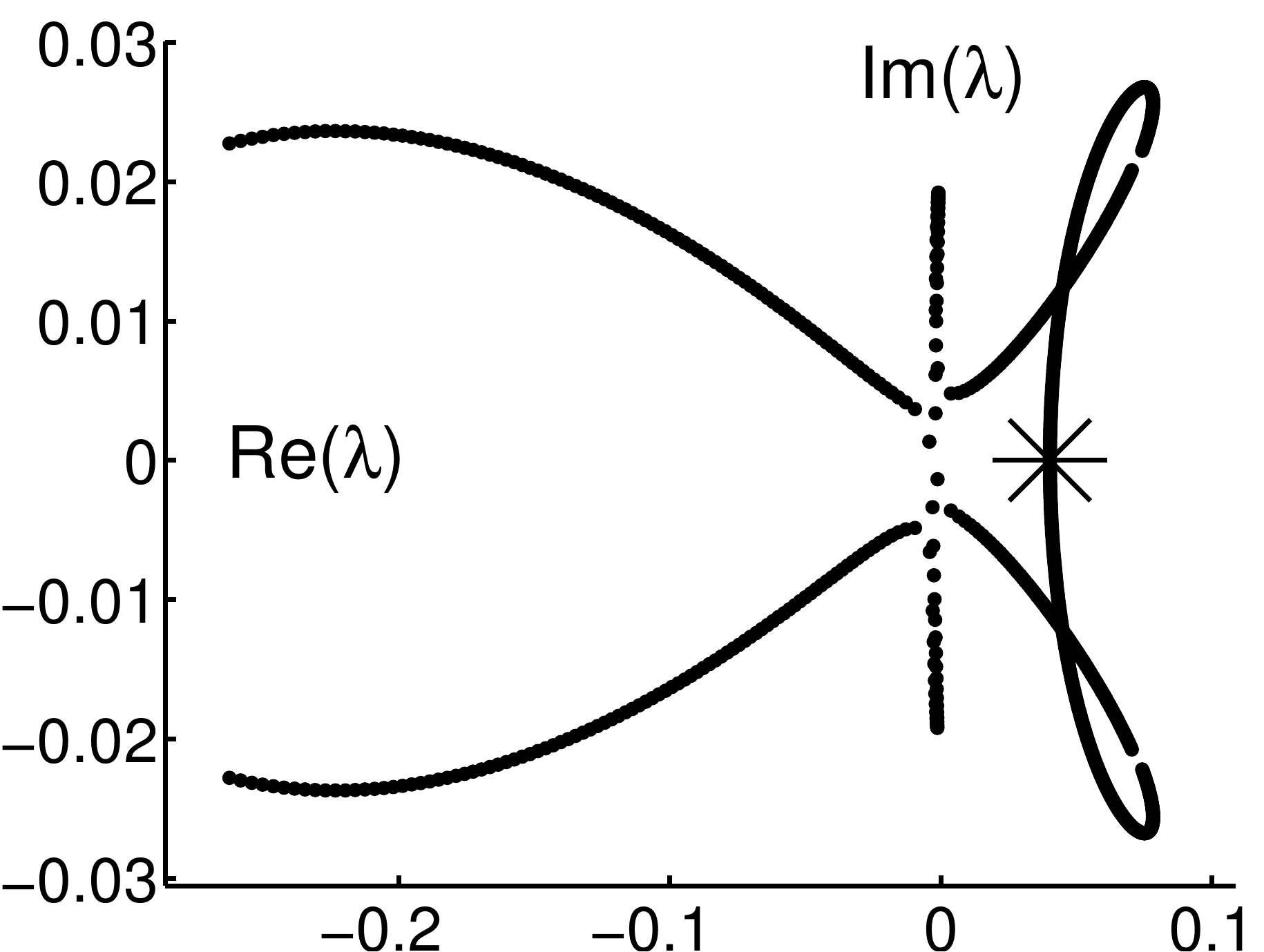}\quad (b)\includegraphics[scale=0.2]{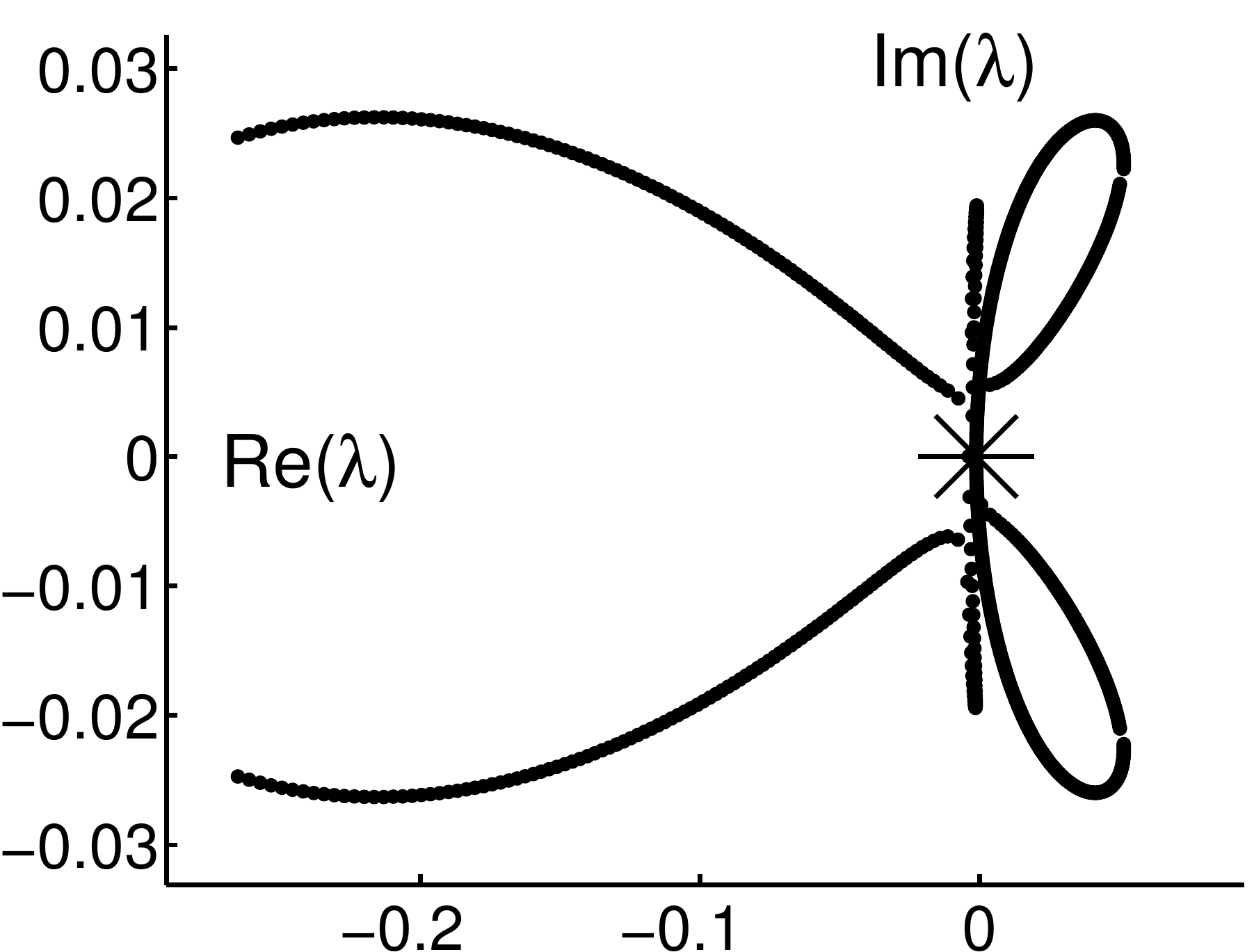} (c)\includegraphics[scale=0.23]{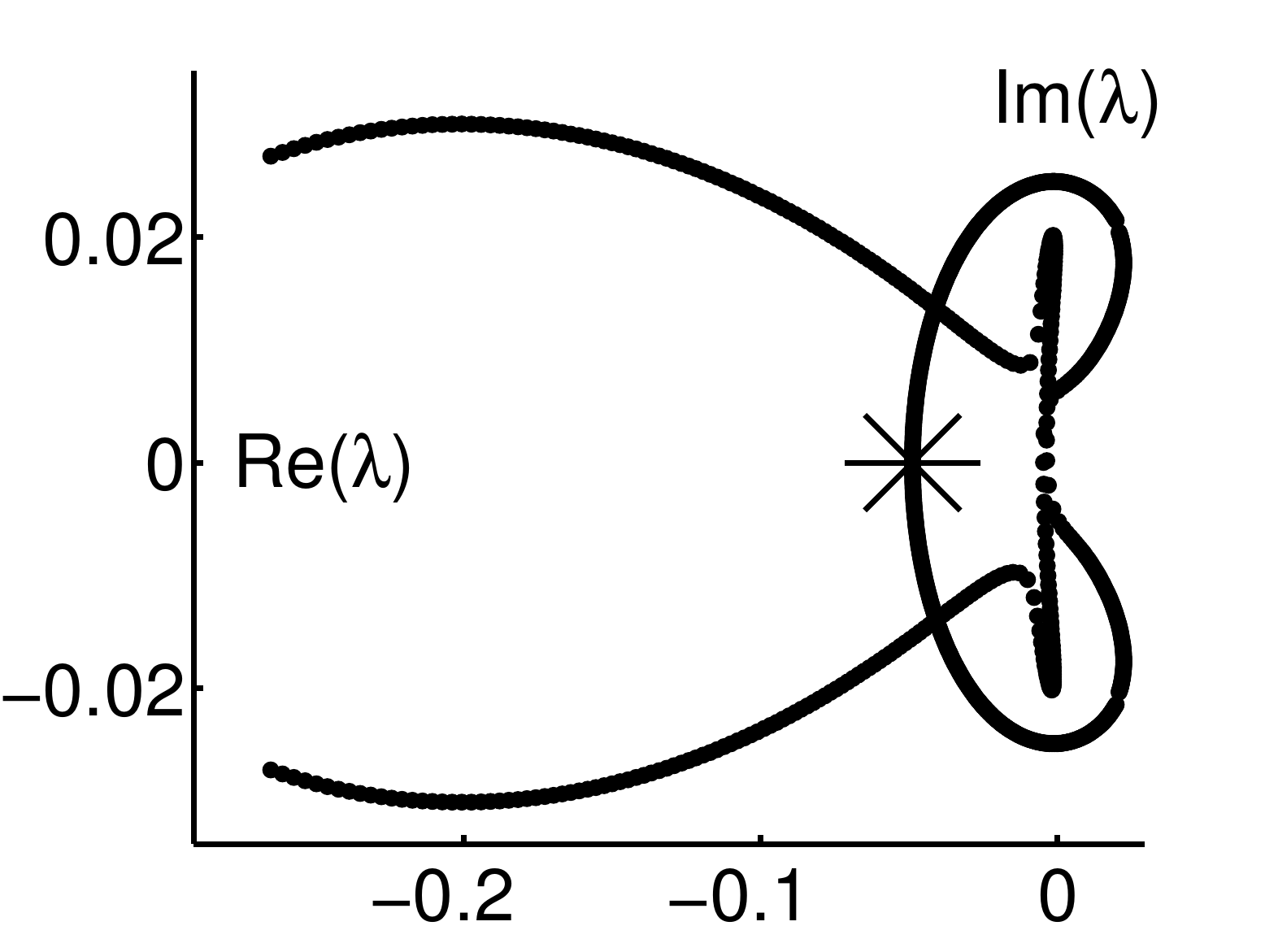}\\
\caption{Here, we zoom in near the origin in the $\lambda$-plane near, but just below, the lower stability boundary
when considering 6.3-periodic stationary solutions of \eqref{e:cks1} in the case $\eps=0.2$.  The three figures correspond
to (a) $q=4.4$, (b) $q=4.5$, and (c) $q=4.6$.  We find that prior to stability, a real anti-periodic eigenvalue
(corresponding to $\xi=-\pi$, and marked with a large star) crosses  through the origin and then the rest of the unstable spectrum
crosses the imaginary axis away from the origin. This crossing of a real anti-periodic eigenvalue corresponds to a period doubling bifurcation; see \cite{KE} for more details on period-multiplying bifurcations for KS. The spectral curves were generated by computing the roots of the Evans function using the method of moments described in Section \ref{meth:mom}.  In particular,
the transition to stability is not signaled by an eigenvalue crossing through the origin.
}\label{f:specplots-zoom}
\end{center}
\end{figure}

%TODO: Discuss bifurcations here!!!! DONE, I hope my interpretation of period doubling is correct HERE. P.N.

\subsubsection{High-frequency analysis}
We begin our study of the spectrum of the linear operator $L$, obtained by linearizing \eqref{e:cks1} about a fixed periodic traveling wave solution, by eliminating the possibility that the associated Bloch operators $L_\xi$ admit arbitrarily large unstable eigenvalues.
As the resulting estimates are independent of the Bloch frequency $\xi$, this allows us to reduce our search
for unstable spectrum of $L$ into a fixed compact domain in $\lambda$.   For generality, here we consider \eqref{e:KS} with a general
(smooth, say) nonlinearity $f$ rather than the quadratic nonlinearity leading to the formulation in \eqref{e:cks1} and $\gamma=1$.

%Here, we use here the formulation
%$u_t+f(u)_x+\eps u_{xxx}+\delta u_{xx}+u_{xxxx}=0$.
Letting $\bar{u}$ be a fixed $\bar{X}$-periodic solution of \eqref{e:gen}, for each $\xi\in[-\pi/\bar X,\pi/\bar X)$ we must study the eigenvalue
problem
\[
\lambda u+\left(\partial_x+i\xi\right)\left(f'(\bar{u})u\right)+\eps\left(\partial_x+i\xi\right)^3 u+\delta\left(\partial_x+i\xi\right)^2 u+\left(\partial_x+i\xi\right)^4u=0
\]
considered on $L^2_{\rm per}([0,\bar X])$  We restrict ourselves to rational Bloch frequencies,
%CHANGED-MR
that is to cases where $\xi\bar X/(2\pi)\in \QM$, 
%
%CHANGED-MR: ()
(recall that the spectra of $L_{\xi}$ is continuous in $\xi$)
and thus this is equivalent with studying the family of eigenvalue problems
\begin{equation}\label{sp_pb}
\lambda u+\left(f'(\bar{u})u\right)'+\eps u'''+\delta u''+u''''=0
\end{equation}
considered on $L^2_{\rm per}([0,n\bar X])$ for each $n\in\NM$.  In this 
%CHANGEDKZ: spelling:
%later 
latter
%ENDCHANGED
formulation, we have the following estimates on the modulus of possible unstable eigenvalues.

\bl \label{l:hfbds}
Let $n\in\NM$ be arbitrary, and suppose that there exists a function $u\in L^2_{\rm per}([0,n\bar X])$ which satisfies \eqref{sp_pb} for some $\lambda\in\CM$. 
%CHANGED-MR: useless
%with positive real part.  
%
Then $\lambda$ must satisfy the estimates
%Spectra ($\lambda\in\{\lambda\in\mathcal{C}/\Re(\lambda)\geq 0\}$) satisfy the estimates
%CHANGED-MR: L^\infty
\begin{equation}\label{kRe_lambda}
\Re(\lambda)\leq \frac{1}{2}\|f''(\bar{u})\bar{u}'\|_{L^{\infty}([0,\bar X])}+\frac{\delta^2}{4},
\end{equation}
and
%CHANGED-MR: L^\infty+f'' term
\begin{equation}\label{kMod_lambda}
\Re \lambda +|\Im \lambda|\le\frac{1}{2}\left(\|f''(\bar{u})\bar u'\|_{L^{\infty}([0,\bar X])}+\|f'(\bar{u})\|^2_{L^{\infty}([0,\bar X])}+\delta^2+\frac{(1+2\eps^2)^2}{2}\right).
\end{equation}
In particular, the above estimates are independent of $n$.
\el

\begin{proof}
%It is equivalent to show the nonexistence of periodic eigenfunctions $u$
%on all multiples of the period $X$. I remove this sentence: already explained just above. P.

By taking the real part of the complex scalar product of (\ref{sp_pb}) with ${u}$,
one obtains
%CHANGED-MR: []
\be\label{temp}
\Re(\lambda)\|u\|^2_{L_{per}^2([0,n\bar X])}=-\frac{1}{2}\int_0^{n\bar X}(f'(\bar u))'|u|^2(x)dx
+ \delta\|u'\|^2_{L^2_{per}([0, n\bar X])}-\|u''\|^2_{L^2_{per}([0, n\bar X])}.
\ee
Then, use  the Sobolev interpolation bound
%CHANGED-MR: frac+[]
$$
\|u'\|^2_{L^2_{per}([0, n\bar X])}\le
\frac{1}{2C}\ \|u''\|^2_{L^2_{per}([0, n\bar X])} +
\frac{C}{2}\ \|u\|^2_{L^2_{per}([0, n\bar X])},\quad C>0,
$$
with $C=\frac{\delta}{2}$ to deduce \eqref{kRe_lambda}.
%NOTE: I see this is so far the same as done by Pascal -KZ
Next, by taking the imaginary part of the complex scalar product of (\ref{sp_pb}) with ${u}$,
one obtains
%CHANGED-MR: langle, rangle +[]
$$
\Im(\lambda)\|u\|^2_{L_{per}^2([0,n\bar X])}=
 \Im\langle u',f'(\bar u)u\rangle_{L^2_{per}([0, n\bar X])}
+\eps\Im \langle u',u''\rangle_{L^2_{per}([0, n\bar X])},
$$
%
%CHANGED-MR
%or
then
%
%CHANGED-MR: L^\infty +[]
$$
\begin{aligned}
|\Im(\lambda)|\|u\|^2_{L_{per}^2([0,n\bar X])}&\le
 \|f'(\bar u)\|_{L^\infty([0,\bar X])}
\|u\|_{L^2_{per}([0, n\bar X])} \|u'\|_{L^2_{per}([0, n\bar X])} \\
&
\quad+\eps\|u'\|_{L^2_{per}([0, n\bar X])} \|u''\|_{L^2_{per}([0, n\bar X])} .
\end{aligned}
$$
By using Young's inequality, one finds
%CHANGED-MR: []+\infty
$$
\displaystyle
|\Im(\lambda)|\|u\|^2_{L_{per}^2([0,n\bar X])}\le
 \frac{1}{2}\|f'(\bar u)\|^2_{L^\infty([0,\bar X])}
\|u\|^2_{L^2_{per}([0, n\bar X])}+\frac{1+\eps D}{2} \|u'\|^2_{L^2_{per}([0, n\bar X])}+\frac{\eps}{2D} \|u''\|^2_{L^2_{per}([0, n\bar X])}
$$
valid for any $D>0$.  By using again the above Sobolev interpolation bound, one finds
%CHANGED-MR: []+\infty
\begin{equation}\label{ineq1}
\begin{aligned}
\displaystyle
|\Im(\lambda)|\|u\|^2_{L_{per}^2([0,n\bar X])}&\le
\left( \frac{1}{2}\|f'(\bar u)\|^2_{L^\infty([0,\bar X])}+\frac{C(1+\eps D)}{4}\right)\|u\|^2_{L^2_{per}([0, n\bar X])}\\
&\quad\quad        +\left(\frac{1+\eps D}{4C}+\frac{\eps}{2D}\right) \|u''\|^2_{L^2_{per}([0, n\bar X])} ,
\end{aligned}
\end{equation}
where now $C,D>0$ are arbitrary constants. From 
%CHANGED-MR: eq
\eqref{temp}, 
one also deduces that
%CHANGED-MR: []+\infty
\be\label{ineq2}
\displaystyle
\Re(\lambda)\|u\|^2_{L_{per}^2([0,n\bar X])}\leq \frac{1}{2}(\|f''(\bar{u})\bar u'\|_{L^{\infty}([0,\bar X])}+\delta^2)\|u\|_{L^2([0, n\bar X])}-\frac{1}{2}\|u''\|^2_{L^2([0, n\bar X])}.
\ee
Choosing now 
%CHANGED-MR: reverse
$D=2\eps$, $C=1+D\eps$ 
and adding (\ref{ineq1}) and (\ref{ineq2}) yields
%CHANGED-MR: []+\infty
\[
\displaystyle
\Re(\lambda)+|\Im(\lambda)|\le \frac{1}{2}\Big(\|f''(\bar{u})\bar u'\|_{L^{\infty}([0,\bar X])}+\|f'(\bar{u})\|^2_{L^{\infty}([0,\bar X])}+\delta^2+\frac{(1+2\eps^2)^2}{2}\Big),
\]
as claimed.
\end{proof}

%CHANGED-MR: added C_3 and changed by 1/2
\br
\textup{
Notice that by choosing different constants $C$ and $D$ above, along with a different choice of the
Young's scaling in \eqref{ineq2}, various other bounds of the form \eqref{kMod_lambda} can be obtained which could be beneficial
in certain parameter regimes.  Specifically, given any constants $C_1,C_2,C_3,\alpha,\beta,A>0$ such that
\[
\frac{\delta}{2C_1}-1+A\left(\frac{C_2}{4\alpha}\|f'(\bar{u})\|_{L^\infty([0,\bar X])}+\frac{\eps\beta C_3}{4}+\frac{\eps}{2\beta}\right)=0
\]
we obtain the inequality
\[
\Re(\lambda)+A|\Im(\lambda)|\leq
\frac{1}{2}\|f''(\bar{u})\bar u'\|_{L^\infty([0,\bar X])}+\frac{\delta C_1}{2}
+\frac{A}{2}\left(\|f'(\bar{u})\|_{L^\infty([0,\bar X])}\left(\alpha+\frac{1}{2\alpha C_2}\right)+\frac{\eps\beta}{2C_3}\right).
\]
The inequality \eqref{kMod_lambda} corresponds to the choices
\[
C_1=\delta,\quad C_2=C_3=\frac{1}{1+2\eps^2},\quad \alpha=\|f'(\bar{u})\|_{L^\infty([0,\bar X])},\quad\beta=2\eps,\quad A=1,
\]
while an example of an alternative bound, which may be useful for large $\eps>0$, is given by
\begin{equation}\label{kMod_lambdaeps}
\Re(\lambda)+\frac{1}{2\eps^2}|\Im(\lambda)|\leq
\frac{1}{2}\left(\|f''(\bar{u})\bar u'\|_{L^\infty([0,\bar X])}+\delta^2
+\frac{1}{\eps^2}\left(\frac14\|f'(\bar{u})\|^2_{L^\infty([0,\bar X])}+\frac{4}{3\eps^2}\right)\right)
\end{equation}
corresponding to the choices
\[
C_1=\delta,\quad C_2=C_3=\frac{3\eps^2}{4},\quad\alpha=\frac12\|f'(\bar{u})\|_{L^\infty([0,\bar X])},\quad\beta=\frac{2}{\eps},\quad A=\frac{1}{2\eps^2}.
\]
}
\er
%ENDCHANGED

By Lemma \ref{l:hfbds}, for  fixed modeling parameters $\eps$ and $\delta$ and a periodic profile $\bar{u}$ of \eqref{e:gen},
there exists a real number $R_0=R_0(\eps,\delta,\bar{u})$ such that the unstable 
%CHANGED-MR
(and marginally stable) 
spectrum of the associated linearized operator $L$
satisfies
%CHANGED-MR: |
\be\label{compact}
\sigma\left(L\right)\cap\left\{\ \lambda\in\CM\ \middle|\ \Re(\lambda)\geq0\ \right\}\ 
\subset\ B(0,R_0).
\ee
%
%CHANGEDKZ:  compactification means adding points... we are reducing
%to a compact region here.  (we do the other things sometimes in large
%shock context, so this actually matters in placing the work...
%This effectively compactifies the region in the spectral plane where we need to search for unstable spectra of $L$.
This reduces to a compact set the region in the spectral plane where we need to search for unstable spectra of $L$.
%ENDCHANGED
In order to verify the spectral stability hypotheses 
%CHANGED-MR: --
(D1)--(D3) 
and (H3) we 
%CHANGEDKZ:
%utilize 
use
the periodic Evans function
and basic analytic function theory, the necessary details of which we review in the next section.

%CHANGEDKZ, for accuracy and to follow convention of my other similar papers:
%\subsubsection{Low-frequency analysis: Evans function and numerical methods}
\subsubsection{Low- and mid-frequency analysis: Evans function and numerical methods}
%ENDCHANGED
\label{sec:lowfreqanal}

We now begin our search for unstable spectra of $L$ in the ball $B(0,R_0)$, where $R_0$ is given by the high-frequency
bounds in the previous section.  To this end, we 
%CHANGEDKZ
%utilize 
use
%ENDCHANGED
a complex-analytic function, known as the periodic Evans function 
%TODO-MR: identify citation (yes... shoudl be G1 -- mj)
%\cite{1}, 
\cite{G1}, 
%CHANGEDKZ, grammar:
%which 
that
%ENDCHANGED
is well-suited to the task at hand.
First, let $\bar{u}$ be a fixed $\bar X$-periodic traveling wave solution of \eqref{e:cks1}, 
%added this KZ
$\bar X$ arbitrary,
and notice that the associated spectral problem \eqref{e:eig} can be 
rewritten as a first order system of the form
\be\label{e:system}
{\bf Y}'(x;\lambda)=\mathbb{H}(x,\lambda){\bf Y}(x;\lambda), 
%CHANGEDKZ: it is complex!
%{\bf Y}\in\R^4
\quad {\bf Y}\in\C^4
%ENDCHANGED
\ee
and 
%CHANGEDKZ:
%notice 
that
$\lambda\in\sigma(L)$ if and only if \eqref{e:system} admits a non-trivial solution satisfying
\[
{\bf Y}(x+\bar X;\lambda)=e^{i\xi \bar X}{\bf Y}(x;\lambda),\quad\forall x\in\RM
\]
%CHANGEDKZ: this was confusing, since it is the convention $X=1$ that
%we are changing and it does not appear in \eqref{e:ansatz}.  Nor
%is the Floquet number defined or mentioned anywhere else in this
%paper..!... also $\bar \gamma$ is not defined and conflicts with
%notation of this paper.. (moved footnote also to better place..)
for some $\xi\in[-\pi/\bar X,\pi/\bar X)$.\footnote{
%Here, we deviate
%from the convention adopted above in 
%\eqref{e:ansatz} 
%Section \ref{e:lin}
%by introducing the period $\bar X$ into the Floquet number
%%$\tilde{\gamma}(\xi):=e^{i\pi \bar X}$, 
%$\tilde{\gamma}(\xi):=e^{i\xi \bar X}$, 
%considered for $\xi\in[-\pi/\bar X,\pi/\bar X)$, rather than $\gamma(\xi):=e^{i\xi}$.  This choice
%is to ensure that, when linearizing about a constant state, the dispersion relation $D(\lambda,\xi)=0$ agrees with
%that obtained by the standard Fourier transform, where $D$ denotes the 
%periodic Evans function defined below in \eqref{e:evans}.}
Here, we have allowed the domain of $\xi$ to depend on the period $\bar X$.
This choice is to ensure that, when linearizing about a constant state, 
the dispersion relation $D(\lambda,\xi)=0$ agrees with
that obtained by the standard Fourier transform, where $D$ denotes the 
periodic Evans function defined below in \eqref{e:evans}.
This differs from the convention of Gardner \cite{G1} and some others in which
the Evans function is parametrized instead by the Floquet number
$\bar \zeta:=e^{i\bar X \xi}\in S^1$, a quantity with domain
independent of $\bar X$.}
%ENDCHANGED
Letting $\Psi(x,\lambda)$ be a matrix solution of \eqref{e:system} with initial condition $\Psi(0,\lambda)={\bf I}$
for all $\lambda\in\CM$, we follow Gardner \cite{G1} and define the periodic Evans function for our problem to be
\be\label{e:evans}
D(\lambda,\xi):=\det\left(\Psi(\bar X,\lambda)-e^{i\xi \bar X}{\bf I}\right),\quad(\lambda,\xi)\in\CM^2,
\ee
where 
%CHANGEDKZ, added this:
for later convenience
%ENDCHANGED
we allow for possibly complex Bloch frequencies $\xi$ in the above definition.
By construction, then, $\lambda\in\sigma(L)$ if and only
%CHANGEDKZ
%if there exists a $\xi\in[-\pi/\bar X,\pi/\bar X)$
if there exists a $\xi\in[-\pi/\bar X,\pi/\bar X)\subset \RM$ 
%ENDCHANGED
such that $D(\lambda,\xi)=0$.  More precisely, for a fixed $\xi\in[-\pi/\bar X,\pi/\bar X)$ the roots
of the function $D(\cdot,\xi)$ agree in location and algebraic multiplicity with the eigenvalues of the Bloch operator $L_\xi$
%; see 
\cite{G1}.

Since $\mathbb{H}(\cdot,\lambda)$ clearly depends analytically on $\lambda$, it follows that the Evans function is a complex-analytic
function of both $\lambda$ and $\xi$, hence our search for unstable spectra of $L$ may by reformulated as a problem
in analytic function theory.  Indeed, for a fixed $\xi$ the number of eigenvalues of $L_\xi$ within the bounded
component of a closed contour $\Gamma$, along which $D(\cdot,\xi)|_{\Gamma}$ is non-vanishing, can be calculated
via the winding number
\[
n(\xi;\Gamma):=\frac{1}{2\pi i}\oint_{\Gamma}\frac{\partial_{\lambda}D(\lambda,\xi)}{D(\lambda,\xi)}~d\lambda.
\]
Given a fixed contour $\Gamma$ then, notice as $n(\cdot;\Gamma)$ is a continuous function with integer values, it is locally constant
and hence can only change values when a root of $D(\cdot,\xi)$ crosses through $\Gamma$.  With these preparations in mind, we now outline
the general scheme of our numerical
procedure to verify the spectral stability hypotheses
%CHANGEDKZ, location of footnote confused the meaning..
%Seems to fit better in any case in text... 
%(D1)-(D3) and (H3):\footnote{As noted below, for numerical efficiency, the following
%steps are sometime carried out in a slightly different order.  Of course then, there are several consistency checks
%that follow to justify various assumptions made.  For similar reasons, it is at times advantageous
%to execute a particular step in a slightly different way than suggested here; more will be said on this below.  Nevertheless, the
%general idea of the numerical procedure is contained in the 
%following steps.}   
%CHANGED-MR: --
(D1)--(D3) 
and (H3). As noted below, for numerical efficiency, the following
steps are sometime carried out in a slightly different order.  Of course then, there are several consistency checks
that follow to justify various assumptions made.  For similar reasons, it is at times advantageous
to execute a particular step in a slightly different way than suggested here; more will be said 
%std word choice
about
%
% on 
this below.  Nevertheless, the
general idea of the numerical procedure is contained in the following steps, to be completed for fixed.
%ENDCHANGED
\begin{itemize}
\item[{\bf\underline{Step 0}:}] First, numerically determine set 
%CHANGED-MR: \d+|+\RM
$\mathcal{D}:=\{\,q\,|\,\d_\lambda^2D(0,0)=0\}$ 
by looking for sign changes\footnote{Since the map 
%CHANGED-MR: \d
$q\mapsto \d_\lambda^2D(0,0)$ 
is real valued, this is sufficient to find the zeros.}  of 
%CHANGED-MR: \d
$\d_\lambda^2D(\lambda,0)$ 
for $\lambda>0$, where here $D(\lambda,0)$ denotes
the Evans function for $\xi=0$ obtained by linearizing about the periodic profile 
%CHANGED-MR
%$u_q$.  
$\bar u$ with parameter $q$.
\end{itemize}
Then away from this degenerate
parameter set $\mathcal{D}$, we may complete the following steps.
\begin{itemize}
\item[{\bf\underline{Step 1}:}] 
%CHANGED, more active -KZ:
%First, let $R_0>0$ be such that \eqref{compact} holds; such a real number exists by Lemma \ref{l:hfbds}.
First, determine using the bounds of
Lemma \ref{l:hfbds} an $R_0>0$ satisfying \eqref{compact}.
%ENDCHANGED
\item[{\bf\underline{Step 2}:}] Let $0<r_0<R_0$, $\Omega_0:=B(0,R_0)\setminus B(0,r_0)$, and set
%CHANGED-MR: |
\be\label{e:contours}
\begin{aligned}
\Gamma_0&:=\partial\left(\Omega_0\cap\{\lambda\in\CM\,|\Re(\lambda)\geq 0\}\right),\\
\Gamma_1&:=\partial\left(B(0,R_0)\cap\{\lambda\in\CM\,|\Re(\lambda)\geq 0\}\right);
\end{aligned}
\ee
see Figure \ref{f:gammacurves}.  Now, verify that for some $0<k_0\ll 1$ the following two conditions hold:
%CHANGED-MR: \bar X
\begin{itemize}
\item[(a)] $n(\xi,\Gamma_0)=0$ for all $\xi\in[-\pi/\bar X,\pi/\bar X)$.
\item[(b)] $n(\xi,\Gamma_1)=0$ for $k_0\leq|\xi|\leq\pi/\bar X$;
\end{itemize}
see Figure \ref{f:contours} below.
\item[{\bf\underline{Step 3}:}] Verify that for some $r_0<r_1<R_0$, $n(\xi,\partial B(0,r_1))=2$ for $|\xi|<k_0$.
\item[{\bf\underline{Step 4}:}] Finally, Taylor expand the two critical modes for $|\xi|\ll 1$ as
\be\label{e:lexp}
\lambda_j(\xi)=\alpha_j\xi+\beta_j\xi^2+\mathcal{O}(|\xi|^3),\quad j=1,2
\ee
and numerically verify that $\alpha_j\in\RM i$, the $\alpha_j$ are distinct, and $\Re(\beta_j)<0$.  Notice that such an expansion exists by Lemma \ref{blochfacts}.
\end{itemize}
%CHANGEDKZ, obvious, but should be stated... up to now
%we didn't seem to mention instability eplicitly...

\noindent 
%{\bf \underline{Conclusions}.}
If the criteria of each of the above steps are satisfied, then we
conclude (numerically) that the wave is {\it stable}.  We emphasize here the simplicity and generality of the above numerical
protocol, hence being directly applicable to many model problems.  In particular, the above numerical
method does not require an often lengthy and problem specific spectral perturbation expansion near the neutral
stability mode $(\lambda,\xi)=(0,0)$.
%
%simplicity of our method... we should emphasize again that this is a protocol that can be used on many
%problems, and doesn't require lengthy and problem specific spectral per. expansion..
%If they fail for any step, then we conclude that the wave is {\it unstable}.
%ENDCHANGED

\begin{figure}[htbp]
\begin{center}
(a)\includegraphics[scale=0.3]{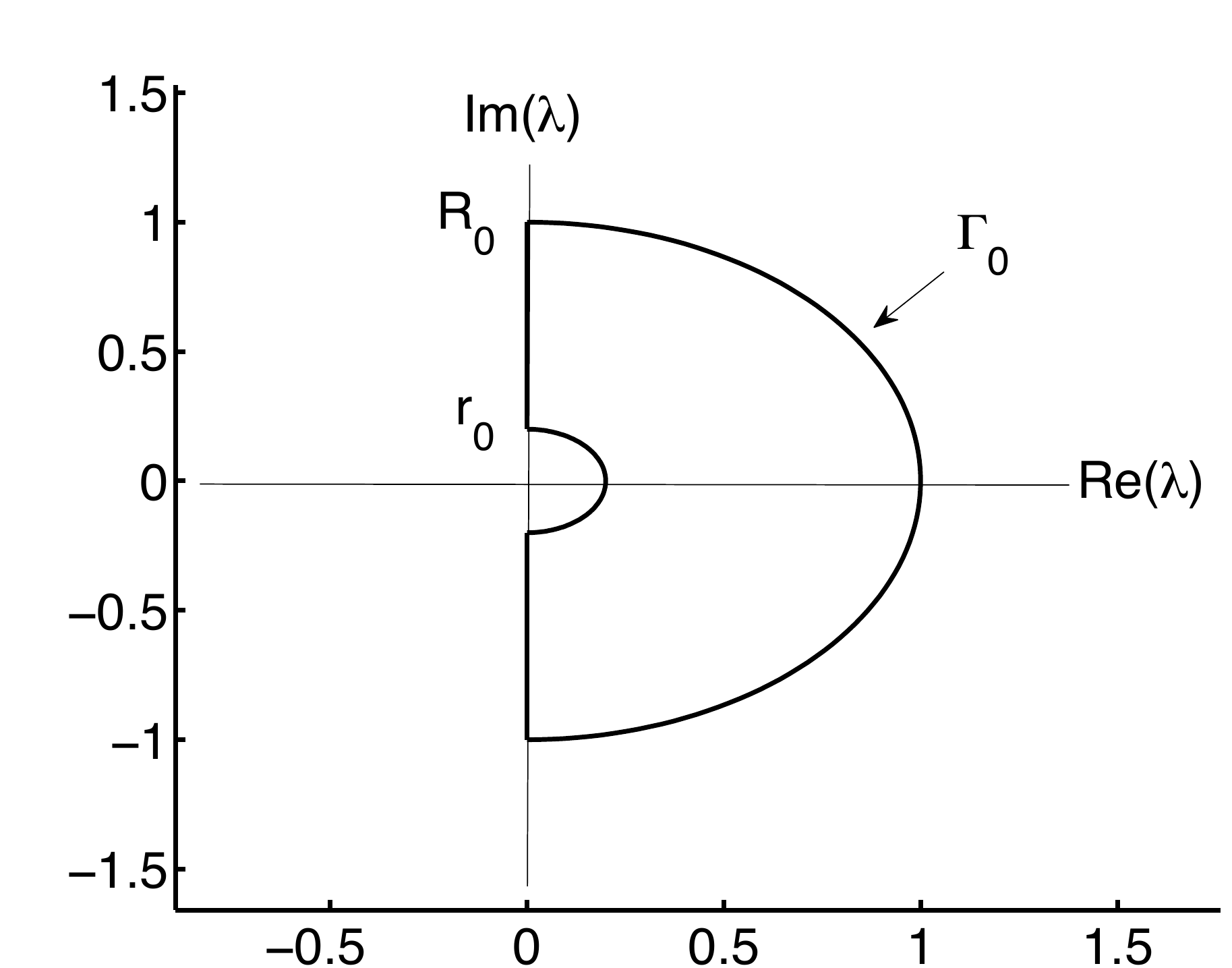}\quad\quad(b)\includegraphics[scale=0.3]{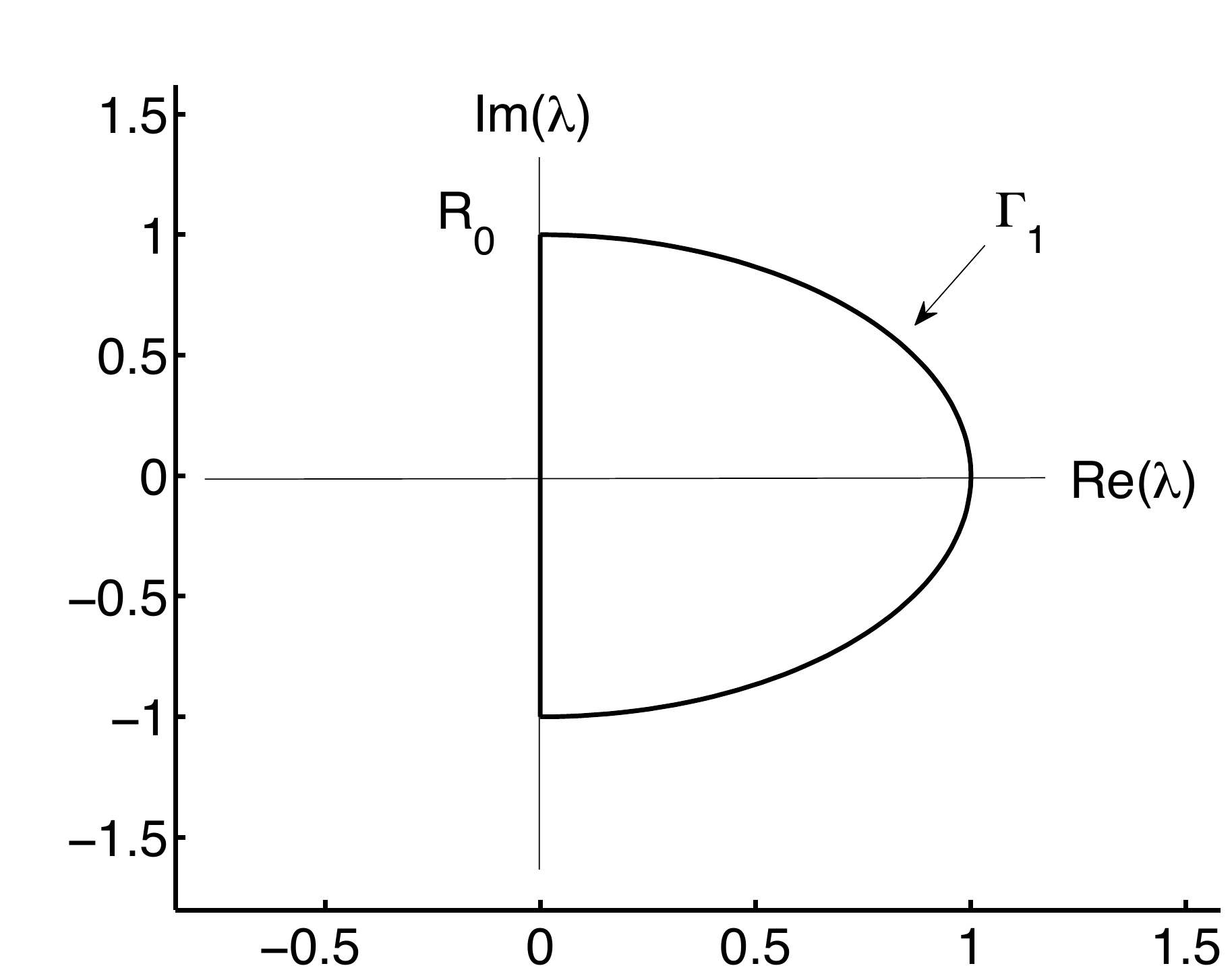}
\caption{Illustrations of the curves $\Gamma_0$ (a) and $\Gamma_1$ (b).  These pictures
are not drawn to reflect the actual dimensions used in the study, but to be a visual aid.}
\label{f:gammacurves}
\end{center}
\end{figure}
\begin{figure}[htbp]
\begin{center}
(a)\includegraphics[scale=0.3]{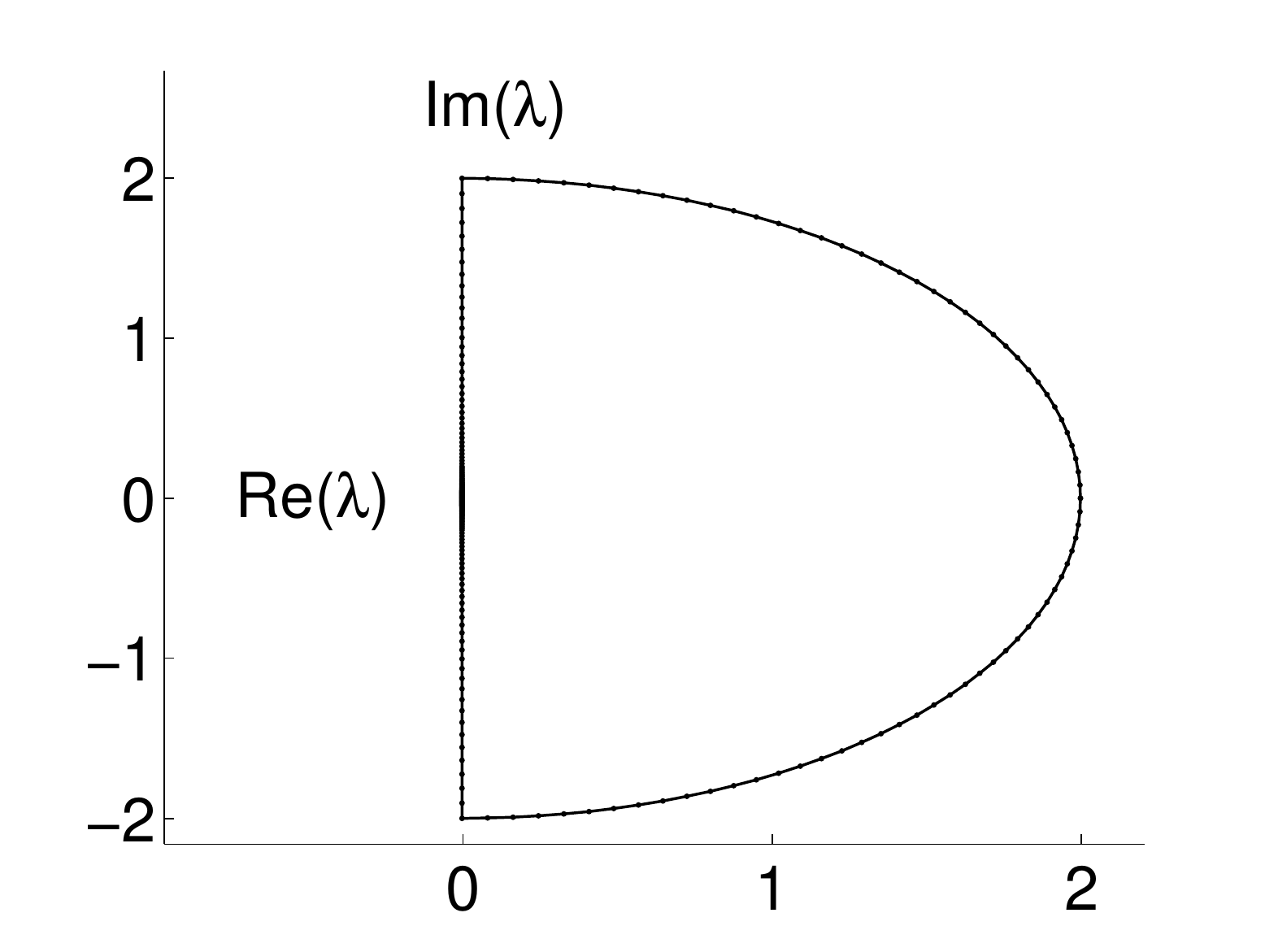}\quad\quad(b)\includegraphics[scale=0.3]{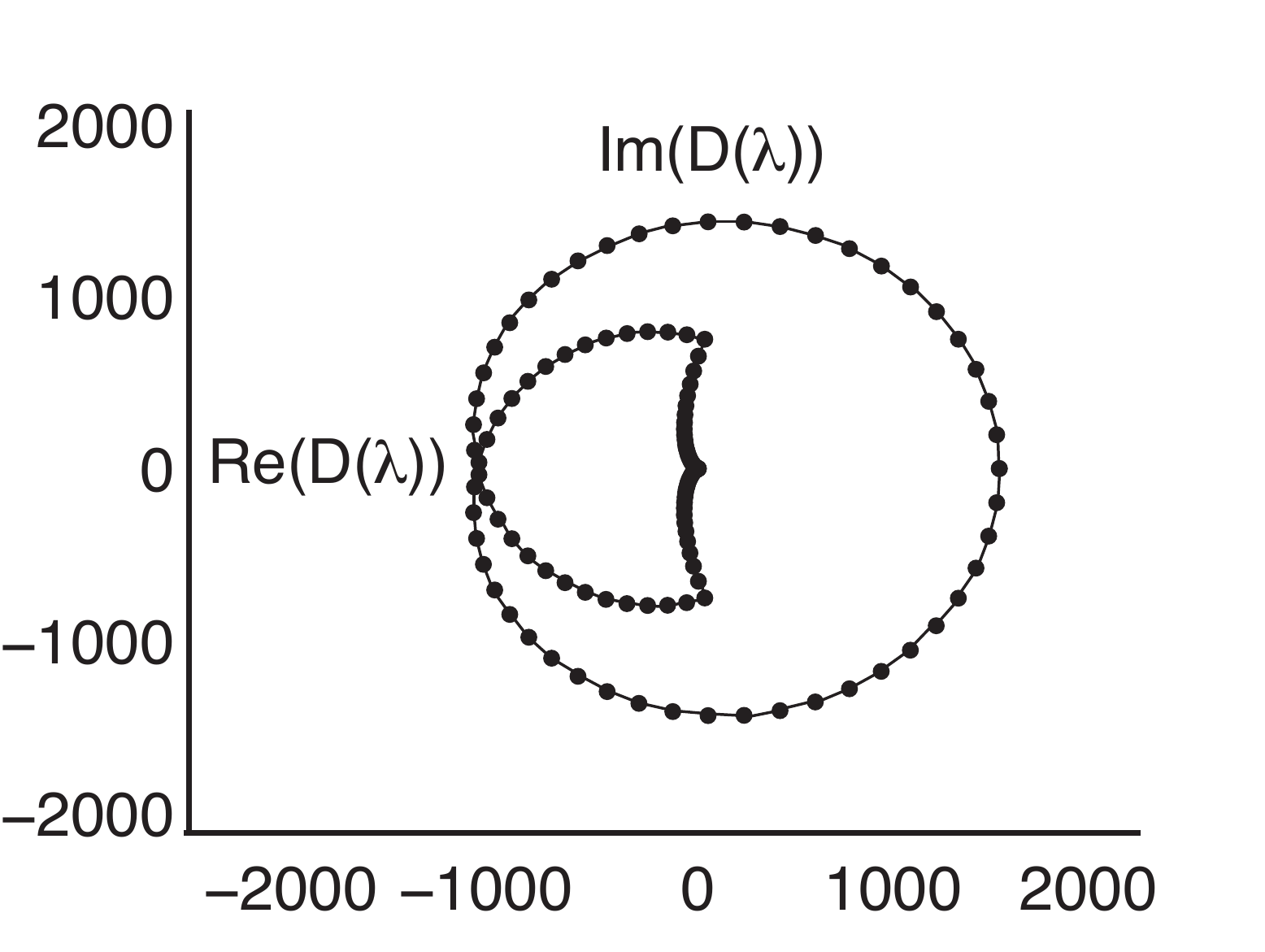}
\caption{
Sample Evans function output, demonstrating the calculation of $n(\xi,\Gamma_0)$ in Step 2 of the above numerical protocol.
We view in (a) a domain contour of form $\Gamma_0$ defined in Step 2 above on which we evaluate the Evans function, 
and in (b) the associated range for $\xi = 0$. Here, $\varepsilon = 0$, $q = 5$, $X = 6.3$, and $\delta = 1.4627$.  
In particular, we see that $n(0,\Gamma_0)=0$ in this case.
}
\label{f:contours}
\end{center}
\end{figure}

%CHANGED-MJnew
\br\label{r:degenerate}\textup{
The set $\mathcal{D}$ introduced in Step 0 above is precisely the parameter set in 
which hypothesis (H2) is valid; see \cite{NR2,JNRZ1} for details.  In particular, Step 0 is a useful
test for co-periodic instability corresponding to bifurcation of periodic solution to a new branch. %(transcritical bif.).  
From a numerical standpoint, it should
be noted that Step 3 of the above procedure will fail, producing an infinite loop in the algorithm,
on the set $\mathcal{D}$.  In general, given a numerical tolerance $0<\gamma\ll 1$, the algorithm
is only guaranteed to be well-conditioned off of a $\gamma$-neighborhood of $\mathcal{D}$ since this
guarantees we are away from a degenerate boundary case where 
%CHANGED-MR: \d
$\d_\lambda^2D(0,0)$ 
is too small. This will produce sharp stability  boundaries in the region where 
%CHANGED-MR: \d
$\d_\lambda^2D(0,0)$ 
is small. Such degenerate boundaries, while seemingly not occurring in the present case, do occur in the non-conservative
Swift-Hohenberg equation studied in Appendix \ref{a:sh}.
}
\er

\br\label{r:lfchoice}
\textup{
It is clear that some care must be taken in choosing the low-frequency cutoff $k_0$ in the above procedure, as it must
be chosen small enough to ensure that the conditions on $\alpha_j$ and $\beta_j$ in Step 4 are sufficient to guarantee
that $\Re(\lambda_j(\xi))<0$ for $|\xi|<k_0$; that is, the cubic order remainder term must be small enough
for $|\xi|<k_0$ as to not dominate the $\mathcal{O}(|\xi|^2)$ terms determined by Taylor expansions \eqref{e:lexp}.
To analyze how small $k_0$ must be, assume $|\xi|<k_0$ and note that Taylor's theorem implies 
\[
\lambda_j(\xi)=\alpha_j\xi+\beta_j\xi^2+\frac{\xi^3}{2}\int_0^1 (1-s)^2\lambda_j'''(s\xi)ds.
\]
Letting $K\gg k_0$ and setting\footnote{In our numerics, we utilized the method of moments to determine the number $M_j$;
see Section \ref{meth:mom} below for details.} $M_j=\max_{|\zeta|=K}|\lambda_j(\zeta)|$, where $k_0$ is given in Step 3 above, it follows by basic interior estimates that for all $s\in[0, 1]$
\[
|\lambda_j'''(s\xi)|=\left|\frac{3!}{2\pi i}\int_{\partial B(0,K)}\frac{\lambda_j(\zeta)}{(\zeta-s\xi)^4}d\zeta\right|\leq \frac{6M_jK}{(K-k_0)^4}.
\]
In particular, upon verifying that $\Re(\alpha_j)=0$ and $\Re(\beta_j)<0$ for $j=1,2$,
we can guarantee $\Re(\lambda_j(\xi))<0$ for $|\xi|<k_0$ provided that
$k_0$ satisfies the inequality
\be\label{e:lfselection}
k_0<\frac{(K-k_0)^4}{K}\cdot\min_{j=1,2}\frac{|\Re(\beta_j)|}{M_j}.
\ee
This serves as a consistency check in the above numerical procedure: if the chosen $k_0$ does not satisfy the stated
bound then one must choose a new $k_0$ which does satisfy the bound and repeat Steps 1--4 with this new $k_0$.
}
\er
%%%%%%%%%%%%%%%%%%%%%%%%%%%%%%%%%%%%%%%%%%%%
%%%%%%%%%%%%%%%%%%%%%%%%%%%%%%%%%%%%%%%%%%%%
%%%%%%%%%%%%%%%%%%%%%%%%%%%%%%%%%%%%%%%%%%%%
%\br\label{r:taylorexp}
%\textup{
\subsubsection{Numerical determination of Taylor coefficients}
Concerning Step 4 in the above procedure, we now explain how the coefficients $\alpha_j$ and $\beta_j$ in \eqref{e:lexp} can be
obtained from the Evans function.  First, assuming 
%CHANGEDKZ added:
that
%ENDCHANGED
Steps 1-3 have been verified, the analyticity of the Evans function implies
that $D(\cdot,\cdot)$ can be Taylor expanded to third order as
%CHANGED-MR: a_j to remove f
\[
D(\lambda,\xi)=a_0\lambda^2+a_1\lambda\xi+a_2\xi^2+a_3\lambda^3+a_4\lambda^2\xi+a_5\lambda\xi^2+a_6\xi^3+\mathcal{O}(|\lambda|^4+|\xi|^4)
\]
where the coefficients 
%CHANGED-MR: a_j
$a_0,\ldots,a_6\in\CM$ 
can be calculated from the Evans function via the formula
\be\label{e:evans-doubint}
\partial_\lambda^r\partial_\xi^sD(0,0)=\frac{r!s!}{4\pi^2r!}\oint_{\partial B(0,h)}\oint_{\partial B(0,h)}D(\mu,k)\mu^{-r-1}k^{-s-1}d\mu~dk
\ee
where $h>0$ is sufficiently small.
Substituting the expansions in \eqref{e:lexp} into the equation $D(\lambda_j(\xi),\xi)=0$ and equating second-order terms, we find that the $\alpha_j$
are roots of the polynomial 
%CHANGED-MR: a_j
$a_0z^2+a_1z+a_2=0$. 
In particular, we can take
%CHANGED
\be\label{e:alpha}
\alpha_j=\frac{-a_1+(-1)^{j+1}\sqrt{a_1^2-4a_0a_2}}{2a_0}
\ee
noting that (strong) hyperbolicity of the associated Whitham averaged system is equivalent to the discriminant condition
%CHANGED-MR: a_j
\be\label{e:disc}
a_1^2-4a_0a_2<0,
\ee
which is easily checked.  Furthermore, equating third-order terms in  $D(\lambda_j(\xi),\xi)=0$ implies that
%CHANGED-MR: a_j
\be\label{betaj}
\beta_j=-\frac{a_3\alpha_j^3+a_4\alpha_j^2 +a_5\alpha_j+a_6}
{2a_0\alpha_j+a_1}
\ee
where we note that the denominator is non-zero by the definition of the $\alpha_j$ and the discriminant condition \eqref{e:disc}.

\subsubsection{\label{meth:mom}Method of moments}

Finally, we describe how to numerically obtain bounds on the numbers $M_j:=\max_{|\xi|=K}|\lambda_j(\xi)|$.  From the above discussion, the computation of these quantities is important to determine the sign of eigenvalues in the neighborhood of the origin. Here, the idea is to use the Evans function to track the spectral curves $\lambda_j(k)$ as functions of $k$ by using what we refer to as the method of moments. We need to locate two eigenvalues: to this end, we define the first and second moments of the spectral curves $\lambda_j(\xi)$ as
\[
m_1(\xi):=\sum_{j=1,2}\lambda_j(\xi)\quad m_2(\xi):=\sum_{j=1,2}\lambda_j(\xi)^2.
\]
%CHANGED-KZ
%m_l(\xi):= \sum_{j=1}^l\lambda_j(\xi)^l
Note that these moments can be directly computed through the Evans function via the formula
$$
m_l(\xi)
=\frac{1}{2\pi i}\oint_{\partial B(0,R)}\frac{\lambda^l\partial_\lambda D(\lambda,\xi)}{D(\lambda,\xi)}~d\lambda.
$$
for $R>0$ sufficiently large so as $\lambda_j(\xi)\in B(0,R)$ for all $|\xi|=K$.  Once the moments $m_l(\xi)$ are computed for a fixed $\xi$, by using a simple Stieltjes trapezoid method, for example,
the corresponding numbers $\lambda_j(\xi)$ can be easily recovered via the formula
\[
\lambda_j(\xi)=\frac{m_1(\xi)+(-1)^j\sqrt{2m_2(\xi)^2-m_1(\xi)^2}}{2},\quad j=1,2
\]
from which the numbers $M_j$ can easily be determined. This method of moments readily extends to an arbitrary number of eigenvalues; see \cite{BZ,BJNRZ1}).
 In order to avoid the ambiguity for the choice of $\lambda_1$ and $\lambda_2$ above, we substitute \eqref{e:lfselection} by
$$
\displaystyle
k_0<\frac{(K-k_0)^4}{KM}\cdot\min_{j=1,2}|\Re(\beta_j)|.
$$
\noindent
where $M=\max_{j=1,2}\max_{|\xi|=K}|\lambda_j(\xi)|$.

%TODO: SAVE FOR BJNRZ1
%Moments $m_k=\sum_j r_j^k$ may be obtained by contour integral
%$$
%m_j=\oint z^j \Big(D'(z)/D(z)\Big) dz.
%$$
%Iteration:
%$s_1=m_1$.
%$s_{k}=(1/k)( s_{k-1}m_1 - s_{k-2}m_2 + \dots (-1)^k m_k)$.
%polynomial $(x-r_1)\cdots(x-r_n)=p(x)=\sum c_j x^j$ has
%$c_n=1$, $c_j= (-1){j}s_j$.
%Call Matlab to find roots (algorithm: find eigenvalues of
%companion matrix.)
%Fast, but limited accuracy as number of roots gets big.

\subsubsection{Numerical results}

The above numerical procedures provide a direct and well-conditioned way to numerically verify the spectral stability
hypotheses (H3) and 
%CHANGED-MR: --
(D1)--(D3). 
We now apply this procedure to two cases.
\begin{center}
\mathversion{bold}
{\bf First case: ${\bf \gamma=1,\eps=0}$}\\
 \mathversion{normal}
 \end{center}
\noindent
The case $\eps=0$ corresponds to the ``classic" Kuramoto-Sivashinsky equation, which has received a great amount of numerical and analytical attention over the years.  In particular, stability boundaries are well known for this equation.  We state our results first in this special case for 
%CHANGEDKZ
%verifiability 
verification
of our method against 
%CHANGEDKZ, since we now found many more rsults to check against..
%NOTE: the description in this section is extremely clear and nice!-KZ
these
previously known results. Let us now carefully describe the implementation of the numerical procedure for  the special case of \eqref{e:cks1} for the periodic traveling waves parameterized by $q=5$, and $X=6.3$.  For numerical efficiency, we proceed in our explicit example by
slightly modifying both the content and order 
%CHANGEDKZ, `to' was wrong word, steps was missing:
%to 
of the steps of
%ENDCHANGED
the above numerical procedure.  As a first step, we notice that when $\eps=0$
and $q=5$, a periodic orbit of \eqref{e:ckssystem} with period $X=6.3$ exists when $\delta=1.4627$, and so we fix $\delta$ at this
value throughout this discussion.  The corresponding solution $\bar{u}$ of \eqref{e:cks1} then satisfies
%CHANGED-MR: <
$\|\bar{u}\|_{L^\infty(\RM)}<4.6509$ and $\|\bar{u}'\|_{L^\infty(\RM)}<6.4856$ 
so that, appealing to Lemma \ref{l:hfbds}, we find
that any unstable spectra of the associated linearization $L$ about $\bar{u}$ lies in the ball $B(0,\tilde{R})$ with
%CHANGED-MJ-MARCH no need for \sqrt{2}... changed back.
%TODO-MR: mismatch, R should be R*\sqrt{2}, here this corresponds to R=21.748
%CHANGED-MR: ''+\sqrt{2}
\[
\tilde{R}=\frac{1}{2}\left(\|f''(\bar{u})\bar u'\|_{L^\infty(\RM)}+\|f'(\bar{u})\|_{L^\infty(\RM)}^2+\delta^2+\frac{(1+2\eps^2)^2}{2}\right)%<15.378.
\]
%\[
%\tilde{R}=\frac{1}{\sqrt{2}}\left(\|f''(\bar{u})\bar u'\|_{L^\infty(\RM)}+\|f'(\bar{u})\|_{L^\infty(\RM)}^2+\delta^2+\frac{(1+2\eps^2)^2}{2}\right)<21.748.
%\]
%
Next, we set 
%CHANGED-MJ-MARCH no need for \sqrt{2}... changed back
%TODO-MR: same, taken arbitrary value
%CHANGED-MR: \sqrt{2}
$R_0=15.478>\tilde{R}$ 
%$R_0=21.788>\tilde{R}$ 
%
and $r_0=1$ and define $\Gamma_0$ and $\Gamma_1$ as in \eqref{e:contours}.
Using 153 evenly spaced points in $\lambda$, thereby assuring relative error between successive points varies by less than $0.2$,
we compute that $n(\xi;\Gamma_0)=0$ for 1000 evenly spaced points in $\xi$ in the interval $\xi\in[-\pi/6.3,\pi/6.3]$,
where here we use the scaled lifted polar coordinate method 
(described in Appendix \ref{s:alg}) 
to compute the Evans function $D(\cdot,\xi)$; see \cite{BJNRZ1}.

Now, for numerical efficiency we deviate 
%CHANGEDKZ:
%from the above numerical method 
slightly from the basic numerical method described above
%ENDCHANGED
and proceed to compute the Taylor expansions \eqref{e:lexp} of the critical
modes near the origin.  Notice that as there are generically two such branches of spectra bifurcating from the origin at $\xi=0$, there
is in general no problem with making this assumption at this point: at the end of our numerical test, we will do a consistency check
to verify that there were indeed only two branches to start with.  Using the 
unscaled lifted polar method 
%CHANGED: not really explained..-KZ
(described in Appendix \ref{s:alg}) 
%ENDCHANGED
to compute the Evans function, we obtain the values of $\alpha_j$ and $\beta_j$ which are recorded
in the $q=5$ row of Table \ref{stabstudy:0eps}. Here, we have used \eqref{e:evans-doubint} with 300 points on the $\lambda$-contour, yielding relative error between
successive values of $\alpha_j$ and $\beta_j$ less than 0.1 and 0.01, respectively, and 1000 points on the $k$-contour, yielding
relative error less than 0.2 in successive steps. From the above Taylor coefficients $\beta_j$, we can use Remark \ref{r:lfchoice} to determine
an appropriate low-frequency cutoff $k_0$.  Here, we choose $K=0.5$, $R=2$ and choose $k_0$ such that
the inequality in \eqref{e:lfselection} holds; a brief calculation shows that $k_0=0.018365$ suffices.
With these choices, we use 427 points in the $\lambda$-contour, assuring relative
error between successive points varies less than 0.2 and verify that $n(\xi;\Gamma_1)=0$ for 1000 evenly spaced
values of $\xi$ satisfying 
%CHANGED-MR: \bar X
$k_0\leq|\xi|\leq\pi/6.3$.

\begin{table}[!th]
\begin{tabular}{|c|c|c|c|c|c|}
\hline
$q$&$\delta$&$\alpha_1$&$\alpha_2$&$\beta_1$&$\beta_2$\\
\hline
4.5 & 1.41 & 0.695+1.34e-05i & -0.695+1.33e-05i & -6.32-0.000214i & -6.32+0.000209i \\ 
\hline
4.6 & 1.42 & 6.86e-09+0.267i & 7.83e-09-0.267i & -5.98-2.02e-07i & -5.98-3.91e-07i \\ 
\hline
4.7 & 1.43 & -4.66e-09-0.792i & -5.37e-09+0.792i & -5.66-2.05e-08i & -5.66+9.42e-08i \\ 
\hline
4.8 & 1.44 & 2.44e-09-1.09i & 1.89e-09+1.09i & -5.34-2.9e-07i & -5.34+2.15e-07i \\ 
\hline
4.9 & 1.45 & 2.38e-09-1.32i & 1.57e-09+1.32i & -5.04-1.15e-07i & -5.04+8.15e-08i \\ 
\hline
5 & 1.46 & -9.44e-09-1.52i & -9.9e-09+1.52i & -4.74+5.02e-07i & -4.74-2.85e-07i \\ 
\hline
5.1 & 1.47 & 4.21e-09-1.7i & 3.06e-09+1.7i & -4.45-3.9e-07i & -4.45+2.9e-07i \\ 
\hline
5.2 & 1.48 & 8.39e-10-1.86i & 2.29e-10+1.86i & -4.17-9.99e-09i & -4.17-7.39e-09i \\ 
\hline
5.3 & 1.49 & -1.81e-07+2i & 1.89e-07-2i & -3.9+1.95e-05i & -3.9+2.13e-05i \\ 
\hline
5.4 & 1.51 & -1.37e-07+2.14i & 1.67e-07-2.14i & -3.64-4.97e-05i & -3.64-4.72e-05i \\ 
\hline
5.5 & 1.52 & -4.29e-07-2.27i & 3.88e-07+2.27i & -3.38-2.82e-05i & -3.38-3.23e-05i \\ 
\hline
5.6 & 1.53 & -3.99e-09-2.4i & -4.91e-09+2.4i & -3.13+4.78e-07i & -3.13-3.34e-07i \\ 
\hline
5.7 & 1.54 & 2.48e-10-2.52i & -4.01e-10+2.52i & -2.88-3.05e-08i & -2.88+2.41e-08i \\ 
\hline
5.8 & 1.55 & -7.36e-10-2.63i & -1.31e-09+2.63i & -2.64+1.54e-07i & -2.64-5.01e-07i \\ 
\hline
5.9 & 1.56 & -1.79e-09-2.74i & -2.24e-09+2.74i & -2.41+6.27e-08i & -2.41-1.07e-07i \\ 
\hline
6 & 1.57 & -1.08e-09-2.84i & -1.79e-09+2.85i & -2.18+3.14e-09i & -2.18+1.26e-08i \\ 
\hline
6.1 & 1.58 & 3.79e-10-2.95i & 2.89e-10+2.95i & -1.95+3.54e-07i & -1.95-3.47e-07i \\ 
\hline
6.2 & 1.59 & 1.91e-10-3.05i & -7.21e-10+3.05i & -1.73-4.21e-08i & -1.73-2.22e-08i \\ 
\hline
6.3 & 1.61 & -1.14e-08-3.14i & 1.18e-08+3.14i & -1.52+2.47e-07i & -1.52-2.87e-08i \\ 
\hline
6.4 & 1.62 & -1.28e-08-3.24i & 1.37e-08+3.24i & -1.3+2.88e-07i & -1.3+2.36e-08i \\ 
\hline
6.5 & 1.63 & -3.76e-08-3.33i & 3.67e-08+3.33i & -1.09+6.92e-07i & -1.09-1.71e-07i \\ 
\hline
6.6 & 1.64 & -4.03e-08-3.42i & 3.99e-08+3.42i & -0.885+6.99e-07i & -0.885-2.95e-07i \\ 
\hline
6.7 & 1.65 & -4.33e-08-3.51i & 4.33e-08+3.51i & -0.681+7.4e-07i & -0.681-2.83e-07i \\ 
\hline
6.8 & 1.66 & -2.36e-09+3.6i & 1.73e-08-3.6i & -0.476-8.45e-05i & -0.476-8.36e-05i \\ 
\hline
6.9 & 1.68 & -2.66e-06+3.68i & 2.66e-06-3.68i & -0.281+0.000738i & -0.281+0.000671i \\ 
\hline
7 & 1.69 & -1.41e-08-3.77i & 1.36e-08+3.77i & -0.0849+1.26e-07i & -0.0843-2.38e-07i \\ 
\hline
7.1 & 1.7 & -1.5e-08-3.85i & 1.39e-08+3.85i & 0.109+1.6e-07i & 0.11-1.23e-07i \\ 
\hline
\end{tabular}
\caption{This table collects the coefficients of the Taylor expansions \eqref{e:lexp} of the critical modes
bifurcating from the origin at $\xi=0$ in the case $\eps=0$ and $X=6.3$.  Here, we vary the integration constant $q$ in \eqref{e:ckssystem}
and note then that $\delta=\delta(q)$ is determined from $q$ via the periodic boundary conditions.  This table
demonstrates a range of values of $q$ corresponding to spectrally stable 6.3-periodic traveling wave solutions
of \eqref{e:cks1}.  In these computations, the evaluation of the integrals \eqref{e:evans-doubint} was performed
using an absolute tolerance of $1e-10$ and relative tolerance of $1e-8$ for the integration in the Floquet
parameter, and an absolute tolerance of $1e-8$ with relative tolerance of $1e-6$ in the integration
in $\lambda$.  Furthermore, the value of the Evans function along the contour was found
with absolute tolerance of $1e-8$ with relative tolerance of $1e-10$.
}
\label{stabstudy:0eps}
\end{table}

Finally we must verify that there are indeed only two spectral branches bifurcating
at $\xi=0$ from the origin.  Following Step 3 of the above procedure, we must simply
find an $r_1\in(r_0,R_0)$ such that $n(\xi;\partial B(0,r_1))=2$ for all $|\xi|\leq k_0$.  We note, however, that
computing the Evans function $D(\lambda,\xi)$ for a given $\xi$ becomes increasingly more difficult and less
accurate as $\Re(\lambda)$ becomes increasingly more negative; see \cite{BJNRZ1} for more details.
As a result, we find it more useful to compute the winding number corresponding to the contour\footnote{Notice that
by the previously computed Taylor expansions, we expect
%CHANGED-MR: max+-
$\min_{|\xi|\leq k_0}\Re(\lambda_j(\xi))\geq -2\max_j|Re(\beta_j)|k_0^2=-0.0031727$.}
%
%CHANGED-MR: |+-
\[
\tilde{\Gamma}:=\partial\left\{\lambda\in\CM\ \middle|\ \Re(\lambda)\geq-2\max_{j=1,2}|\Re(\beta_j)|k_0^2\ \textrm{ and }\ \Big|\lambda-2\max_{j=1,2}|\Re(\beta_j)|k_0^2\Big|\leq 2r_0\right\},
\]
from which we find, as expected, $n(\xi;\tilde{\Gamma})=2$ for 1000 evenly spaced values of $\xi$
satisfying $|\xi|\leq k_0$ where again we use 427 points along the $\lambda$-contour to evaluate
the Evans function, ensuring relative error less than 0.2 between successive points.
This observation, together with the (now justified) Taylor expansions computed above, 
%CHANGEDKZ: added `that':
%implies
implies that
%ENDCHANGED
no unstable spectra can exist for any Bloch-frequency $\xi$.

From the above arguments, we conclude with great numerical certainty that the underlying periodic traveling wave
associated with $q=5$, $\eps=0$ and period $X=6.3$ is spectrally stable to small
localized perturbations, in the sense of 
%CHANGED-MR
(D1)--(D3) 
and (H3).  This procedure can then be repeated
for different values of $q$, holding $\eps=0$, the period  (X=6.3) and the wave speed ($c=0$) fixed, and the results of these
computations are tabulated in Table \ref{stabstudy:0eps}.  In particular, our numerics indicate
that, as expected, there is a band of spectrally stable periodic traveling wave solutions, corresponding
to different values of $q$, of \eqref{e:cks1} with $\eps=0$ and fixed period $X=6.3$.

%CHANGEDKZ- I think this is more than a remark.. also want
%to repeat this format below, so promoted it into a sort of
%subsubsubsection...:
\medskip
{\bf Comparison with previous results.}
%\br\label{r:fst}
%\textup{
As there is a well known approximate stability boundary in the case $\eps=0$ provided in \cite{FST}, it seems
worthwhile to compare the results of Table \ref{stabstudy:0eps} to theirs.
In \cite{FST}, the authors consider $2\pi$-periodic solutions of the equation
\be\label{e:fstform}
\partial_t u+u\partial_x u+\partial_x^2 u+\nu \partial_x^4 u=0,\quad\nu>0.
\ee
Going to Fourier space, it is clear that when $\nu>1$ all Fourier modes of the solution are linearly damped and hence
the solution converges to a spatially homogeneous state as $t\to\infty$.  However, when $\nu<1$  there will be a finite
number of excited modes leading to a non-uniform solutions.  Setting $\nu=1-\eta$ then, a stability analysis is conducted
for $2\pi$-periodic solutions of \eqref{e:fstform} with $0<\eta<0.69$.  In particular, the authors find that such solutions
are spectrally stable for $\eta\in(\eta_1,\eta_2)$ with $\eta_1\in(0.25,0.3)$ and $\eta_2\in(0.4,0.45)$  In order to compare these results
to those provided in Table \ref{stabstudy:0eps}, we notice that by setting $\alpha=\frac{6.3\nu^{2/7}}{2\pi}$ and introducing
the new variables $\bar{u}=\alpha^{-3} \nu^{-1/7}u$, $\bar{x}=\alpha\nu^{-2/7} x$, and $\tilde{t}=\alpha^4\nu^{-1/7}$ the $2\pi$-periodic
solutions of \eqref{e:fstform} correspond to 6.3-periodic solutions of the equation
\[
\partial_{\bar{t}}\bar{u}+\bar{u}\partial_{\bar{x}}\bar{u}+\rho\partial^2_{\bar{x}}\bar{u}+\partial_{\bar{x}}^4\bar{u}=0
\]
where $\rho=\Big(\frac{2\pi}{6.3}\Big)^2\nu^{-1}$.  Letting $\nu=1-\eta$ then, the results of \cite{FST} suggest
we should have stability for $\rho\in(\delta_1,\delta_2)$ where
$\delta_1\in(1.32623,1.42096)$ and $\delta_2\in(1.65778,1.80849)$ in our $\rho$-parametrization: these predictions
are consistent with the data in Table \ref{stabstudy:0eps} where we find $\delta_1\approx 1.411$ and $\delta_2\approx 1.701$.
%}
%\er
%ENDCHANGED

%CHANGEDKZ: added this worthwhile rmk...:
\br\label{numrmk}
\textup{
It is worth mentioning that the numerical method of \cite{FST}
is completely different from ours, so that the close agreement between
results gives a useful check on both sets of numerics.
Specifically, for frequencies $(\xi,\lambda)$ bounded away
from the origin, they use a Galerkin method similar to that 
used by us to generate figures using SPECTRUW; that is, they 
compute the spectra of a truncated infinite-dimensional matrix.
In the numerically delicate small-frequency regime, they
approximate the spectra by perturbation expansion around
critical modes, as we do.  
However, they accomplish this in a different way, by direct spectral
perturbation expansion of the underlying linearized operator about
the wave.
They point out also the relation between this spectral expansion and
the formal modulation (i.e., Whitham) equations expected to govern
asymptotic behavior.
Though the numerics of \cite{FST} are well-conditioned,
and based on sound functional-analytic principles,
there is no attempt made there to estimate numerical computation error.
Indeed, for this type of method, this seems to us a complicated task.
By contrast, Evans-function/winding number methods come effectively
with built-in error bounds, given by the ODE tolerance and 
the requirements of Rouch\'es Theorem to guarantee validity of the winding 
number.
}
\er
%ENDCHANGED

\begin{table}[!th]
\begin{tabular}{|c|c|c|c|c|c|}
\hline
$q$&$\delta$&$\alpha_1$&$\alpha_2$&$\beta_1$&$\beta_2$\\
\hline
4.6 & 1.41 & 0.694+0.272i & -0.694+0.272i & -6.46-1.05i & -6.46+1.05i \\ 
\hline
4.7 & 1.42 & 5.44e-08+0.518i & -6.31e-09+0.0311i & -9.16+1.37e-06i & -3.11-1.56e-07i \\ 
\hline
4.8 & 1.43 & -2.91e-09-0.499i & -4.35e-09+1.05i & -4.86+9.51e-10i & -6.77+2.62e-07i \\ 
\hline
4.9 & 1.44 & 3.51e-09+1.35i & 2.21e-09-0.791i & -6.2+3.6e-07i & -4.82-1.93e-07i \\ 
\hline
5 & 1.45 & -2.02e-07+1.58i & 7.16e-08-1.02i & -5.78-1.52e-05i & -4.64-4.23e-06i \\ 
\hline
5.1 & 1.46 & 9.62e-10-1.21i & 9.17e-10+1.78i & -4.43-6.08e-08i & -5.42+1.06e-07i \\ 
\hline
5.2 & 1.47 & 1.55e-10-1.38i & -4.18e-10+1.96i & -4.2+1.05e-07i & -5.08-1.52e-07i \\ 
\hline
5.3 & 1.49 & 2.81e-09-1.54i & 2.59e-09+2.12i & -3.97-1.89e-07i & -4.77+2.67e-07i \\ 
\hline
5.4 & 1.5 & 1.43e-09-1.68i & 1.16e-09+2.27i & -3.74-1.73e-08i & -4.47-1.77e-08i \\ 
\hline
5.5 & 1.51 & 6.84e-10-1.81i & 3.05e-10+2.41i & -3.51-3.51e-08i & -4.19+2.86e-08i \\ 
\hline
5.6 & 1.52 & -9.72e-11-1.94i & -8.41e-10+2.54i & -3.28+2.7e-08i & -3.92-5.11e-08i \\ 
\hline
5.7 & 1.53 & -1.05e-07+2.67i & 5.65e-08-2.06i & -3.65+5.4e-06i & -3.06+2.24e-06i \\ 
\hline
5.8 & 1.54 & 2.55e-09-2.17i & 2.12e-09+2.79i & -2.84-9.26e-08i & -3.39+6.11e-08i \\ 
\hline
5.9 & 1.55 & -2.51e-09+2.9i & 5.59e-09-2.28i & -3.14-2.14e-06i & -2.63-1.01e-06i \\ 
\hline
6 & 1.56 & -4.8e-10-2.38i & -1.25e-09+3.01i & -2.42+6.1e-08i & -2.9-5.96e-08i \\ 
\hline
6.1 & 1.57 & -4.61e-07-2.48i & 6.86e-07+3.12i & -2.21-5.33e-05i & -2.66-0.000108i \\ 
\hline
6.2 & 1.58 & -1.66e-09-2.58i & 1.4e-09+3.22i & -2.01+2.63e-07i & -2.43+1.72e-07i \\ 
\hline
6.3 & 1.59 & -8.86e-10-2.67i & -1.57e-09+3.32i & -1.81+1.27e-07i & -2.2-1.13e-07i \\ 
\hline
6.4 & 1.6 & -4e-09+3.42i & 5.31e-10-2.76i & -1.98+4.16e-06i & -1.61+2.22e-06i \\ 
\hline
6.5 & 1.62 & -5.27e-10-2.85i & -7.42e-10+3.52i & -1.42+1.32e-07i & -1.76+3.66e-07i \\ 
\hline
6.6 & 1.63 & 1.34e-09-2.94i & 3.55e-10+3.61i & -1.23+2.29e-07i & -1.55-2.84e-07i \\ 
\hline
6.7 & 1.64 & -1.61e-09+3.7i & 1.98e-09-3.02i & -1.34+5.22e-08i & -1.05+7.35e-09i \\ 
\hline
6.8 & 1.65 & -6.87e-08+3.8i & 4.71e-08-3.1i & -1.13+4.11e-06i & -0.865+1.19e-06i \\ 
\hline
6.9 & 1.66 & -8.73e-07-3.18i & 1.39e-06+3.88i & -0.688-3.38e-05i & -0.925-6.86e-05i \\ 
\hline
7 & 1.67 & -9.68e-09+3.97i & 8.81e-09-3.26i & -0.72-6.88e-06i & -0.51-4.02e-06i \\ 
\hline
7.1 & 1.68 & -5.29e-09+4.06i & 4.25e-09-3.33i & -0.521+1.23e-05i & -0.337+6.76e-06i \\ 
\hline
7.2 & 1.7 & -2.46e-08-3.41i & 3.62e-08+4.14i & -0.167+3.29e-07i & -0.324-2.02e-07i \\ 
\hline
7.3 & 1.71 & -5.07e-07-3.48i & 6.83e-07+4.23i & 0.0037+0.000203i & -0.124+0.000359i \\ 
\hline
7.4 & 1.72 & -2.72e-08-3.55i & 4.14e-08+4.31i & 0.167+1.17e-06i & 0.0642+1.07e-06i \\ 
\hline
\end{tabular}
\caption{Similar to Table \ref{stabstudy:0eps}, this table collects the Taylor coefficients
of the critical modes in \eqref{e:lexp} in the case $\eps=0.2$ and $X=6.3$.  This table indicates
the existence of a range of values of $q$ corresponding to spectrally stable 6.3-periodic traveling
wave solutions of \eqref{e:cks1}.  These computations were preformed with the same absolute and relative
tolerances used for the computations discussed in Table \ref{stabstudy:0eps}.
%KS stability study for $\varepsilon = 0.2$ and period $X \equiv 6.3$.  Looks like lower stability
%boundary is between $q=4.6$ and $q=4.7$.  What is upper boundary?
}
\label{stabstudy:0.2eps}
\end{table}

\begin{center}
\mathversion{bold}
{\bf Second case: ${\bf \gamma=1,\eps=0.2}$}\\
 \mathversion{normal}
 \end{center}

\noindent
This choice corresponds to a generalized Kuramoto-Sivshinsky equation: the purpose here was to see whether we have the same qualitative picture of a band of spectrally stable periodic traveling waves. Here $X=6.3$ is fixed.  Employing
our above numerical procedure, in an analogous fashion 
%CHANGEDKZ, fixed grammar:
%as the 
as for the 
%ENDCHANGED
$\eps=0$ case considered above, we summarize
our findings in Table \ref{stabstudy:0.2eps}.  As before, we find a range of values of the integration constant $q$ associated
with periodic traveling wave solutions of \eqref{e:cks1} which are spectrally stable to small localized perturbations,
in the sense that 
%CHANGED-MR: --
(D1)--(D3) 
and (H3) hold; see also Figure \ref{f:orbits} (b).

\mathversion{bold}
\subsection{A full parameter study: $\gamma=\delta$, $\eps^2+\delta^2=1$}
\mathversion{normal}

\begin{table}[!th]
\begin{center}
\begin{tabular}{|c|c|c||c||c|c|c|}
\hline
$\delta$&$X_L$&$X_U$&\quad&$\delta$&$X_L$&$X_U$\\
\hline
\hline
0.1 & 9.03125 & 25.7188 &\quad&0.045 & 8.98047 & 25.9219 \\
\hline
0.095 & 9.03125 & 25.7188 &\quad&0.04 & 8.98047 & 26.1406 \\
\hline
0.09 & 9.03125 & 25.7188 &\quad&0.035 & 8.98047 & 26.1406 \\
\hline
0.085 & 9.03125 & 25.9062 &\quad&0.03 & 8.98047 & 26.1406 \\
\hline
0.08 & 9.03125 & 25.9062 &\quad&0.025 & 8.98047 & 26.1406 \\
\hline
0.075 & 9.03125 & 25.9062 &\quad&0.02 & 8.98047 & 26.1406 \\
\hline
0.06 & 8.98047 & 25.9219 &\quad&0.015 & 8.98047 & 26.1406 \\
\hline
0.055 & 8.98047 & 25.9219 &\quad&0.01 & 8.98047 & 26.1406 \\
\hline
0.05 & 8.98047 & 25.9219 &\quad&0.005 & 8.98047 & 26.1406 \\
\hline
\end{tabular}
\end{center}
\caption{Table of the periods $X_L$ and $X_U$ corresponding respectively to the lower and upper stability boundary of the generalized KS system, $\partial_t u +\partial_x(u^2/2)+\partial_x^3u+\delta(\partial_x^2 u+\partial_x^4 u)=0$  as $\delta \to 0^+$.
%See Figure \ref{fig110} for details about these computations. These boundaries come from computing the Evans function.
}
\label{t:stabbdry-0delta}
\end{table}

Finally, we conclude our spectral stability analysis by considering stability of periodic traveling wave solutions of the 
generalized Kuramoto-Sivashinsky equation, under the particular scaling
\be\label{e:kssing}
\partial_t u +\partial_x(u^2/2)+\eps\partial_x^3u+\delta(\partial_x^2 u+\partial_x^4 u)=0,
\ee
and $\eps^2+\delta^2=1$.  
It should be emphasized that  the singular limit $\delta\to 0$ of \eqref{e:kssing} 
arises naturally in the study of small amplitude roll-waves on the surface of a viscous 
liquid thin film running down an inclined plane. Indeed, one can  derive \eqref{e:kssing} 
either from shallow water equations with friction at the bottom or free surface Navier-Stokes 
%CHANGED-MR: steady --> constant
%CHANGEDKZ, punctuation:
%equations both at the transition to instability of steady states (fluid height and velocity 
equations, both at the transition to instability of constant steady states (fluid height and velocity 
%ENDCHANGED
are constant in the shallow water description) 
for small amplitude disturbances and in the small wave number regime; see \cite{W,YY} 
for more details on this derivation. In this case, periodic traveling waves solutions 
of \eqref{e:kssing} correspond to well-known hydrodynamical instabilities, 
%CHANGEDKZ
%so called
%roll-waves; see \cite{NR1} and \cite{JZN} for the spectral and nonlinear stability analysis
known as
{\it roll-waves}; see \cite{NR1} and \cite{JZN} for the spectral and nonlinear stability analysis
%ENDCHANGED
of these roll-waves and for the analysis of their corresponding Whitham equations.

In the limit $\delta\to 0$, the governing equation \eqref{e:kssing} reduces to the integrable KdV equation
\be\label{e:kdv1}
\partial_t u+u\partial_x u+\partial_{x}^3 u=0
\ee
where it is known that all periodic traveling wave solutions are spectrally stable to perturbations in $L^2(\RM)$
and nonlinearly (orbitally) stable in $L^2_{\rm per}([0,nX])$ where $X$ denotes the period of the underlying elliptic
function solution; see \cite{BD}.
As discussed in detail in Appendix \ref{a:kdv}, the elliptic function solutions given in \eqref{e:kdvsoln} which
satisfy the selection principle \eqref{solvability} may be continued for $0<\delta\ll 1$
to a three-parameter family of periodic traveling
wave solutions of \eqref{e:cks1} parameterized by translation, period, and spatial mean over a period.

Concerning the stability of these ``near KdV profiles", we numerically observe that for $\delta>0$ sufficiently small
there exist numbers $X_L(\delta)$ and $X_U(\delta)$ such that the associated X-periodic traveling wave solution
of \eqref{e:kssing} is spectrally stable to perturbations in $L^2(\RM)$ 
provided $X_L(\delta)<X<X_U(\delta)$,
%CHANGED, added-KZ:
with $X_L\approx 8.49$ and $X_U\approx 26.17$;
%ENDCHANGED
see Table \ref{t:stabbdry-0delta} for details.  This numerical observation complements the well known result that the
%CHANGEDKZ: added ref to BJNRZ2....
``near KdV solitary wave" profiles of \eqref{e:kssing} are spectrally unstable to localized perturbations; see \cite{PSU,BJNRZ2}.
%CHANGEDKZ:
%An analytical verification of these numerical observation is beyond the scope of our work and
Analytical verification of these numerical 
%CHANGED-MR: s
observations, 
though outside the scope of the present work, 
%is beyond the scope of our work and
would be a very interesting direction for future investigation.
See \cite{JNRZ4} for substantial progress in this direction.
%ENDCHANGED

\begin{figure}[htbp]
\begin{center}
\includegraphics[scale=.35]{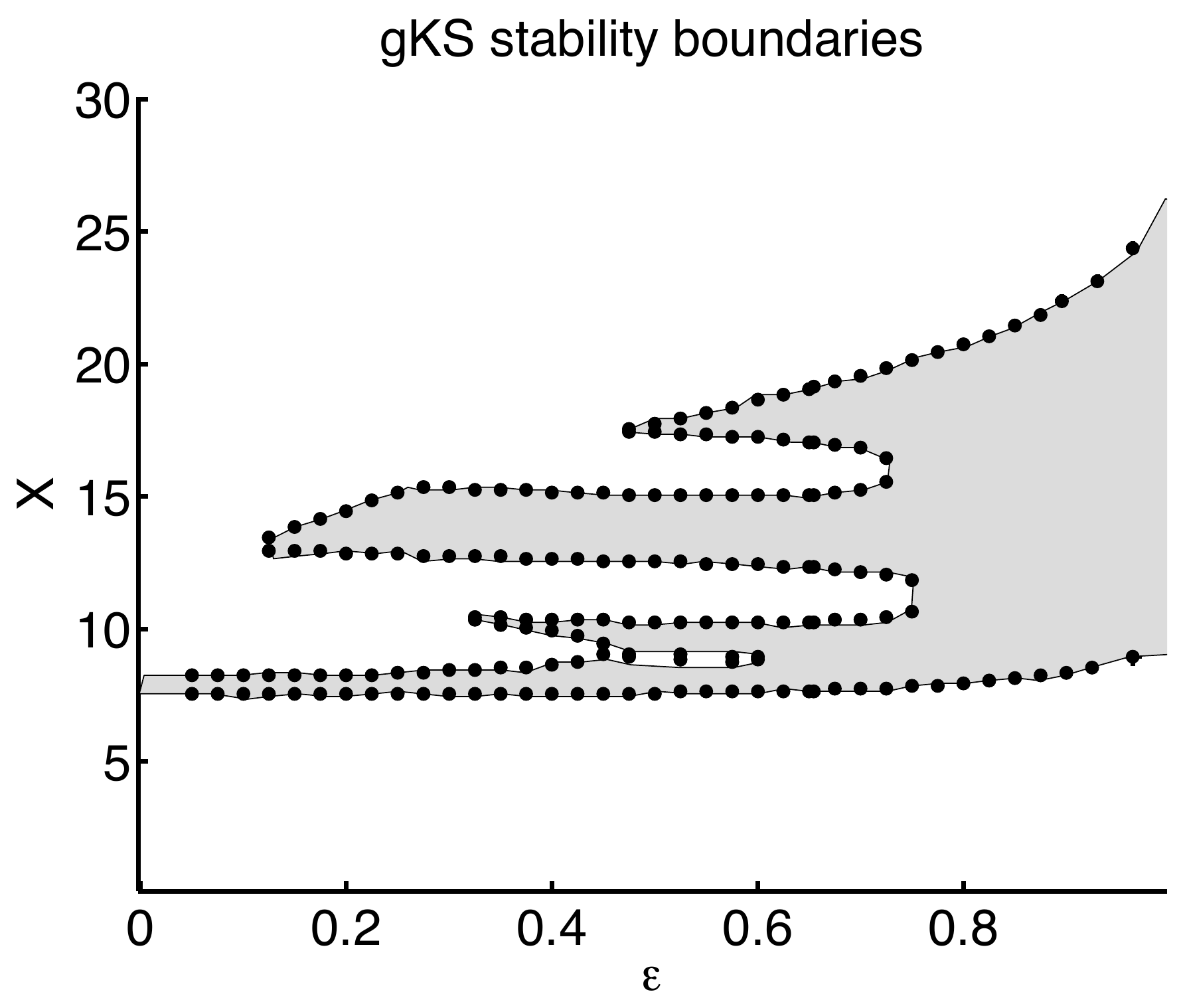}
\caption{Plot of the stability boundaries (in the period $X$) versus the parameter $\eps$. 
Here, $\delta=\sqrt{1-\eps^2}$ is fixed by
the choice of $\eps$ and the shaded regions correspond to spectrally stable periodic traveling waves.
In the limits $\eps\to 0$ and $\eps\to 1$, we see the existence of only a single
band of spectrally stable periodic traveling waves.  For intermediate values, however, several bands emerge
and the stability picture becomes much more complicated.
}\label{f:gKSstabislands}  
\end{center}
\end{figure}

At first sight, one may attempt to deduce from the previous computations $\gamma=1, \eps=0, 0.2$ or $\gamma=\delta\to 0, \eps=1$ that, generically, for a fixed set of parameters, there exists a band of spectrally stable periodic traveling waves. In fact, the situation is slightly more complicated and we carry out a spectral stability analysis for the full set of parameters $\eps^2+\delta^2=1$ and $\gamma=\delta$. Up to a time and space rescaling, this analysis includes all the particular cases treated previously: see Figure \ref{f:gKSstabislands}.  
This picture confirms that both for $\eps\approx 0$ or $\delta\approx 0$ 
there is a single band of spectrally stable periodic waves. However for 
intermediate values of $\eps$ and $\delta$, we clearly see that there a 
several bands of spectrally stable periodic waves and bounded bands of 
unstable periodic waves: this may be, among other things, connected to 
the various bifurcations of periodic waves occurring in this intermediate regime.  Moreover, throughout this intermediate regime
the nature of the transition to instability can be more complicated
than previously seen.  For example, it may happen that one diffusion coefficient $\beta_j$ becomes positive 
while the other stays strictly negative; see Figure \ref{f:SHeigpic2}.  This stands in contrast to the
numerics in Tables \ref{stabstudy:0eps} and \ref{stabstudy:0.2eps}.

\begin{figure}[htbp]
\begin{center}
(a) \includegraphics[scale=.35]{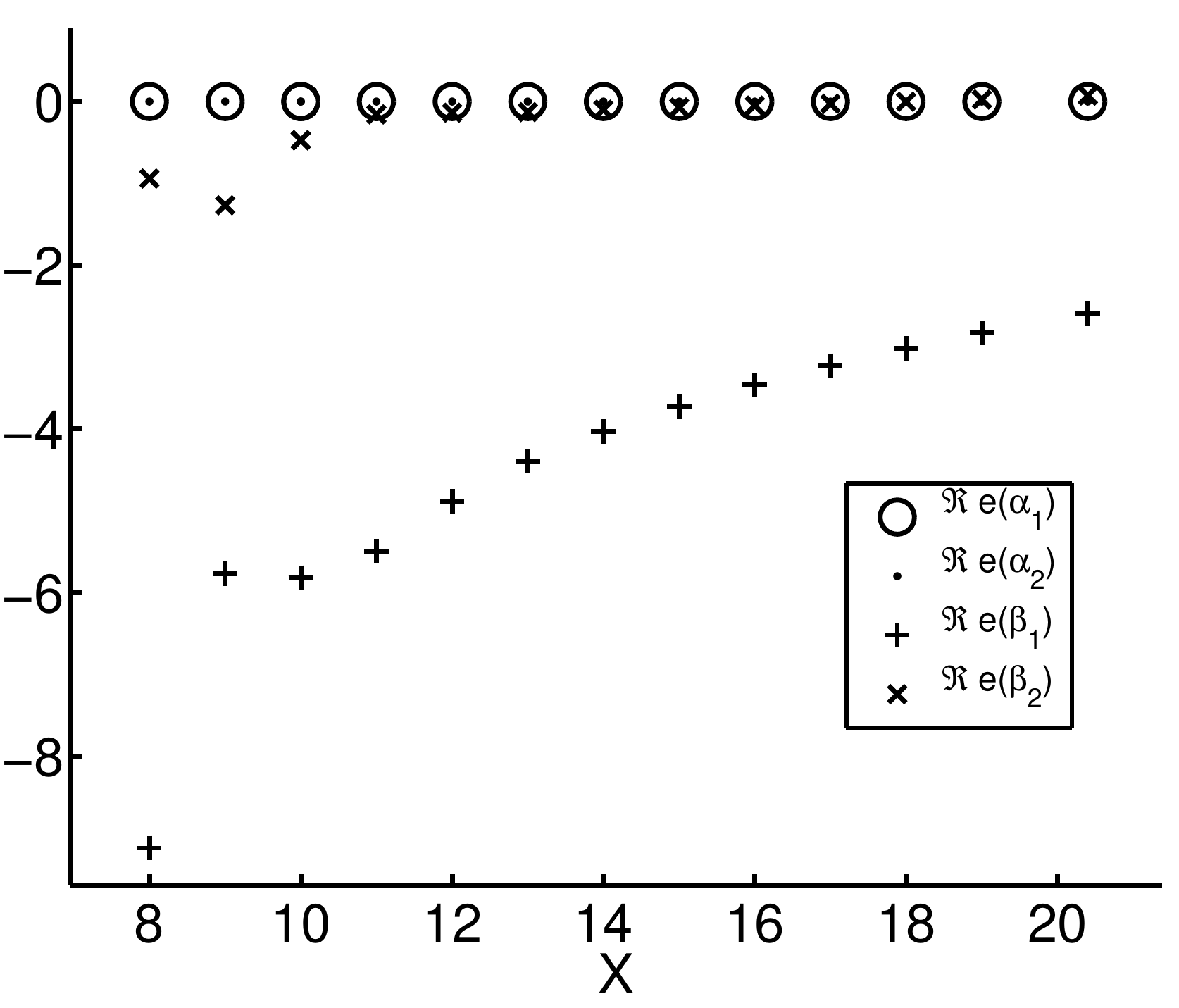}\quad(b) \includegraphics[scale=.35]{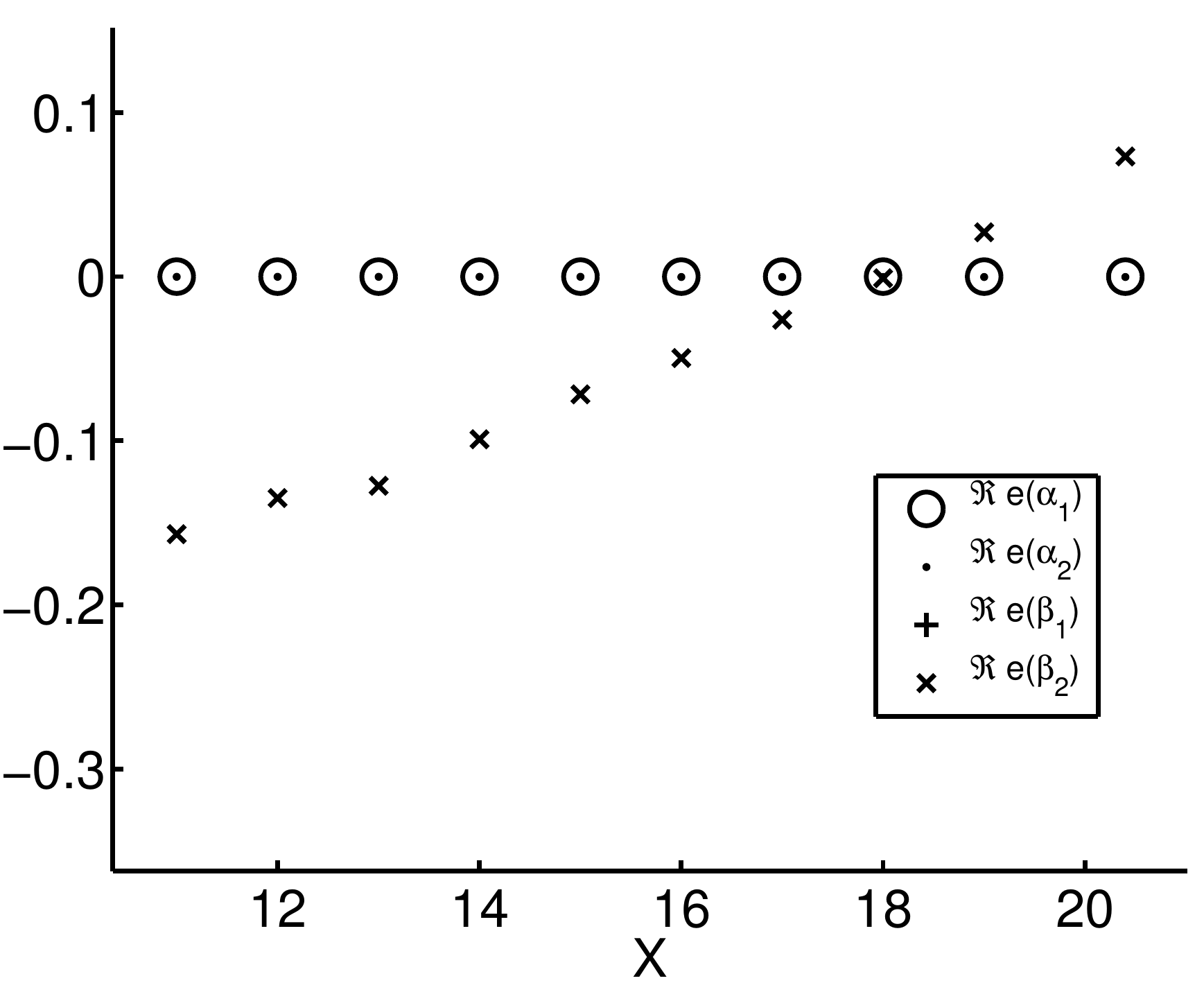}
\caption{Plot of the real part of $\alpha_j$ and $\beta_j$ from the spectral curves against the period $X$ for gKS when $\varepsilon = 0.8$ and $\delta = 0.6$. Figure (b) is a zoom of figure (a). 
}\label{f:SHeigpic2}  
\end{center}
\end{figure}

\medskip
{\bf Comparison with previous results.}
The results of our numerics agree very well with previous results of
\cite{BaN,CDK} obtained by completely different means.
In particular, our estimates for the limiting stability boundaries
$X_L$ and $X_U$ in the KdV limit are quite close
to predictions obtained in \cite{BaN} by formal singular perturbation
analysis.
Likewise, the global stability boundaries displayed in
Figure \ref{f:gKSstabislands} agree quite well with
numerical results displayed in
Figure 12 p. 316 of \cite{CDK}.
The methods used in \cite{CDK} are again Galerkin-type as described in Remark
\ref{numrmk}, quite different from the Evans function methods used here,
so give a useful independent check.

To connect our results to those of \cite{CDK}, we give
a brief lexicon between our paper and that of Chang et al.
The figure 12 in \cite{CDK} involves $\alpha$, $\delta$,
whereas our Figure is in terms of parameters $X$ and $\epsilon$.
One has period $X=2*\pi/\alpha$ and
coefficient $\epsilon=\delta/\sqrt(1+\delta^2)$.
On the right hand side of Figure 12 \cite{CDK} ($\delta=8/\epsilon=0.99$), 
corresponding to the KdV limit, one finds stability region
$\alpha\in(0.24,0.74)$ which (after translation to our coordinates)
agrees with the bounds found in this paper and in \cite{BaN}. 
The first island of instability (starting from $\epsilon=1$) is found at
$\delta=1.1$ which corresponds to $\epsilon=0.74$.
We note that $\delta=1.1$ in \cite{CDK} is
found to represent also the transition where new branches
of periodic wave trains are found as $\delta$ decreases below $1.1$.
There is also a nice discussion on the nature of the instabilities at this
and other singularities of the stability boundary curves (p.316 \cite{CDK}
as well),
which we recommend to the interested reader.

\subsection{The Whitham system and time evolution studies} \label{num:whitham}

In the previous section, we demonstrated numerically the existence of spectrally stable periodic traveling
wave solutions of \eqref{e:cks1} in various circumstances, both in the classically studied case $\eps=0$ (Kuramoto-Sivashinsky equation) and in the case $\eps\neq 0$ arising from general thin film flows.  In each scenario considered, we found that all constant solutions are spectrally
unstable and that, for a fixed period, an interval of the integration constant $q$ (alternatively, of $\delta$) corresponding to spectral stability; see Figure \ref{f:orbits}. 
%TODO(see Figure \ref{f:orbits2}??): which picture here?. P.N.  

%for a fixed model, there exists a small interval of wave numbers corresponding to spectrally
%stable periodic traveling waves.  
%NOTE: We need numerical evidence here. P.N.
%Similarly, we found for a fixed period an interval of the %integration constant
%$q$ (alternatively, of $\delta$) corresponding to spectral stability; see Figure \ref{f:orbits} (see Figure \ref{f:orbits2}??).  
%NOTE: actually, the second assertion is correct: for a fixed period we found a range a parameter (q,delta(q)) with spectral stability; but for the range of period? (first sentence): one has to make a rescaling here to recover this result from our analysis (to be detailed here). This is not clear at all here. P.N.
% NOTE2: At least, one table which list a test case with a FIXED set of PARAMETERS and varying period and stability. P.N.
The goal of this final subsection
is to shed light 
%CHANGED, std. wording:
%into 
on
%ENDCHANGED
the dynamics of the underlying periodic traveling wave when subject to a small
integrable perturbation: in particular, we wish to connect the observed (numerical) behavior
of solutions to the theoretical predictions given by Theorem \ref{main}.  As we will see, the long-time behavior of
such solutions to low-frequency perturbations can be well-approximated by a formal second-order Whitham modulation
equation, with the dynamics intimately related to the properties of these derived amplitude equations.

%CHANGED- not what we want, obsolete passage I think-KZ
%For definiteness and clarity, unless otherwise stated
%we restrict ourselves throughout this section to the special case when $\eps=0$ in \eqref{e:cks1}.
We consider the behavior of a fixed periodic traveling wave solution $\bar{u}$ of \eqref{e:cks1},
corresponding, say, to $\alpha=0$ and 
%CHANGED-MR: \bar
$\beta=\bar\beta$ 
in \eqref{e:manifold}, when subject to perturbations with characteristic wavenumber $|\nu|\ll 1$, on space and time scales
$(X,T)=(x/\nu,t/\nu)$.  Using a nonlinear optics (WKB) approximation
\be\label{e:wkb}
u(x,t)=u^0\left(X,T,\frac{\psi(X,T)}{\nu}\right)+\nu u^1\left(X,T,\frac{\psi(X,T)}{\nu}\right)+\mathcal{O}(\nu^2)
\ee
where 
%CHANGED-MR: plural
%$y\mapsto u^0(X,T,y)$ is an unknown 1-periodic function 
$y\mapsto u^j(X,T,y)$ are unknown 1-periodic functions, 
to find approximate solutions 
%CHANGED: want more general-KZ
%of \eqref{e:cks1}, it follows
of \eqref{e:KS}, it follows
%ENDCHANGED
by substitution of the ansatz \eqref{e:wkb} into 
%CHANGED-MR
%of 
%
\eqref{e:KS}
%\eqref{e:cks1} 
%
in the re-scaled 
%CHANGED-MR: ()
$(X,T)$-coordinates 
and collecting terms
of leading order that we can take
\[
u^0(X,T,y)=U\left(y-\alpha(X,T)-c(\beta(X,T))t;\beta(X,T)\right),
\]
where $\alpha(X,0)=0$ and 
%CHANGED-MR: \bar
$\beta(X,0)=\bar\beta$.
%
%CHANGED-MR: added
For simplicity, we specialize now the discussion to the case where $\beta=(k,c)$, with $k$ the wave number and $c$ the wave speed.
%

%CHANGED:
As described in a
%\cite{FST}, 
more general setting in \cite{Se},
continuing the above calculation to higher 
%TODO: why are we citing FST when we did this very cleanly in our announcement?
%-KZ (at least cite announcement too...)
orders,
% and using the Galilean invariance and odd symmetry of the governing equation, 
integrating over one period with respect to the fast variable $y$ and noting
by periodicity that integrals of perfect derivatives vanish, 
we find at first order in $\nu$ the 
%following 
modulation system
%CHANGED-MR: omega->k
\ba\label{e:whitham}
M(k,c)_t+ F(k,c)_x&=0,\\
k_t + (ck)_x&=0
\ea
where $k=\psi_X$ is the local frequency, $c(X,T)=-\psi_T(X,T)/\psi_X(X,T)$ denotes the wave speed,
$M(k,c)$ denotes the mean of $u^0$ over
one period, and $F(k,c)$ is the mean of $f(u^0)$.
Here, the second equation represents simply equality of mixed partial
derivatives of $\psi$.
%ENDCHANGED

%CHANGED-MR: replaced, not consistent + omega->k
%In the more specific setting of \eqref{e:cks1}, 
%using the Galilean invariance and odd symmetry of the governing equation, 
%we may reduce \eqref{e:whitham} to
%\be\label{e:whitham2}
%\begin{aligned}
%c_T+\left(H(\omega)\right)_X&=0\\
%\omega_{T}+(c\omega)_X&=0,
%\end{aligned}
%\ee
%where
%$H(\omega)$ is the mean of $(u^0)^2/2$,
%
%In the context of \eqref{e:KS}, thanks to the Galillean
%invariance $x\to x-ct$, $u\to u+c$, \eqref{e:whitham}
%reduces to
%\ba\label{ksw}
%c_t+ (H(\omega)-m(\omega)c)_x&=0,\\
%\omega_t + (c\omega)_x&=0,
%\ea
%where $m(\omega)$ denotes the mean over one period of $u$
%for a zero-speed wave of frequency $\omega$, and
%$H(\omega)$ the mean of $u^2/2$, and in the classical situation
%$\eps=0$ considered in \cite{FST} (using the resulting odd symmetry
%, to
%$c_t+ (H(\omega))_x=0$,
%$\omega_t + (c\omega)_x=0$.
In the more specific setting of \eqref{e:cks1}, $F(k,c)$ is the mean of $(u^0)^2/2$, and 
using the Galilean invariance $x\to x-ct$, $u\to u+c$, we may reduce \eqref{e:whitham} to
\be\label{e:whitham2}
\begin{aligned}
(m(k)+c)_T+\left(H(k)-m(k)c\right)_X&=0\\
k_{T}+(ck)_X&=0,
\end{aligned}
\ee
where $H(k)=F(k,0)$ and $m(k)$ denotes the mean over one period for a zero-speed wave of frequency $k$. In the classical situation, considered in \cite{FST}, where moreover $\eps=0$, symmetry of the governing equation ensures $m(k)=0$ and \eqref{e:whitham}
reduces to
\ba\label{ksw}
c_t+ (H(k))_x&=0,\\
k_t + (ck)_x&=0.
\ea
%ENDCHANGED-MR

Linearizing 
%CHANGED, referring only to eps=0 here, right?-KZ
%\eqref{e:whitham2} 
the latter 
%ENDCHANGE
%CHANGED-MR: omega,psi->k+...
%about the background wave $\bar{u}=U(\cdot;\beta^*)$, considered here as a constant in $X$ and $T$ and corresponding in the parameter space to $c=0$ and $\omega=\omega^*$, 
about the constant $(\bar k,0)$, corresponding to a background wave $\bar{u}=U(\cdot;(\bar k,0))$,
yields the linear \emph{scalar} wave equation
\be\label{e:wave}
%\psi_{TT}-\omega^* dH(\omega^*)~\psi_{XX}=0
k_{TT}\ -\ \bar k H'(\bar k)\ k_{XX}=0
\ee
%%TODO: Question: isn't it the wave equation for \omega instead? then it is integrated once??? 
%% What about the heuristic discussion then???? P.N.
%in the local wave number $\psi$.
in the local wave number $k$.
In particular, the critical spectral curves $\lambda_j(\xi)$ bifurcating from the origin at the $\xi=0$ state
agree to first-order with the dispersion relation of \eqref{e:wave}.  Clearly then, hyperbolicity
of \eqref{e:wave} is a necessary requirement for the spectral stability of the underlying periodic traveling wave
$\bar{u}$.

Continuing to consider the classic case $\eps=0$
%CHANGED-MR: c=0+too much consider
%, we may consider, 
and a zero-speed wave, we may introduce 
similarly as in \cite{NR2}, higher order corrections to the WKB approximation \eqref{e:wkb}
to find that the critical spectral curves $\lambda_j(\xi)$ actually
agree to \emph{second} order with the dispersion relation of the viscoelastic wave 
%CHANGED-MR: added
equation\footnote{The absence of terms like $k_{XXX}$ originates in the symmetry of the governing equation under $(x,c)->(-x,-c)$.}
%
%CHANGED-MR: omega->k
\be\label{e:viscowave}
k_{TT}\ -\ \bar k H'(\bar k)\ k_{XX}=d(\bar k)~k_{TXX}
\ee
for some $d(\bar k)$ depending only on the underlying wave $U(.;\bar \beta)$. 
%ENDCHANGED
As a result, we find that the diffusive spectral stability conditions 
%CHANGED-MR
%$(D1)-(D3)$
(D1)--(D3) 
are equivalent to 
%CHANGED-MR
$\bar k H'(\bar k)>0$, corresponding to hyperbolicity of the 
linearized first-order Whitham averaged system \eqref{e:wave}, and $\Re(d(\bar k))<0$, 
%ENDCHANGED
corresponding to diffusivity of the second-order linearized Whitham
averaged system \eqref{e:viscowave}. Thus, the periodic traveling waves below the lower stability boundary ($\delta\approx 1.411$)
in Table \ref{stabstudy:0eps} correspond to a loss of hyperbolicity in the first-order Whitham system, while those
above the upper stability boundary ($\delta\approx 1.701$) correspond to a ``backward damping" effect corresponding
to the amplification of the local wave-number 
%CHANGED-MR: k
%$\psi_X(X,T)$ 
$k(X,T)$ 
on time-scales of order $T=t/\nu$. These observations are discussed in more detail in \cite{FST} 
%CHANGED-MR: added
and generalizations to cases $\eps\neq0$ (and $\bar c$ arbitrary) may be found in \cite{NR2}.

Equation  \eqref{e:viscowave}
recovers the formal prediction of ``viscoelastic behavior'' of
modulated waves carried out in \cite{FST} and elsewhere, or
``bouncing'' behavior of individual periodic cells. 
Put more concretely, \eqref{e:viscowave} predicts that the
maximum of a perturbed periodic solution should behave approximately
like point masses connected by viscoelastic springs.
However, we emphasize that {\it such qualitative behavior}- in particular,
the fact that the modulation equation is of second order-  
{\it does not derive only
from Galilean or other invariance of the underlying equations},
as might be suggested by early literature on the subject, 
{\it but rather from the more general structure of conservative} (i.e.,
divergence) {\it form} 
\cite{Se,JZ1}.\footnote{As discussed further in \cite{Z3},
conservation of mass lies outside the usual Noetherian formulation.}
Indeed, for any choice of $f$, $\lambda_j(\xi)$ may be seen to
agree to second order with the dispersion relation for an appropriate
diffusive correction of \eqref{e:whitham}, a generalized
viscoelastic wave equation.  See \cite{NR1,NR2} for further discussion
of 
%higher order...
Whitham averaged equations and their derivation.

\begin{figure}[htbp]
%CHANGED-MJ NEW changed figures
\begin{center}
(a)\includegraphics[scale=.3]{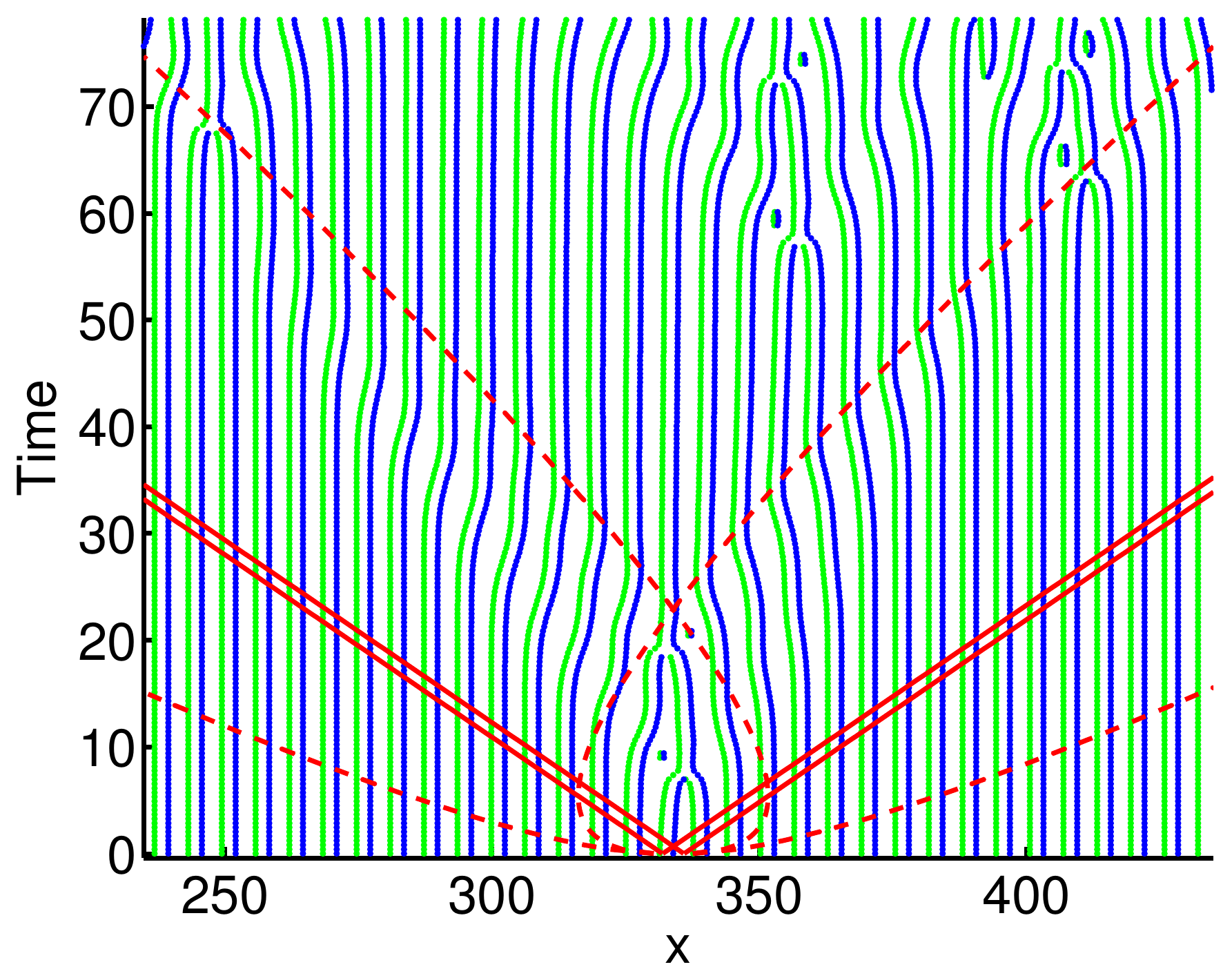}\quad(b)\includegraphics[scale=.3]{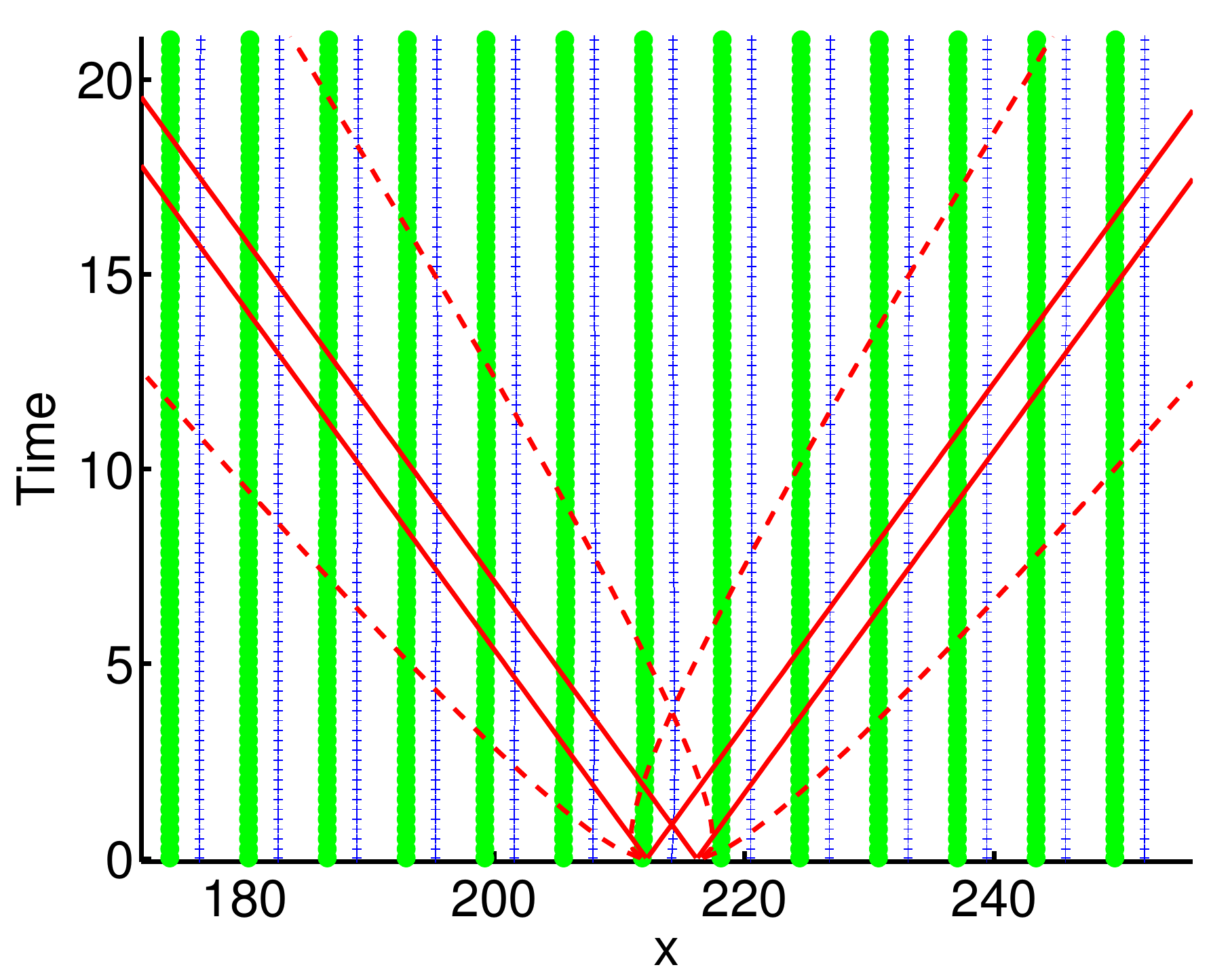}
      \quad(c)\includegraphics[scale=.3]{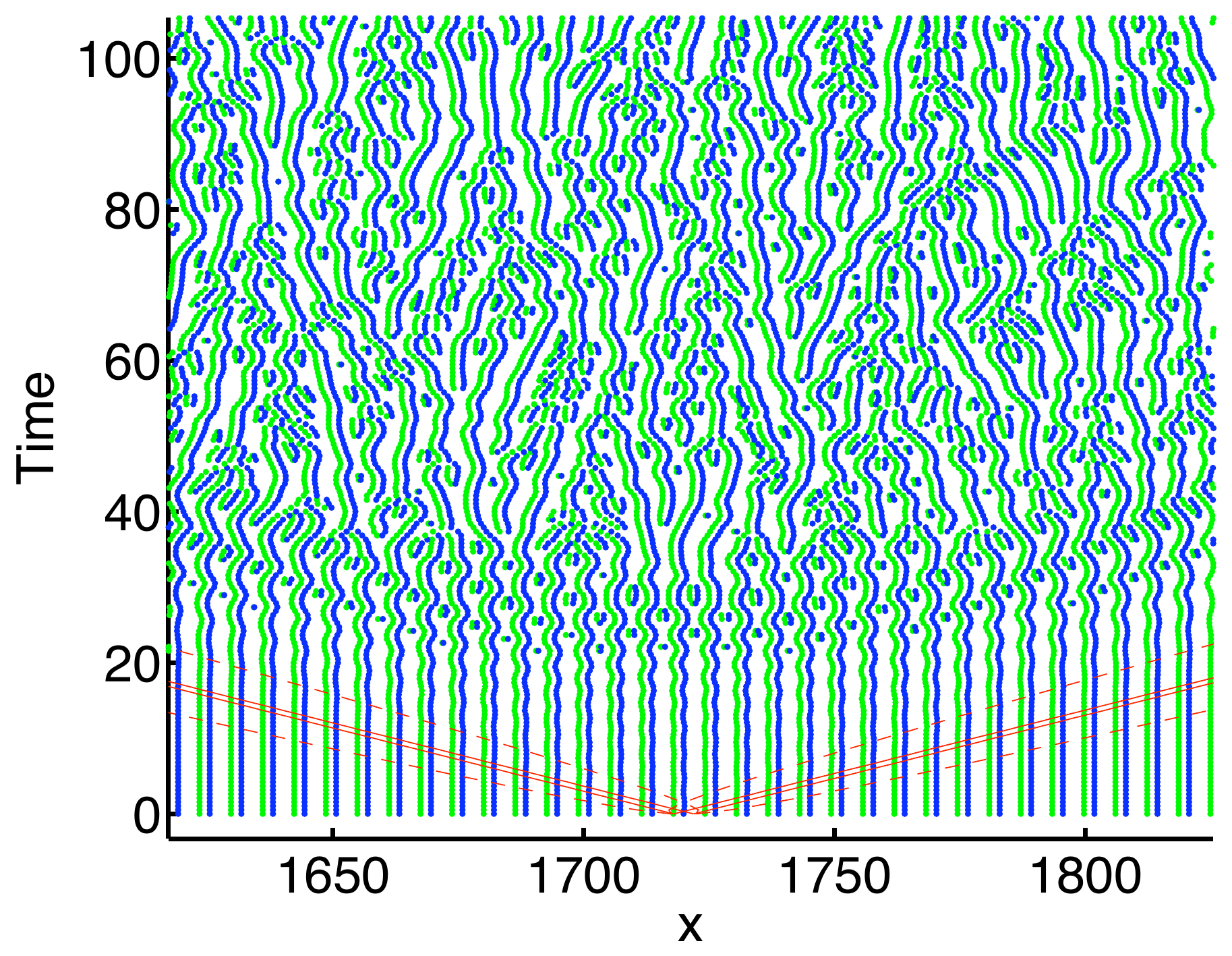}\quad(d)\includegraphics[scale=.3]{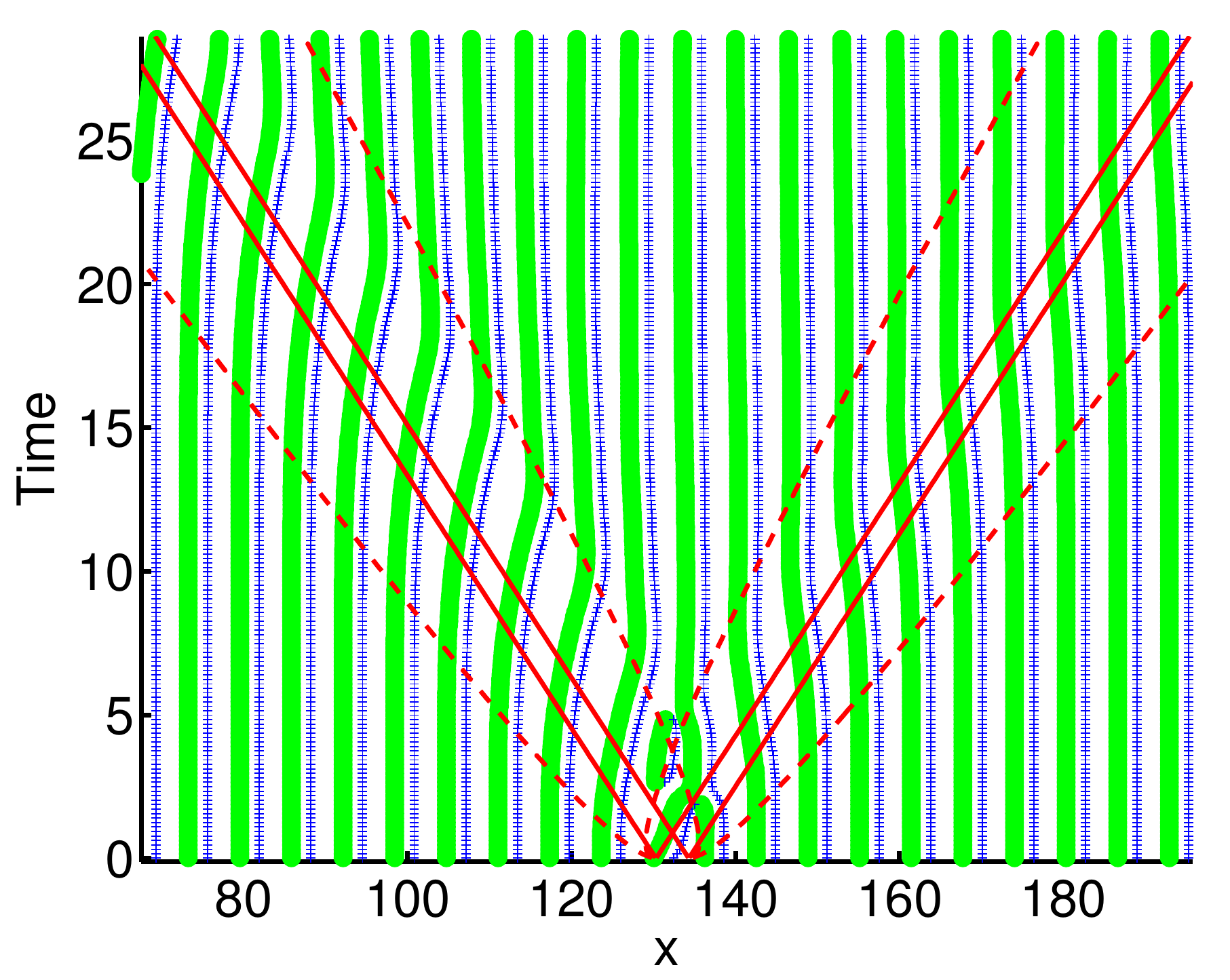}
\end{center}
%\begin{center}
%(a)\includegraphics[scale=.3]{pix/kstwofig156}\quad(b)\includegraphics[scale=.3]{pix/kstwofig128}
%      \quad(c)\includegraphics[scale=.3]{pix/kstwofig134.pdf}
%\end{center}
\caption{Here, we present the results of four time-evolution studies for the case when $\eps=0$.
In Figures (a)-(c), we start with a small ``Gaussian" type perturbation
of the underlying 6.3-periodic wave (taken to be stationary by change of coordinates) and evolve the perturbation over time, with the vertical
lines corresponding to the ``peaks" and ``troughs" between the waves.  The wave-train in (a) corresponds to a wave below the lower stability
boundary 
%CHANGED-MJ NEW
%($q=4$),
($q=3$),
%ENDCHANGED
while the wave-train in (c) corresponds to a wave above the upper stability boundary ($q=10$).  The wave-train in (b) is spectrally
stable and corresponds to $q=5.5$.
% TODO (do we want this explanation, and is it correct?) 
The \qut{peaks} are plotted with thick green lines and the \qut{troughs} are plotted with thin blue lines. 
In (a)-(d), the solid and dotted red lines indicate respectively the first order and second order approximations of bounds on the characteristics.
They originate from a region enclosing three standard deviations from the mean of the perturbation.  Finally, Figure (d) is the same
as Figure (b) except for the initial perturbation is multiplied by a factor of 10.  It is interesting to note that
even when we subject the $q=5.5$ wave to a large perturbation, lying well outside our stability theory, 
we observe a similar time asymptotic stability with analogous phase description as we developed for weak perturbations.
}\label{f:tevol}
\end{figure}

%TODO: QUESTION: why (b) has different range for (x,t)? P.N.
                          %for (c): isn't it necessary to carry out a computation on a larger time interval: the one chosen corresponds to (a) which is an hyperbolic time scale. In order to see the role of bad diffusion we should at least do computations on time interval similar to (b).... 

%CHANGED-MJ NEW took out since we changed figure 7(a).... so this is no longer relevant.
%\begin{figure}[htbp]\label{f:tevolq=4}
%\begin{center}
%(a)\includegraphics[scale=.25]{pix/kstwofig163.pdf}\quad(b)\includegraphics[scale=.25]{pix/kstwofig164}\\
%(c)\includegraphics[scale=.25]{pix/kstwofig162.pdf}
%\end{center}
%\caption{Here, we continue the time evolution study from Figure \ref{f:tevol}(a) (corresponding to $\eps=0$, $X=6.3$, and $q=4$).
%In particular, we highlight various aspects of the instability by zooming in on the evolution over different time intervals.
%Notice, due to lack of hyperbolicity of the associated first order Whitham equation \eqref{e:wave} the time scale
%on which the modulational instability (corresponding to Bloch frequencies $|\xi|\ll 1$) 
%is observed is expected to be 
%%CHANGED (was nonsense)-KZ:
%%$\mathcal{O}(|\xi|^{-1})$.}
%$\sim \eta^{-1}$, where $\eta$ is the difference between $X$ and the
%stability boundary $X_*$; see Section \ref{s:behavior}.
%%ENDCHANGED
%}
%\end{figure}

We now  wish to connect the formal predictions of the above WKB analysis to the numerically observed time-evolution
of a perturbed periodic traveling wave solution of \eqref{e:cks1}.  Keeping with the above theme, we fix $\eps=0$ throughout
and consider waves with period $X=6.3$ which, after an appropriate change of coordinates, have been taken to be initially stationary.
In Figure \ref{f:tevol} we have fixed three periodic traveling wave solutions of \eqref{e:cks1}.  The wave in Figure \ref{f:tevol}(a)
corresponds to a periodic traveling wave below the lower stability boundary so that the associated linearized Whitham averaged system
formally describing long-time behavior fails to be locally well-posed (hyperbolic).  The resulting instability seems
considerably more drastic than that observed for the wave in Figure \ref{f:tevol}(c), corresponding to a periodic traveling
wave above the upper stability boundary.  In this latter case, the first-order linearized Whitham system is locally well-posed
but second-order diffusion coefficient, i.e. the coefficients $\beta_j$ in \eqref{e:lexp}, have real part with positive sign.
This results in a type of ``backward diffusion" where the amplitude of the local-wavenumber $\psi$ grows with time,
resulting in the forced visco-elastic behavior between the individual peaks and valleys in Figure \ref{f:tevol}(c).
For more details on the instabilities 
%CHANGED-MJ NEW changed to reflect deletion of Figure 8.
%in Figure \ref{f:tevol}(a) and Figure \ref{f:tevol}(c), see Figures \ref{f:tevolq=4} and \ref{f:tevolq=10}, respectively.
in Figure \ref{f:tevol}(c), see Figure \ref{f:tevolq10}.

\begin{figure}[htbp]
\begin{center}
(a)\includegraphics[scale=.25]{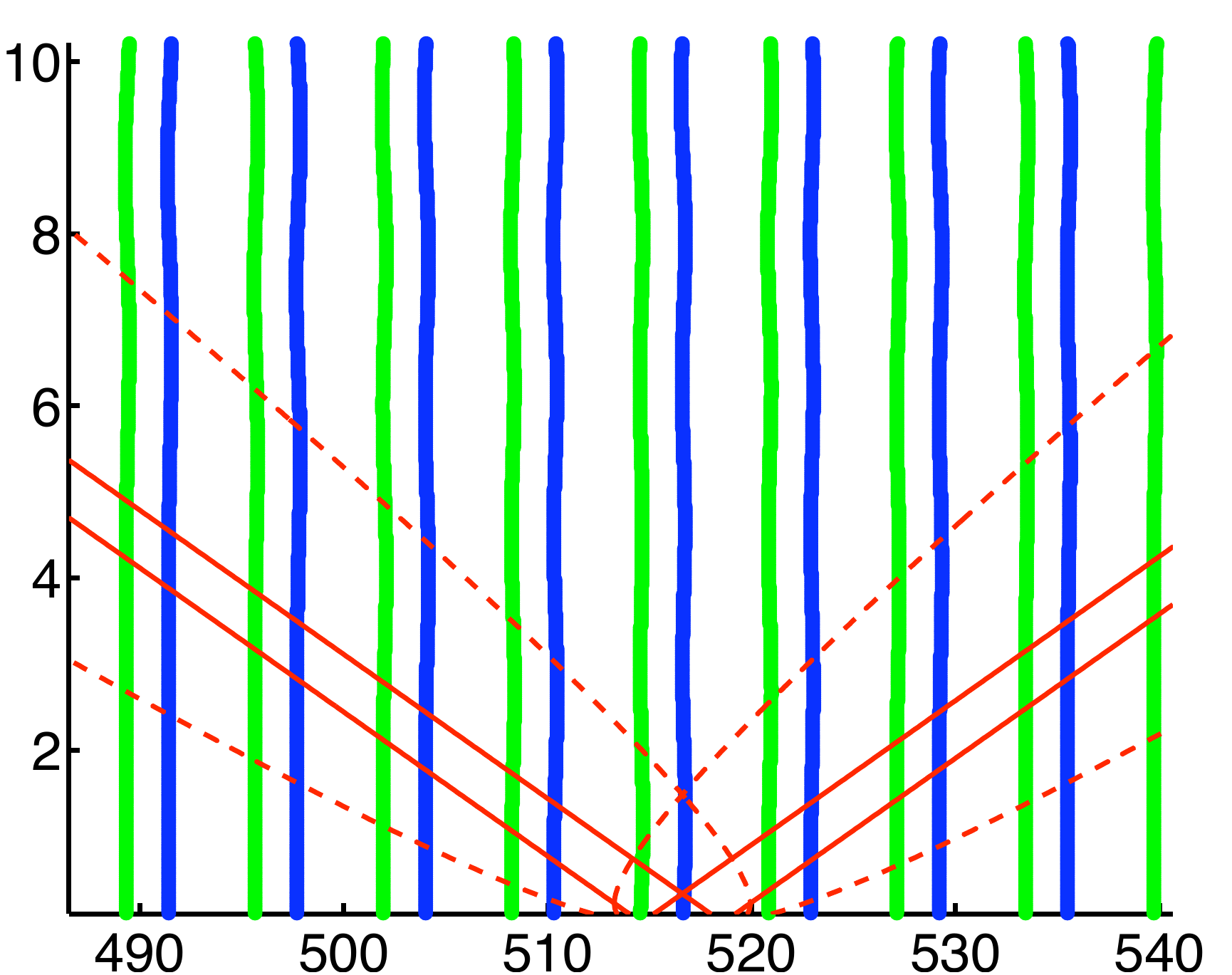}\quad(b)\includegraphics[scale=.25]{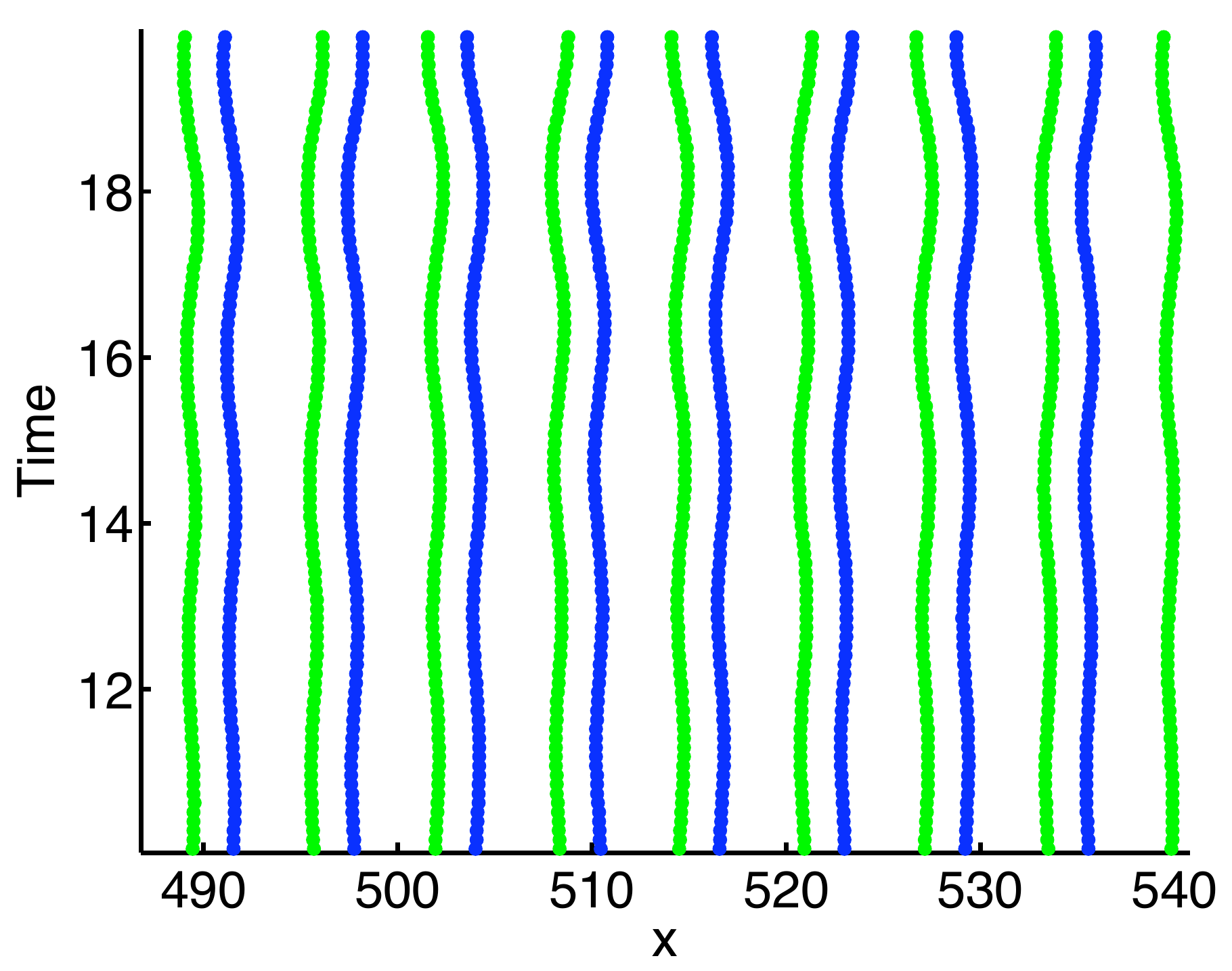}\\
(c)\includegraphics[scale=.25]{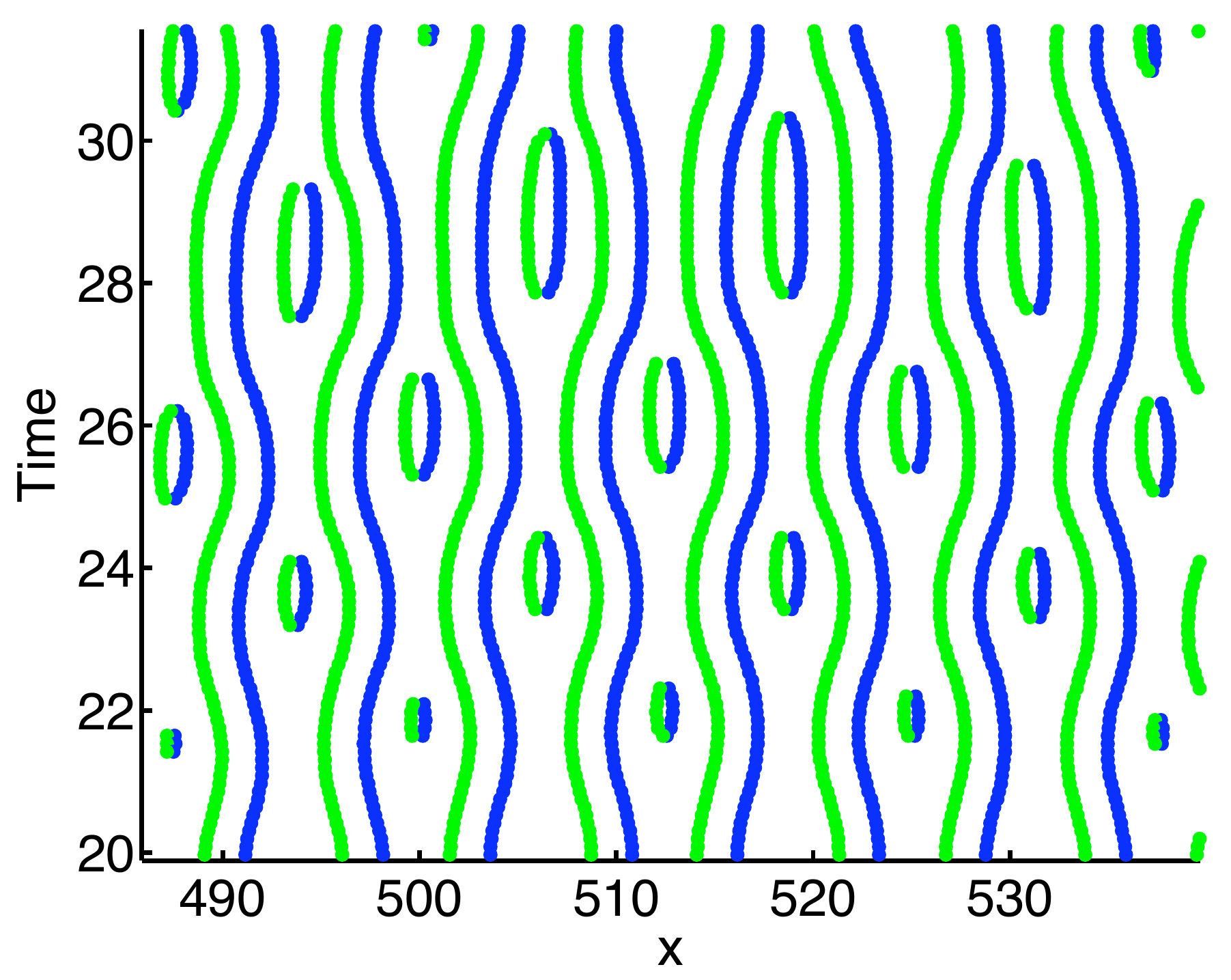}\quad (d)\includegraphics[scale=.25]{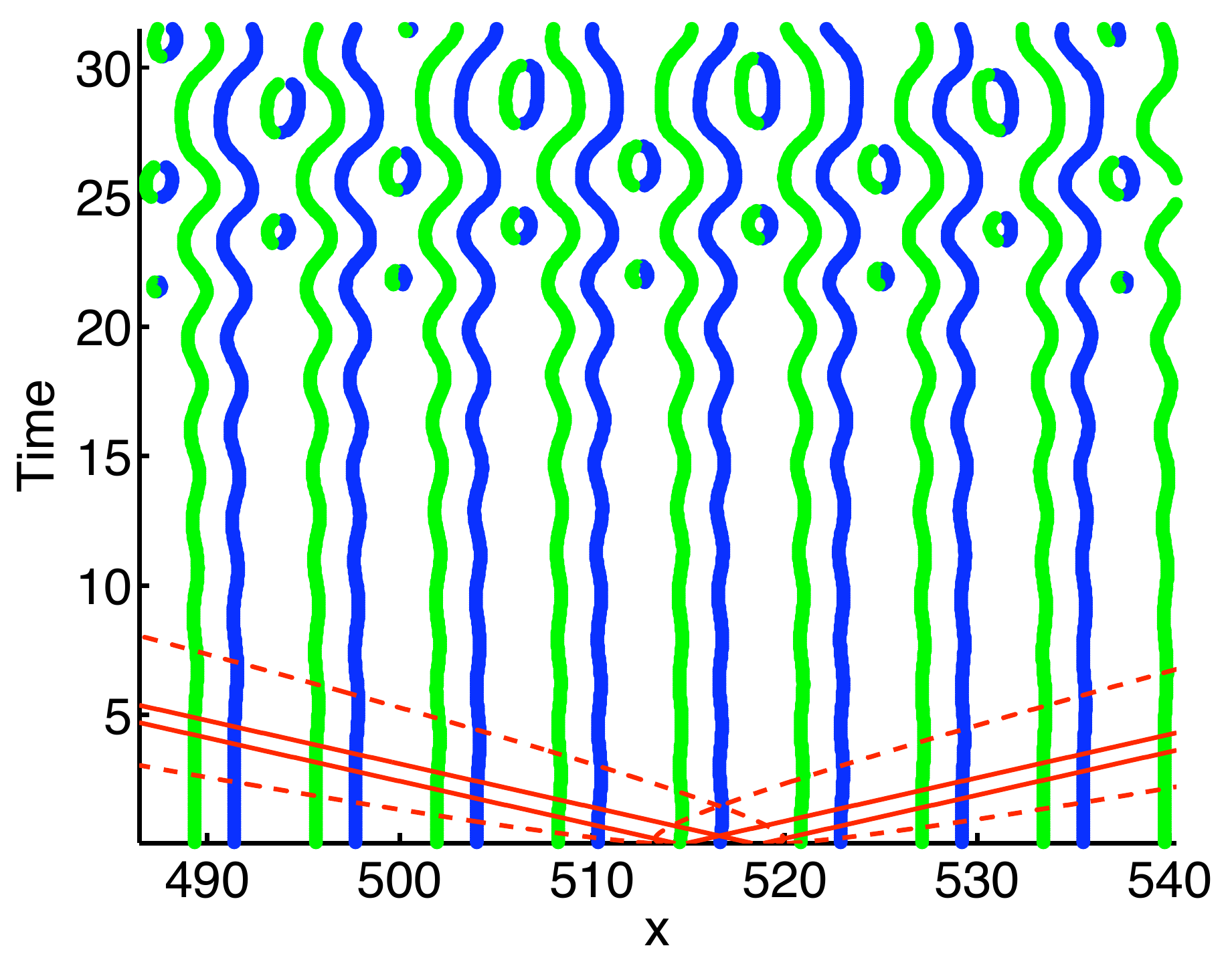}\\
(e)\includegraphics[scale=.25]{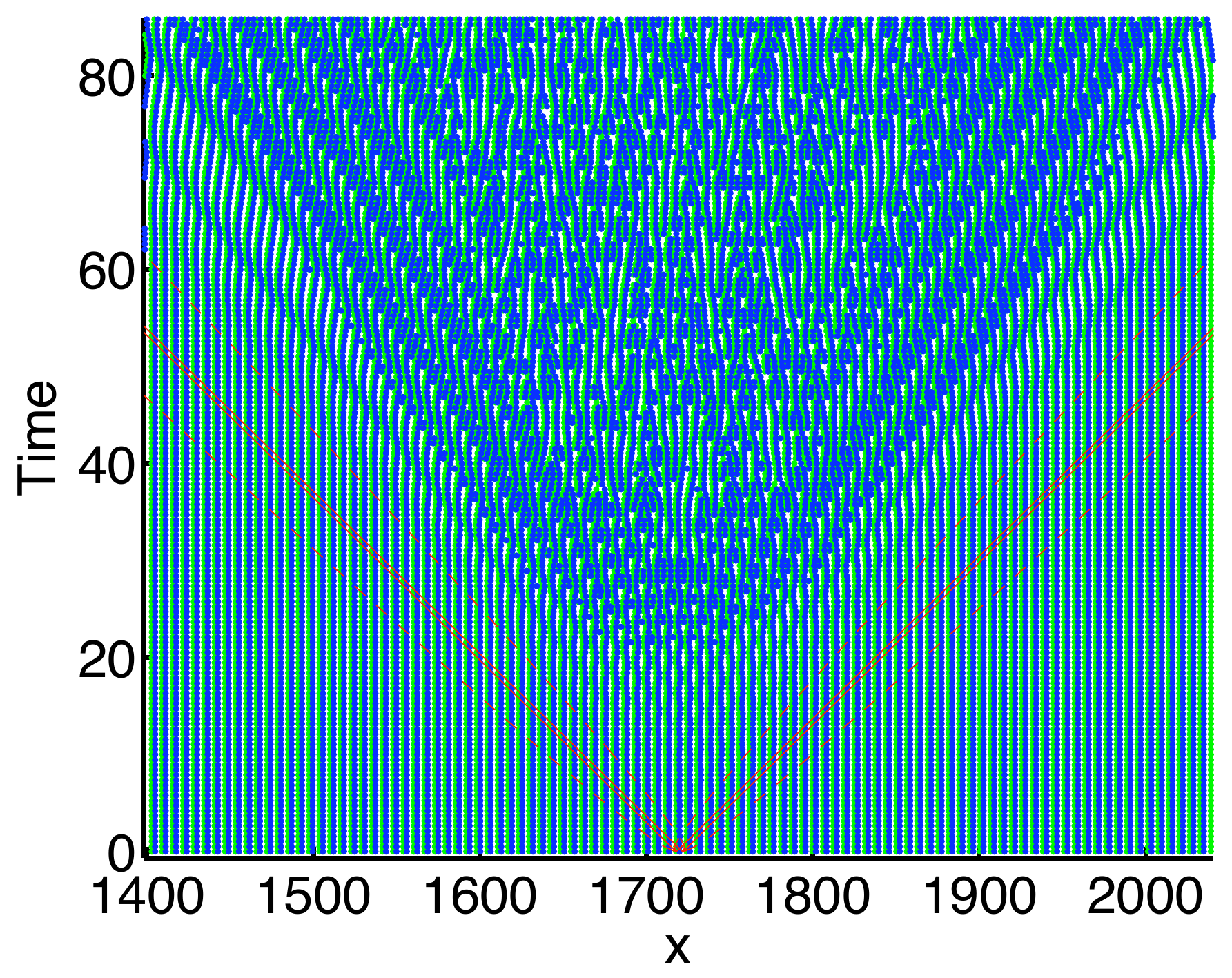}\\
\end{center}
\caption{Here, we continue the time evolution study from Figure \ref{f:tevol}(c) (corresponding to $\eps=0$, $X=6.3$, and $q=10$).
In particular, we highlight various aspects of the instability by zooming in on the evolution over different time intervals.
Furthermore, due to the hyperbolicity of the first order Whitham equation \eqref{e:wave} in this case 
%CHANGED-MR
%we expect
%
the time scale on which the modulational instability (corresponding to Bloch frequencies $|\xi|\ll 1$) 
is observed is expected to be 
%CHANGED, same reason-KZ
%$\mathcal{O}(|\xi|^{-2})$.
$\sim \eta^{-2}$, where $\eta$ is the difference between $X$ and the
stability boundary $X_*$; see Appendix \ref{s:behavior}.
%ENDCHANGEC
}\label{f:tevolq10}
\end{figure}

Finally, the wave in Figure \ref{f:tevol}(b) is spectrally stable and the resulting time-evolution of the perturbed wave
at first sight appears 
%CHANGED-MR: be
to be of the form
\[
u(x,t)=u(x+\psi(x,t))
\]
where $\psi(\cdot,t)$ resembles the sum of well-separated Gaussian waves propagating in opposite directions.  More precisely,
it appears at first sight that our small initial perturbation simply divides its mass
into two traveling Gaussian packets which convect in opposite directions
and satisfies $\lim_{t\to\infty}u(x,t)=\bar{u}(x)$ for each $x\in\RM$.  In particular, this description would suggest that
\[
\|u(\cdot,t)-\bar{u}\|_{L^p(\RM)}\approx\|\psi(\cdot,t)\|_{L^p(\RM)}\lesssim (1+t)^{-\frac{1}{2}(1-1/p)},\quad p\geq 2.
\]
However, while this intuition seems reasonable from our numerical
experiments it is not correct: rather, the above ``convecting Gaussian" description applies to the \emph{wave number} $\psi_x$ \emph{and
not the phase} $\psi$.  As a result, $\psi$ should roughly be described by the \emph{integral} of Gaussian packets convecting in
opposite directions, i.e. a small amplitude compactly supported (for each $t>0$) sum of
error functions of algebraically growing mass with $\lim_{t\to\infty}\psi(x,t)\neq 0$.
In particular, we expect then that
\[
\|u(\cdot,t)-\bar{u}(\cdot-\psi(\cdot,t))\|_{L^p(\RM)}\approx \|\psi_x(\cdot,t)\|_{L^p(\RM)}\lesssim (1+t)^{-\frac{1}{2}(1-1/p)}
\]
for all $p\geq 2$, and that
\[
\|u(\cdot,t)-\bar{u}\|_{L^p(\RM)}\approx\|\psi(\cdot,t)\|_{L^p(\RM)}\lesssim 1.
\]
It is this observation that small localized perturbations of an underlying periodic traveling wave solution of \eqref{e:cks1} behave time-asymptotically as localized shifts of the original wave that drives our stability analysis in the next section, even in the general case when $\eps\neq 0$; see
Proposition \ref{GIdecay1}, Corollary \ref{e:Gbdsfinal}, and Lemma \ref{l:cancel1} below.

Finally, we remark that if one considers the more general case when $\eps\neq 0$ it follows by similar considerations that the critical
spectral curves $\lambda_j(\xi)$ agree to second-order with the dispersion relation of a second-order Whitham modulation
equation: see \cite{NR2} for details of this derivation.  However, the associated second order Whitham averaged system
consists of a coupled system which, while sharing many of the properties of that of
\eqref{e:whitham2}, is considerably more complicated to analyze directly.
Nevertheless, when considering for definiteness again the case $\eps=0.2$,
it is found that 6.3-periodic traveling wave solutions below the lower stability boundary ($\delta\approx 1.43$)
in Table \ref{stabstudy:0.2eps} correspond to a loss of hyperbolicity of the first-order Whitham system \eqref{e:whitham2}
while  those corresponding to waves above the upper stability boundary ($\delta\approx 1.719$) correspond to a
``backward damping" effect as described above.

\br 
\textup{As noted previously, the modulation $\psi$ in the above discussion is not the same modulation 
presented in Theorem \ref{main}, although the two are very closely related; see Remark \ref{nat:coordinates}.}
\er 

\br
\textup{
For rigorous justification of the Whitham equations at the nonlinear
level, see \cite{JNRZ1}.
}
\er

\section[Proof of Theorem 1.1]{Proof of Theorem \ref{main}}\label{s:proof}
%TODO: check that labels match!!!

%CHANGED, reads better I think-KZ
%In this section, we prove Theorem \ref{main} and show 
In this section, we prove Theorem \ref{main}, showing 
%ENDCHANGED
that spectral stability of a given
periodic traveling wave solution of \eqref{e:gen} implies both linearized and nonlinear stability to
small localized perturbations. The proof
closely follows the analysis of \cite{JZ1}, concerning the analogous problem in the context of strictly parabolic
second-order conservation laws. The main difference lies in the fact that the linear
operator $L$ defined in \eqref{e:lin} is a fourth-order differential operator rather than second order. Nevertheless,
the principles of \cite{JZ1} readily extend to the present higher-order case with little modification.

\subsection{Spectral preparation}\label{prep}

%CHANGED-MR: added
Recall that in the 
%CHANGED-MJ-MARCH statement, not proof...
%proof 
statement 
%ENDCHANGED
of Theorem \ref{main}, we normalize $\bar X$ to 1.
%
%CHANGED-MR: no recall, this is the purpose of the next lemma
%Recall that our assumptions (H1)-(H2) and (D3) imply that the two zero-eigenvalues of the operator $L_0$ on $L^2_{\rm per}([0,\bar X])$ 
%bifurcate from the $(\xi,\lambda)=(0,0)$ state as $C^1$ functions in the Bloch frequency $\xi$. 
Our assumptions (H1)-(H2) and (D3) imply that the generalized kernel of the operator $L_0$, defined on $L^2_{\rm per}([0,1])$, is of dimension $2$.  
A crucial part of our analysis of the linearized solution operator $e^{Lt}$ relies on understanding how 
%CHANGED-MR: to match above
%the corresponding 
this 
zero-eigenspace bifurcates from this neutral state.
As expected, the existence of a Jordan block at $\lambda=0$ for the operator $L_0$, guaranteed by hypothesis (H4), greatly complicates
matters as compared to the degenerate case when (H4) fails; see \cite{JZ3}.
We thus begin our linearized stability analysis with a careful study of the Bloch perturbation expansion of these critical
eigenvalues and associated eigen-projections near $\xi=0$. 

\begin{lemma}\label{blochfacts}
Assuming (H1), (H2), 
%(H4) 
and (D3), there exist constants $\xi_0\in(0,\pi)$, $\varepsilon_0>0$ and two continuous curves, $j=1,2$, $\lambda_j: [-\xi_0,\xi_0]\to B(0,\varepsilon_0)$ such that, when $|\xi|\leq\xi_0$, 
\be\label{critical_spec}
\sigma(L_\xi)\cap B(0,\varepsilon_0)\ =\ \left\{\lambda_1(\xi),\lambda_2(\xi)\right\}.
\ee
Moreover these two critical curves are differentiable at $0$ and can be expanded as
\be\label{lin_group_velocity}
\displaystyle
\lambda_j(\xi)=\ -ia_j\xi\ +o(\xi),\quad j=1,2
\ee
as $\xi\to 0$.
Assuming also (H3), the curves $\lambda_j$ are analytic in a neighborhood of $\xi=0$.  
Thus, up to a possible change of $\xi_0$, there exist, for $0<|\xi|\leq\xi_0$, dual right and left eigenfunctions 
$\{q_j(\xi, \cdot)\}_{j=1,2}$ and $\{\tilde q_j(\xi, \cdot)\}_{j=1,2}$ of $L_\xi$ associated with $\lambda_j(\xi)$, of form
%CHANGED-MJ-MARCH typesetting
%$$
%\begin{array}{rclcl}
%q_j(\xi,\cdot)&=&(i\xi)^{-1}\beta_{j,1}(\xi)\,v_1(\xi,\cdot)&+&\beta_{j,2}(\xi)\,v_2(\xi,\cdot)\\[1em]
%\tilde q_j(\xi,\cdot)&=&\ i\xi\quad\tilde\beta_{j,1}(\xi)\,\tilde v_1(\xi,\cdot)&+&\tilde\beta_{j,2}(\xi)\,\tilde v_2(\xi,\cdot)
%\end{array}
%$$
\begin{align*}
q_j(\xi,\cdot)&=(i\xi)^{-1}\beta_{j,1}(\xi)\,v_1(\xi,\cdot)+\beta_{j,2}(\xi)\,v_2(\xi,\cdot)\\[1em]
\tilde q_j(\xi,\cdot)&= i\xi\tilde\beta_{j,1}(\xi)\,\tilde v_1(\xi,\cdot)+\tilde\beta_{j,2}(\xi)\,\tilde v_2(\xi,\cdot)
\end{align*}
for $j=1,2$, where
\begin{itemize}
\item for $j=1,2$, the functions $v_j:[-\xi_0,\xi_0]\to L^2_{\rm per}([0,1])$ and $\tilde v_j:[-\xi_0,\xi_0]\to L^2_{\rm per}([0,1])$ 
are analytic functions such that, when $|\xi|\leq\xi_0$,  $\{v_j(\xi,\cdot)\}_{j=1,2}$ and 
$\{\tilde v_j(\xi,\cdot)\}_{j=1,2}$ are dual bases of the total
eigenspace of $L_\xi$ associated with spectrum $\sigma(L_\xi)\cap B(0,\varepsilon_0)$, chosen to satisfy
$$
v_1(0,\cdot)=\bar u_x,\quad{\rm and }\quad\tilde v_2(0,\cdot)\equiv 1;
$$
\item for $j=1,2$ and $k=1,2$, the functions $\beta_{j,k}:\ [-\xi_0,\xi_0]\to\C$ and $\tilde\beta_{j,k}:\ [-\xi_0,\xi_0]\to\C$ are analytic.
\end{itemize}
Finally, assuming in addition that (D2) holds, the spectral curves $\lambda_j$ can be expanded as
\begin{equation}\label{e:surfaces2}
\lambda_j(\xi)=-ia_j\xi-b_j\xi^2+\mathcal{O}(|\xi|^3),\quad j=1,2
\end{equation}
for some $a_j\in\RM$ and $b_j>0$.
\end{lemma}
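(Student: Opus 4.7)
The plan is to reduce the near-origin spectral problem for the family $L_\xi$ to a $2\times 2$ matrix problem via Kato analytic perturbation theory, and then extract the announced expansions by exploiting the divergence form $L=\partial_x\mathcal N$, where $\mathcal N v=(c-a)v-v_{xxx}-\eps v_{xx}-\delta v_x$. Since $L_\xi$ has compact resolvent (from $H^4_{\rm per}\hookrightarrow L^2_{\rm per}$) and $\xi\mapsto L_\xi$ is an entire analytic family (polynomial in $i\xi$), assumption (D3) provides $\varepsilon_0,\xi_0>0$ such that the Riesz projection
\[
P(\xi)=-\frac{1}{2\pi i}\oint_{|\lambda|=\varepsilon_0}(L_\xi-\lambda)^{-1}d\lambda
\]
is analytic in $\xi\in[-\xi_0,\xi_0]$ with constant rank $2$. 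I would choose analytic biorthogonal bases $\{v_j(\xi)\},\{\tilde v_j(\xi)\}$ of $\mathrm{Range}(P(\xi))$ and $\mathrm{Range}(P(\xi)^*)$ normalized so that $v_1(0,\cdot)=\bar u_x$ and $\tilde v_2(0,\cdot)\equiv 1$: this is legitimate because $\bar u_x\in\ker L_0$ by translation invariance, which is nontrivial thanks to (H2), while $1\in\ker L_0^*$ by the conservative form of the equation.

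The critical spectrum then coincides with that of the analytic $2\times 2$ matrix $\mathcal L(\xi)=(\langle\tilde v_j(\xi),L_\xi v_k(\xi)\rangle)_{j,k}$. The divergence identity $\int_0^1 L_\xi v\,dx=i\xi\int_0^1\mathcal N_\xi v\,dx$, obtained by integration by parts with the boundary term canceling by periodicity, together with $\tilde v_2(0)\equiv 1$, yields $\mathcal L_{2k}(\xi)=O(\xi)$; and $L_0\bar u_x=0$ yields $\mathcal L_{j1}(\xi)=O(\xi)$. A direct calculation of the $O(\xi)$ coefficient of $\mathcal L_{21}$, namely $\langle 1,L_1\bar u_x\rangle$ with $L_1=\partial_\xi L_\xi|_{\xi=0}$, reduces via periodicity to $i\int_0^1(c-f'(\bar u))\bar u_x\,dx=i\int_0^1\partial_x(c\bar u-f(\bar u))\,dx=0$, refining the estimate to $\mathcal L_{21}(\xi)=O(\xi^2)$. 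Consequently $\mathrm{tr}\,\mathcal L(\xi)=\xi\,T(\xi)$ and $\det\mathcal L(\xi)=\xi^2 D(\xi)$ with $T,D$ analytic, and the quadratic formula gives eigenvalues $\lambda_j(\xi)=\xi\mu_j(\xi)$ with $\mu_j(\xi)=\tfrac12(T(\xi)\pm\sqrt{T(\xi)^2-4D(\xi)})$. Continuity of $\{\mu_1(\xi),\mu_2(\xi)\}$ as an unordered pair, combined with the factor $\xi$, provides two continuous curves $\lambda_j$ that are differentiable at $0$ with $\lambda_j(\xi)=-ia_j\xi+o(\xi)$ where $-ia_j:=\mu_j(0)$.

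Under the additional hypothesis (H3), the discriminant $T(0)^2-4D(0)=-(a_1-a_2)^2$ is nonzero, so an analytic branch of the square root exists near $\xi=0$ and $\lambda_j,\mu_j$ are analytic. For $0<|\xi|\le\xi_0$ each $\lambda_j(\xi)$ is a simple eigenvalue of $\mathcal L(\xi)$, and from the block scaling
\[
\mathcal L(\xi)=\begin{pmatrix}O(\xi)&O(1)\\ O(\xi^2)&O(\xi)\end{pmatrix},
\]
one sees that once the second coordinate of the eigenvector is normalized to an analytic function $\beta_{j,2}(\xi)$, the first coordinate is forced to have the singular form $(i\xi)^{-1}\beta_{j,1}(\xi)$ with $\beta_{j,1}$ analytic, which gives the announced formula for $q_j$; the compensating factor $i\xi$ in front of $\tilde v_1$ in $\tilde q_j$ is then dictated by the biorthogonality relation $\langle\tilde q_j,q_k\rangle=\delta_{jk}$. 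Finally, under (D2), the analytic expansion $\lambda_j(\xi)=-ia_j\xi+c_j\xi^2+O(\xi^3)$ combined with the spectral bound $\mathrm{Re}\,\lambda_j(\xi)\le-\theta\xi^2$ forces $\mathrm{Im}\,a_j=0$ and $\mathrm{Re}\,c_j\le-\theta<0$; reality of $c_j$ (so that $b_j:=-c_j>0$) follows from the symmetry $\overline{\sigma(L_\xi)}=\sigma(L_{-\xi})$ coming from the real coefficients of the equation, which under (H3) must preserve each branch individually, yielding $\overline{\lambda_j(-\xi)}=\lambda_j(\xi)$ and hence real Taylor coefficients.

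The delicate step is the identification of the refined scaling $\mathcal L_{21}=O(\xi^2)$ and the consequent asymmetric $(i\xi)^{-1}/i\xi$ normalization of the right/left eigenfunctions: this is precisely where the divergence form of the equation interacts with the gauges $v_1(0)=\bar u_x$, $\tilde v_2(0)\equiv 1$, and what distinguishes the analysis from that of generic (non-conservative) higher-order parabolic problems.
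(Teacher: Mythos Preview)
Your proof is correct and follows essentially the same route as the paper: Kato projection onto the two-dimensional critical eigenspace, reduction to a $2\times2$ matrix $\mathcal L(\xi)$ in the bases gauged by $v_1(0)=\bar u_x$ and $\tilde v_2(0)\equiv1$, the crucial computation $\langle 1,L^{(1)}\bar u_x\rangle=0$ giving $\mathcal L_{21}(\xi)=O(\xi^2)$, and the resulting factorization $\lambda_j(\xi)=\xi\mu_j(\xi)$. The paper packages the last step via the conjugation $\check M_\xi:=(i\xi)^{-1}S(\xi)M_\xi S(\xi)^{-1}$ with $S(\xi)=\mathrm{diag}(i\xi,1)$, whose smoothness at $\xi=0$ encodes exactly the block scaling you wrote down and immediately yields both the $(i\xi)^{-1}$ in $q_j$ and the $i\xi$ in $\tilde q_j$; your trace/determinant and quadratic-formula argument is an equivalent formulation. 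One small point: when you identify the $O(\xi)$ coefficient of $\mathcal L_{21}$ with $\langle1,L_1\bar u_x\rangle$, you are silently using that the cross terms $\langle\partial_\xi\tilde v_2(0),L_0\bar u_x\rangle$ and $\langle1,L_0\partial_\xi v_1(0)\rangle$ vanish---but they do, by $L_0\bar u_x=0$ and the divergence form respectively, so no harm done (the paper is equally terse here).
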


%CHANGED-MR2: removed (H4), not needed
\begin{proof}(following \cite{JZ1,JZN,NR2})
First, as already mentioned in Remark \ref{r:linphasecouple}, 
assumptions (H1), (H2) and (D3) ensures the possibility of a parametrization 
$\beta=(k,M)$ by wave number and mean 
%and (H4) implies $\d_Mc(\bar\beta)\neq0$ so that we may switch to a $(k,c)$ parametrization with $\d_cM(\bar\beta)\neq0$. 
We adopt such a parametrization. This provides $L_0\bar u_x=0$ and 
%$L_0\d_cU(\cdot;\bar \beta)=-\bar u_x$ 
$L_0\d_cU(\cdot;\bar \beta)=-\d_Mc(\bar \beta)\bar u_x$ 
with $\langle \tilde v,\bar u_x\rangle_{L^2_{\rm per}([0,1])}=0$ 
and 
%$\langle \tilde v, \d_cU(\cdot;\bar \beta)\rangle_{L^2_{\rm per}([0,1])}=\d_cM(\bar\beta)\neq0$, 
$\langle \tilde v, \d_MU(\cdot;\bar \beta)\rangle_{L^2_{\rm per}([0,1])}=1$, 
where $\tilde v\equiv1$.

Since $L_\xi$ has dense domain $H^4_{\rm per}([0,1])$ compactly embedded in $L^2_{\rm per}([0,1])$, its spectrum consists of isolated eigenvalues of finite multiplicity \cite{He}. As $0$ is separated from the rest of the spectrum of $L_0$, 
%CHANGED-MJ-MARCH added
assumption (D3) and 
%ENDCHANGED
standard spectral theory for perturbations by relatively compact operators (see \cite{K}) yields constants
$\xi_0\in(0,\pi)$, $\varepsilon_0>0$ and \emph{continuous} functions $\lambda_1$, $\lambda_2$ such that, for $|\xi|<\xi_0$, $\sigma(L_\xi)\cap B(0,\varepsilon_0)\ =\ \left\{\lambda_1(\xi),\lambda_2(\xi)\right\}$. Moreover this also yields analytic dual right and left spectral projectors associated to spectrum in $B(0,\varepsilon_0)$. 
Analytic dual bases of the right and left eigenspaces may then be obtained by projecting dual bases for spectral spaces of the spectrum of $L_0$ in $B(0,\varepsilon_0)$. By the first paragraph of the proof and the conservation law structure of the governing equation, we may choose such bases in the form 
%CHANGED-MR2: removed (H4), not needed
%$\{\bar u_x,-\d_cU(\cdot;\bar \beta)/\d_Mc(\bar\beta)\}$ 
$\{\bar u_x,\d_MU(\cdot;\bar \beta)\}$ 
and $\{ *,\tilde v\}$ and obtain in this way the functions $\{v_j\}$ and $\{\tilde v_j\}$ of the lemma.

%CHANGED-MJ-MARCH expanded, using phrasing from previous proof.
We have now reduced the 
infinite dimensional spectral perturbation problem for the operators $L_\xi$ to 
the spectral analysis of
%We are thus left with the spectral analysis of
$$
M_\xi\ = \left[\langle\tilde v_j(\xi,\cdot),L_\xi v_l(\xi,\cdot)\rangle_{L^2([0,1])}\right]_{j,l},
$$
a $2\times 2$ matrix perturbation problem. 
%
%CHANGED-MJ-MARCH reworded
%still
By direct calculation 
%CHANGED-MR2: removed (H4), not needed
%$M_0=\begin{pmatrix}0&1\\0&0\end{pmatrix}$. 
$M_0=\begin{pmatrix}0&-\d_Mc(\bar \beta)\\0&0\end{pmatrix}$. 
Below, however, we will scale $M_\xi$ to blow up at the double eigenvalue. To do so, we expand the operator $L_\xi$ as
\be \label{Lpert}
L_\xi=L_0 + i\xi L^{(1)}+(i\xi)^2L^{(2)}+(i\xi)^3L^{(3)}+(i\xi)^4L^{(4)}
\ee
and note, specifically, that
\ba \label{Ls}
L^{(1)}=-(\bar{u}-\bar c)-3\eps\partial_x^2-2\delta\partial_x-4\partial_x^3.
\ea
%CHANGED-MJ-MARCH wording
By either a direct calculation or by first scaling the parameterization then differentiating the profile
equation with respect to $k$ (see \cite{NR1,NR2}), we find that $\left<\tilde{v}_2(0,\cdot),L^{(1)}v_1(0,\cdot)\right>=0$.
%Then notice that $\left<\tilde{v}_2(0,\cdot),L^{(1)}v_1(0,\cdot)\right>=0$. This may be seen either by direct inspection or first scaling %parametrization then differentiating profile equation with respect to $k$ (see \cite{NR1,NR2}). 
%
Therefore, $M_\xi$ can be expanded as $\xi\to 0$ as
%CHANGED-MR2: removed (H4), not needed
\[
M_\xi=\begin{pmatrix}0&-\d_Mc(\bar \beta)\\0 & 0\end{pmatrix}
+i\xi\begin{pmatrix} * & * \\0 & *\end{pmatrix}
+\mathcal{O}(|\xi|^2)
\]
so that, in particular, the scaling
\be\label{rescale}
\check M_\xi:= (i\xi)^{-1} S(\xi)M_\xi S(\xi)^{-1},\qquad
S(\xi):=\bp i\xi & 0\\0 & 1\\\ep,
\ee
preserves smoothness in $\xi$ at $\xi=0$.  Since the eigenvalues $m_j(\xi)$ of $\check M_\xi$ are
$(i\xi)^{-1}\lambda_j(\xi)$, 
%CHANGED-MJ-MARCH missing? this was not disccused above.
%their continuity provides the missing differentiability at $0$. 
their continuity implies the  differentiability of the functions $\lambda_j(\xi)$ at $\xi=0$.
%
%CHANGED-MJ-MARCH wording
%As $m_j(0)=-a_j$, under assumption (H3) eigenmodes of $\check M_\xi$ are analytic in $\xi$ in a neighborhood of $0$. Undoing the scaling finishes the proof up to the observation that \eqref{e:surfaces2} follows from (D2) and $\overline{\lambda_j(\xi)}=\lambda_j(-\xi)$.
Assuming, in addition to above, that assumption (H3) holds, it follows from the fact that $m_j(0)=-a_j$ that the eigenmodes
of $\check M_\xi$ are analytic in $\xi$ in a neighborhood of $\xi=0$.   Undoing the scaling finishes the proof, 
up to the observation that \eqref{e:surfaces2} follows from (D2) and $\overline{\lambda_j(\xi)}=\lambda_j(-\xi)$.
\end{proof}
%ENDCHANGED

\subsection{Linearized bounds}\label{s:linbds}

We begin our stability analysis by deriving decay rates for the linearized solution operator $e^{Lt}$ of the linearized
equation \eqref{e:lin}.  Recalling the inverse Bloch transform representation \eqref{IBFT} of the linearized solution operator, 
%CHANGED-MR: to match new form of blochfacts
%we first note that by standard spectral perturbation theory \cite{K}, for each $|\xi|$ sufficiently small
%the total eigenprojection $P(\xi)$ onto the eigenspace of $L_{\xi}$ associated with the eigenvalues $\lambda_j(\xi)$,
%$j=1,2$ described in Lemma \ref{blochfacts} is well-defined and analytic in $\xi$ since, by discreteness
%of the spectrum of $L_{\xi}$, these distinguished eigenvalues are separated at $\xi=0$ from the rest
%of the spectrum of $L_0$.  For $\epsilon>0$ sufficiently small then, we introduce a smooth cutoff function $\phi(\xi)$ such that
%\[
%\phi(\xi)=\left\{
%            \begin{array}{ll}
%              1, & \textrm{ if }|\xi|\leq \epsilon \\
%              0, & \textrm{ if }|\xi|\geq 2\epsilon
%            \end{array}
%          \right.
%\]
we first introduce, as in the proof of Lemma \ref{blochfacts}, for each $\xi\in(-\xi_0,\xi_0)$ the spectral projection $P(\xi)$, analytic in $\xi$, onto the total eigenspace associated with spectrum $\sigma(L_\xi)\cap B(0,\varepsilon_0)=\left\{\lambda_1(\xi),\lambda_2(\xi)\right\}$ of the Bloch operator $L_\xi$. We also choose a smooth cutoff function $\phi:[-\pi,\pi)\to[0,1]$ such that
\[
\phi(\xi)=\left\{
            \begin{array}{ll}
              1, & \textrm{ if }|\xi|\leq \xi_0/2 \\
              0, & \textrm{ if }|\xi|\geq \xi_0
            \end{array}
          \right.
\]
%ENDCHANGED-MR
and split the solution operator $S(t):=e^{Lt}$ into its 
%CHANGED-MR
%low frequency part
low Floquet-number critical part
%
%CHANGED-MR: dot
\begin{equation}\label{solnlf}
S^I(t)g(x):=\int_{-\pi}^\pi e^{i\xi x}\phi(\xi)[P(\xi)e^{L_{\xi}t}\check{g}(\xi,\cdot)](x)d\xi
\end{equation}
and 
%CHANGED-MR
%high frequency part
exponentially-stable part
%
%CHANGED-MR: dot
\begin{equation}\label{solnhf}
S^{II}(t)g(x):=\int_{-\pi}^\pi e^{i\xi x}[\left(1-\phi(\xi)P(\xi)\right)e^{L_{\xi}t}\check{g}(\xi,\cdot)](x)d\xi.
\end{equation}

As the 
%CHANGED-MJ-MARCH wording
%%CHANGED-MR
%%low-frequency 
%critical 
%%
%analysis is considerably more delicate, we begin by deriving 
analysis of the critical part is considerably more delicate, we being by deriving
%
%CHANGED-MR
%$L^p\to L^p$ 
$L^p$ 
bounds on $S^{II}(t)$.
By standard sectorial bounds \cite{Pa,He}, 
%CHANGED-MR
the fact that $L^2$ and $H^r$ spectra coincide 
and the spectral separation of the $\lambda_j(\xi)$ from the remaining spectrum
of $L_{\xi}$ we 
%CHANGED-MR: not so trivial
%trivially have the high-frequency bounds
have bounds
%
%CHANGED-MR: X=1+compact form+H^r
\begin{equation}\label{sgbds}
\begin{aligned}
%\|e^{L_{\xi }t}\left(1-\phi(\xi)P(\xi)\right)g\|_{L_{\rm per}^2([0,1])}&\leq Ce^{-\theta t}\|g\|_{L_{\rm per}^2([0, 1])},\\
\|L_{\xi}^me^{L_{\xi}t}\left(1-\phi(\xi)P(\xi)\right)g\|_{H_{\rm per}^r([0,1]),\xi}&\leq Ct^{-m}e^{-\theta t}\|g\|_{H_{\rm per}^r([0, 1]),\xi}
\end{aligned}
\end{equation}
for some constants $\theta,C>0$, with $\|g\|_{H_{\rm per}^r([0,1]),\xi}^2=\sum_{j=0}^r\|(\d_x+i\xi)^jg\|_{L_{\rm per}^2([0,1])}^2$.  Using the fact that $L_{\xi}$ is 
%CHANGED-MR
%a fourth order differential operator, 
a relatively compact perturbation of the fourth order differential operator $(-1-(\d_x+i\xi)^4)$, 
in conjunction with \eqref{iso}, we immediately have 
%CHANGED-MR:
%the following high-frequency bounds.
the $H^r$ bounds of the following Proposition.

\begin{prop}\label{p:hfbounds}
Under assumptions (H1)--(H4) and (D1)--(D3), there 
%CHANGED-MR
%exists 
exist 
constants $C,\theta>0$ such that for all $2\leq p\leq\infty$,
%CHANGED-MR: compact form
$0\leq 4l_1+l_2=l_3\leq K+1$, $0\leq 4m_1+m_2+m_3\leq K$, $K$ as in (H1), $r=0,1$ and $t>0$
\begin{align*}
\left\|\d_t^{l_1}\d_x^{l_2} S^{II}(t)\d_x^{l_3}g\right\|_{H^r(\RM)}&\leq Ct^{-\frac{4l_1+l_2+l_3}{4}}e^{-\theta t}\|g\|_{H^r(\RM)},\\
\left\|\d_t^{m_1}\d_x^{m_2} S^{II}(t)\d_x^{m_3}g\right\|_{L^p(\RM)}&\leq Ct^{-\frac{1}{4}\left(\frac{1}{2}-\frac{1}{p}\right)-\frac{4m_1+m_2+m_3}{4}}e^{-\theta t}\|g\|_{L^2(\RM)},\\
\left\|S^{II}(t)g\right\|_{L^p(\RM)}&\leq Ce^{-\theta t}\|g\|_{H^1(\RM)}.
\end{align*}
\end{prop}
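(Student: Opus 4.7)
The plan is to use the inverse Bloch representation \eqref{solnhf} together with the Parseval isometry \eqref{iso} (for the $H^r$ bounds) and the Hausdorff--Young inequality \eqref{hy} (for the $L^p$ bounds), reducing everything to uniform-in-$\xi$ estimates on the Bloch propagator $e^{L_\xi t}(1 - \phi(\xi)P(\xi))$. The two essential ingredients are: (i) the spectral-gap sectorial bound \eqref{sgbds}, which produces the exponential factor $e^{-\theta t}$ and converts each application of $L_\xi$ into a gain of $t^{-1}$; and (ii) fourth-order parabolic smoothing: since $L_\xi$ is a relatively compact perturbation of $-(1 + (\partial_x + i\xi)^4)$ with uniformly controlled coefficients in $\xi\in[-\pi,\pi)$, one obtains a uniform-in-$\xi$ analytic-semigroup estimate of the form $\|(\partial_x + i\xi)^m e^{L_\xi t/2}\|_{H^r_{\rm per,\xi}\to H^r_{\rm per,\xi}}\leq C\,t^{-m/4}$ for $t\in(0,1]$, with $\|\cdot\|_{H^r_{\rm per,\xi}}$ denoting the twisted norm introduced after \eqref{sgbds}.

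For the $H^r$ bound I would first bring the derivatives inside the Bloch integral via $\mathcal{B}(\partial_x u)(\xi,x)=(\partial_x+i\xi)\check u(\xi,x)$ and $\mathcal{B}(\partial_t S^{II}(t)g)(\xi,\cdot)=L_\xi e^{L_\xi t}(1-\phi(\xi)P(\xi))\check g(\xi,\cdot)$, obtaining
\[
\partial_t^{l_1}\partial_x^{l_2} S^{II}(t)\partial_x^{l_3} g(x) = \int_{-\pi}^\pi e^{i\xi x}(\partial_x + i\xi)^{l_2} L_\xi^{l_1}(1-\phi(\xi)P(\xi)) e^{L_\xi t}(\partial_x + i\xi)^{l_3}\check g(\xi,\cdot)\, d\xi,
\]
and then apply Parseval. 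Splitting $e^{L_\xi t}=e^{L_\xi t/2}\cdot e^{L_\xi t/2}$, using the smoothing above to absorb the $(\partial_x+i\xi)^{l_2+l_3}$ factors with gain $t^{-(l_2+l_3)/4}$, and using \eqref{sgbds} on $L_\xi^{l_1}(1-\phi(\xi)P(\xi))e^{L_\xi t/2}$ with gain $t^{-l_1}e^{-\theta t/2}$, produces the required Bloch-side bound; Parseval transfers it to the stated estimate. For $t\geq 1$ the exponential trivially dominates any polynomial loss, so one may simply apply \eqref{sgbds} directly.

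For the $L^2\to L^p$ bound I would apply \eqref{hy} with $\tfrac{1}{p}+\tfrac{1}{q}=1$ to the same identity, reducing to an $L^q([-\pi,\pi);L^p([0,1]))$ estimate on the Bloch symbol. The factor $t^{-(1/2-1/p)/4}$ emerges by combining the $1$-D Gagliardo--Nirenberg interpolation
\[
\|w\|_{L^p([0,1])}\leq C\,\|w\|^{1-\theta}_{L^2([0,1])}\,\|w\|^{\theta}_{H^1_{\rm per,\xi}([0,1])},\qquad \theta=\tfrac12-\tfrac1p,
\]
applied pointwise in $\xi$, with the smoothing estimate above for $m=1$ (balanced over $\xi$-independent constants), which gives $\|e^{L_\xi t/2}w\|_{L^p([0,1])}\leq C t^{-\theta/4}\|w\|_{L^2([0,1])}$; the $t^{-(4m_1+m_2+m_3)/4}$ factor is produced exactly as in the $H^r$ analysis. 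One then concludes via $\|\cdot\|_{L^q([-\pi,\pi))}\leq C\|\cdot\|_{L^2([-\pi,\pi))}$ (since $q\leq 2$ on a bounded interval) and Parseval on $\check g$. The third bound is a simpler variant: apply $1$-D Sobolev $H^1_{\rm per,\xi}([0,1])\hookrightarrow L^p([0,1])$ uniformly in $\xi$, use the uniform bound $\|e^{L_\xi t}(1-\phi(\xi)P(\xi))\|_{H^1\to H^1}\leq Ce^{-\theta t}$ from \eqref{sgbds} with $m=0$, $r=1$, apply \eqref{hy}, and close with the $H^1$-version of Parseval $\|g\|_{H^1(\RM)}^2=2\pi\|\check g\|_{L^2([-\pi,\pi);H^1_{\rm per,\xi})}^2$.

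The main obstacle I anticipate is ensuring that all smoothing and embedding constants are uniform in the Bloch parameter $\xi\in[-\pi,\pi)$. This is controlled by the analyticity (hence compactness of dependence) of $\xi\mapsto L_\xi$ on the compact parameter set $[-\pi,\pi)$, combined with the fact that $L_\xi$ is a relatively compact perturbation of $-(1+(\partial_x+i\xi)^4)$ with $\xi$-bounded perturbing coefficients, which yields uniform sectorial resolvent estimates on sectors avoiding the low-frequency critical set $\sigma(L_\xi)\cap B(0,\varepsilon_0)$ that is removed by the cutoff $1-\phi(\xi)P(\xi)$.
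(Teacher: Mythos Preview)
Your proposal is correct, and the $H^r$ argument is essentially the paper's (the paragraph preceding the proposition already records the sectorial bound \eqref{sgbds}, the relatively-compact-perturbation observation, and the Parseval transfer \eqref{iso}; your semigroup-splitting argument simply spells out the parabolic smoothing implicit in those lines).

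Where you diverge is in the $L^p$ bounds. The paper does not pass through the Hausdorff--Young inequality \eqref{hy} at all here: instead it stays entirely on the physical side, applying the one-dimensional Gagliardo--Nirenberg inequality on $\RM$,
\[
\|g\|_{L^p(\RM)}\leq C\,\|g\|_{L^2(\RM)}^{\,1-(1/2-1/p)}\,\|\partial_x g\|_{L^2(\RM)}^{\,1/2-1/p},
\]
directly to $h=\partial_t^{m_1}\partial_x^{m_2}S^{II}(t)\partial_x^{m_3}g$ and invoking the already-established $L^2$ and $H^1$ bounds (the first displayed estimate with $r=0,1$). This produces the factor $t^{-\frac14(1/2-1/p)}$ with one line of algebra, and the third bound is just the Sobolev embedding $H^1(\RM)\hookrightarrow L^p(\RM)$ applied to the case $l_1=l_2=l_3=0$, $r=1$. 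Your route---Hausdorff--Young plus per-$\xi$ interpolation on $[0,1]$ plus $L^q\hookrightarrow L^2$ on the bounded $\xi$-interval---reaches the same conclusion but is considerably more machinery for this particular statement. The paper's shortcut works precisely because the $H^r(\RM)$ estimates are already in hand; your Bloch-side argument would be the natural one if, say, the norms on the left and right sides were genuinely mixed in a way that prevented a direct physical-space interpolation.
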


\begin{proof}
%CHANGED-MR
%First, notice that by using the bounds \eqref{sgbds} together with \eqref{iso}, the fact that $L_{\xi}$ is a fourth
%order differential operator, and the triangle inequality, we have for each $0\leq l\leq K+1$
%%CHANGED-MR: bad integration in \xi
%\[
%\left\|\partial_x^lS^{II}(t)g\right\|_{L^2(\RM)},~~\left\|S^{II}(t)\partial_x^lg\right\|_{L^2(\RM)}
%\leq C t^{-l/4}e^{-\theta t}\left\|\check{g}\right\|_{L^2([-\pi,\pi),L^2([0,1]))}
%=Ct^{-l/4}e^{-\theta t}\|g\|_{L^2(\RM)},
%\]
%%
%which verifies the first claim. To prove the second inequality, note that Sobolev embedding and the above $L^2\to L^2$ bound
%implies for $0\leq m\leq K$
%\[
%\left\|\partial_x^mS^{II}g\right\|_{L^\infty(\RM)}\leq C\left(\left\|\partial_x^mS^{II}g\right\|_{L^2(\RM)}
%        \cdot\left\|\partial_x^{m+1}S^{II}g\right\|_{L^2(\RM)}\right)^{1/2}\leq C t^{-\frac{1}{4}(m+1/2)}e^{-\theta t}\|g\|_{L^2(\RM)}.
%\]
%The result for general $2\leq p\leq\infty$ now follows by $L^p$ interpolation.
%CHANGED-MJ-MARCH wording
By the above discussion, all that is left is to explain how to verify the stated $L^p$ bounds.
%We are left with explaining how to get $L^p$ bounds. 
These follow from $L^2$ and $H^1$ bounds and the Sobolev embedding inequality 
$\|g\|_{L^p(\RM)}\leq C\|g\|_{L^2(\RM)}^{1-\left(1/2-1/p\right)}\|\d_xg\|_{L^2(\RM)}^{1/2-1/p}$.
%
%ENDCHANGED-MR
\end{proof}

Next, we analyze the 
%CHANGED-MR
%low-frequency part 
critical part 
of the solution operator $S(t)$.
For this purpose, it is convenient to introduce the (critical)
Green kernel
\[
G^I(x,t;y):=S^I(t)\delta_y(x)
\]
associated with $S^I$ and
%CHANGED-MR
%\[
%\left[G^I_{\xi}(x,t;y)\right]:=\phi(\xi)P(\xi)e^{L_{\xi}t}\left[\delta_y(x)\right]
%\]
\[
\left[G^I_{\xi}(\cdot,t;y)\right](x):=\phi(\xi)P(\xi)e^{L_{\xi}t}\left[\delta_y\right](x)
\]
the corresponding integral kernel appearing within the Bloch-Fourier representation of $G^I$,
where $\left[\cdot\right]$ denotes the 
%CHANGED-MR: X=1
1-periodic 
extension of the given function onto the whole real line.
We first prove the following lemma yielding a spectral representation of $G^I$ incorporating
the results of Lemma \ref{blochfacts}.

\begin{lemma}\label{l:G1rep}
Under the hypothesis (H1)--(H4) and (D1)--(D3), we have
%CHANGED-MR: xi+2pi
\begin{equation}\label{e:G1rep}
\begin{aligned}
\left[G_\xi^I(\cdot,t;y)\right](x)&=\phi(\xi)\sum_{j=1}^2e^{\lambda_j(\xi)t}q_j(\xi,x)\tilde{q}_j(\xi,y)^*,\\
G^I(x,t;y)&=\int_{-\pi}^\pi e^{i\xi(x-y)}\left[G_\xi^I(\cdot,t;y)\right](x)d\xi\\
&=\int_{-\pi}^\pi e^{i\xi(x-y)}\phi(\xi)\sum_{j=1}^2e^{\lambda_j(\xi)t}q_j(\xi,x)\tilde{q}_j(\xi,y)^*d\xi,
\end{aligned}
\end{equation}
%ENDCHANGED-MR
where $*$ denotes the matrix adjoint, or complex conjugate transpose, and $q_j(\xi,\cdot)$ and $\tilde{q}_j(\xi,\cdot)$
are right and left eigenfunctions of $L_{\xi}$ associated with the eigenvalues $\lambda_j(\xi)$ defined in  
%CHANGED-MR
%\eqref{e:surfaces},
\eqref{critical_spec}, 
normalized so that 
%CHANGED-MR: X=1
$\left<\tilde{q}_j(\xi,\cdot),q_j(\xi,\cdot)\right>_{L_{\rm per}^2([0,1])}=1$.
\end{lemma}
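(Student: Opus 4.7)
The result follows from a direct unpacking of the definitions, combined with the finite-dimensional spectral decomposition of $L_\xi$ on its critical eigenspace. The plan is first to establish the pointwise (in $\xi$) identity for $[G^I_\xi(\cdot, t; y)]$, then to derive the Bloch-integral representation of $G^I$ by applying $S^I(t)$ to the Dirac mass $\delta_y$.

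For the first identity, $P(\xi)$ is by construction the spectral projection onto the two-dimensional total eigenspace of $L_\xi$ associated with $\{\lambda_1(\xi),\lambda_2(\xi)\}$, and the bases $\{q_j(\xi,\cdot)\}$, $\{\tilde q_j(\xi,\cdot)\}$ from Lemma \ref{blochfacts} are biorthogonal with $\langle \tilde q_j(\xi,\cdot),q_k(\xi,\cdot)\rangle_{L^2_{\rm per}([0,1])}=\delta_{jk}$. Hence for any $f \in L^2_{\rm per}([0,1])$ one has
\be
P(\xi)\,e^{L_\xi t} f = \sum_{j=1}^{2} e^{\lambda_j(\xi)t}\,q_j(\xi,\cdot)\,\langle \tilde q_j(\xi,\cdot), f\rangle_{L^2_{\rm per}([0,1])},
\ee
since (H3) forces $\lambda_1(\xi)\ne\lambda_2(\xi)$ for $0<|\xi|\le\xi_0$ so that $L_\xi$ is diagonal on the critical subspace in this basis. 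Taking $f = [\delta_y]$, the $1$-periodic extension of $\delta_y$, and using $1$-periodicity of $\tilde q_j(\xi,\cdot)$ to evaluate $\langle \tilde q_j(\xi,\cdot), [\delta_y]\rangle = \tilde q_j(\xi,y)^*$, then multiplying by the cutoff $\phi(\xi)$, yields the first claimed formula.

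For the Bloch representation of $G^I$, I would compute $S^I(t)\delta_y$ directly from its definition \eqref{solnlf}. A short computation using $\hat\delta_y(z) = (2\pi)^{-1} e^{-izy}$ together with the Poisson identity $\sum_k e^{2\pi i k(x-y)} = \sum_{n\in\ZZ}\delta(x-y-n)$ shows that, distributionally,
\be
\check{\delta_y}(\xi,x) \;\propto\; e^{-i\xi y}\,[\delta_y](x).
\ee
Inserting this into \eqref{solnlf}, the phase $e^{-i\xi y}$ combines with $e^{i\xi x}$ to form $e^{i\xi(x-y)}$, and the remaining action of $\phi(\xi)P(\xi)e^{L_\xi t}$ on $[\delta_y]$ is precisely $[G^I_\xi(\cdot,t;y)]$, producing the asserted integral representation. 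To make the formal manipulation rigorous I would pair both sides against a Schwartz test function $g$, apply Fubini to the representation of $S^I(t)g$ obtained from \eqref{solnlf}, and identify the resulting Schwartz kernel.

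The principal subtlety is the behavior near $\xi = 0$: the eigenfunctions $q_j(\xi,\cdot)$ and $\tilde q_j(\xi,\cdot)$ supplied by Lemma \ref{blochfacts} are defined only for $0<|\xi|\le\xi_0$ and carry compensating factors $(i\xi)^{-1}$ and $i\xi$, reflecting the Jordan block of $L_0$ guaranteed by (H4). Thus each individual summand $e^{\lambda_j(\xi)t}q_j(\xi,x)\tilde q_j(\xi,y)^*$ is singular at $\xi = 0$, but by analyticity of the finite-rank spectral projector $P(\xi)$ the sum $\sum_{j=1}^{2} e^{\lambda_j(\xi)t}q_j(\xi,x)\tilde q_j(\xi,y)^*$ extends continuously across $\xi = 0$. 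The identities in the lemma are therefore to be interpreted after this cancellation, which is what allows both sides to be integrated against $\phi(\xi)$ over $[-\pi,\pi]$ without trouble at the origin. Once this structural point is recognized, the proof reduces to the spectral decomposition sketched above applied to the Dirac mass.
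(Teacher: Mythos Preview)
Your proof is correct and follows essentially the same approach as the paper: the first identity via the spectral decomposition of $P(\xi)e^{L_\xi t}$ in the biorthogonal basis of Lemma~\ref{blochfacts}, and the integral representation via the Bloch-transform computation $\check{\delta_y}(\xi,x)=e^{-i\xi y}[\delta_y](x)$. Your added discussion of the cancellation at $\xi=0$ is a useful clarification that the paper leaves implicit.
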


\begin{proof}
The first equality follows by the spectral
decomposition of $e^{L_\xi t}$, and the spectral
description of Lemma \ref{blochfacts}.   The second equality follows by the inverse Bloch transform formula
\eqref{solnlf}
and Fourier transform manipulations using the fact that both discrete and continuous
transforms of the 
%CHANGED-MR
centered 
$\delta$-function are 
%CHANGED-MR
%one, 
constant equal to $(2\pi)^{-1}$ to get
$$
\check{\delta_y}(\xi,x)\ =\ \frac{1}{2\pi}e^{-i\xi y}\sum_{l\in\ZM}e^{2i\pi l(x-y)}
\ =\ e^{-i\xi y}[\delta_y](x).
$$
The third quality now follows by substitution; see 
%CHANGED-MR
%\cite{OZ4,JZ1,JZN} 
\cite{OZ4} 
for further details.
\end{proof}

Continuing, we point out that it seems unlikely that the low-frequency Green function $G^I$ will satisfy
$L^p\to L^p$ bounds which are suitable for our purposes.  To see this, notice
from Lemma \ref{l:G1rep} and Lemma \ref{blochfacts} that we have the representation
%CHANGED-MR: to match new blochfacts
%\[
%G^I(x,t;y)=\frac{1}{2\pi}\int_{\RM}e^{i\xi(x-y)}\phi(\xi)\sum_{j,k,l=1}^2e^{\lambda_j(\xi)t}\beta_{j,k}\tilde{\beta}_{j,l}v_k(\xi,x)\tilde{v}_l(\xi,y)^*d\xi
%\]
\[
G^I(x,t;y)=\int_{-\pi}^\pi e^{i\xi(x-y)}\phi(\xi)\sum_{j}^2e^{\lambda_j(\xi)t}
\Big((i\xi)^{-1}\beta_{j,1}(\xi)\tilde{\beta}_{j,2}(\xi)v_1(\xi,x)\tilde{v}_2(\xi,y)^*
+\mathcal{O}(1)\Big)d\xi
\]
of the 
%CHANGED-MR
%low-frequency 
critical 
Green kernel. From assumption (D2) 
%CHANGED-MR: to match blochfacts
%and the fact that $\beta_{j,1}=\mathcal{O}(|\xi|^{-1})$ near $\xi=0$,
%
we expect, for example,
%CHANGED-MR
\[
\left\|G^I(\cdot,t;y)\right\|_{L^\infty(\RM)}\approx \left\|\xi\mapsto|\xi|^{-1}e^{-\theta|\xi|^2t}\phi(\xi)\right\|_{L^1([-\pi,\pi))},
\]
where the right hand side is interpreted in the principal value sense, which is merely bounded and hence
does not decay in time.  In order to compensate for this lack of decay arising from the Jordan block associated with the translation
mode at $\xi=0$, we separate out the bounded translation mode from the faster-decaying ``good" part of the Green kernel.
To this end, notice that by defining the function
%CHANGED-MR: pi +\xi
\begin{equation}\label{e1def}
\tilde{e}(x,t;y):=\int_{-\pi}^\pi
e^{i\xi(x-y)}\phi(\xi)\sum_{j=1}^2e^{\lambda_j(\xi)t}(i\xi)^{-1}\beta_{j,1}(\xi)\tilde{q}_j(\xi,y)^*d\xi
\end{equation}
we have
%CHANGED-MR
\begin{equation}\label{e:G1decomp}
\begin{aligned}
G^I(x,t;y)&-\bar u_x(x)\tilde{e}(x,t;y)\\
&=\int_{-\pi}^\pi e^{i\xi(x-y)}\phi(\xi)\sum_{j=1}^2e^{\lambda_j(\xi)t}
\beta_{j,2}(\xi)v_2(\xi,x)\tilde{q}_j(\xi,y)^*d\xi\\
&+\int_{-\pi}^\pi e^{i\xi(x-y)}\phi(\xi)\sum_{j=1}^2e^{\lambda_j(\xi)t}\beta_{j,1}(\xi)
\frac{v_1(\xi,x)-v_1(0,x)}{i\xi}\tilde{q}_j(\xi,y)^*d\xi
\end{aligned}
\end{equation}
%CHANGED-MR: compact
%Now, by analyticity of $v_1$ on $\xi$ then, $v_1(\xi,x)-\bar{u}'(x)=\mathcal{O}(|\xi|)$ and so, by Lemma \ref{blochfacts},
%we have for $0\leq|\xi|\ll 1$ and $j,l=1,2$
%\begin{equation}\label{e:difbds1}
%\beta_{j,1}\tilde{\beta}_{j,l}\left(v_1(\xi,x)-\bar{u}'(x)\right)\tilde{v}_l(\xi,y)^*=\mathcal{O}(1)
%\end{equation}
%and
%\begin{equation}\label{e:difbds2}
%\beta_{j,2}\tilde{\beta}_{j,l}v_2(\xi,x)\tilde{v}_l(\xi,y)^*=\mathcal{O}(1),
%\end{equation}
%hence
Now since all quantities involved in \eqref{e:G1decomp} are
%CHANGED-MJ-MARCH explained regular
% regular 
$C^1$
in $\xi$, from (D2) we deduce
%ENDCHANGED-MR
%CHANGED-MR
\begin{equation}\label{e:Gbdsinfty}
\left\|G^I(\cdot,t;y)-\bar{u}_x(\cdot)\tilde{e}(\cdot,t;y)\right\|_{L^\infty(\RM)}
\leq C\left\|\xi\mapsto e^{-\theta|\xi|^2t}\phi(\xi)\right\|_{L^1([-\pi,\pi))}\leq C(1+t)^{-1/2}
\end{equation}
yielding algebraic decay of 
%CHANGED-MR: '
$G^I-\bar{u}_x\tilde{e}$ 
in time.  The more general effect of this regularization is the content of the following
proposition.

%CHANGED-MR: not marked, includes all bounds !
\begin{prop}\label{GIdecay1}
Under the assumptions (H1)--(H4) and (D1)--(D3), the 
%CHANGED-MR
%low-frequency 
critical 
Green kernel $G^I(x,t;y)$ of \eqref{e:lin} may be decomposed
as
%CHANGED-MR
\[
G^{I}(x,t;y)=\bar{u}_x(x)\tilde{e}(x,t;y)+\widetilde{G}^I(x,t;y)
\]
where for all 
%CHANGED-MR
$t\geq0$, 
$1\leq q\leq2\leq p\leq\infty$, 
and $1\leq r\leq 4$ the residual $\widetilde{G}^I(x,t;y)$ satisfies
\begin{equation}\label{e:G1bds}
\begin{aligned}
\left\|\int_\RM\widetilde{G}^I(\cdot,t;y)g(y)dy\right\|_{L^p(\RM)}&\leq C\left(1+t\right)^{-\frac{1}{2}\left(\frac1q-\frac1p\right)}\|g\|_{L^q(\RM)},\\
\left\|\int_\RM\partial_y^r\widetilde{G}^I(\cdot,t;y)g(y)dy\right\|_{L^p(\RM)}&\leq C\left(1+t\right)^{-\frac{1}{2}\left(\frac1q-\frac1p\right)-\frac{1}{2}}\|g\|_{L^q(\RM)},\\
\left\|\int_\RM\partial_t\widetilde{G}^I(\cdot,t;y)g(y)dy\right\|_{L^p(\RM)}&\leq C\left(1+t\right)^{-\frac{1}{2}\left(\frac1q-\frac1p\right)-\frac{1}{2}}\|g\|_{L^q(\RM)}.
\end{aligned}
\end{equation}
Furthermore, for all 
%CHANGED-MR
$t\geq0$, 
$1\leq q\leq2\leq p\leq\infty$, 
$0\leq j,l$, $j+l\leq K$, 
and $1\leq r\leq 4$ we have
\begin{equation}\label{e:ebds1}
\left\|\int_\RM\partial_x^j\partial_t^l\partial_y^r \tilde{e}(\cdot,t;y)g(y)dy\right\|_{L^p(\RM)}\leq C(1+t)^{-\frac{1}{2}\left(\frac1q-\frac1p\right)-\frac{(j+l)}{2}}\|g\|_{L^q(\RM)}
\end{equation}
and, if in addition $j+l\geq 1$, or 
%CHANGED-MR
($j=l=0$, $p=\infty$ and $q=1$),
%and $t\geq 1$,
%
\begin{equation}\label{e:ebds2}
\left\|\int_\RM\partial_x^j\partial_t^l\tilde{e}(\cdot,t;y)g(y)dy\right\|_{L^p(\RM)}\leq C(1+t)^{\frac{1}{2}-\frac{1}{2}\left(\frac1q-\frac1p\right)-\frac{(j+l)}{2}}\|g\|_{L^q(\RM)}.
\end{equation}
\end{prop}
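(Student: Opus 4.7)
The decomposition of the critical Green kernel into $\bar u_x(x)\tilde e(x,t;y)$ and $\widetilde G^I(x,t;y)$ is essentially already displayed in \eqref{e:G1decomp}: the $(i\xi)^{-1}$ singularity in the leading eigenfunctions $q_j(\xi,\cdot)$, coming from the rescaling of Lemma \ref{blochfacts} that absorbs the Jordan block at $\lambda=0$, is isolated into the scalar factor $\tilde e$ multiplying $v_1(0,\cdot)=\bar u_x$, via the Taylor splitting $v_1(\xi,x)=v_1(0,x)+i\xi\,\widetilde v_1(\xi,x)$ with $\widetilde v_1$ analytic in $\xi$. The resulting Bloch integrand defining $\widetilde G^I$ is then uniformly $C^1$ in $\xi$ up to $\xi=0$, with all periodic factors bounded in $L^p_{\rm per}([0,1])$ uniformly in $\xi$ on the support of $\phi$. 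Combined with the Gaussian bound $|e^{\lambda_j(\xi)t}|\le Ce^{-\theta\xi^2 t}$ coming from (D2) and \eqref{e:surfaces2}, this is what allows Bloch--Fourier estimates to be converted into algebraic decay in $t$.

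For \eqref{e:G1bds}, the plan is to write, using \eqref{IBFT} and the explicit splitting above,
\begin{equation*}
\int_\RM\widetilde G^I(x,t;y)\,g(y)\,dy\,=\,\int_{-\pi}^{\pi}e^{i\xi x}\phi(\xi)\sum_{j=1,2}e^{\lambda_j(\xi)t}\,\widetilde q_j(\xi,x)\,\bigl\langle\tilde q_j(\xi,\cdot),\check g(\xi,\cdot)\bigr\rangle_{L^2_{\rm per}([0,1])}\,d\xi,
\end{equation*}
with $\widetilde q_j(\xi,\cdot)$ the regularized remainder of $q_j(\xi,\cdot)$ (uniformly bounded in $L^p_{\rm per}$). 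Applying the Hausdorff--Young inequality \eqref{hy} on the outer $\xi$-integration with output exponent $p$, H\"older on the $L^2_{\rm per}$ pairing, and the dual Hausdorff--Young estimate $\|\check g\|_{L^{q'}(\xi,L^{q'}(x))}\le C\|g\|_{L^q(\RM)}$ for $1\le q\le 2$, one extracts a Gaussian factor whose $L^s_\xi$-norm, with $1/s=1/q-1/p$, contributes $(1+t)^{-(1/q-1/p)/2}$, giving the first line of \eqref{e:G1bds}. For the derivative bounds, each $\partial_y$ acting on the Bloch phase $e^{-i\xi y}$ brings down a factor $-i\xi$ (periodic-derivative contributions remaining uniformly bounded), while $\partial_t$ acting on $e^{\lambda_j(\xi)t}$ produces $\lambda_j(\xi)=O(\xi)$; each such factor of $|\xi|$ combined with $e^{-\theta\xi^2 t}$ contributes one additional $(1+t)^{-1/2}$ to the $L^s_\xi$ estimate, yielding the second and third lines of \eqref{e:G1bds}.

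The bounds on $\tilde e$ follow from the same procedure applied to
\begin{equation*}
\int_\RM\partial_x^j\partial_t^l\partial_y^r\tilde e(x,t;y)\,g(y)\,dy,
\end{equation*}
whose Bloch integrand carries an additional $(i\xi)^{-1}\beta_{j,1}(\xi)$ from the singular part, balanced against powers $(i\xi)^{j+r}$ from $\partial_x^j$ and the leading part of $\partial_y^r$, and $\lambda_j(\xi)^l=O(\xi^l)$ from $\partial_t^l$, for a net weight of order $|\xi|^{j+l+r-1}$. When $j+l+r\ge 1$ the function $|\xi|^{j+l+r-1}e^{-\theta\xi^2 t}$ is $L^s_\xi$-integrable with Gaussian scale $(1+t)^{-(j+l+r-1)/2-1/(2s)}$, producing the exponent $\tfrac12-\tfrac12(1/q-1/p)-(j+l+r)/2$ that matches \eqref{e:ebds1} (for $r\ge 1$) and \eqref{e:ebds2} (for $j+l\ge 1$). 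The endpoint $j=l=r=0$, $p=\infty$, $q=1$ of \eqref{e:ebds2}, where $|\xi|^{-1}$ fails to be $L^s$-integrable, is handled instead by the direct pointwise estimate
\begin{equation*}
\sup_{x,y\in\RM}|\tilde e(x,t;y)|\,\le\,C\int_{-\pi}^{\pi}\phi(\xi)|\xi|^{-1}e^{-\theta\xi^2 t}\,d\xi\,\le\,C\log(2+t)\,\le\,C(1+t)^{1/2},
\end{equation*}
which after pairing with $g\in L^1(\RM)$ yields the claimed bound.

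The main technical point to verify carefully is that, after the subtraction of $\bar u_x\tilde e$, all the factors populating the Bloch integrand for $\widetilde G^I$ and its derivatives (the regularized eigenfunctions, the normalization scalars $\beta_{j,k}(\xi)$, $\tilde\beta_{j,k}(\xi)$, and their relevant $\xi$-derivatives) are indeed bounded uniformly in $\xi$ on the support of $\phi$, and that every $t$-dependence remains confined to the Gaussian factor $e^{\lambda_j(\xi)t}$ up to harmless polynomial $\xi$-weights. These regularity properties are direct consequences of the analyticity statements of Lemma \ref{blochfacts} together with the quadratic spectral expansion \eqref{e:surfaces2}.
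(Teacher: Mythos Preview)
There are two genuine gaps.

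First, for the $\partial_y^r$ bounds (the second line of \eqref{e:G1bds} and all of \eqref{e:ebds1}), your claim that the ``periodic-derivative contributions remain uniformly bounded'' is not enough: a merely bounded term $\partial_y^r\tilde q_j(\xi,y)$ carries no extra $\xi$-factor, so it produces the \emph{same} decay rate as the undifferentiated kernel, not an additional $(1+t)^{-1/2}$. The mechanism the paper actually uses is structural: by Lemma~\ref{blochfacts} one has $\tilde q_j(0,\cdot)=\tilde\beta_{j,2}(0)\tilde v_2(0,\cdot)\equiv\tilde\beta_{j,2}(0)$, a \emph{constant}, so that for $r\ge 1$
\[
\partial_y^r\tilde q_j(\xi,y)=\partial_y^r\tilde q_j(\xi,y)-\partial_y^r\tilde q_j(0,y)=O(\xi)
\]
by analyticity in $\xi$. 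This is precisely how the conservative form of the equation (which forces the constant function into the left kernel of $L_0$) feeds an extra half-power of decay into the $y$-derivative estimates; without invoking it, your argument for those lines does not close.

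Second, your endpoint estimate for $j=l=0$, $p=\infty$, $q=1$ is incorrect as written: the integral $\int_{-\pi}^\pi\phi(\xi)|\xi|^{-1}e^{-\theta\xi^2 t}\,d\xi$ diverges, since $|\xi|^{-1}$ is not locally integrable near $\xi=0$, so the chain of inequalities yields $+\infty$, not $\log(2+t)$. The integral defining $\tilde e$ exists only as a principal value, and its boundedness cannot be obtained by passing absolute values inside. The paper handles this by Taylor-expanding away the regular pieces and reducing the genuinely singular remainder to the explicit identity
\[
\pv\int_{\RM}e^{i\xi(x-y)}e^{-ia_j\xi t-b_j\xi^2 t}\,(i\xi)^{-1}\,d\xi
=2\pi\,\errfn\Bigl(\frac{x-y-a_jt}{\sqrt{4b_jt}}\Bigr),
\]
which is uniformly bounded. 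No absolute-value or Hausdorff--Young argument can substitute for this oscillatory cancellation.
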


\begin{proof}
To begin, let 
%CHANGED-MR
$\widetilde{G}^I(x,t;y):=G^I(x,t;y)-\bar{u}_x(x)\tilde{e}(x,t;y)$ 
where the function $\tilde{e}$ is defined in \eqref{e1def}.
By interpolation, it is sufficient to consider only the cases $q=1$ and $q=2$.

{\it (i) Case $q=1$.} To prove bounds for $q=1$ we rely on Triangle Inequality
%CHANGED-MJ-MARCH q->p
$$
\left\|\int_\RM F(\cdot,t;y)g(y)dy\right\|_{L^p(\RM)}\ 
\leq\ \|g\|_{L^1(\RM)}\sup_{y\in\RM}\|F(\cdot,t;y)\|_{L^p(\RM)}.
$$
Now, to generalize \eqref{e:Gbdsinfty} and prove the first part of \eqref{e:G1bds} in the case $q=1$, we use \eqref{hy} to get for any $(t,y)$
$$
\left\|\tilde G^I(\cdot,t;y)\right\|_{L^p(\RM)}
\leq C\left\|\xi\mapsto e^{-\theta|\xi|^2t}\phi(\xi)\right\|_{L^{p'}([-\pi,\pi))}\leq C(1+t)^{-\frac12(1-1/p)}
$$
with $p'$ the H\"older conjugate of $p$, $1/p+1/p'=1$. The second part of \eqref{e:G1bds} comes 
in a similar way by observing that for $j=1,2$ and $r\geq 1$, the fact that $\tilde q_j(0,y)=\tilde\beta_{j,2}(0)$ implies 
%CHANGED-MJ-MARCH added =O(\xi) for clarity
\be\label{constant_q}
\d_y^r \tilde q_j(\xi,y)\ =\ \d_y^r \tilde q_j(\xi,y)-\d_y^r \tilde q_j(0,y)=\mathcal{O}(\xi),
\ee
and thus
$$
\left\|\d_y^r\tilde G^I(\cdot,t;y)\right\|_{L^p(\RM)}
\leq C\left\|\xi\mapsto |\xi|e^{-\theta|\xi|^2t}\phi(\xi)\right\|_{L^{p'}([-\pi,\pi))}\leq C(1+t)^{-\frac12(1-1/p)-\frac12}.
$$
Likewise the last part of \eqref{e:G1bds} stems from the fact that the time derivative brings down factors $\lambda_j(\xi)=\mathcal{O}(\xi)$.

Using again \eqref{constant_q}, we obtain for $r\geq1$
$$
\d_x^j\d_t^l\d_y^r \tilde{e}(x,t;y)=\int_{-\pi}^\pi
e^{i\xi(x-y)}\phi(\xi)\sum_{j'=1}^2e^{\lambda_{j'}(\xi)t}(i\xi)^{j}\lambda_j(\xi)^l\beta_{j',1}(\xi)\frac{\d_y^r\tilde{q}_{j'}(\xi,y)^*-\d_y^r\tilde{q}_{j'}(0,y)^*}{i\xi}d\xi
$$
and another use of \eqref{hy} provides
$$
\left\|\d_x^j\d_t^l\d_y^r \tilde{e}(\cdot,t;y)\right\|_{L^p(\RM)}
\leq C\left\|\xi\mapsto |\xi|^{j+l}e^{-\theta|\xi|^2t}\phi(\xi)\right\|_{L^{p'}([-\pi,\pi))}\leq C(1+t)^{-\frac12(1-1/p)-\frac12(j+l)}.
$$
This proves \eqref{e:ebds1} in the case $q=1$. Inequality 
%CHANGED-MJ-MARCH ebds 2 is what was meant, I think.
%\eqref{e:ebds1} 
\eqref{e:ebds2}
follows similarly when $j+l\geq1$.
%ENDCHANGED-MR

Finally, we must obtain 
%CHANGED-MR
%the bounds $L^\infty$ \eqref{e:ebds2} in the critical case $j=l=0$.
\eqref{e:ebds2} in the critical case $j=l=0$, $p=\infty$ and $q=1$.
Recalling \eqref{e:surfaces2} we introduce $\check\lambda(\xi):=-ia_j\xi-b_j\xi^2$ and expand
\ba\label{e:princpart}
\tilde{e}(x,t;y)&=\int_{-\pi}^\pi
e^{i\xi(x-y)}\phi(\xi)\sum_{j=1}^2e^{\lambda_j(\xi)t}\frac{\beta_{j,1}(\xi)\tilde{q}_j(\xi,y)^*-\beta_{j,1}(0)\tilde{q}_{j}(0,y)^*}{i\xi}d\xi\\
&+\int_{-\pi}^\pi e^{i\xi(x-y)}\phi(\xi)\sum_{j=1}^2e^{\check\lambda_j(\xi)t}\frac{e^{(\lambda_j(\xi)-\check\lambda_j(\xi))t}-1}{i\xi}\beta_{j,1}(0)\tilde\beta_{j,2}(0)d\xi\\
&+\int_{-\pi}^\pi e^{i\xi(x-y)}\sum_{j=1}^2e^{\check\lambda_j(\xi)t}\frac{\phi(\xi)-1}{i\xi}\beta_{j,1}(0)\tilde\beta_{j,2}(0)d\xi\\
&-\int_{\RM\setminus[-\pi,\pi]} e^{i\xi(x-y)}\sum_{j=1}^2e^{\check\lambda_j(\xi)t}\frac{1}{i\xi}\beta_{j,1}(0)\tilde\beta_{j,2}(0)d\xi\\
&+\textrm{P.V.}\int_{\RM} e^{i\xi(x-y)}\sum_{j=1}^2e^{\check\lambda_j(\xi)t}\frac{1}{i\xi}\beta_{j,1}(0)\tilde\beta_{j,2}(0)d\xi.
\ea
All but the last term in \eqref{e:princpart} may be bounded using Haussdorff-Young estimates, 
either using the classical one or \eqref{hy}. To complete the proof then, we must derive an appropriate $L^\infty$ 
bound on the last integral in \eqref{e:princpart}. Since
\[
\frac{1}{2\pi }\int_{\RM}
e^{i\xi \cdot (x-y)}
e^{\check\lambda_j(\xi)t} d\xi
=\frac{e^{-(x-y-a_jt)^2/4b_jt}}{\sqrt{4\pi b_jt}},\quad j=1,2,
\]
the final principal value integral in \eqref{e:princpart} is recognized to be
\[
2\pi\sum_{j=1}^2\beta_{j,1}(0)\tilde\beta_{j,2}(0)\,\errfn\left(\frac{x-y-a_j t}{\sqrt{4b_jt}}\right),
\]
hence is bounded in $L^\infty$ as claimed.
%ENDCHANGED-MR

{\it (ii) Case $q=2$.} To prove bounds with $q=2$ we directly apply \eqref{hy} to Bloch formulations. For instance expand
$$
\begin{aligned}
\int_\RM \tilde G^I(x,t;y)&g(y)dy=\int_{-\pi}^\pi e^{i\xi x}\phi(\xi)\sum_{j=1}^2e^{\lambda_j(\xi)t}
\beta_{j,2}(\xi)v_2(\xi,x)\langle\tilde{q}_j(\xi,\cdot),\check{g}(\xi,\cdot)\rangle_{L^2_{\rm per}([0,1])} d\xi\\
&+\int_{-\pi}^\pi e^{i\xi x}\phi(\xi)\sum_{j=1}^2e^{\lambda_j(\xi)t}\beta_{j,1}(\xi)
\frac{v_1(\xi,x)-v_1(0,x)}{i\xi}\langle\tilde{q}_j(\xi,\cdot),\check{g}(\xi,\cdot)\rangle_{L^2_{\rm per}([0,1])}d\xi.
\end{aligned}
$$
The generalized Haussdorff-Young estimate \eqref{hy} thus yields
$$
\begin{aligned}
\left\|\int_\RM\tilde{G}^I(\cdot,t;y)g(y)dy\right\|_{L^p(\RM)}
&\leq C\left\|\xi\mapsto e^{-\theta|\xi|^2t}\|\check{g}(\xi,\cdot)\|_{L^2_{\rm per}([0,1])}\right\|_{L^{p'}([-\pi,\pi])}\\
&\leq C\left\|\xi\mapsto e^{-\theta|\xi|^2t}\right\|_{L^{p''}([-\pi,\pi])}\|\check{g}\|_{L^2([-\pi,\pi),L^2_{\rm per}([0,1]))}\\
&\leq C(1+t)^{-\frac12\left(\frac12-\frac1p\right)}\|g\|_{L^2(\RM)}
\end{aligned}
$$
where $p'$ is such that $1/p+1/p'=1$, and $p''$ such that $1/p'=1/2+1/p''$. 
The other bounds on $\tilde G^I$ follows in the same way, observing 
that space derivatives $\d_y$ allow the use of \eqref{constant_q} and thus provide an extra $\mathcal{O}(\xi)$ factor
in the appropriate integrals.  The bounds on $\tilde e$ can be obtained in an analogous way, completing the proof.
\end{proof}

Finally, we combine the above various 
%CHANGED-MR
%high- and low-frequency 
exponentially-stable and critical 
bounds to obtain decay estimates on the Green function
\[
G(x,t;y)=S(t)\delta_y(x)
\]
associated with the full solution operator $S(t)=e^{Lt}$.
%HERE: OK I guess.
%CHANGED-MR2: contradiction with the end of the sentence
%For this purpose, 
Prior to that, 
we let 
%CHANGED-MR
%$\chi$ be a smooth real valued cutoff function defined on $[0,\infty)$
$\chi:[0,\infty)\to[0,1]$ be a smooth real valued cutoff function
such that
\[
\chi(t)=\left\{
         \begin{array}{ll}
           0, & \textrm{ if }0\leq t\leq 1 \\
           1, & \textrm{ if }t\geq 2
         \end{array}
       \right.
\]
and define
\begin{equation}\label{enew}
e(x,t;y):=\chi(t)\tilde{e}(x,t;y);
\end{equation}
%CHANGED-MJ-MARCH added
the purpose of the time 
%CHANGED-MR2: as elsewhere
%cut off 
cutoff 
function $\chi$ will be made clear 
%CHANGED-MR2
%below 
%
in Remark \ref{time-cutoff} below.
The following result immediately follows from 
%CHANGED-MR2
%Proposition \ref{GIdecay1}.
Lemma \ref{l:G1rep} and Proposition \ref{GIdecay1}.

\begin{corr}\label{e:Gbdsfinal}
Under assumptions (H1)--(H4) and (D1)--(D3), the Green function $G(x,t;y)$ of \eqref{e:lin} decomposes
as
%CHANGED-MR
\[
G(x,t;y)=\bar{u}_x(x)e(x,t;y)+\widetilde{G}(x,t;y)
\]
where, for some $C,\theta>0$ and all $t>0$, $2\leq p\leq\infty$ and $1\leq r\leq 4$,
%
%CHANGED-MR: \RM+replaced useless bound with useful one
%\begin{align}
%\left\|\int_{-\infty}^\infty\widetilde{G}(\cdot,t;y)f(y)dy\right\|_{L^p(\RM)}&\leq Ct^{-\frac{1}{4}\left(\frac{1}{2}-\frac{1}{p}\right)}
%                          \left(1+t\right)^{-\frac{1}{4}\left(\frac{2}{q}-\frac{1}{2}-\frac{1}{p}\right)}\|f\|_{L^q\cap L^2}\label{finalGbd1}\\
%\left\|\int_{-\infty}^\infty\partial_y^r\widetilde{G}(\cdot,t;y)f(y)dy\right\|_{L^p(\RM)}&\leq
%         Ct^{-\frac{1}{4}\left(\frac{1}{2}-\frac{1}{p}\right)-\frac{r}{4}}
%              \left(1+t\right)^{-\frac{1}{4}\left(\frac{2}{q}-\frac{1}{2}-\frac{1}{p}\right)-\frac{1}{2}+\frac{r}{4}}\|f\|_{L^q\cap L^2}\label{finalGbdyder}\\
%\left\|\int_{-\infty}^\infty\partial_t\widetilde{G}(\cdot,t;y)f(y)dy\right\|_{L^p(\RM)}&\leq
%        Ct^{-\frac{1}{4}\left(\frac{1}{2}-\frac{1}{p}\right)-1}
%              \left(1+t\right)^{-\frac{1}{4}\left(\frac{2}{q}-\frac{1}{2}-\frac{1}{p}\right)+\frac{1}{2}}\|f\|_{L^q\cap L^2}\label{finalGbdtder}
%\end{align}
\begin{align}
\left\|\int_\RM\partial_y^r\widetilde{G}(\cdot,t;y)g(y)dy\right\|_{L^p(\RM)}&\leq 
\min \begin{cases}
Ce^{-\theta t}\|\d_x^rg\|_{H^1(\RM)}
+C\left(1+t\right)^{-\frac{1}{2}\left(\frac{1}{2}-\frac{1}{p}\right)-\frac{1}{2}}\|g\|_{L^2(\RM)}\\
Ct^{-\frac{1}{4}\left(\frac{1}{2}-\frac{1}{p}\right)-\frac{r}{4}}
\left(1+t\right)^{-\frac{1}{4}\left(\frac{3}{2}-\frac{1}{p}\right)-\frac12+\frac{r}{4}}\|g\|_{L^1(\RM)\cap L^2(\RM)}
\end{cases}
\label{finalGbdyder}\\
\left\|\int_\RM\widetilde{G}(\cdot,t;y)g(y)dy\right\|_{L^p(\RM)}&\leq 
C\left(1+t\right)^{-\frac{1}{2}\left(1-\frac{1}{p}\right)}\|g\|_{L^1(\RM)\cap H^1(\RM)}
\label{finalGbd1}
\end{align}
%ENDCHANGED-MR
Furthermore,  $e(x,t;y)\equiv 0$ for $0\leq t\leq 1$ and
there exists a constant $C>0$ such that for all 
%CHANGED-MR
$t\geq0$, $1\leq q\leq 2\leq p\leq\infty$, $0\leq j,l$, $j+l\leq K$, and $1\leq r\leq 4$
we have
%CHANGED-MR
%\begin{equation}\label{finalebds}
%\begin{aligned}
%\left\|\int_\RM\partial_x^j\partial_t^l e(\cdot,t;y)f(y)dy\right\|_{L^p(\RM)}&\leq C\left(1+t\right)^{\frac{1}{2}-\frac{1}{2}\left(\frac{1}{q}-\frac{1}{p}\right)
%              -\frac{(j+k)}{2}}\|f\|_{L^q(\RM)}\\
%\left\|\int_\RM\partial_x^j\partial_t^l\partial_y^r e(\cdot,t;y)f(y)dy\right\|_{L^p(\RM)}&\leq C\left(1+t\right)^{-\frac{1}{2}\left(\frac{1}{q}-\frac{1}{p}\right)
%              -\frac{(j+k)}{2}}\|f\|_{L^q(\RM)}.
%\end{aligned}
%\end{equation}
\begin{equation}\label{finalebds1}
\begin{aligned}
\left\|\int_\RM\partial_x^j\partial_t^l\partial_y^r e(\cdot,t;y)g(y)dy\right\|_{L^p(\RM)}&\leq C\left(1+t\right)^{-\frac{1}{2}\left(\frac{1}{q}-\frac{1}{p}\right)
              -\frac{(j+k)}{2}}\|g\|_{L^q(\RM)}.
\end{aligned}
\end{equation}
and, if in addition $j+l\geq 1$, or ($j=l=0$, $p=\infty$ and $q=1$),
\begin{equation}\label{finalebds2}
\begin{aligned}
\left\|\int_\RM\partial_x^j\partial_t^l e(\cdot,t;y)g(y)dy\right\|_{L^p(\RM)}&\leq C\left(1+t\right)^{\frac{1}{2}-\frac{1}{2}\left(\frac{1}{q}-\frac{1}{p}\right)
              -\frac{(j+k)}{2}}\|g\|_{L^q(\RM)}\\
\end{aligned}
\end{equation}
%ENDCHANGED-MR
\end{corr}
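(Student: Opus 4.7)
The plan is to combine the exponential bounds of Proposition \ref{p:hfbounds} on the non-critical part $S^{II}$ with the algebraic bounds of Proposition \ref{GIdecay1} on the critical part via the splitting $S(t) = S^I(t) + S^{II}(t)$. Setting $\widetilde G(x,t;y) := G(x,t;y) - \bar u_x(x)\,e(x,t;y)$ and invoking the decomposition $G^I = \bar u_x \tilde e + \widetilde G^I$ from Proposition \ref{GIdecay1} together with the definition $e = \chi\tilde e$ from \eqref{enew}, one obtains
\[
\widetilde G(x,t;y) \;=\; \widetilde G^I(x,t;y) \;+\; G^{II}(x,t;y) \;+\; \bigl(1-\chi(t)\bigr)\bar u_x(x)\tilde e(x,t;y),
\]
where $G^{II}(x,t;y) := S^{II}(t)\delta_y(x)$ denotes the integral kernel of $S^{II}(t)$. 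I would estimate each of these three contributions separately, and then deduce the bounds on $e$ directly from those on $\tilde e$.

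For the $\widetilde G^I$ piece, both \eqref{finalGbdyder} and \eqref{finalGbd1} follow immediately from \eqref{e:G1bds}: taking $q=2$ yields the $\|g\|_{L^2}$-based alternative in \eqref{finalGbdyder}, while $q=1$ yields the $\|g\|_{L^1}$ contribution appearing both in the large-$t$ regime of the second alternative of \eqref{finalGbdyder} and in \eqref{finalGbd1}. For the $G^{II}$ piece, I would integrate by parts in $y$ to rewrite
\[
\int_\RM \partial_y^r G^{II}(x,t;y)\,g(y)\,dy \;=\; (-1)^r\bigl[S^{II}(t)\partial_x^r g\bigr](x)
\]
(valid for Schwartz $g$ and extended by density), and then apply the $L^2\to L^p$ and $H^1\to L^p$ estimates of Proposition \ref{p:hfbounds} with $m_3=r$: these yield, respectively, the short-time singular factor $t^{-\frac{1}{4}(\frac{1}{2}-\frac{1}{p})-\frac{r}{4}}e^{-\theta t}\|g\|_{L^2}$ and the term $Ce^{-\theta t}\|\partial_x^r g\|_{H^1}$ of \eqref{finalGbdyder}; the no-derivative estimate \eqref{finalGbd1} follows similarly from the $L^p\leftarrow H^1$ bound of Proposition \ref{p:hfbounds}. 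The cutoff remainder $(1-\chi(t))\bar u_x \tilde e$ is compactly supported in $t \in [0,2]$, so its contribution on this bounded range follows directly from \eqref{e:ebds1}--\eqref{e:ebds2} and is absorbed into the target $(1+t)^{-\alpha}$ bounds without difficulty.

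For the bounds on $e$ itself, I would use $e = \chi\tilde e$ together with the Leibniz rule
\[
\partial_x^j\partial_t^l\partial_y^r e(x,t;y) \;=\; \sum_{l'=0}^{l}\binom{l}{l'}\chi^{(l-l')}(t)\,\partial_x^j\partial_t^{l'}\partial_y^r \tilde e(x,t;y).
\]
The leading term $\chi(t)\,\partial_x^j\partial_t^l\partial_y^r\tilde e$ inherits \eqref{finalebds1}--\eqref{finalebds2} from \eqref{e:ebds1}--\eqref{e:ebds2} of Proposition \ref{GIdecay1}, while every lower-order term in the Leibniz sum carries a factor $\chi^{(l-l')}$ with $l' < l$, hence is supported on the compact set $t \in [1,2]$ where all relevant norms of $\tilde e$ are uniformly bounded; such contributions are automatically subsumed by the target polynomial estimates on this bounded range. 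The vanishing $e(x,t;y) \equiv 0$ for $0 \leq t \leq 1$ is immediate from the choice of $\chi$.

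The main obstacle is the careful bookkeeping involved in the second alternative of \eqref{finalGbdyder}, where one must verify that the short-time $L^2$-based bound coming from $S^{II}$ and the long-time $L^1$-based bound coming from $\widetilde G^I$ combine to a single product $t^{-a}(1+t)^{-b}\|g\|_{L^1\cap L^2}$ with the precise exponents stated. This reduces to the elementary arithmetic verification that the total exponent as $t\to\infty$ is $\frac{1}{4}(\frac{1}{2}-\frac{1}{p})+\frac{r}{4}+\frac{1}{4}(\frac{3}{2}-\frac{1}{p})+\frac{1}{2}-\frac{r}{4} = \frac{1}{2}(1-\frac{1}{p})+\frac{1}{2}$, matching the $L^1\to L^p$ rate with one space derivative predicted by Proposition \ref{GIdecay1}, while as $t\to 0^+$ the singularity $t^{-\frac{1}{4}(\frac{1}{2}-\frac{1}{p})-\frac{r}{4}}$ is entirely dictated by the parabolic short-time behavior of $S^{II}$ acting on $\partial_x^r g$.
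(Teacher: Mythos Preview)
Your proposal is correct and is precisely the natural way to unpack what the paper records only as ``immediately follows from Lemma \ref{l:G1rep} and Proposition \ref{GIdecay1}'' (together with Proposition \ref{p:hfbounds}); there is no separate argument in the paper to compare against.

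One small technical point to watch in your treatment of the cutoff remainder $(1-\chi(t))\bar u_x\tilde e$: for the undifferentiated bound \eqref{finalGbd1} you appeal to \eqref{e:ebds1}--\eqref{e:ebds2}, but \eqref{e:ebds2} with $j=l=0$ is only stated for $p=\infty$, $q=1$, and indeed $\|\int\tilde e(\cdot,t;y)g(y)\,dy\|_{L^p}$ need not be controlled by $\|g\|_{L^1\cap H^1}$ for finite $p$ because of the $(i\xi)^{-1}$ factor in the definition of $\tilde e$. The clean fix is to rewrite this piece as $(1-\chi)\bar u_x\tilde e = (1-\chi)(G^I-\widetilde G^I)$, so that $\widetilde G = (1-\chi)G^I + \chi\,\widetilde G^I + G^{II}$; on the compact support $t\in[0,2]$ the term $(1-\chi)G^I$ is bounded in $L^q\to L^p$ directly via Hausdorff--Young and the analyticity of the full projection $P(\xi)$ (where the apparent $\xi^{-1}$ singularity cancels in the sum $\sum_j q_j\tilde q_j^*$), while $\chi\,\widetilde G^I$ is handled by \eqref{e:G1bds} exactly as you describe. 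For \eqref{finalGbdyder} your original argument via \eqref{e:ebds1} is already fine, since the $\partial_y^r$ derivative removes the obstruction.
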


\br\label{Green-Bloch}
\textup{
%CHANGED-MJ-MARCH changed some wording...
%As appears from the proofs of various linear estimates, except for the critical bounds on $e$ or $\tilde e$ in the case $j=l=0$, $p=\infty$ and $q=1$, the introduction of Green functions is a pure presentation device. We adopted it to mark proximity with \cite{JZ1}. Yet since almost all linear bounds are proved using Haussdorff-Young estimates, there may be some gain in clarity and presentation efficiency in keeping descriptions at the level of semigroups and Bloch symbols. The latter approach is adopted in \cite{JNRZ1}, where further decompositions are needed.
As may be clear from the proofs of various linear estimates discussed above, except for the critical bounds on $e$ or $\tilde e$ in the case $j=l=0$, $p=\infty$ and $q=1$, the introduction of the critical Green kernel was a pure presentation device.  Indeed, we adopted here 
it to mark proximity with \cite{JZ1}.  Yet, since almost all the above linear bounds were proved using Haussdorff-Young type estimates, there may be some gain in clarity and efficiency in presentation in keeping all descriptions at the level of semigroups and Bloch symbols.
This latter approach was recently adopted in 
%CHANGED-MR2: since they are quoted somewhere else anyway
%\cite{JNRZ1}, 
\cite{JNRZ1,JNRZ2,JNRZ3}, 
where further decompositions of the critical part of the 
solution operator are needed.
}
\er
%ENDCHANGED-MR

\subsection{Nonlinear preparations}\label{s:pert}

Given the linearized bounds on the linear solution operator $S(t)=e^{Lt}$ derived in the previous section, we are now in position to consider
the effect of the small nonlinear terms
that were omitted in obtaining the linearized equation \eqref{e:lin}.
%HERE: OK, I guess.
%CHANGED-MR: added
Our first task is to explain how to implement at the nonlinear level the separation of critical phase-shift contribution. 
To this end, let $\tilde{u}(x,t)$  be a solution of
\begin{equation}\label{e:kstravel}
\partial_t u-c\partial_x u+\partial_x^4u+\eps\partial_x^3u+\delta\partial_x^2u+\partial_x f(u)=0
\end{equation}
and define the spatially modulated function $u(x,t):=\tilde{u}(x+\psi(x,t),t)$, where 
%CHANGED-MR
$\psi:\RM\times\R_+\to\RM$ 
is a function to be determined later.  Moreover, let $\bar{u}(x)$ be a stationary periodic solution of \eqref{e:kstravel}
and define the nonlinear perturbation function
\be\label{pertvar}
v(x,t):=u(x,t)-\bar{u}(x).
\ee

\begin{lem}\label{l:cancel1}
For $v$ as above, we have
\be\label{veq}
(\d_t-L)(v-\psi\bar u_x)=\cN,\quad \cN=\d_x\cQ+\d_x \cR+\d_t \cS
\ee
where
$$
\begin{aligned}
\cQ&=-\left(f(\bar u+v)-f(\bar u)-df(\bar{u})v\right)\\
\cR&=\psi_t v-\delta\frac{-\psi_x}{1+\psi_x}v_x
-\eps\frac{-\psi_x}{1+\psi_x}\d_x\left(\frac{1}{1+\psi_x}v_x\right)
-\eps\d_x\left(\frac{-\psi_x}{1+\psi_x}v_x\right)\\
&-\frac{-\psi_x}{1+\psi_x}\d_x\left(\frac{1}{1+\psi_x}\d_x\left(\frac{1}{1+\psi_x}v_x\right)\right)
-\d_x\left(\frac{-\psi_x}{1+\psi_x}\d_x\left(\frac{1}{1+\psi_x}v_x\right)\right)\\
&-\d_x^2\left(\frac{-\psi_x}{1+\psi_x}v_x\right)-\delta\frac{\psi_x^2}{1+\psi_x}\bar u_x
-\eps\frac{-\psi_x}{1+\psi_x}\d_x\left(\frac{-\psi_x}{1+\psi_x}\bar u_x\right)
-\eps\frac{\psi_x^2}{1+\psi_x}\bar u_{xx}\\
&-\eps\d_x\left(\frac{\psi_x^2}{1+\psi_x}\bar u_x\right)
-\frac{-\psi_x}{1+\psi_x}\d_x\left(\frac{-\psi_x}{1+\psi_x}\d_x\left(\frac{1}{1+\psi_x}\bar u_x\right)\right)
-\frac{-\psi_x}{1+\psi_x}\d_x^2\left(\frac{-\psi_x}{1+\psi_x}v_x\right)\\
&-\frac{\psi_x^2}{1+\psi_x}\bar u_{xxx}
-\d_x\left(\frac{-\psi_x}{1+\psi_x}\d_x\left(\frac{-\psi_x}{1+\psi_x}\bar u_x\right)\right)
-\d_x\left(\frac{\psi_x^2}{1+\psi_x}\bar u_{xx}\right)-\d_x^2\left(\frac{\psi_x^2}{1+\psi_x}\bar u_x\right)\\
\cS&=-v\psi_x.
\end{aligned}
$$
\end{lem}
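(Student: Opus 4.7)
The identity is verified by a direct, if lengthy, chain-rule computation. Set $A := 1/(1+\psi_x) = 1 + B$ with $B := -\psi_x/(1+\psi_x)$; every correction from the change of variables is packaged in $B$ and $\psi_t$, which accounts for the $\frac{-\psi_x}{1+\psi_x}$ factors pervading $\cR$. Inductive differentiation of $u(x,t) = \tilde u(x+\psi(x,t),t)$ yields
\[
(\partial_x^k \tilde u)(x+\psi,t) = (A\partial_x)^k u, \qquad (\partial_t \tilde u)(x+\psi,t) = \partial_t u - \psi_t A\partial_x u,
\]
and $(\partial_x f(\tilde u))(x+\psi,t) = A\partial_x f(u)$. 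Substituting into \eqref{e:kstravel} evaluated at $(x+\psi,t)$ and using $\partial_t u = \partial_t v$ produces the starting identity
\[
\partial_t v = (c+\psi_t) A\partial_x u - (A\partial_x)^4 u - \eps(A\partial_x)^3 u - \delta(A\partial_x)^2 u - A\partial_x f(u),
\]
which will be matched against $Lv + \partial_x\cQ + \partial_x\cR + \partial_t\cS - (\partial_t - L)(\psi \bar u_x)$.

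Next I extract $Lv + \partial_x\cQ$. Each $(A\partial_x)^k w$ decomposes, via the recursion $(A\partial_x)^k w = \partial_x((A\partial_x)^{k-1} w) + B\,\partial_x((A\partial_x)^{k-1} w)$, as $\partial_x^k w$ plus a finite sum of nested terms each carrying at least one factor $B$. Combining this expansion with the flux split $f(\bar u+v) - f(\bar u) = f'(\bar u)v - \cQ$ and the stationary profile equation $c\bar u_x = \bar u_{xxxx} + \eps\bar u_{xxx} + \delta \bar u_{xx} + \partial_x f(\bar u)$, the contributions in which every $A$ is replaced by $1$ collapse exactly to $Lv + \partial_x \cQ$. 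All remaining terms, carrying at least one factor of $B$ or $\psi_t$, form a nonlinear residual that must be identified with $\partial_x \cR + \partial_t \cS - \psi_t \bar u_x + [L,\psi]\bar u_x$, the last two pieces arising from $(\partial_t - L)(\psi \bar u_x) = \psi_t \bar u_x - [L,\psi]\bar u_x$ via $L\bar u_x = 0$ (translation invariance of the profile).

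The matching is then transparent: the $\psi_t \bar u_x$ piece of $\psi_t A\partial_x u = \psi_t(\bar u_x + v_x) + \psi_t B\partial_x u$ cancels the $\psi_t \bar u_x$ generated by the phase shift; the commutator $[L,\psi]\bar u_x$, being a finite sum of products of derivatives of $\psi$ and $\bar u$, reorganizes into the $\bar u$-forced summands of $\partial_x \cR$ via algebraic identities like $\psi_x \bar u_{xxxx} = \partial_x(\psi_x \bar u_{xxx}) - \psi_{xx} \bar u_{xxx}$; the leftover mixed term $\psi_t v_x$ becomes $\partial_x(\psi_t v) - \psi_{xt} v$, giving the $\psi_t v$ summand of $\cR$, while $\cS = -\psi_x v$ is fixed by the identity $\partial_x(\psi_t v) - \partial_t(\psi_x v) = \psi_t v_x - \psi_x v_t$ (the residual $\psi_x v_t$ being re-expressed through the chain-rule formula for $\partial_t u$ and absorbed into the remaining nested $B$-summands). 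The main obstacle is combinatorial: the expansion of $(A\partial_x)^4 u$ produces many nested terms, each of which must be matched against a specific summand of $\cR$. The statement displays $\cR$ in unconsolidated, nested form precisely to facilitate this summand-by-summand verification; the cleanest bookkeeping splits $u = \bar u + v$ only at the innermost level, keeping contributions from $\bar u_x$ (pure modulation of the profile) and $v_x$ (true perturbation) separate throughout.
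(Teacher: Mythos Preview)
Your approach is essentially the same as the paper's: both derive the transformed PDE for $u$ via the chain rule (the paper writes it after multiplication by $1+\psi_x$, you before), subtract the profile equation to extract $Lv+\d_x\cQ$, and then use $L\bar u_x=0$ (equivalently $\psi L\bar u_x=0$) to absorb the linear-in-$\psi$ leftovers into $(\d_t-L)(\psi\bar u_x)$. The only glitch is a sign: since $(\d_t-L)(v-\psi\bar u_x)=\cN$ rearranges to $\d_t v = Lv + \cN + (\d_t-L)(\psi\bar u_x)$, your target should read $Lv+\d_x\cQ+\d_x\cR+\d_t\cS+(\d_t-L)(\psi\bar u_x)$ rather than with a minus, and the subsequent cancellation of $\psi_t\bar u_x$ then goes through with the correct sign.
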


\begin{proof}
%CHANGED-MJ-MARCH changed some wording...
From the definition of $u$ above, it follows that
$$
\begin{aligned}
(1+\psi_x)&\d_t u-(\psi_t+c)\d_x u+\d_x f(u)+\delta\d_x\left(\frac{1}{1+\psi_x}\d_xu\right)\\
&+\eps\d_x\left(\frac{1}{1+\psi_x}\d_x\left(\frac{1}{1+\psi_x}\d_xu\right)\right)\\
&+\d_x\left(\frac{1}{1+\psi_x}\d_x\left(\frac{1}{1+\psi_x}\d_x\left(\frac{1}{1+\psi_x}\d_xu\right)\right)\right)=0.
\end{aligned}
$$
Hence, subtracting the profile equation $-c\bar u_x+\bar u_{xxxx}+\eps\bar u_{xxx}+\delta\bar u_{xx}+(f(\bar u))_x=0$
implies
$$
\begin{aligned}
(\d_t-L)v&-(\psi\bar u_x)_t=\d_x\cQ-\psi_xv_t+\psi_tv_x-\delta\d_x\left(\frac{-\psi_x}{1+\psi_x}(\bar u+v)_x\right)\\
&-\eps\d_x\left(\frac{-\psi_x}{1+\psi_x}\d_x\left(\frac{1}{1+\psi_x}(\bar u+v)_x\right)\right)
-\eps\d_x^2\left(\frac{-\psi_x}{1+\psi_x}(\bar u+v)_x\right)\\
&-\partial_x\left(\frac{-\psi_x}{1+\psi_x}\d_x\left(\frac{1}{1+\psi_x}\d_x\left(\frac{1}{1+\psi_x}(\bar u+v)_x\right)\right)\right)\\
&-\d_x^2\left(\frac{-\psi_x}{1+\psi_x}\d_x\left(\frac{1}{1+\psi_x}(\bar u+v)_x\right)\right)
-\d_x^3\left(\frac{-\psi_x}{1+\psi_x}(\bar u+v)_x\right)
\end{aligned}
$$
or, equivalently,
$$
\begin{aligned}
(\d_t-L)v-(\psi\bar u_x)_t&=\d_x\cQ+\d_x\cR+\d_t\cS-\delta\d_x(-\psi_x\bar u_x)-\eps\d_x(-\psi_x\bar u_{xx})-\eps\d_x^2(-\psi_x\bar u_x)\\
&-\d_x(-\psi_x\bar u_{xxx})-\d_x^2(-\psi_x\bar u_{xx})-\d_x^3(-\psi_x\bar u_x).
\end{aligned}
$$
Using the fact that $\psi L\bar u_x=0$ completes the proof.  
\end{proof}

Using Lemma \ref{l:cancel1} and applying Duhamel's principle, recalling that $\tilde{e}(x,t;y)=0$  for $0\leq t\leq 1$, we obtain the implicit
integral representation
%CHANGED-MR: \RM+\cN
$$
v(x,t)=\bar{u}_x(x)\psi(x,t)+\int_\RM G(x,t;y)v(y,0)dy+\int_0^t\int_\RM G(x,t-s;y)\cN(y,s)dy~ds
$$
for the nonlinear perturbation variable $v$ and some function $\psi$ still to be determined.  
Recalling Corollary \ref{e:Gbdsfinal}, it follows that by
defining $\psi$ implicitly via the integral formula
%CHANGED-MR: \RM+\cN
\begin{equation}\label{psi1}
\psi(x,t)=-\int_\RM e(x,t;y)v(y,0)dy-\int_0^t\int_\RM e(x,t-s;y)\cN(y,s)dyds
\end{equation}
the perturbation variable $v$ must satisfy the integral equation
%CHANGED-MR: \RM+\cN
\begin{equation}\label{vimplicit}
v(x,t)=\int_\RM \widetilde{G}(x,t;y)v(y,0)dy+\int_0^t\int_\RM \widetilde{G}(x,t-s;y)\cN(y,s)dy~ds.
\end{equation}
Furthermore, recalling that $e(x,s;y)=0$ for $0\leq s\leq 1$, we find by differentiating \eqref{psi1} that
%CHANGED-MR: \RM+\cN
\begin{equation}\label{psider}
\partial_t^j\partial_x^k\psi(x,t)=-\int_\RM\partial_t^j\partial_x^k e(x,t;y)v(y,0)dy-\int_0^t\int_\RM\partial_t^j\partial_x^ke(x,t-s;y)\cN(y,s)dy~ds.
\end{equation}
for $0\leq j\leq 1$ and $0\leq k\leq K+1$. 

%CHANGED-MR: added
\br\label{time-cutoff}
\textup{
%CHANGED-MJ-MARCH wording + expansion
%The goal of the time cutoff in \eqref{enew} is precisely to ensure $\psi(\cdot,0)\equiv0$, corresponding to the fact that we deal with initially localized perturbations. Yet there is indeed some room here allowing to treat also some kinds of initial non-localized perturbations, see \cite{JNRZ1}.
The purpose of the time cutoff function $\chi$ introduced in \eqref{enew} is precisely to ensure that $\psi(\cdot,0)\equiv 0$, corresponding
to the fact that we are dealing with perturbations that are initially localized in space.  Yet, in this periodic context
it is reasonable to consider also perturbations which affect the phase or wave number of the underlying periodic profile, which
corresponds to perturbations which are \emph{not} initially localized in space.  Stability of periodic wave trains to such
non-localized perturbations have been the focus of much recent study; see \cite{SSSU,JNRZ2,JNRZ3} for analysis in the context
of reaction diffusion systems, and \cite{JNRZ1} for analysis in systems with a conservative structure.
}
\er

\br\label{nat:coordinates2}
\textup{
%CHANGED-MJ-MARCH wording + expansion
%Note that as announced in Remark \ref{nat:coordinates} the implicit $\psi$-dependent change of variables carried out the task of introducing at the linear level the main expected term $-\psi\bar u_x$ while putting in nonlinear terms derivatives of $\psi$ rather that $\psi$ itself.
As discussed in Remark \ref{nat:coordinates}, the implicit $\psi$-dependent change of variables performed above 
has the effect of introducing at the linear level the critical non-decaying translational mode $\psi\bar u_x$, 
%CHANGED-MR2: not really
%of the nonlinear perturbation variable $v$, 
%
while at the same time ensuring that only derivatives of $\psi$, which decay in $L^p(\RM)$, appear
in the nonlinear terms $\mathcal{N}$ in Lemma \ref{l:cancel1}, as opposed to non-decaying terms involving $\psi$ itself.
}
\er
%ENDCHANGED-MR

Equations \eqref{vimplicit} and \eqref{psider} together form a complete system in the variables 
%CHANGED-MR
%$(v,\partial_t^j\psi,\partial_x^k\psi)$, $0\leq j\leq 1$, $0\leq k\leq 4$, 
$(v,\psi_t,\psi_x)$ 
and from the solution of this system, if it exists, we may recover the 
%CHANGED-MR
phase 
modulation $\psi$ through \eqref{psi1}.
Furthermore, the short-time existence and continuity with respect to $t$ of solutions 
%CHANGED-MR
$(v,\psi_t,\psi_x)\in H^K(\RM)\times H^K(\RM)\times H^{K+1}(\RM)$ 
follows from \eqref{veq} together with \eqref{psider} and a standard contraction-mapping argument based on \eqref{psi1}, 
%CHANGED-MR
%\eqref{finalebds},
\eqref{finalebds1}, \eqref{finalebds2},
and the following nonlinear damping estimate.

%CHANGED-MR
%\begin{prop}\label{p:damping}
%Let $v(\cdot,0)\in H^K(\RM)$ and suppose that for $0\leq t\leq T$ the $H^K(\RM)$ norm of $v$ and the $H^{K+1}(\RM)$ norms
%of $\psi_t(\cdot,t)$ and $\psi_x(\cdot,t)$ remain bounded by a sufficiently small constant.  Then there exists constants
%$C,\theta>0$ such that, for all $0\leq t\leq T$,
%\begin{equation}\label{e:damping}
%\|v(t)\|_{H^K(\RM)}^2\leq C e^{-\theta t}\|v(0)\|_{H^k(\RM)}^2+C\int_0^t e^{-\theta(t-s)}\left(\|v(s)\|_{L^2(\RM)}^2+\left\|(\psi_t,\psi_x)(s)\right\|_{H^K(\RM)}^2\right)ds.
%\end{equation}
%\end{prop}
\begin{prop}\label{p:damping}
Assuming (H1), there exist positive constants $\theta$, $C$ and $\eps_0$ such that if $v$ and $\psi$ solve \eqref{veq} on $[0,T]$ for some $T>0$ and 
$$
\sup_{t\in[0,T]}\|(v,\psi_x)(t)\|_{H^K(\RM)}+\sup_{t\in[0,T]}\|\psi_t(t)\|_{H^{K-1}(\RM)}\leq\eps_0
$$ 
then, for all $0\leq t\leq T$,
\be\label{e:damping}
\begin{aligned}
\|v(t)\|^2_{H^K(\RM)}
&\leq Ce^{-\theta t}\|v(0)\|_{H^K(\RM)}^2\\
&+C\int_0^t e^{-\theta(t-s)}
\left(\|v(s)\|^2_{L^2(\RM)}+\|\psi_x(s)\|_{H^{K+1}(\RM)}^2+\|\psi_t(s)\|_{H^{K-2}(\RM)}^2\right)ds.
\end{aligned}
\ee
\end{prop}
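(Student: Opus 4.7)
The plan is to carry out a standard fourth-order parabolic energy estimate on $v$ directly. From Lemma \ref{l:cancel1}, using $L\bar u_x=0$ to rewrite $(\partial_t-L)(v-\psi\bar u_x)=\cN$ as an equation on $v$ alone, one obtains
$$
v_t+\partial_x^4 v\ =\ -\eps\,\partial_x^3 v-\delta\,\partial_x^2 v+\partial_x((a-c)v)+\psi_t\bar u_x-L(\psi\bar u_x)+\cN,
$$
where the right-hand side splits into linear lower-order terms in $v$, a linear $\psi$-source, and the essentially nonlinear term $\cN=\partial_x Q+\partial_x R+\partial_t S$ from Lemma \ref{l:cancel1}. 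The principal part $+\partial_x^4 v$ is dissipative and provides the control needed to absorb all other contributions under the smallness assumption.

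For each $0\le k\le K$, I would differentiate this equation $k$ times in $x$, pair with $\partial_x^k v$ in $L^2(\RM)$, and integrate by parts. The principal part yields $+\|\partial_x^{k+2}v\|_{L^2}^2$; the odd-order $\eps$-term produces no boundary contribution; and the anti-dissipative $-\delta\,\partial_x^2 v$ contributes $\delta\|\partial_x^{k+1}v\|_{L^2}^2$, which, by the Nirenberg interpolation $\|\partial_x^{k+1}v\|^2\le\|\partial_x^k v\|\|\partial_x^{k+2}v\|$ and Young's inequality, is bounded by $\tfrac14\|\partial_x^{k+2}v\|^2+C\|\partial_x^k v\|^2$. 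The convective transport $\partial_x((a-c)v)$ contributes only $\mathcal{O}(\|v\|_{H^k}^2)$ via Leibniz and Young's inequality.

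The nonlinear contribution from $\cN$ is controlled by Moser-type product estimates and Sobolev embedding, exploiting the smallness $\|v,\psi_x\|_{H^K}+\|\psi_t\|_{H^{K-1}}\le\eps_0$. The inner product $\int\partial_x^k v\cdot\partial_x^k\cN\,dx$ cannot be estimated term-by-term at the desired Sobolev level directly; rather, since the highest $x$-derivatives in $\cN$ hit $v$ or $\psi_x$ at order three, I would integrate by parts twice to relocate two $\partial_x$'s onto $\partial_x^k v$, producing $\partial_x^{k+2}v$ factors that are absorbed into the principal dissipation and reducing by two the number of derivatives needed on the remaining factor. This yields
$$
\Big|\int\partial_x^k v\cdot\partial_x^k\cN\,dx\Big|\ \le\ \tfrac14\|\partial_x^{k+2}v\|^2+C(\eps_0)\big(\|v\|_{H^K}^2+\|\psi_x\|_{H^{K+1}}^2+\|\psi_t\|_{H^{K-2}}^2\big),
$$
and the same integration-by-parts trick applied to the linear $\psi$-source $\psi_t\bar u_x-L(\psi\bar u_x)$ explains the asymmetric $H^{K+1}$ on $\psi_x$ and $H^{K-2}$ on $\psi_t$ in the final estimate: for the Leibniz expansion of $\partial_x^K(\psi_t\bar u_x)$, the two highest-order terms in $\partial_x^j\psi_t$ ($j=K-1,K$) require one and two integration by parts respectively, the $\partial_x^{K+2}v$ factor then being absorbed into the dissipation.

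Summing over $0\le k\le K$ and choosing $\eps_0>0$ small enough so that the $C(\eps_0)\|v\|_{H^{K+2}}^2$ contributions are absorbed by the principal dissipation produces a differential inequality of the form
$$
\tfrac{d}{dt}\|v\|_{H^K}^2+\theta\|v\|_{H^{K+2}}^2\ \le\ C\big(\|v\|_{L^2}^2+\|\psi_x\|_{H^{K+1}}^2+\|\psi_t\|_{H^{K-2}}^2\big),
$$
and the trivial bound $\|v\|_{H^{K+2}}\ge\|v\|_{H^K}$ followed by Grönwall's inequality yields \eqref{e:damping}. The main obstacle is not the energy identity itself, which is standard once the equation is assembled, but the careful bookkeeping of the many multi-index integration-by-parts rearrangements needed to realize the precise Sobolev norms on $\psi$ on the right-hand side; what makes this possible is the crucial fact that the fourth-order parabolic dissipation provides exactly two extra $x$-derivatives of absorption, allowing $\psi_t$ to appear at the $H^{K-2}$ level rather than at the naive $H^K$ level one would obtain from a bare Cauchy--Schwarz estimate.
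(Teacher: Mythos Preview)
Your approach is essentially the same as the paper's: both perform an $H^K$ energy estimate, using the fourth-order parabolic dissipation to absorb lower-order and small-coefficient high-order terms, then close via Gr\"onwall. The paper likewise pairs the $v$-equation against $\sum_{j=0}^K(-1)^j\partial_x^{2j}v$, obtains
\[
\tfrac12\tfrac{d}{dt}\|v\|_{H^K}^2+\tfrac12\|v\|_{H^{K+2}}^2\ \le\ C\big(\|v\|_{H^{K+1}}^2+\|\psi_x\|_{H^{K+1}}^2+\|\psi_t\|_{H^{K-2}}^2\big),
\]
interpolates the $\|v\|_{H^{K+1}}^2$ term between $\|v\|_{L^2}^2$ and $\|\partial_x^{K+2}v\|_{L^2}^2$, and integrates.

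There is, however, a gap in your treatment of $\cN$. First, your derivative count is off: $\cR$ contains terms such as $\partial_x^2\big(\frac{-\psi_x}{1+\psi_x}v_x\big)$, so $\partial_x\cR$, and hence $\cN$, contains \emph{fourth}-order $x$-derivatives of $v$ (with small $\psi_x$-coefficient), not third. This is harmless---your two integrations by parts still give a contribution $\mathcal{O}(\eps_0)\|v\|_{H^{K+2}}^2$ that is absorbed---but the statement needs correction. More seriously, $\partial_t\cS=-\partial_t(v\psi_x)$ contains the term $-v_t\psi_x$, which you do not address at all; this cannot be handled by your integration-by-parts-in-$x$ mechanism, since $v_t$ itself carries four $x$-derivatives of $v$ via the equation. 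The paper avoids this by \emph{not} working with $\cN$ as a black box: it rewrites the $v$-equation directly from the original PDE for $u=\tilde u(x+\psi,t)$, which has the form $(1+\psi_x)u_t=\ldots$ with no $u_t$ on the right, and then divides through by $1+\psi_x$ (equivalently, moves the $-v_t\psi_x$ piece of $\partial_t\cS$ to the left side). Under the smallness assumption $\|\psi_x\|_{L^\infty}\le\eps_0$ this is legitimate and yields an equation with $v_t$ appearing only on the left, after which the energy estimate proceeds exactly as you describe. Your proof is repairable along these lines, but as written the $\partial_t\cS$ contribution is a genuine loose end.
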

\begin{proof}
First rewrite \eqref{veq} as
$$
\begin{aligned}
%CHANGED-MJ-MARCH put a + between v_t and \d_x^4v at beginning of first line...
%v_t&\d_x^4v\ =\ -\eps \d_x^3v-\delta\d_x^2v+c\d_xv-\d_x(f(\bar u+v)-f(\bar u))-\frac{-\psi_x}{1+\psi_x}\d_x(f(\bar u+v))\\
v_t&+\d_x^4v\ =\ -\eps \d_x^3v-\delta\d_x^2v+c\d_xv-\d_x(f(\bar u+v)-f(\bar u))-\frac{-\psi_x}{1+\psi_x}\d_x(f(\bar u+v))\\
%ENDCHANGED
&+\frac{\psi_t-c\psi_x}{1+\psi_x}(\bar u_x+v_x)
-\delta\frac{-\psi_x}{1+\psi_x}\d_x\left(\frac{1}{1+\psi_x}(\bar u_x+v_x)\right)-\delta\d_x\left(\frac{-\psi_x}{1+\psi_x}(\bar u_x+v_x)\right)\\
&-\eps\frac{-\psi_x}{1+\psi_x}\d_x\left(\frac{1}{1+\psi_x}\d_x\left(\frac{1}{1+\psi_x}(\bar u_x+v_x)\right)\right)
-\eps\d_x\left(\frac{-\psi_x}{1+\psi_x}\d_x\left(\frac{1}{1+\psi_x}(\bar u_x+v_x)\right)\right)\\
&-\eps\d_x^2\left(\frac{-\psi_x}{1+\psi_x}(\bar u_x+v_x)\right)
-\frac{-\psi_x}{1+\psi_x}\d_x\left(\frac{1}{1+\psi_x}\d_x\left(\frac{1}{1+\psi_x}\d_x\left(\frac{1}{1+\psi_x}(\bar u_x+v_x)\right)\right)\right)\\
&-\eps\d_x\left(\frac{-\psi_x}{1+\psi_x}\d_x\left(\frac{1}{1+\psi_x}\d_x\left(\frac{1}{1+\psi_x}(\bar u_x+v_x)\right)\right)\right)\\
&-\eps\d_x^2\left(\frac{-\psi_x}{1+\psi_x}\d_x\left(\frac{1}{1+\psi_x}(\bar u_x+v_x)\right)\right)
-\eps\d_x^3\left(\frac{-\psi_x}{1+\psi_x}(\bar u_x+v_x)\right).
\end{aligned}
$$
By taking scalar product against $\sum_{j=0}^K(-1)^{j}\d_x^j v$ and using integration by parts and Sobolev embedding $\|g\|_{L^\infty(\RM)}\leq C\|g\|_{H^1(\RM)}$, we obtain under a smallness assumption as in the statement of the proposition
$$
\frac12\frac{d}{dt}\left(\|v\|_{H^K(\RM)}^2\right)(t)+\frac12\|v(t)\|_{H^{K+2}(\RM)}^2
\ \leq\ C(\|v(t)\|_{H^{K+1}(\RM)}^2+\|\psi_x(t)\|_{H^{K+1}(\RM)}^2+\|\psi_t(t)\|_{H^{K-2}(\RM)}^2).
$$
Using now the Sobolev inequality
$$
\|g\|_{H^{K+1}(\RM)}^2\ \leq\ \eta\,\|\d_x^{K+2}g\|_{L^2(\RM)}^2+C_\eta\|g\|_{L^2(\RM)}^2
$$
with a sufficiently small $\eta$ reduces the result to a simple integration.
\end{proof}
%ENDCHANGED-MR

\subsection{Nonlinear iteration}

With the above preparations in hand, we are now prepared to state the main technical lemma leading to the proof of Theorem \ref{main}.
For this purpose, associated with the solution $(v,\psi_t,\psi_x)$ of the integral system \eqref{vimplicit} and \eqref{psider} considered
in the previous section, define
%CHANGED-MR: =\psi_xx
\begin{equation}\label{eta}
\eta(t):=\sup_{0\leq s\leq t}\left\|(v,\psi_t,\psi_x,\psi_{xx})(s)\right\|_{H^K(\RM)}(1+s)^{1/4}.
\end{equation}
By standard short-time $H^K(\RM)$ existence theory, 
%CHANGED-MR
%the function $\RM\ni t\mapsto \left\|(v,\psi_t,\psi_x)\right\|_{H^K(\RM)}(t)$ 
$\eta$ 
is continuous so long as it remains sufficiently small. Using the linearized estimates of Section \ref{s:linbds} we now
prove that if $\eta(0)$ is small then $\eta(t)$ remains small for all $t>0$.

%CHANGED-MR
%\begin{lemma}\label{l:iteration}
%Let $\eta$ be defined as in \eqref{eta} and suppose $E_0:=\|v(0)\|_{L^1(\RM)\cap H^K(\RM)}$ is sufficiently small.  Then
%for all  $t\geq 0$ for which $\eta(t)$ is finite and sufficiently small, we have the estimate
%\[
%\eta(t)\leq C\left(E_0+\eta(t)^2\right)
%\]
%for some constant $C>0$.
%\end{lemma}
\begin{lemma}\label{l:iteration}%CHANGED-MJ-MARCH changed "for some T>0" to after display.
Under assumptions (H1)--(H4) and (D1)--(D3), there exist positive constants $C$ and $\varepsilon_0$ such that if $v(0)$ is such that 
\[
E_0:=\|v(0)\|_{L^1(\RM)\cap H^K(\RM)} \leq \varepsilon_0\quad\textrm{and}\quad\eta(T) \leq \varepsilon_0
\] 
for some $T>0$, then for all $0\leq t\leq T$ we have
\be\label{eq:sclaim}
\eta(t)\le C(E_0+\eta(t)^2).
\ee
\end{lemma}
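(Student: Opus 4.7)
The plan is to bound each component contributing to $\eta(t)$ by applying the linearized Green function estimates of Corollary~\ref{e:Gbdsfinal} to the integral representations \eqref{vimplicit} and \eqref{psider}, exploiting the quadratic structure of the nonlinearity $\mathcal{N}=\partial_x\mathcal{Q}+\partial_x\mathcal{R}+\partial_t\mathcal{S}$ identified in Lemma~\ref{l:cancel1}. Since $\mathcal{Q}=\mathcal{O}(v^2)$ by Taylor expansion, and $\mathcal{R},\mathcal{S}$ are at least quadratic in $(v,\psi_x,\psi_t)$ with smooth coefficients depending on $\bar u$ and on the uniformly bounded quantity $(1+\psi_x)^{-1}$ (bounded because $\eta$ controls $\|\psi_x\|_{L^\infty}$ through $H^K\hookrightarrow L^\infty$), the definition of $\eta$ together with standard bilinear/Moser estimates will yield
\begin{equation*}
\|(\mathcal{Q},\mathcal{R},\mathcal{S})(s)\|_{L^1(\RM)\cap H^{K-1}(\RM)}\ \leq\ C\,\eta(t)^2\,(1+s)^{-1/2},\qquad 0\leq s\leq t,
\end{equation*}
once $\eta(t)$ is taken small enough.

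To handle $\|v(t)\|_{L^2}$, I would substitute this into \eqref{vimplicit}, shift the derivatives in $\mathcal{N}$ from the nonlinearity onto $\widetilde{G}$ by integration by parts in $y$ and in $s$ (the boundary contribution at $s=0$ vanishes by $e\equiv0$ there, and the one at $s=t$ reduces to a quadratic term controlled directly by $\eta(t)^2(1+t)^{-1/2}$), and apply the bounds \eqref{finalGbd1}--\eqref{finalGbdyder}. The convolution inequality
\begin{equation*}
\int_0^t (1+t-s)^{-1/4-1/2}(1+s)^{-1/2}\,ds\ \leq\ C(1+t)^{-1/4}
\end{equation*}
then gives $\|v(t)\|_{L^2(\RM)}\leq C(E_0+\eta(t)^2)(1+t)^{-1/4}$. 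Applying exactly the same scheme to \eqref{psider}, with $e$ playing the role of $\widetilde{G}$ and using the enhanced rate of \eqref{finalebds2} (available since all needed estimates on $\psi_t,\psi_x,\psi_{xx}$ involve $j+l\geq 1$), yields the corresponding $L^2$ bounds on $\|(\psi_t,\psi_x,\psi_{xx})(t)\|_{L^2(\RM)}$.

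Promoting these $L^2$ bounds to the full $H^K$ control required by $\eta$ is done by feeding them into the damping estimate \eqref{e:damping} of Proposition~\ref{p:damping}, the smallness hypothesis of which is ensured by the standing assumption $\eta(t)\leq\varepsilon_0$; the auxiliary bounds on $\|\psi_x(s)\|_{H^{K+1}(\RM)}$ and $\|\psi_t(s)\|_{H^{K-2}(\RM)}$ follow from differentiating \eqref{psider} and invoking \eqref{finalebds2} with the appropriate choices of $(j,l)$. Integrating the exponentially weighted $(1+s)^{-1/2}$ decay yields the desired $\|v(t)\|_{H^K(\RM)}\leq C(E_0+\eta(t)^2)(1+t)^{-1/4}$, and combining with the other pieces closes the bound \eqref{eq:sclaim}.

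The main obstacle is combinatorial rather than conceptual: the expression for $\mathcal{R}$ in Lemma~\ref{l:cancel1} is long and carries up to three spatial derivatives falling partly on $v$ and partly on $\bar u$, so one must check carefully that the integration-by-parts accounting never demands more regularity than the $H^K$ framework supplies, and that every term indeed enjoys the declared quadratic $(1+s)^{-1/2}$ decay in terms of $\eta$. A secondary subtlety is that $\eta$ controls only the derivatives of $\psi$; the uniform-in-$x$ $L^\infty$ bound on $\psi$ itself, needed both for the invertibility of the change of variables $x\mapsto x+\psi(x,t)$ and for the uniform boundedness of $(1+\psi_x)^{-1}$, must be obtained separately from the critical endpoint of \eqref{finalebds1}, i.e.\ $(j,l,p,q)=(0,0,\infty,1)$, which delivers exactly a bound of the form $C(E_0+\eta(t)^2)$ without time decay---harmless with regard to \eqref{eq:sclaim}.
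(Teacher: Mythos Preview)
Your proposal is correct and follows essentially the same route as the paper: quadratic bounds $\|(\cQ,\cR,\cS)(s)\|_{L^1\cap L^2}\leq C\eta^2(1+s)^{-1/2}$, integration by parts to move the $\partial_y$ and $\partial_s$ onto the kernel, the linearized estimates of Corollary~\ref{e:Gbdsfinal} to bound $\|v\|_{L^p}$ and $\|(\psi_t,\psi_x)\|_{W^{K+1,p}}$, and then Proposition~\ref{p:damping} to close in $H^K$. One small slip worth fixing: the $s=0$ boundary term in the $\partial_t\cS$ integration by parts vanishes because $\cS(0)=-v(0)\psi_x(0)=0$ (since $\psi(\cdot,0)\equiv 0$), not because $e\equiv 0$---you are working in the $\widetilde G$ integral at that point; relatedly, the paper integrates by parts only in the critical ($\widetilde G^I$) piece and handles the $S^{II}$ piece directly via the exponential bound against $\|\partial_t\cS\|_{H^1}$, which requires controlling $v_t$ through the equation \eqref{veq}.
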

\begin{proof}
First, note that by Lemma \ref{l:cancel1}, we have under the smallness assumption 
%CHANGED-MJ-MARCH added
on $\eta$ that
\begin{equation}\label{soursebd1}
\left\|(\cQ,\cR,\cS)(t)\right\|_{L^1(\RM)\cap L^2(\RM)}
\leq C\eta(t)^2(1+t)^{-1/2}
\end{equation}
for some constant $C>0$. The contribution of $\d_t\cS$ needs some special care. 
For this purpose, note that, integrating by parts in critical contribution, 
we obtain from previous linear bounds and $\cS(0)=0$
$$
\begin{aligned}
\left\|\int_0^t\int_\RM\tilde G(\cdot,t-s;y)\d_s\cS(y,s)dy~ds\right\|_{L^p(\RM)}&
\leq C\|\cS(t)\|_{L^2(\RM)}+C\int_0^t e^{-\theta(t-s)}\|\d_t\cS(s)\|_{H^1(\RM)}ds\\
&\quad+C\int_0^t (1+t-s)^{-\frac12\left(1-\frac1p\right)-\frac12}\|\cS(s)\|_{L^1(\RM)}ds.
\end{aligned}
$$ 
Now, observe that, using \eqref{veq} to bound $v_t$, we have under the same smallness assumption the estimate
%MR: here we need K\geq 5
\begin{equation}\label{soursebd2}
\left\|\cS(t)\right\|_{H^1(\RM)}\leq C\eta(t)^2(1+t)^{-1/2}.
\end{equation}
Using then the second part of \eqref{finalGbdyder} and \eqref{finalGbd1} to bound other terms, we obtain from \eqref{vimplicit}
the estimate
\begin{equation}\label{vintbd1}
\begin{aligned}
\left\|v(t)\right\|_{L^p(\RM)}&\leq C\left(1+t\right)^{-\frac{1}{2}\left(1-\frac{1}{p}\right)}(E_0+\eta(t)^2)\\
&\quad +C\eta(t)^2\int_0^t\left(t-s\right)^{-\frac{1}{4}\left(\frac{1}{2}-\frac{1}{p}\right)-\frac{1}{4}}
               \left(1+t-s\right)^{-\frac{1}{4}\left(\frac{3}{2}-\frac{1}{p}\right)-\frac{1}{4}}(1+s)^{-\frac{1}{2}}ds\\
&\leq C_p\left(E_0+\eta(t)^2\right)(1+t)^{-\frac{1}{2}\left(1-\frac{1}{p}\right)}
\end{aligned}
\end{equation}
and similarly, using \eqref{finalebds1} and \eqref{finalebds2} with $q=1$, we obtain from \eqref{psider} the estimate
\begin{equation}\label{psiintbd1}
\begin{aligned}
\left\|(\psi_t,\psi_x)\right\|_{W^{K+1,p}(\RM)}&\leq C(1+t)^{-\frac{1}{2}(1-\frac{1}{p})}E_0\\
&\quad+C\eta(t)^2\int_0^t\left(1+t-s\right)^{-\frac{1}{2}(1-1/p)-\frac12}(1+s)^{-1/2}ds\\
&\leq C_p\left(E_0+\eta(t)^2\right)(1+t)^{-\frac{1}{2}\left(1-\frac{1}{p}\right)},
\end{aligned}
\end{equation}
valid for all\footnote{Notice that the above integral estimates fail in the case $p=\infty$ due to a term of size $\log(1+t)$
arising from integrating on $\left[\frac{t}{2},t\right]$.} $2\leq p<\infty$.
The assumed smallness guarantees that we may apply Proposition~\ref{p:damping} and get
\begin{align*}
\|v(t)\|_{H^K(\RM)}^2&\leq Ce^{-\theta t}E_0^2+C(E_0+\eta(t)^2)^2\int_0^t e^{-\theta(t-s)}(1+s)^{-1/2}ds\\
&\leq C\left(E_0+\eta(t)^2\right)^2(1+t)^{-1/2}.
\end{align*}
Together with \eqref{psiintbd1}, this completes the proof.
\end{proof}

\noindent
{\bf Proof of  Theorem \ref{main}.} 
We are free to assume the constant $C$ in \eqref{eq:sclaim} is larger than $1$. Since $\eta$ is continuous and $\eta(0)=\|v(0)\|_{H^K(\RM)}\leq E_0$, it follows by continuous induction that if $4C^2E_0<1$ then $\eta(t)\leq 2CE_0$ for all $t\geq0$. We may then use \eqref{vintbd1} and \eqref{psiintbd1} to get uniform bounds for $p\in[2,4]$
%CHANGED-MJ-MARCH added E_0 to RHS.
$$
\|v(t)\|_{L^p(\RM)}+\|(\psi_t,\psi_x)(t)\|_{W^{K+1,p}(\RM)}
\leq CE_0(1+t)^{-\frac12\left(1-\frac1p\right)}.
$$
Next, we rewrite $\cR$ in form 
$$
\begin{aligned}
\cR&=\cR_1+\d_x\cR_2+\d_x^2\cR_3+\d_x^3\cR_4,\\
\cR_j(x,t)&=\cR^{(0)}_j(x,\psi_x(x,t),\psi_{xx}(x,t),\psi_{xxx}(x,t),\psi_{xxxx}(x,t))\\
&+\cR^{(1)}_j(x,\psi_x(x,t),\psi_{xx}(x,t),\psi_{xxx}(x,t),\psi_{xxxx}(x,t))v(x,t)
\end{aligned}
$$
so that
$$
\left\|(\cQ,\cR_1,\cR_2,\cR_3,\cR_4,\cS)(t)\right\|_{L^2(\RM)}
\leq C(\|v(t)\|_{L^4(\RM)}^2+\|(\psi_t,\psi_x)(t)\|_{W^{K+1,4}(\RM)}^2)
\leq CE_0(1+t)^{-\frac{3}{4}}
$$
and
$$
\left\|(\d_x\cQ,\d_x\cR_1,\d_x^2\cR_2,\d_x^3\cR_3,\d_x^4\cR_4,\d_t\cS)(t)\right\|_{H^1(\RM)}
\leq C E_0(1+t)^{-\frac{1}{2}}.
$$
Using now the estimate
$$
\begin{aligned}
\left\|\int_0^t\int_\RM\tilde G(\cdot,t-s;y)\d_s\cS(y,s)dy~ds\right\|_{L^p(\RM)}&\leq C\|\cS(t)\|_{L^2(\RM)}+C\int_0^t e^{-\theta(t-s)}\|\d_t\cS(s)\|_{H^1(\RM)}ds\\
&\quad+C\int_0^t (1+t-s)^{-\frac12\left(\frac12-\frac1p\right)-\frac12}\|\cS(s)\|_{L^2(\RM)}ds,
\end{aligned}
$$
together with the first part of \eqref{finalGbdyder} and \eqref{finalGbd1}, we obtain for all $p\in[2,\infty]$
$$
\begin{aligned}
\|v(t)\|_{L^p(\RM)}&\leq C(1+t)^{-\frac12\left(1-\frac1p\right)}E_0
+C E_0\int_0^t\left(1+t-s\right)^{-\frac12\left(\frac12-\frac1p\right)-\frac12}(1+s)^{-\frac34}ds\\
&\leq C E_0(1+t)^{-\frac12\left(1-\frac1p\right)}
\end{aligned}
$$
%CHANGED-MJ-MARCH changed slightly... please verify what I say is true...
%and, similarly, using \eqref{finalebds1} and \eqref{finalebds2} with $q=1,2$,
and, similarly, using 
%CHANGED-MR2: q=1 initial data, q=2 nonlinear terms... feel free to rephrase
%\eqref{finalebds1} with $q=1$ and \eqref{finalebds2} with $q=2$ 
\eqref{finalebds1} with $q=2$ and \eqref{finalebds2} both with $q=1$ and $q=2$ 
we obtain the estimate
$$
\begin{aligned}
\|(\psi_t,\psi_x)(t)\|_{W^{K+1,p}(\RM)}&
\leq C(1+t)^{-\frac12\left(1-\frac1p\right)}E_0
+C E_0\int_0^t\left(1+t-s\right)^{-\frac12\left(\frac12-\frac1p\right)-\frac12}
\left(1+s\right)^{-\frac34}ds\\
&\leq C E_0(1+t)^{-\frac12\left(1-\frac1p\right)}.
\end{aligned}
$$
This completes the proof of \eqref{eq:smallsest}.

To establish \eqref{eq:stab}, note that \eqref{finalebds1} and \eqref{finalebds2} imply
$$
\left\|\psi(t)\right\|_{L^\infty(\RM)}
\leq CE_0+CE_0^2\int_0^t\left(1+t-s\right)^{-\frac{1}{2}\left(1-\frac{1}{p}\right)}(1+s)^{-\frac{1}{2}}ds
\leq CE_0.
$$
Finally, notice that by definition we have that
\[
\tilde{u}(x,t)-\bar{u}(x)=v(x,t)+\left(\tilde{u}(x,t)-\tilde{u}(x+\psi(x,t),t)\right),
\]
hence 
$$
\|\tilde{u}(t)-\bar{u}\|_{L^\infty(\RM)}\leq \|v(t)\|_{L^\infty(\RM)}
+\|\bar u_x\|_{L^\infty([0,1])}\|\psi(t)\|_{L^\infty(\RM)}
\leq C E_0.
$$
We thus obtain the final $L^1\cap H^K\to L^\infty$ bound \eqref{eq:stab} of Theorem \ref{main}.
This completes the proof of the main theorem, hence establishing the nonlinear $L^1\cap H^K\to L^\infty$ stability of the underlying periodic traveling wave $\bar{u}$ under the structural and spectral assumptions (H1)--(H4) and (D1)--(D3).

\medskip
{\bf Acknowledgement}:
The numerical stability computations in this paper were carried out
using the STABLAB package developed by Jeffrey Humpherys and
the first and last authors and the SpectrUW package
developed by Bernard Deconinck and collaborators;
see \cite{BHZ2,CDKK} for documentation. 
%CHANGED: reversed order to make better sense -KZ
We gratefully acknowledge their contribution.
We thank also
Indiana University Information Technology Service for
the use of the Quarry computer with which some of
our computations 
%CHANGED-MR
%where 
were 
carried out.
%ENDCHANGED

%%%%%%%%%%%%%%%%%%%%%%%%%%%%%%%%%%%
\appendix

%%%%%%%%%%%%%%%%%%%%%%%%%%%%%%%%%%%%

\section{Appendix: Survey of existence theory for generalized KS \label{a:survey}}

In this appendix we give a brief survey of the existence theory for periodic traveling wave solutions
of the generalized Kuramoto-Sivashinsky equation \eqref{e:KS} with nonlinearity $f(u)=3u^2$.  For the forthcoming
discussion we will also consider the particular scaling where $\gamma=\delta$, i.e. the equation
\be\label{e:cks}
u_t+6uu_x+\varepsilon \partial_x^3 u +\delta(\partial_x^2 u+\partial_x^4u)=0.
\ee
Under this scaling, there are three situations to consider: the rather generic case where $\varepsilon=1$ and $\delta=\mathcal{O}(1)$; the classic Kuramoto-Sivashinsky limit $\delta=1$ and $|\eps|\ll 1$ and the (integrable) Korteweg-de Vries (KdV) limit $\eps=1$ and $0<\delta\ll 1$.  In the first two cases, we obtain only existence of small amplitude periodic wave trains through a normal form approach whereas we obtain existence of large amplitude periodic traveling waves in the KdV limit, by using Fenichel theory.

\subsection{The generic case: Hopf bifurcation analysis}\label{a:hopf}

Traveling waves $u(x,t)=U(x-ct)$ of \eqref{e:cks} are then readily seen to be solutions of the ODE
\be\label{e:pr}
\delta(U'''+U')+\eps U''+3U^2-cU=q,
\ee
where $q\in\RM$ is a constant of integration.
We begin by considering the generic situation where $\eps>0$ and $\delta\neq 0$ are fixed and arbitrary.
In this case, so long as $c^2+12 q>0$, \eqref{e:pr} possess two stationary solutions
$U_{-}(c,q)<U_{+}(c,q)$ such that $3U_{\pm}^2-cU_{\pm}=q$.  Linearizing \eqref{e:pr} about a constant state $U=U_{\pm}$
thus yields 
\begin{equation}\label{ep_lin}
(6U_{\pm}-c)\tilde{U}+\eps\tilde{U}''+\delta\big(\tilde{U}'+\tilde{U}'''\big) = 0,
\end{equation}
which is seen to undergo a Hopf bifurcation (necessarily) at $U_+$ when $c_{\rm Hopf}(q)=6U_{+}-\eps$.
%This can only happens for $U=U_+$ and the equation settles a $c_{Hopf}(q)$.
Denoting $V=U-U_+$ and $6U_+-c=\eps+\mu$ with $|\mu|\ll 1$, equation \eqref{e:pr} then reads
\be\label{e:hopf_bifeqn}
(3V+\eps+\mu)V+\eps V''+\delta(V'+V''')=0,
\ee
from which we find, for sufficiently small $\mu$ on one side of zero, a family $V(\mu)$ of periodic
orbits with amplitude $\mathcal{O}(|\mu|^{1/2})$ and frequency $k(\mu)$, with $k(0)=1$ and $k'(0)\neq 0$.
As a result, in the neighborhood of the Hopf bifurcation we find a three dimensional manifold of small amplitude
periodic traveling waves parameterized by translation,  the wavenumber $k$, and the integration constant $q$.

\br\textup{
%CHANGED-MR
%Notice 
Note 
that when $c^2+12q=0$, the equilibrium states coincide.  The nature of the bifurcation occurring at this
degenerate point requires a more delicate normal form analysis, which we postpone to the next section.
}
\er

\noindent
Finally, notice that for $|\mu|\ll 1$ the solution $U$ of \eqref{e:pr} satisfies
\[
U=\left<U\right>+\mathcal{O}(|\mu|^{1/2}),
\]
where $\left<U\right>$ denotes the spatial mean of $U$ over a period,
so that, by the definition of $q$, we find
\[
3\left<U\right>^2-c\left<U\right>=q+\mathcal{O}(|\mu|^{1/2}).
\]
Setting $M(k,q)=\left<U\right>$ it follows that 
%CHANGED-MR:\d
%$\frac{dM}{dq}$ 
$\d_qM$ 
is non-zero %
%\[
%\frac{dq}{dM}=6M-c,
%\]
%which is non-zero
by the definition of $\mu$. Here, we are using that $\eps>0$ is fixed.  In particular, in a neighborhood of the Hopf bifurcation,
we can switch from the $(k,q)$ parametrization of the local manifold of periodic traveling waves (here identified up to translation)
to the $(k,M)$ parametrization; see Remark \ref{r:linphasecouple}.  This 
%CHANGED-MR
%recovers 
provides
the following proposition.
%CHANGED-MR
%from \cite{NR2}.

\begin{proposition}
%CHANGED-MR
%({\rm [NR2]})
Let $M\in\R$ be fixed and $k<1$ be such that $1-k$ is small. Then there exist a unique $c(k,M)$ and a unique $q(k,M)$ such that there exists a $1$-periodic $U(\,\cdot\,;k,M)$ solution of
\begin{equation}
\label{epIk}
\displaystyle
k\left(3U^2-cU\right)+\eps k^3U''+\delta\left(k^2U'+k^4U'''\right)\ =\ q,\quad \left<U\right>=M\ .
\end{equation}
Moreover this solution is unique up to translation.
\end{proposition}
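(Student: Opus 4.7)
The plan is to deduce the proposition from the Hopf bifurcation analysis of Section \ref{a:hopf} by two successive applications of the implicit function theorem. The rescaling $U(y) = V(ky)$ converts a 1-periodic solution of \eqref{epIk} into a $(1/k)$-periodic solution of \eqref{e:pr} (up to a trivial rescaling of the integration constant and keeping the same wave speed $c$), so the regime ``$k$ close to 1 in \eqref{epIk}'' corresponds to \eqref{e:pr} at periods close to the Hopf period. Section \ref{a:hopf} already produces, near the Hopf point at $U_+(c,q)$, a two-parameter family of small-amplitude periodic orbits (modulo translation) parameterized by the bifurcation parameter $\mu$ and the integration constant $q$, with frequency $k = k(\mu;q)$ satisfying $k(0;q) = 1$ and $\partial_\mu k(0;q) \neq 0$.

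Since $\partial_\mu k(0;q) \neq 0$ varies smoothly with $q$, the implicit function theorem yields a unique $\mu = \mu(k,q)$ near $\mu = 0$, which reparametrizes the bifurcating branch as $(k,q) \mapsto (U(\cdot;k,q), c(k,q))$ with $c(k,q) = c_{\mathrm{Hopf}}(q) - \mu(k,q)$. Each such wave has a well-defined mean $M(k,q) := \langle U(\cdot;k,q) \rangle$. To swap $q$ for $M$, I would verify that $\partial_q M \neq 0$ at the Hopf point. This is immediate from the expansion $3M^2 - cM = q + \mathcal{O}(|\mu|^{1/2})$ recorded at the end of Section \ref{a:hopf}, combined with the Hopf relation $c = 6U_+ - \varepsilon + \mathcal{O}(\mu)$; a brief differentiation gives $\partial_q M = (6U_+ + \varepsilon)/\varepsilon^2 + \mathcal{O}(\mu^{1/2}) \neq 0$ generically.

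A second application of the implicit function theorem then produces $q = q(k,M)$ in a neighborhood of the Hopf point, and hence $c(k,M) := c(k,q(k,M))$ and $U(\cdot;k,M) := U(\cdot;k,q(k,M))$ with the prescribed mean satisfied by construction. Uniqueness up to translation is inherited from the Hopf bifurcation itself: the Crandall--Rabinowitz theorem gives a unique bifurcating branch transverse to the $S^1$ translation orbit, so any other 1-periodic solution near the branch with the same $(k,M)$ must differ from $U(\cdot;k,M)$ by a translation. The main (essentially technical) obstacle is checking non-resonance at the Hopf point---namely that no higher Fourier mode $e^{2\pi i n y}$ with $|n| \geq 2$ becomes critical simultaneously with the $n = \pm 1$ modes at $k=1$---which is ruled out by the explicit form of the dispersion relation obtained by linearizing \eqref{e:hopf_bifeqn}, whose imaginary part vanishes only at $\omega^2 = 1$. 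All remaining verifications are routine applications of the implicit function theorem in parameter-dependent form.
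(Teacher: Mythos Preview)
Your approach is essentially the same as the paper's: the Hopf analysis in Section~\ref{a:hopf} produces the $(k,q)$-parametrized family via $k'(0)\neq 0$, and then $\partial_q M\neq 0$ permits the change to $(k,M)$ coordinates by the implicit function theorem, with uniqueness inherited from the bifurcation. Your explicit formula $\partial_q M=(6U_++\varepsilon)/\varepsilon^2$ appears to contain an arithmetic slip---differentiating $3M^2-cM=q$ together with $c=6U_+-\varepsilon$ and $-3U_+^2+\varepsilon U_+=q$ gives instead $\partial_q M=1/(\varepsilon-6U_+)$ at leading order---but this does not affect the argument, since nonvanishing (for $c_{\rm Hopf}\neq 0$) is all that is needed.
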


\subsection{The classic KS limit: a normal form analysis}\label{a:nf}

Here, we complement the above bifurcation analysis in the previous section with a normal form analysis near the degenerate case $|\eps|\ll 1$. In order to find periodic solutions, we necessarily work in the neighborhood of  $0<c^2+12q\ll 1$. Setting $V=U-U_+$ then we see that $V$ must satisfy \eqref{e:hopf_bifeqn} which we can rewrite here as the first order differential system
\be\label{edo3}
\left(\begin{array}{c}x'\\y'\\z'\end{array}\right)=\left(\begin{array}{ccc} 0 & 1 & 0\\-1 & 0 & 0\\0 & 0 & 0\end{array}\right)\left(\begin{array}{c}x\\y\\z\end{array}\right)+\mu(x+z)\left(\begin{array}{c}1\\0\\-1\end{array}\right)+\eps z\left(\begin{array}{c}1\\0\\-1\end{array}\right)+6(x+z)^2\left(\begin{array}{c}1\\0\\-1\end{array}\right)
\ee
where $x=2V+V''$, $y=V'$, $z=V+V''$, and $\eps>0$ and $\mu$ are small.

Next we compute the normal form of (\ref{edo3}).  By finding an appropriate change of variables of the form % we search for a change of variable
\[
(x,y,z)=(Id+\mu T_1+\eps T_2+Q)(\tilde{x},\tilde{y},\tilde{z})
\]
where the $T_i$ are linear operators and $Q$ a quadratic vector field, we obtain after a lengthy but straightforward computation
\be\label{nf}
\begin{aligned}
\tilde{x}'&=\tilde{y}+\mu\tilde{x}/2+\Gamma\tilde{x}\tilde{z}+\mathcal{O}(3)\\
\tilde{y}'&=-\tilde{x}+\mu\tilde{y}/2+\Gamma\tilde{y}\tilde{z}+\mathcal{O}(3)\\
\tilde{z}'&=-(\eps+\mu)\tilde{z}-\Gamma\big(\tilde{z}^2+(\tilde{x}^2+\tilde{y}^2)/2\big)+\mathcal{O}(3).
\end{aligned}
\ee
where $\mathcal{O}(3)$ denotes terms cubic order or higher terms.
Finally, using polar coordinates $r^2=\tilde{x}^2+\tilde{y}^2$ and $\theta=\cos^{-1}\left(\frac{\tilde{x}}{r}\right)$,
and dropping all resulting $\mathcal{O}(3)$ terms we obtain the normal form
%\be\label{nf:cyl}
%\displaystyle
%r'=\mu \frac{r}{2}+\Gamma rz,\quad z'=-(\eps+\mu)z-\Gamma\big(z^2+\frac{r^2}{2})
%\ee
\be\label{nf:cyl}
\begin{aligned}
r'&=\mu \frac{r}{2}+\Gamma rz\\
z'&=-(\eps+\mu)z-\Gamma\big(z^2+\frac{r^2}{2})\\
\theta'&=-1
\end{aligned}
\ee
valid in a neighborhood of $(\eps,\mu)=(0,0)$.

The dynamics of the normal form equation \eqref{nf:cyl} is quite easy to describe.  In the corresponding three dimensional phase
plane we find two fixed points in the r-z plane with $r=0$: namely, $(r,z)=(0,0)$ and $(r,z)=(0,-(\eps+\mu)/\Gamma)$.  By definition, these correspond
to the steady solutions $U_{\pm}$ of the original system \eqref{e:pr}.
The Jacobian matrix J at these points are, respectively,
\[
J(0,0)=\left(\begin{array}{cc} \mu/2 & 0\\ 0 & -(\eps+\mu)\end{array}\right)\textrm{  and  }
J\left(0,\frac{-(\eps+\mu)}{\Gamma}\right)\left(\begin{array}{cc} -\eps-\mu/2 & 0\\ 0 & \eps+\mu\end{array}\right).
\]
Furthermore, nontrivial stationary solutions of \eqref{nf:cyl} in the r-z plane exist only if $\mu^2+2\mu\eps$ is positive, in which case there is only
one possible stationary point $P$ given by
\[
(r,z)=\left(\sqrt{\frac{\mu^2+2\eps\mu}{2\Gamma^2}}\;,-\frac{\mu}{2\Gamma}\right),
\]
%CHANGED, added missing word, right?-K
which
is a non-trivial stationary solution of \eqref{nf:cyl} corresponding to a periodic solution to the original system \eqref{e:pr}.
%CHANGED-MR
%Because 
%Here,
%
We readily find that the trace and determinant of the Jacobian matrix $J_P$ are
\[
{\rm tr}(J_P)=-\eps,\quad {\rm det}(J_P)=\frac{\mu^2+2\eps\mu}{2}
\]
so that the periodic point  $P$ undergoes a Hopf bifurcation (in the r-z plane) at $\eps=0$, which corresponds precisely to the Kuramoto-Sivashinsky equation. To determine whether
it is a sub or supercritical bifurcation, one would need to compute higher order terms in the normal
form, which is beyond the scope of this paper. These periodic orbits emerging from $P$ in the r-z plane
correspond to quasiperiodic solutions of the original system \eqref{e:pr}.

Finally, note that in \cite{CD} a similar normal form was derived in the case when $\eps=0$, corresponding to the ``classic" KS equation
of the form
\[
\tilde{r}'=-\tilde{\mu}\tilde{r}/2+2\tilde{r}\tilde{z},\quad \tilde{z}'=\tilde{\mu}\tilde{z}-2\tilde{z}^2-4\tilde{r}^2,\quad\theta'=1
\]
from which we can recover \eqref{nf:cyl} in this case by a simple rescaling. In both cases, if ${\rm det}(J_P)=\frac{\mu^2+2\eps\mu}{2}\neq 0$, a straightforward application of the implicit function theorem shows that the point $P$, corresponding to a  periodic wave train of \eqref{e:cks}, persists 
%CHANGED-MR
%for 
under 
higher order perturbations.\\

%%Mentioned that point P persists for higher order perturations even for KS BUT not the periodics in z-r plane: see thereafter. P.

We further note that in \cite{CD} a full family of periodic solutions to (\ref{nf:cyl}) for the Poincar\'e return map around the point $P$ and ending with a solitary wave was found. However, it was not proved that such a family of quasi periodic solutions persists under higher order perturbations. This is not surprising since we are precisely at the Hopf bifurcation point and $\eps$ is the additional parameter needed in KS to carry out a codimension $2$ bifurcation analysis. In contrast when $0<|\eps|\ll 1$, there is a selection of the periodic orbit of the Poincar\'e return map and this structure persists for higher order perturbations. \\

%%Added a comment on Shil'nikov and work in progress with M. Jolly. P.

Let us mention that the full bifurcation analysis for KS is far more complicated: indeed Kent and Elgin \cite {KE} proved the occurrence of a Shi'lnikov bifurcation which leads to  cascades of period doubling, period multiplying $k$-bifurcations and oscillatory homoclinic
%CHANGED, added
as period is increased.
%ENDCHANGED
The computation of the bifurcation diagram was also investigated numerically for KS \cite{BKJ} and work is still in progress to carry out a similar program for gKS by using AUTO continuation software.

\subsection{The KdV limit: singular perturbation analysis}\label{a:kdv}

We finally conclude our survey of relevant existence results for periodic traveling wave solutions of \eqref{e:cks} by considering a particular singular limit arising 
%CHANGED-MR
%often in applications in thin film theory.
in applications to pattern formation analysis. 
In this context, it is usually assumed that $\eps=1$ and $0<\delta\ll 1$ in which case \eqref{e:cks} can
be treated as a singular perturbation of the integrable KdV equation
\be\label{e:kdv}
u_t+uu_x+u_{xxx}=0.
\ee
As is well known, \eqref{e:kdv} admits a four parameter family of periodic traveling wave solutions of the form
\be\label{e:kdvsoln}
U_{\rm KdV}(x-c(u_0,\kappa,p)t;x_0,u_0,\kappa,p)=u_0+12p^2\kappa^2\cn^2\left(\kappa(x-x_0-c(u_0,\kappa,p)t),p\right)
\ee
with wave speed $c(u_0,\kappa,p):=8\kappa^2p^2-4\kappa^2+u_0$,
where $\cn(\cdot,p)$ denotes the standard Jacobi elliptic cosine function with elliptic modulus $p\in[0,1)$,
and $x_0$, $u_0$, and $\kappa$ are arbitrary real numbers corresponding, respectively, to the
translation, Galilean, and scaling symmetries of the KdV equation \eqref{e:kdv}.

Here, we are interested in the continuation of
these explicit solutions to the KdV equation in the singular limit $\delta\to 0^+$ in \eqref{e:cks}.
In this limit, it was shown in \cite{EMR}, by using Fenichel theory, that periodic solutions to \eqref{e:cks} remain close to the above elliptic function
solutions of the KdV and, in particular, an expansion of these solutions with respect to $\delta$
was obtained.  Furthermore, we point out 
%CHANGED-MR
that 
in \cite{BaN} a formal spectral analysis of these perturbed KdV waves was conducted. 
%CHANGED-MR
% in the context of mean-zero perturbations under the flow induced by \eqref{e:cks}.
We now briefly describe the associated expansions of the periodic traveling waves in the 
limit $\delta\to 0^+$; see \cite{EMR,BaN,NR2,JNRZ4} for more details.

Without loss of generality, we can restrict to zero-mean solutions of \eqref{e:cks}.  We seek
an expansion of the associated periodic traveling wave solution of the form
\[
U^\delta(x,t)=U_{\rm KdV}(x,t;x_0,u_0^*,\kappa,p)+\delta\tilde{U}(x,t)+\mathcal{O}(|\delta|^2)
\]
where $u^*$ is chosen so that the mean of $U_{\rm KdV}$ over one spatial period vanishes.  Notice at
order $\delta^0$ the parameters $p$ and $\kappa$ are completely arbitrary, yielding a three parameter
family of periodic traveling wave solutions of \eqref{e:kdv} with zero mean.  When $0<\delta\ll 1$ however,
we expect only a \emph{two} parameter family to persist, hence we expect a selection principle
to manifest itself at the next order between the parameters $p$ and $\kappa$.

At the next order one finds that the first-order correction $\tilde{U}$ must satisfy the equation
%CHANGED-MR: c_1
\be\label{e:firstcorr}
\kappa^2\tilde{U}'''+(6U_{\rm KdV}\tilde{U}-c(u_0^*,\kappa,p)\tilde{U})'-\tilde{c}_1U_{\rm KdV}'+\kappa U_{\rm KdV}''+\kappa^3U_{\rm KdV}''''=0,
\ee
where $\tilde{c}_1$ denotes the first order $\delta$-correction to the wave speed of $U^\delta$.  As the linear
operator
\[
\mathcal{L}:=\kappa^2\frac{d^3}{dx^3}+\frac{d}{dx}\left((6U_{\rm KdV}-c(u_0^*,\kappa,p)).\right)
\]
has Fredholm index 0
%CHANGED-MR: adjoint
% with ${\rm Ker}(\mathcal{L})={\rm span}\{1,U_{KdV}'(\cdot;x_0,u_0^*,\kappa,p)\}$, 
and the kernel of its adjoint operator is spanned by constant functions and $U_{KdV}(\cdot;x_0,u_0^*,\kappa,p)\}$,  
it follows that \eqref{e:firstcorr} will have a solution precisely when the solvability condition
\be\label{solvability}
\Big<\Big(U_{\rm KdV}'\Big)^2\Big>=\kappa^2\Big<\Big(U_{\rm KdV}''\Big)^2\Big>
\ee
is satisfied. Condition \eqref{solvability} yields an explicit selection principle between
the elliptic modulus $p$ and the scaling parameter $\kappa$ and, as a result, we find
for $0<\delta\ll 1$ a two-parameter family of periodic traveling wave solutions of \eqref{e:cks}
parameterized by translation and the elliptic modulus $p$ or, equivalently, by translation
and period.

Coming back to the more general case where we allow $U_{\rm KdV}$ to have non-zero mean, we have
obtained for $0<\delta\ll 1$ a three dimensional manifold of periodic traveling wave solutions of \eqref{e:cks} parameterized
by translation, period, and spatial mean over a period. 

%CHANGED-MR: replaced with something below
%%%Added a comment on our paper on the KdV limit. P.
%In a forthcoming paper, we will investigate analytically the spectral stability of periodic traveling wave in this KdV limit by using singular perturbation analysis and the fact that both eigenvalues and eigenvectors for the KdV equations are known explicitly \cite{BD}. 
%
%CHANGED,added-KZ
%The first numerical tests carried out here suggests that there are finite limit for lower and upper bounds of stability as $\delta\to 0$.
The numerical tests carried out here and in \cite{CDK} suggests that there are finite limit for lower and upper bounds of stability as $\delta\to 0$,
in agreement with results predicted by formal singular perturbation analysis in \cite{BaN}.
%ENDCHANGED
%CHANGED-MR
%CHANGED-MJ-MARCH wording 
The purpose of the recent work \cite{JNRZ4}, although still finally relying on elliptic integrals numerical computations of \cite{BaN}, is precisely to go a step further towards a complete analytic proof of these observations.

\section{Appendix: The Swift-Hohenberg equation}\label{a:sh}

In this appendix, we demonstrate a streamlined proof that spectrally modulationally stable
periodic traveling wave solutions of the Swift-Hohenberg equation \eqref{e:sh} are nonlinearly
stable to small localized perturbations.  While this problem has been previously solved
by Schneider in \cite{Sc} by using a combination of weighted energy estimates, renormalization theory,
and ingenious nonlinear cancellation technique, all carried out in the Bloch frequency domain, our nonlinear analysis rather relies on spatial domain techniques developed in \cite{OZ4,JZ1,JZ3} in the context of systems of viscous conservation laws. We also carry out a numerical spectral stability analysis, demonstrating generality of our techniques. In particular, 
%CHANGED-MR
%we study spectral stability in a particular parameter regime and 
in a specific parameter regime we 
obtain nice agreement with stability curves found by Mielke \cite{M1}.

\subsection{Setup and main result}
The traveling wave solutions of \eqref{e:sh} are stationary solutions of the PDE
\begin{equation}\label{e:travsh}
\partial_t u-c\partial_x u+(1+\partial_x^2)^2u-ru+f(u)=0
\end{equation}
for some wave speed $c\in\RM$, i.e. they are solutions of the traveling wave ODE
\begin{equation}\label{e:twsh}
-cu'+u''''+2u''+(1-r)u+f(u)=0.
\end{equation}
Due to the presence of the non-conservative terms, this equation can not be further integrated, %(as we did with \eqref{e:ks})
hence the orbits of \eqref{e:twsh} lie in the phase space $\RM^4$.  In particular, it follows that periodic orbits $u$ of
\eqref{e:twsh} correspond to values $(b,c,X)\in\RM^6$, where $X,c\in\RM$ denote the period and speed, respectively,
and the vector $b=(b_1,b_2,b_3,b_4)$ denotes the values of $(u,u',u'',u''')$ at $x=0$ such that
\[
(u,u',u'',u''')(X; b,c)=b,
\]
\noindent
where $(u,u',u'',u''')(\cdot; b,c)$ is the unique solution of \eqref{e:twsh} so that $(u,u',u'',u''')(0; b,c)=b$.
As usual, we make the following technical assumptions:
\begin{itemize}
\item[(H1')] $f\in C^{K}$, $K\ge 5$.
%(H1) $f\in C^2$.
%TODO, NOTE: not really an assumption I think.. -KZ..
\item[(H2')] The map $H: \,
\R^6 \rightarrow \R^4$	
taking $(b,c,X) \mapsto (u,u',u'',u''')(X; b, c)-b$
is full rank at $(\bar b, \bar c,\bar X)$.
\end{itemize}

By the Implicit Function Theorem, conditions (H1')--(H2') imply that the set of periodic solutions 
%CHANGED-MR
of \eqref{e:sh} 
in the vicinity of 
%CHANGED-MR
%$\bar U$ 
$\bar u$, with parameters $(\bar b, \bar c,\bar X)$, 
forms a smooth $2$-dimensional manifold 
%CHANGED-MR
%$\{\bar U(x-x_0-ct;c)\}$, with $x_0,c\in\RR$.
$$
\left\{\ (x,t)\mapsto U(x-\alpha-c(\beta)t;\beta)\ \middle|\ (\alpha,\beta)\in\RM\times\mathcal{I}\ \right\},
\quad
\hbox{\rm with $\mathcal{I}\subset \RM$}.
$$
In particular, in contrast to the Kuramoto-Sivashinsky equation \eqref{e:gen}, the lack of conservative
structure implies a loss in dimension of the periodic solution manifold.  As we will see, this has the effect
that there are no longer ``enough" periodic orbits around to make variations in wave speed an admissible
perturbation, hence the corresponding linearization does not have a (co-periodic) Jordan block at the origin.
%
%NOTE:
%This is where the game changes.  The lack of conservative structure implies a loss in dimension of the
%periodic manifold in a neighborhood of a fixed solution.  In particular, there are not enough periodic orbits around
%to make variations in wavespeed an admissible periodic perturbation and hence there is no (co-periodic) Jordan block
%at the origin.  This exactly the same as the adaptation of the parabolic conservation law methods to the reaction diffusion setting.

\br\label{H2rmk:sh}
\textup{
As noted in \cite{JZN}, transversality, (H2'),
is necessary for our notion of spectral stability
%(see (D2'), \S \ref{bloch}),
hence there is no loss of generality in making this assumption.
%assuming this here.
}
\er

To begin our stability analysis we consider the linearization of \eqref{e:travsh} about a fixed 
%CHANGED-MR: -
$\bar X$-periodic 
traveling wave solution 
%CHANGED-MR
$\bar{u}=u(\cdot;\bar b,\bar c)$.
Here we assume 
%CHANGED-MR
%with out 
without 
loss of generality $\bar X=1$.
To this end, consider a nearby solution of \eqref{e:travsh} of the form
%CHANGED-MR: no lambda
%\[
%\bar{u}(x)+e^{\lambda t}v(x)
%\]
%where $\lambda\in\CM$ and $v(\cdot)\in L^2(\RM)$ satisfies $\|v\|_{L^2(\RM)}\ll 1$. 
$\bar{u}(x)+v(x,t)$ with $v$ small.
Directly substituting this into \eqref{e:travsh} and neglecting quadratic order terms in $v$ leads
us to 
%CHANGED-MR
%the spectral problem
the linearized equation $(\d_t-L)v=0$ with 
%
%CHANGED-%R
\begin{equation}\label{e:lin-sh}
%\lambda v=
Lv:=cv_x-v_{xxxx}-2v_{xx}+(r-1)v-f'(\bar{u})v.
\end{equation}
%considered on the real Hilbert space $L^2(\RM)$.  
%ENDCHANGED-MR
Introducing the one-parameter family of Bloch operators
\[
L_\xi:=e^{-i\xi x}Le^{i\xi x},\;\;\xi\in[-\pi,\pi)
\]
operating on $L^2_{\rm per}([0,1])$
%CHANGED-MR
%CHANGED-MJ-MARCH not split, but parameterizes
%splits the spectrum of $L$ into
parameterizes the spectrum of $L$ as
\[
\sigma_{L^2(\RM)}(L)=\bigcup_{\xi\in[-\pi,\pi)}\sigma_{L^2_{\rm per}([0,1])}\left(L_\xi\right).
\]
We assume the following spectral 
%CHANGED-MR
%modulational 
stability conditions:
\begin{itemize}
\item[(D1')] $\sigma(L) \subset \left\{ \lambda\in\CM \middle| \hbox{\rm Re}(\lambda)< 0  \right\}\cup\{0\}$.
\item[(D2')] $\sigma(L_\xi) \subset \left\{ \lambda\in\CM \middle| \hbox{\rm Re}(\lambda)\leq -\theta |\xi|^2 \right\}$, for some $\theta>0$ and any $\xi\in[-\pi,\pi)$.
\item[(D3')] $\lambda=0$ is a simple eigenvalue of $L_0$.
\end{itemize}
%ENDCHANGED-MR
%CHANGED-MR
By standard spectral perturbation theory \cite{K}, (D3') implies that the critical eigenvalue $\lambda(\xi)$ bifurcating
from $\lambda=0$ at $\xi=0$ is analytic in $\xi$.  In particular, under assumption (D2')
and the symmetry $\lambda(\xi)=\overline{\lambda(-\xi)}$, the $\lambda(\xi)$ admits an expansion as $\xi\to 0$ of the form 
%CHANGED-MR: confusion with (D1), (D2), (D3)+ match  
%$\lambda(\xi)=\lambda_1\xi+\lambda_2\xi^2+\mathcal{O}(|\xi|^3)$.
%From assumption (D1') then, 
%CHANGED-MJ
%we find that\footnote{Indeed, (D1') implies that $\Re(\lambda_1)=0$ while the fact
%that  the Bloch operators $L_\xi$ are self adjoint implies that $\Im(\lambda_1)=0$.}
%$\lambda_1=0$ and
%%we find that $\Re(\lambda_1)=0$ and 
%%ENDCHANGED
%$\Re(\lambda_2)\leq 0$.  Assumption (D2') thus amounts to the non-degeneracy condition
%$\Re(\lambda_2)\neq 0$, together with the strict stability condition $\Re(\sigma(L_\xi))<0$ for $\xi\neq 0$.
\begin{equation}\label{e:surfacessh}
\lambda(\xi)=-ia\xi-b\xi^2+\mathcal{O}(|\xi|^3),\quad a\in\RM,\quad b>0.
\end{equation}
%given assumption (D2') and symmetry $\lambda(\xi)=\overline{\lambda(-\xi)}$.
%

%CHANGED-MR: more compact+L is not self-adjoint
%\br\textup{
%Notice that given a real valued, periodic coefficient differential operator $\mathcal{L}$ whose
%essential spectrum near the origin admits a Bloch representation of the form $\lambda(\xi)=\sum_{j\in\NM} \alpha_j\xi^j$,
%the fact that the spectrum is symmetric with respect to the imaginary axis implies
%that $\alpha_{2j}\in \RM$ and $\alpha_{2j+1}\in\RM i$ for each $j\in\mathbb{N}$.  Indeed,
%this follows from the fact that if $\lambda(\xi)$ is an eigenvalue of the Bloch operator
%$\mathcal{L}_\xi$ with eigenfunction $v$, then $\bar{\lambda(-\xi)}$ is an eigenvalue
%of $\mathcal{L}_{-\xi}$ with eigenfunction $\bar{v}$.
%Thus, in the special case that $\mathcal{L}$ is self adjoint, as in the present context,
%it follows the critical spectral branches of the Bloch operators $\mathcal{L}_\xi$ 
%bifurcating from the origin must be of the from $\lambda(\xi)=\sum_{j\in\NM}\alpha_{2j}\xi^{2j}$.  Finally,
%we note that this observation follows from the associated Whitham modulation equations as well; see \cite{JNRZ1}
%for details.
%}\label{r:evenoddspec}
%\er
\br\textup{
More generally, assume that a real valued, periodic coefficient differential operator $\mathcal{L}$ whose
essential spectrum near the origin admits a Bloch representation as a single analytic curve of the form $\lambda(\xi)=\sum_{j\in\NM} \alpha_j\xi^j$.
Then the symmetry $\lambda(\xi)=\overline{\lambda(-\xi)}$ implies that $\alpha_{2j}\in \RM$ and $\alpha_{2j+1}\in i\RM$ for each $j\in\mathbb{N}$. 
%Thus, in the special case that $\mathcal{L}$ is self adjoint, as in the present context, it follows that the critical spectral branches of the Bloch operators $\mathcal{L}_\xi$ 
%bifurcating from the origin must be of the from $\lambda(\xi)=\sum_{j\in\NM}\alpha_{2j}\xi^{2j}$. 
Finally, we note that this observation is related to properties of the associated Whitham modulation equations as well; see \cite{JNRZ1}
for details.
}\label{r:evenoddspec}
\er

With these preparations in hand, we now state the main theorem of this appendix.

\begin{theo}\label{main-sh}
Let $\bar u$ be any steady 1-periodic solution of \eqref{e:travsh} such that (H1')--(H2') and (D1')--(D3') hold. Then there exist constants $\varepsilon_0>0$ and $C>0$ such that for any $\tilde u_0$ with $\|\tilde{u}_0-\bar{u}\|_{L^1(\RM)\cap H^K(\RM)}\leq\varepsilon_0$, where $K$ is
as in assumption (H1'), there exist $\tilde{u}$ a solution of \eqref{e:travsh} satisfying $\tilde u(\cdot,0)=\tilde u_0$ 
and a function $\psi(\cdot,t)\in W^{K,\infty}(\RM)$ such that for all $t\geq 0$ and $2\leq p\leq\infty$
we have the estimates
\begin{equation}\label{eq:smallsest-sh}
\begin{aligned}
\left\|\tilde{u}(\cdot+\psi(\cdot,t),t)-\bar{u}(\cdot)\right\|_{L^p(\RM)}
         &\leq C\left(1+t\right)^{-\frac{1}{2}\left(1-\frac{1}{p}\right)-\frac12}\left\|\tilde{u}(\cdot,0)-\bar{u}(\cdot)\right\|_{L^1(\RM)\cap H^K(\RM)},\\
\left\|(\psi_t,\psi_x)(\cdot,t)\right\|_{L^p(\RM)}
         &\leq C\left(1+t\right)^{-\frac{1}{2}\left(1-\frac{1}{p}\right)-\frac12}\left\|\tilde{u}(\cdot,0)-\bar{u}(\cdot)\right\|_{L^1(\RM)\cap H^K(\RM)},\\
\left\|\tilde{u}(\cdot+\psi(\cdot,t),t)-\bar{u}(\cdot)\right\|_{H^K(\RM)}
         &\leq C\left(1+t\right)^{-\frac{3}{4}}\left\|\tilde{u}(\cdot,0)-\bar{u}(\cdot)\right\|_{L^1(\RM)\cap H^K(\RM)},\\
\left\|(\psi_t,\psi_x)(\cdot,t)\right\|_{H^K(\RM)}
         &\leq C\left(1+t\right)^{-\frac{3}{4}}\left\|\tilde{u}(\cdot,0)-\bar{u}(\cdot)\right\|_{L^1(\RM)\cap H^K(\RM)}.
\end{aligned}
\end{equation}
Moreover, we have the $L^1(\RM)\cap H^K(\RM)\to L^p(\RM)$, $2\leq p\leq\infty$, nonlinear stability estimate
\begin{equation}\label{eq:stab-sh}
\left\|\tilde{u}(\cdot,t)-\bar{u}(\cdot)\right\|_{L^p(\RM)},~\left\|\psi(\cdot,t)\right\|_{L^p(\RM)}\leq C\left(1+t\right)^{-\frac{1}{2}\left(1-\frac{1}{p}\right)}\left\|\tilde{u}(\cdot,0)-\bar{u}(\cdot)\right\|_{L^1(\RM)\cap H^K(\RM)}
\end{equation}
valid for all $t\geq 0$.
\end{theo}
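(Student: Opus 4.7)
The plan is to adapt the proof of Theorem \ref{main} to the simpler non-conservative setting, exploiting the fact that by (D3') the eigenvalue $\lambda=0$ of $L_0$ is \emph{simple}, so no Jordan block needs to be unraveled and only a single critical Bloch curve $\lambda(\xi)$ bifurcates from the origin. I would begin with the spectral preparation: standard analytic perturbation theory applied to the discrete isolated simple eigenvalue $0$ of $L_0$ yields $\xi_0>0$ and an analytic family $\lambda:(-\xi_0,\xi_0)\to\CM$ with analytic dual right/left eigenfunctions $q(\xi,\cdot)$, $\tilde q(\xi,\cdot)$ of $L_\xi$ normalized by $\langle \tilde q(\xi,\cdot),q(\xi,\cdot)\rangle_{L^2_{\rm per}([0,1])}=1$ and $q(0,\cdot)=\bar u_x$, together with the expansion \eqref{e:surfacessh}. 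This is much shorter than Lemma \ref{blochfacts}, since no $(i\xi)^{-1}$ rescaling is required to resolve a degeneracy.

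Next I would decompose the linearized solution operator as $e^{Lt}=S^I(t)+S^{II}(t)$ using a smooth cutoff $\phi$ in Bloch frequency localized near $\xi=0$, where $S^I$ projects onto the critical eigenspace. Exponential decay of $S^{II}$ follows from sectorial semigroup bounds as in Proposition \ref{p:hfbounds}. For the low-frequency Green kernel, writing
\[
G^I(x,t;y)=\int_{-\pi}^{\pi}e^{i\xi(x-y)}\phi(\xi)e^{\lambda(\xi)t}q(\xi,x)\tilde q(\xi,y)^*\,d\xi,
\]
I split $q(\xi,x)=\bar u_x(x)+(q(\xi,x)-q(0,x))$ to obtain the decomposition $G^I(x,t;y)=\bar u_x(x)\tilde e(x,t;y)+\tilde G^I(x,t;y)$ with
\[
\tilde e(x,t;y)=\int_{-\pi}^{\pi}e^{i\xi(x-y)}\phi(\xi)e^{\lambda(\xi)t}\tilde q(\xi,y)^*\,d\xi.
\]
Crucially, and in contrast with \eqref{e1def}, no $(i\xi)^{-1}$ factor appears here, so a direct application of the generalized Hausdorff--Young inequality \eqref{hy} combined with $|e^{\lambda(\xi)t}|\lesssim e^{-\theta\xi^2 t}$ yields genuine heat-kernel decay $(1+t)^{-\frac12(1/q-1/p)}$ for $\tilde e$ itself (rather than mere boundedness), with an extra $(1+t)^{-1/2}$ gained for $\tilde G^I$ and for each spatial or time derivative of $\tilde e$. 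Setting $e(x,t;y):=\chi(t)\tilde e(x,t;y)$ as in \eqref{enew} gives a Green kernel decomposition analogous to Corollary \ref{e:Gbdsfinal} but with all exponents shifted by $1/2$.

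For the nonlinear argument I would introduce the spatially modulated perturbation $v(x,t):=\tilde u(x+\psi(x,t),t)-\bar u(x)$ and, paralleling Lemma \ref{l:cancel1}, compute
\[
(\partial_t-L)(v-\psi\bar u_x)=\mathcal{N}=\partial_x\mathcal{Q}+\mathcal{R}+\partial_t\mathcal{S},
\]
where the lack of the convective term $\partial_x f(u)$ and the presence of the algebraic nonlinearity $f(u)$ gives an even simpler form for $\mathcal{Q},\mathcal{R},\mathcal{S}$ than in the conservative case, with each term a product of at least two small factors drawn from $v$, $\psi_x$, $\psi_{xx},\ldots$, $\psi_t$ (and importantly, never $\psi$ itself without a derivative). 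Duhamel's formula combined with the Green kernel decomposition then suggests defining $\psi$ implicitly by
\[
\psi(x,t)=-\int_\RM e(x,t;y)v(y,0)\,dy-\int_0^t\int_\RM e(x,t-s;y)\mathcal{N}(y,s)\,dy\,ds,
\]
so that $v$ satisfies the closed integral equation
\[
v(x,t)=\int_\RM\tilde G(x,t;y)v(y,0)\,dy+\int_0^t\int_\RM\tilde G(x,t-s;y)\mathcal{N}(y,s)\,dy\,ds,
\]
with $\psi(\cdot,0)\equiv 0$ thanks to the cutoff $\chi$.

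The final step is the bootstrap iteration. I would introduce
\[
\eta(t):=\sup_{0\le s\le t}\Bigl((1+s)^{3/4}\|(v,\psi_t,\psi_x,\psi_{xx})(s)\|_{H^K(\RM)}+\sup_{2\le p\le\infty}(1+s)^{\frac12(1-1/p)+\frac12}\|(v,\psi_t,\psi_x)(s)\|_{L^p(\RM)}\Bigr)
\]
and show, by combining the sharpened linear estimates above with a nonlinear damping inequality obtained exactly as in Proposition \ref{p:damping} (integrating \eqref{e:sh} against $\sum_{j=0}^K(-1)^j\partial_x^{2j}v$ and absorbing highest-order terms), that $\eta(t)\le C(E_0+\eta(t)^2)$; the smallness assumption then propagates by continuous induction. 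Because both $\psi$ and $v$ now decay at heat-kernel rate, the nonlinear source satisfies $\|\mathcal{N}(s)\|_{L^1\cap L^2}\lesssim \eta(s)^2(1+s)^{-3/2}$, which is integrable against the linear kernel estimates and closes the iteration with the improved rates of \eqref{eq:smallsest-sh}. The bound \eqref{eq:stab-sh} then follows from $\tilde u(x,t)-\bar u(x)=v(Y(x,t),t)+\bar u(Y(x,t))-\bar u(x)$ and a mean-value estimate on the second difference in terms of $\|\psi(\cdot,t)\|_{L^p}$. I expect the main obstacle to be the careful bookkeeping of time-decay rates in the Duhamel integrals, in particular handling the $\partial_t\mathcal{S}$ contribution (which must be integrated by parts in time as in the proof of Lemma \ref{l:iteration}) so that the critical borderline term $\int_0^t(1+t-s)^{-\frac12(1-1/p)-\frac12}(1+s)^{-3/2}\,ds$ remains of the desired order $(1+t)^{-\frac12(1-1/p)-\frac12}$ without logarithmic losses.
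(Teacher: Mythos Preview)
Your proposal is correct and follows essentially the same route as the paper's proof in Appendix \ref{a:sh}: spectral preparation via simple-eigenvalue perturbation theory, the decomposition $G^I=\bar u_x\tilde e+\tilde G^I$ with $\tilde e$ free of the $(i\xi)^{-1}$ singularity, Hausdorff--Young bounds yielding heat-kernel rates, the implicit definition of $\psi$, a nonlinear damping estimate, and a bootstrap on $\eta(t)\sim(1+t)^{3/4}\|(v,\psi_t,\psi_x,\psi_{xx})\|_{H^K}$. Two minor points where the paper is slightly leaner: the nonlinearity is organized as $\mathcal{N}=\mathcal{Q}+\partial_x\mathcal{R}+\partial_t\mathcal{S}+\mathcal{T}$ (no $\partial_x$ on the algebraic $\mathcal{Q}$ since $f(u)$ is not in divergence form here), and because $\|\mathcal{N}(s)\|_{L^1\cap L^2}\lesssim\eta(s)^2(1+s)^{-3/2}$ is already integrable, the paper bounds the Duhamel integral directly without any integration by parts in time for $\partial_t\mathcal{S}$, so your anticipated ``main obstacle'' does not actually arise.
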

The outline of the proof is as follows.  Given a fixed periodic solution $\bar{u}(x)$ of the traveling wave ODE \eqref{e:twsh}, nearby 
solutions of the traveling SH equation \eqref{e:travsh} $\tilde{u}$ are investigated by  defining a nonlinear perturbation variable
\begin{equation}\label{pertvar-sh}
v(x,t):=\tilde{u}(x+\psi(x,t),t)-\bar{u}(x),
\end{equation}
where $\psi:\RM\times\R_+\to\RM$ is a spatial-temporal phase modulation $\psi$ 
to be chosen later such that $\psi(\cdot,0)\equiv0$, corresponding to perturbations
which are initially spatially localized. 
By a direct calculation, we find that \eqref{e:travsh} is then written
\[
\left(\partial_t-L\right)(v-\bar{u}_x\psi)=\mathcal{N}[v,\psi]
\]
where $L$ is the linearized operator defined in \eqref{e:lin-sh} and $\mathcal{N}$ denotes a nonlinear remainder term depending on derivatives of $v$ and $\psi$: 
see \eqref{veq-sh} below. By an application of Duhamel's principle, it follows that \eqref{e:travsh} reads
\begin{equation}\label{vintegral-sh}
v(\cdot,t)-\bar{u}_x\psi(\cdot,t)=e^{Lt}v(\cdot,0)+\int_0^te^{L(t-s)}\mathcal{N}[v,\psi](\cdot,s)ds.
\end{equation}
Of course, we need then a detailed study of the linearized solution operator $e^{Lt}$. In the present periodic context, this is complicated
by the fact that the spectrum of $L$ agrees with the essential spectrum and contains a spectral curve touching the imaginary axis. We are thus led to separate the exponentially-stable spectrum from this low-Floquet critical curve for which, at best, one should only expect polynomial decay of small perturbations. A full separation at the linear level would lead to a separation of the nonlinear equation 
%CHANGED-MJ-MARCH
%into a decay-critical equation and an exponentially-slaved equation. 
into a decay-critical equation and an exponentially slaved equation forced by nonlinear coupling terms.
%f
Although we perform the linear separation only in an approximate way, the purpose of the introduction of $\psi$ is to ensure that $\psi \bar u_x$ contains at the linear level the main contribution of the algebraically decaying part of the semigroup, decay of $v$ being essentially slaved.
%ENDCHANGED-MR

\subsection{Linearized estimates}\label{s:linearizedestimates-sh}

%CHANGEDD-MR
%We begin our analysis deriving decay rates for the solution operator $e^{Lt}$ of the linearized
%equation \eqref{e:lin-sh}.  Recalling the inverse Bloch transform representation \eqref{IBFT} of the solution operator,
%we first note that by standard spectral perturbation theory \cite{K}, assumption (D3') implies that the total eigenprojection
%$P(\xi)$ onto the eigenspace of $L_{\xi}$ associated with the critical eigenvalue $\lambda(\xi)$
%bifurcating from the $(\xi,\lambda(\xi))=(0,0)$ state is well-defined and analytic in $\xi$ for $\xi$ sufficiently small, since the discreteness
%of the spectrum of $L_{\xi}$ implies $\lambda(\xi)$ separated at $\xi=0$ from the rest
%of the spectrum of $L_0$.  In particular, there exists an eigenfunction $q(x,\xi)$ bifurcating from the $q(x,0)=\bar{u}'(x)$
%state defined for $|\xi|\ll 1$ such that
%\[
%L_\xi q(x,\xi)=\lambda(\xi)q(x,\xi)
%\]
%where, by assumption (D2), the function $\lambda(\xi)$ satisfies the estimate $\Re(\lambda(\xi))\leq-\theta|\xi|^2$ for some constant $\theta>0$.
We begin our analysis deriving decay rates for the semigroup $e^{Lt}$ of the linearized operator $L$ defined in \eqref{e:lin-sh}. 
With the inverse Bloch transform representation \eqref{IBFT} of the solution operator in mind, we begin by making more precise the statements about the critical curve. 
Since $0$ is separated from the rest of the spectrum of $L_0$, we first note that by standard spectral perturbation theory \cite{K}, assumption (D3') implies that there exist $\xi_0\in]0,\pi[$, $\varepsilon_0>0$, an analytic curve $\lambda:\ [-\xi_0,\xi_0]\to B(0,\varepsilon_0)$ such that, when $|\xi|\leq\xi_0$, $\sigma(L_\xi)\cap B(0,\varepsilon_0)\ =\ \left\{\lambda(\xi)\right\}$. 
and, when $|\xi|\leq\xi_0$, dual right and left eigenfunctions $q(\xi, \cdot)$ and $\tilde q(\xi, \cdot)$ of $L_\xi$ associated with $\lambda(\xi)$, analytic in $\xi$ and such that $q(0,\cdot)=\bar u_x$. We may thus define the critical spectral projection $P(\xi)$ through 
$$
P(\xi)g=q(\xi)\langle \tilde q(\xi),g\rangle_{L_{\rm per}^2([0,1])}.
$$
Symmetry of the spectrum and (D2') imply that $\lambda$ satisfies \eqref{e:surfacessh}.
%ENDCHANGED-MR

%CHANGED-MR
%Now, by decomposing the linearized solution operator $e^{Lt}$ as in \eqref{solnlf} and \eqref{solnhf} the analysis of the associated high-frequency
%solution operator $S^{II}$ follows by precisely the same argument given in Proposition \ref{p:hfbounds}.  In order to
%analyze the low-frequency part of the solution operator $e^{Lt}$ we introduce the associated Green kernel
%\[
%G^I(x,t;y):=S^I(t)\delta_y(x)
%\]
%and note that, by the spectral decomposition of $e^{L_\xi t}$, we can express $G^I$ as
%\[
%G^I(x,t;y)=\int_{-\pi}^\pi e^{i\xi(x-y)}\phi(\xi)e^{\lambda(\xi)t}q(\xi,x)\tilde{q}(\xi,y)^*d\xi.
%\]
Now, we choose a smooth cutoff function $\phi:[-\pi,\pi)\to[0,1]$ such that
\[
\phi(\xi)=\left\{
            \begin{array}{ll}
              1, & \textrm{ if }|\xi|\leq \xi_0/2 \\
              0, & \textrm{ if }|\xi|\geq \xi_0
            \end{array}
          \right.
\]
and decompose the linearized solution operator $S(t)=e^{Lt}$ as $S(t)=S^I(t)+S^{II}(t)$, with
$$
\begin{aligned}
S^I(t)g(x)&:=\int_{-\pi}^\pi e^{i\xi x}\phi(\xi)[P(\xi)e^{L_{\xi}t}\check{g}(\xi,\cdot)](x)d\xi\\
&=\int_{-\pi}^\pi e^{i\xi x}\phi(\xi)e^{\lambda(\xi)t}q(\xi,x)\langle \tilde q(\xi,\cdot),\check{g}(\xi,\cdot)\rangle_{L_{\rm per}^2([0,1])} d\xi\\
S^{II}(t)g(x)&:=\int_{-\pi}^\pi e^{i\xi x}[\left(1-\phi(\xi)P(\xi)\right)e^{L_{\xi}t}\check{g}(\xi,\cdot)](x)d\xi.
\end{aligned}
$$
Note, by the way, that the Green kernel
\[
G^I(x,t;y):=S^I(t)\delta_y(x)
\]
is given by
\[
G^I(x,t;y)=\int_{-\pi}^\pi e^{i\xi(x-y)}\phi(\xi)e^{\lambda(\xi)t}q(\xi,x)\tilde{q}(\xi,y)^*d\xi;
\]
see the proof of Lemma \ref{l:G1rep} for some details. General considerations about semigroups lead for $S^{II}$ to the following bounds,
%CHANGED-MJ-MARCH
%; see the proof of \eqref{p:hfbounds}.
whose proof is essentially the same as Proposition \ref{p:hfbounds}.

\begin{prop}\label{p:hfbounds-sh}
Under assumptions (H1')-(H2') and (D1')--(D3'), there 
exist constants $C,\theta>0$ such that for all $2\leq p\leq\infty$,
$0\leq 4l_1+l_2=l_3\leq K+1$, $0\leq 4m_1+m_2+m_3\leq K$, $K$ as in (H1'), $s=0,1$ and $t>0$
\begin{align*}
\left\|\d_t^{l_1}\d_x^{l_2} S^{II}(t)\d_x^{l_3}g\right\|_{H^s(\RM)}&\leq Ct^{-\frac{4l_1+l_2+l_3}{4}}e^{-\theta t}\|g\|_{H^s(\RM)},\\
\left\|\d_t^{m_1}\d_x^{m_2} S^{II}(t)\d_x^{m_3}g\right\|_{L^p(\RM)}&\leq Ct^{-\frac{1}{4}\left(\frac{1}{2}-\frac{1}{p}\right)-\frac{4m_1+m_2+m_3}{4}}e^{-\theta t}\|g\|_{L^2(\RM)},\\
\left\|S^{II}(t)g\right\|_{L^p(\RM)}&\leq Ce^{-\theta t}\|g\|_{H^1(\RM)}.
\end{align*}
\end{prop}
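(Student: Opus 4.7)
The proof will closely follow that of Proposition \ref{p:hfbounds}, with essentially no change in the overall architecture. The starting point is that each Bloch operator $L_\xi$ is a relatively compact perturbation of the constant-coefficient fourth-order elliptic operator $-(\d_x+i\xi)^4+2(\d_x+i\xi)^2+(r-1)$; after removing the critical spectral projection $\phi(\xi)P(\xi)$ (which is supported only on $|\xi|\leq\xi_0$), assumptions (D1')--(D3') guarantee that the spectrum of $(1-\phi(\xi)P(\xi))L_\xi$ remains in a half-plane $\{\Re\lambda\leq -\theta\}$ for some $\theta>0$ uniformly in $\xi\in[-\pi,\pi)$. Standard sectorial semigroup calculus \cite{Pa,He} then yields the uniform Bloch-level bounds
$$
\|L_\xi^m(\d_x+i\xi)^j e^{L_\xi t}(1-\phi(\xi)P(\xi))g\|_{L^2_{\rm per}([0,1])} \leq Ct^{-m-j/4}e^{-\theta t}\|g\|_{L^2_{\rm per}([0,1])}
$$
for all $m,j\geq 0$ and $t>0$, together with analogous statements in the $H^s_{\rm per}$ norms; here the $t^{-j/4}$ rate reflects the fourth-order parabolic smoothing of the underlying operator.

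First I would establish the $H^s$ estimates ($s=0,1$). Applying the Parseval identity \eqref{iso} together with the intertwining relation $\mathcal{B}(\d_x^k u)(\xi,\cdot)=(\d_x+i\xi)^k \check{u}(\xi,\cdot)$ immediately converts the Bloch-level bounds above into real-line estimates on $S^{II}$ given by \eqref{solnhf}. The derivative accounting works out as claimed: each time derivative $\d_t$ acting on the semigroup produces one factor of $L_\xi$ (cost $t^{-1}$, i.e.\ $t^{-4/4}$), while each spatial derivative acting on either side produces one factor of $(\d_x+i\xi)$ (cost $t^{-1/4}$ by parabolic smoothing), giving total rate $t^{-(4l_1+l_2+l_3)/4}$. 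The case $s=1$ follows by distributing one extra derivative between the data and the semigroup via the $H^1$-version of the sectorial bound.

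Next, to pass to $L^p$ for $2\leq p\leq\infty$, I would invoke the Sobolev interpolation $\|u\|_{L^p(\R)}\leq C\|u\|_{L^2(\R)}^{1-(1/2-1/p)}\|\d_x u\|_{L^2(\R)}^{1/2-1/p}$, valid for $p\in[2,\infty]$. Combining this with the $L^2$-based bounds already established produces an additional factor of $t^{-(1/4)(1/2-1/p)}$, yielding the second asserted estimate. The third bound $\|S^{II}(t)g\|_{L^p(\R)} \leq Ce^{-\theta t}\|g\|_{H^1(\R)}$ requires no parabolic gain and follows immediately from the Sobolev embedding $H^1(\R)\hookrightarrow L^p(\R)$ together with the $H^1\to H^1$ semigroup bound, with no need to distinguish small $t$ from large $t$.

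Main obstacle: the argument is essentially routine and offers no new difficulty beyond Proposition \ref{p:hfbounds}. The only careful bookkeeping is in balancing the $(\d_x+i\xi)$ symbols between the semigroup and the data under the Bloch transform in order to obtain constants uniform in $\xi$, and in verifying that the smoothing rate is genuinely $t^{-1/4}$ per spatial derivative, consistent with the fourth-order nature of $L$. The absence of a Jordan block at the origin (compare (D3) with (D3')) plays no role here, since the whole construction concerns only the non-critical part of the spectrum.
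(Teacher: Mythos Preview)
Your proposal is correct and follows essentially the same approach as the paper. The paper in fact gives no separate proof for this proposition, stating only that it is ``essentially the same as Proposition \ref{p:hfbounds}''; your outline reproduces precisely that argument (sectorial bounds at the Bloch level, Parseval to transfer to $H^s(\RM)$, and Sobolev interpolation for the $L^p$ estimates).
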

Aiming at identifying up to remainders terms of form 
%CHANGED-MJ-MARCH got rid of \dots.... confusing to me 
%$S^I(t)\dots$ with terms in form $\bar u_x\dots$, we decompose further by expanding $q(\xi,\cdot)=\bar u_x+\mathcal{O}(\xi)$ and thus write $G^I(x,t;y)=\bar u_x(x) \tilde e(x,t;y)+\tilde G^I(x,t;y)$ with
%CHANGED-MR2: different g
%$S^I(t)g$ with terms in form $\bar u_x g$, 
$S^I(t)g$ with terms of form $\bar u_x h$, 
we decompose further by expanding $q(\xi,\cdot)=\bar u_x+\mathcal{O}(\xi)$ and thus write $G^I(x,t;y)=\bar u_x(x) \tilde e(x,t;y)+\tilde G^I(x,t;y)$ with
$$
\begin{aligned}
\tilde e(x,t;y)&=\int_{-\pi}^\pi e^{i\xi(x-y)}\phi(\xi)e^{\lambda(\xi)t}\tilde{q}(\xi,y)^*d\xi\\
\tilde G^I(x,t;y)&=\int_{-\pi}^\pi e^{i\xi(x-y)}\phi(\xi)e^{\lambda(\xi)t}\left(q(\xi,x)-q(0,x)\right)\tilde{q}(\xi,y)^*d\xi.
\end{aligned}
$$
Since 
$$
\begin{aligned}
\int_\RM\tilde e(x,t;y)g(y) dy&=\int_{-\pi}^\pi e^{i\xi x}\phi(\xi)e^{\lambda(\xi)t}\langle\tilde{q}(\xi,\cdot),\check{g}(\xi,\cdot)\rangle_{L_{\rm per}^2([0,1])}d\xi\\
\int_\RM\tilde G^I(x,t;y) dy&=\int_{-\pi}^\pi e^{i\xi x}\phi(\xi)e^{\lambda(\xi)t}\left(q(\xi,x)-q(0,x)\right)
\langle\tilde{q}(\xi,\cdot),\check{g}(\xi,\cdot)\rangle_{L_{\rm per}^2([0,1])}d\xi,
\end{aligned}
$$
a direct application of Hausdorff-Young inequality \eqref{hy} yields the following bounds.

\begin{prop}\label{GIdecay1-sh}
Under the assumptions (H1')-(H2') and (D1)--(D3), the low-frequency Green function $G^I(x,t;y)$ of \eqref{e:lin-sh} may be decomposed
as
\[
G^{I}(x,t;y)=\bar{u}'(x)\tilde{e}(x,t;y)+\widetilde{G}^I(x,t;y)
\]
where for all $t\geq0$ and $1\leq q\leq2\leq p\leq\infty$,  $0\leq j,l,s,j+l\leq K+1$, the residual $\widetilde{G}^I(x,t;y)$ satisfies
\begin{equation}\label{e:G1bds-sh}
\left\|\int_\RM\partial_x^j\partial_t^l\partial_y^s\widetilde{G}^I(\cdot,t;y)g(y) dy\right\|_{L^p(\RM)}
\leq C \left(1+t\right)^{-\frac{1}{2}\left(\frac1q-\frac1p\right)-\frac{1}{2}-\frac{l}{2}}\|g\|_{L^q(\RM)}.
\end{equation}
Furthermore, for all $t\geq0$, $1\leq q\leq 2\leq p\leq\infty$, $0\leq j,l,s,j+l\leq K+1$,we have
\begin{equation}\label{e:ebds-sh}
\left\|\int_\RM\partial_x^j\partial_t^l\partial_y^s \tilde{e}(\cdot,t;y)\ g(y) dy\right\|_{L^p(\RM)}
\leq C(1+t)^{-\frac{1}{2}\left(\frac1q-\frac1p\right)-\frac{(j+l)}{2}}\|g\|_{L^q(\RM)}.
\end{equation}
\end{prop}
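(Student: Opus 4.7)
My plan is to directly apply the generalized Hausdorff-Young inequality \eqref{hy} to the explicit Bloch integral representations of $\tilde e$ and $\tilde G^I$ displayed just above the statement. The three ingredients used throughout are: (i) by \eqref{e:surfacessh} and assumption (D2'), $|e^{\lambda(\xi)t}|\le Ce^{-\theta\xi^2 t}$ uniformly for $\xi\in[-\xi_0,\xi_0]$; (ii) by analyticity of $\xi\mapsto q(\xi,\cdot)$ in $L^\infty_{\rm per}([0,1])$, one has $q(\xi,\cdot)-q(0,\cdot)=\xi\,q_1(\xi,\cdot)$ for some $q_1$ uniformly bounded in $\xi$; and (iii) each $\partial_t$ acting on $e^{\lambda(\xi)t}$ brings down a factor $\lambda(\xi)=\mathcal O(\xi)$.

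For \eqref{e:G1bds-sh}, I would expand the $\partial_x^j\partial_y^s$ derivatives inside the Bloch integral via Leibniz: each $\partial_x$ produces either a factor $(i\xi)$ (when acting on $e^{i\xi(x-y)}$) or a derivative of $q(\xi,x)-q(0,x)$, and similarly for $\partial_y$. In every resulting term ingredient (ii) still supplies one $\xi$-factor and ingredient (iii) supplies $\xi^l$, so the Bloch symbol has amplitude $\mathcal O(\xi^{1+l}e^{-\theta\xi^2 t})$ near the origin. For $q=1$, applying \eqref{hy} with Hölder conjugate $p'$ reduces matters to the scalar Gaussian estimate
$$\bigl\|\xi\mapsto|\xi|^{1+l}e^{-\theta\xi^2 t}\phi(\xi)\bigr\|_{L^{r}([-\pi,\pi])}\le C(1+t)^{-\frac{1+l}{2}-\frac{1}{2r}},$$
proved by the rescaling $\xi=\zeta/\sqrt t$; a short bookkeeping with $r=p'$ matches the resulting exponent with the advertised rate $-\tfrac12(1-\tfrac1p)-\tfrac12-\tfrac l2$. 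For $q=2$, I follow the same scheme but combine \eqref{hy} with Hölder's inequality and Parseval's identity \eqref{iso} to absorb $\|\check g\|_{L^2([-\pi,\pi),L^2([0,1]))}=C\|g\|_{L^2}$, reducing again to the Gaussian estimate above; interpolation covers intermediate $q\in(1,2)$.

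For \eqref{e:ebds-sh} the argument is identical except that the $\mathcal O(\xi)$ factor of ingredient (ii) is absent. In contrast to the gKS setting, (D3') only asserts simplicity of the eigenvalue at $0$, so $\tilde q(0,\cdot)$ need not be constant and $\partial_y^s\tilde q(0,y)$ does not in general vanish; consequently $\partial_y^s$ contributes no extra $\xi$-power, whereas $\partial_x^j$ and $\partial_t^l$ together supply $\xi^{j+l}$. The same Hausdorff-Young/Hölder scheme combined with the Gaussian estimate above (with $j+l$ in place of $1+l$) delivers the advertised rate $-\tfrac12(\tfrac1q-\tfrac1p)-\tfrac{j+l}{2}$.

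There is no substantial obstacle. The only delicate point is correct bookkeeping of the $\xi$-powers through the Leibniz expansions in order to identify the worst-case (slowest-decaying) term. Compared with the proof of Proposition~\ref{GIdecay1}, the present analysis is strictly simpler: the simplicity of the eigenvalue at $\xi=0$ removes the Jordan block entirely, so no $(i\xi)^{-1}$ factor arises, no cancellation against vanishing moments of $\tilde q$ is needed, and in particular the delicate critical case $q=1$, $p=\infty$, $j=l=0$ which required an error-function representation in the proof of Proposition~\ref{GIdecay1} is here handled directly by the Gaussian estimate above.
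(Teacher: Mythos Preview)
Your proposal is correct and follows essentially the same approach as the paper: the paper's proof is a single sentence stating that the bounds follow from ``a direct application of Hausdorff-Young inequality \eqref{hy}'' to the displayed Bloch representations, and you have correctly filled in the details of that application, including the case split $q=1$ (triangle inequality plus \eqref{hy} in $y$) versus $q=2$ (Parseval plus H\"older in $\xi$) and the bookkeeping of $\xi$-powers that explains why $\partial_x^j,\partial_y^s$ do not improve the rate for $\widetilde G^I$ while $\partial_t^l$ does. Your observation that $\tilde q(0,\cdot)$ need not be constant here (so $\partial_y^s$ gains nothing for $\tilde e$) and your remark that the Jordan-block complications of Proposition~\ref{GIdecay1} disappear are both on point.
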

%ENDCHANGED-MR

Finally, we combine the various 
%CHANGED-MR
%high- and low-frequency 
exponential and algebraic 
bounds derived above to obtain decay estimates on the Green function
%CHANGED-MR
%\[
\be\label{e:lindelta-sh}
G(x,t;y)=S(t)\delta_y(x)
%\]
\ee
%ENDCHANGED-MR
associated with the full solution operator $S(t)=e^{Lt}$. To this end, let $\chi$ be a smooth real valued 
%CHANGED-MR
%cut ff 
cutoff 
function defined on $[0,\infty)$ such that 
%CHANGED-MR
$0\leq \chi\leq 1$ and 
\[
\chi(t)=\left\{
         \begin{array}{ll}
           0, & \textrm{ if }0\leq t\leq 1 \\
           1, & \textrm{ if }t\geq 2
         \end{array}
       \right.
\]
and define
\[
e(x,t;y):=\chi(t)\tilde{e}(x,t;y).
\]
%CHANGED-MJ-MARCH added for clarity (remember, this appendix is to "ease" people into the proof method... don't want to take away too many details).
Using the Hausdorff-Young inequality \eqref{hy} and the triangle inequality
\[
\left\|\int_{\RM}F(\cdot,t,y)g(y)dy\right\|_{L^p(\RM)}\leq C\|g\|_{L^1(\RM)}\sup_{y\in\RM}\left\|F(\cdot,t,y)\right\|_{L^p(\RM)},
\]
we obtain the following estimates.

%CHANGED-MR
%\begin{cor}\label{e:Gbdsfinal-sh}
%Under assumptions (H1)--(H4) and (D1)--(D3), the Green function $G(x,t;y)$ of \eqref{e:lin} decomposes
%as
%\[
%G(x,t;y)=\bar{u}'(x)e(x,t;y)+\widetilde{G}(x,t;y)
%\]
%where, for some $C>0$ and all $t>0$, $2\leq p\leq\infty$, and $0\leq j,l,r,j+l\leq K+1$ we have
%\begin{align}
%\left\|\int_{-\infty}^\infty\widetilde{G}(\cdot,t;y)f(y)dy\right\|_{L^p(\RM)}&\lesssim t^{-\frac{1}{4}\left(\frac{1}{2}-\frac{1}{p}\right)}
%                          \left(1+t\right)^{-\frac{1}{4}\left(\frac{3}{2}-\frac{1}{p}\right)-\frac{1}{2}}\|f\|_{L^1\cap L^2}\label{finalGbd1-sh}\\
%\left\|\int_{-\infty}^\infty\partial_x^j\partial_t^l\partial_y^r e(\cdot,t;y)f(y)dy\right\|_{L^p(\RM)}&\lesssim \left(1+t\right)^{-\frac{1}{2}\left(1-\frac{1}{p}\right)
%              -\frac{(j+k)}{2}}\|f\|_{L^1(\RM)}.\label{finalebds-sh}
%\end{align}
%\end{cor}
\begin{cor}\label{e:Gbdsfinal-sh}
Under assumptions (H1')-(H2') and (D1')--(D3'), the Green function $G(x,t;y)$ of \eqref{e:lindelta-sh} decomposes
as
\[
G(x,t;y)=\bar{u}'(x)e(x,t;y)+\widetilde{G}(x,t;y)
\]
where, for some $C,\theta>0$ and all $t>0$, $2\leq p\leq\infty$, and $0\leq j,l,s,j+l\leq K+1$ we have
\begin{align}
\left\|\int_\RM\d_y^s\widetilde{G}(\cdot,t;y)g(y)dy\right\|_{L^p(\RM)}
&\leq
%\min \begin{cases}
%Ce^{-\theta t}\|\d_x^sg\|_{H^1(\RM)}
%+C\left(1+t\right)^{-\frac{1}{2}\left(\frac{1}{2}-\frac{1}{p}\right)-\frac{1}{2}}\|g\|_{L^2(\RM)}\\
Ct^{-\frac{1}{4}\left(\frac{1}{2}-\frac{1}{p}\right)-\frac{s}{4}}
\left(1+t\right)^{-\frac{1}{4}\left(\frac{3}{2}-\frac{1}{p}\right)-\frac12+\frac{s}{4}}\|g\|_{L^1(\RM)\cap L^2(\RM)}
%\end{cases}
\label{finalGbdyder-sh}\\
\left\|\int_\RM\widetilde{G}(\cdot,t;y)g(y)dy\right\|_{L^p(\RM)}&\leq 
C\left(1+t\right)^{-\frac{1}{2}\left(1-\frac{1}{p}\right)-\frac12}\|g\|_{L^1(\RM)\cap H^1(\RM)}
\label{finalGbd1-sh}\\
\left\|\int_\RM\d_x^j\d_t^l\d_y^s e(\cdot,t;y)g(y)dy\right\|_{L^p(\RM)}&
\leq C\left(1+t\right)^{-\frac{1}{2}\left(1-\frac{1}{p}\right)-\frac{(j+k)}{2}}\|g\|_{L^1(\RM)}.\label{finalebds-sh}
\end{align}
and $e(x,t;y)\equiv 0$ for $0\leq t\leq 1$.
\end{cor}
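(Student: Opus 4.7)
The plan is to combine the $S^I/S^{II}$ semigroup splitting already introduced in Section~\ref{s:linearizedestimates-sh} with the further low-frequency decomposition supplied by Proposition~\ref{GIdecay1-sh}, and then to insert the cutoff $\chi$ so as to localize the translation-mode contribution away from $t=0$. From $S(t)=S^I(t)+S^{II}(t)$ we get $G=G^I+G^{II}$ with $G^{II}(x,t;y):=S^{II}(t)\delta_y(x)$, and Proposition~\ref{GIdecay1-sh} further splits $G^I=\bar{u}'\,\tilde e+\widetilde G^I$. Setting $e(x,t;y):=\chi(t)\tilde e(x,t;y)$ and
\[
\widetilde G(x,t;y):=\widetilde G^I(x,t;y)+G^{II}(x,t;y)+(1-\chi(t))\bar{u}'(x)\tilde e(x,t;y)
\]
then yields the required decomposition $G=\bar{u}'\,e+\widetilde G$, with $e\equiv 0$ on $[0,1]$ by construction.

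The estimate \eqref{finalebds-sh} on $e$ will follow directly from \eqref{e:ebds-sh} taken with $q=1$: differentiation in $t$ commutes past $\chi(t)$ up to Leibniz error terms of the form $\chi^{(k)}(t)\partial_t^{l-k}\tilde e$, which are supported on the compact interval $t\in[1,2]$ and hence uniformly bounded by Proposition~\ref{GIdecay1-sh}, fitting trivially into the stated bound. For $\widetilde G$ I plan to treat the three pieces separately. The low-frequency residual $\widetilde G^I$ is controlled directly by \eqref{e:G1bds-sh} with $q=1$, producing the pure $(1+t)^{-\frac{1}{2}(1-\frac{1}{p})-\frac{1}{2}}\|g\|_{L^1}$ decay; the cutoff correction $(1-\chi(t))\bar{u}'\tilde e$ is supported on $t\leq 2$ and is estimated uniformly by $C\|g\|_{L^1}$ via \eqref{e:ebds-sh}. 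For the high-frequency piece I will integrate by parts in $y$ to rewrite $\int\partial_y^s G^{II}(\cdot,t;y)g(y)\,dy=(-1)^s S^{II}(t)g^{(s)}$ (when $s\geq 1$) and then apply Proposition~\ref{p:hfbounds-sh} with $l_3=s$ to obtain the short-time estimate $Ct^{-\frac{1}{4}(\frac{1}{2}-\frac{1}{p})-\frac{s}{4}}e^{-\theta t}\|g\|_{L^2}$; for the $s=0$ statement \eqref{finalGbd1-sh} I instead use the direct semigroup bound $\|S^{II}(t)g\|_{L^p}\leq Ce^{-\theta t}\|g\|_{H^1}$.

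The one nontrivial step will be reconciling the three time profiles above to produce the hybrid $t^{-a}(1+t)^{-b}$ form in \eqref{finalGbdyder-sh}. The idea is that on $t\leq 1$ the $(1+t)$ factor is harmless and the short-time blowup $t^{-\frac{1}{4}(\frac{1}{2}-\frac{1}{p})-\frac{s}{4}}$ from the high-frequency piece dominates, with both $\|g\|_{L^1}$ and $\|g\|_{L^2}$ contributions absorbed into $\|g\|_{L^1\cap L^2}$; on $t\geq 1$ the exponential factor $e^{-\theta t}$ annihilates the high-frequency contribution, leaving the algebraic $(1+t)^{-\frac{1}{2}(1-\frac{1}{p})-\frac{1}{2}}$ decay from $\widetilde G^I$. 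A quick exponent check confirms that the prescribed product $t^{-\frac{1}{4}(\frac{1}{2}-\frac{1}{p})-\frac{s}{4}}(1+t)^{-\frac{1}{4}(\frac{3}{2}-\frac{1}{p})-\frac{1}{2}+\frac{s}{4}}$ interpolates precisely between these two regimes, so matching the exponents recovers exactly the bound written in \eqref{finalGbdyder-sh}.
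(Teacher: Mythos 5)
Your proposal is correct and follows essentially the same route as the paper, which obtains the corollary simply by combining the exponential bounds of Proposition \ref{p:hfbounds-sh} with the algebraic bounds of Proposition \ref{GIdecay1-sh} after inserting the cutoff $e=\chi\tilde e$; your explicit splitting $\widetilde G=\widetilde G^I+G^{II}+(1-\chi)\bar u_x\tilde e$ and the regime-matching of the short-time factor $t^{-\frac14(\frac12-\frac1p)-\frac s4}$ against the large-time factor $(1+t)^{-\frac12(1-\frac1p)-\frac12}$ is exactly the intended bookkeeping. The only cosmetic slip is the index name: the smoothing bound you invoke is the second estimate of Proposition \ref{p:hfbounds-sh} with $m_3=s$ (not $l_3$), which is indeed what lets the $y$-derivatives be absorbed by the semigroup so that only $\|g\|_{L^2}$ appears in \eqref{finalGbdyder-sh}.
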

%ENDCHANGED-MR

%CHANGED-MR: removed
%\begin{proof}
%From \eqref{e:G1bds-sh} and the triangle inequality we clearly have
%\begin{align*}
%\left\|\int_{\RM}\widetilde{G}^I(\cdot,t;y)f(y)dy\right\|_{L^p(\RM)}&\leq
%   \int_{\RM}\sup_{y\in\RM}\left\|\widetilde{G}^I(\cdot,t;y)\right\|_{L^p(\RM)}|f(y)|dy\\
%   &\lesssim \left(1+t\right)^{-\frac{1}{2}(1-1/p)-\frac{1}{2}}\|f\|_{L^1(\RM)}
%\end{align*}
%from which \eqref{finalGbd1-sh} follows by the $L^2\to L^p$ bound of Proposition \ref{p:hfbounds}.
%%The analogous estimates \eqref{finalGbdyder} and \eqref{finalGbdtder} for $y$- and $t$-derivative follow similarly.
%The analogous estimates \eqref{finalebds-sh} follow similarly.
%
%\end{proof}
%

\subsection{Nonlinear analysis}

Given the linearized bounds on the linearized solution operator $S(t)=e^{Lt}$ derived in the previous section, we are now in position to consider
the effect of the small nonlinear terms that were omitted in obtaining the linearized equation. For this purpose, we introduce $v(x,t)=\tilde u(x+\phi(x,t),t)-\bar{u}(x)$ as in \eqref{pertvar-sh}. Direct calculations similar to those detailed in the proof of Lemma \ref{l:cancel1} provide the following lemma.

\begin{lem}\label{l:cancel1-sh}
For $v$ as above, the equation is 
\be\label{veq-sh}
(\d_t-L)(v-\psi\bar u_x)=\cN,\quad \cN=\cQ+\d_x \cR+\d_t \cS+\cT
\ee
where
$$
\begin{aligned}
\cQ&=-\left(f(\bar u+v)-f(\bar u)-df(\bar{u})v\right)\\
\cR&=\psi_t v-2\frac{-\psi_x}{1+\psi_x}v_x
-\frac{-\psi_x}{1+\psi_x}\d_x\left(\frac{1}{1+\psi_x}\d_x\left(\frac{1}{1+\psi_x}v_x\right)\right)
-\d_x\left(\frac{-\psi_x}{1+\psi_x}\d_x\left(\frac{1}{1+\psi_x}v_x\right)\right)\\
&-\d_x^2\left(\frac{-\psi_x}{1+\psi_x}v_x\right)-2\frac{\psi_x^2}{1+\psi_x}\bar u_x
-\frac{-\psi_x}{1+\psi_x}\d_x\left(\frac{-\psi_x}{1+\psi_x}\d_x\left(\frac{1}{1+\psi_x}\bar u_x\right)\right)
-\frac{-\psi_x}{1+\psi_x}\d_x^2\left(\frac{-\psi_x}{1+\psi_x}v_x\right)\\
&-\frac{\psi_x^2}{1+\psi_x}\bar u_{xxx}
-\d_x\left(\frac{-\psi_x}{1+\psi_x}\d_x\left(\frac{-\psi_x}{1+\psi_x}\bar u_x\right)\right)
-\d_x\left(\frac{\psi_x^2}{1+\psi_x}\bar u_{xx}\right)-\d_x^2\left(\frac{\psi_x^2}{1+\psi_x}\bar u_x\right)\\
\cS&=-v\psi_x\\
\cT&=-\psi_x\left((1-r)v+f(\bar u+v)-f(v)\right).
\end{aligned}
$$
\end{lem}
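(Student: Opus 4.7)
The plan is to follow the template of Lemma \ref{l:cancel1}, substituting the traveling Swift--Hohenberg equation in place of the gKS equation, and carefully tracking the additional terms that arise from the non-conservative part $(r-1)u - f(u)$. Concretely, I would start from the fact that $\tilde u(y,t)$ satisfies \eqref{e:travsh} and compute the equation satisfied by $u(x,t):=\tilde u(x+\psi(x,t),t)$ via the chain rule. Setting $y=x+\psi(x,t)$, one has $\partial_y \tilde u|_{y}=\frac{1}{1+\psi_x}u_x$ and $\partial_t\tilde u|_{y}=u_t-\frac{\psi_t}{1+\psi_x}u_x$, so after multiplying by $(1+\psi_x)$ the equation for $u$ becomes
\[
(1+\psi_x)u_t-(\psi_t+c)u_x+(1+\psi_x)u+2(1+\psi_x)\partial_y^2\tilde u+(1+\psi_x)\partial_y^4\tilde u-(1+\psi_x)(ru-f(u))=0,
\]
where each $\partial_y^k\tilde u$ is re-expressed as an iterated composition of $\frac{1}{1+\psi_x}\partial_x$ acting on $u$.

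Next I would subtract the profile equation $-c\bar u_x+\bar u_{xxxx}+2\bar u_{xx}+(1-r)\bar u+f(\bar u)=0$ and write $u=\bar u+v$, splitting each contribution into a \emph{linear-in-$v$ part with trivial $\psi$-dependence} (which reconstructs $Lv$) and a \emph{remainder}. The trivial-$\psi$ linear part produces $(\partial_t-L)v$; the terms with nontrivial $\psi$-dependence are grouped as follows. Quadratic contributions from $f$ go into $\mathcal{Q}=-(f(\bar u+v)-f(\bar u)-f'(\bar u)v)$. Terms arising from the $2\partial_y^2$ and $\partial_y^4$ pieces, after factoring the outermost $\partial_x$, collect into $\partial_x\mathcal{R}$: these are precisely the fourth- and second-order analogues of the gKS calculation with the $\epsilon\partial_x^3$ terms absent and $\delta$ replaced by the SH coefficient $2$. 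Time-derivative contributions of the form $-\psi_x v_t$ are rewritten as $\partial_t(-\psi_x v)+\ldots$ to produce $\partial_t\mathcal{S}$, with $\mathcal{S}=-v\psi_x$.

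The genuinely new feature, compared to gKS, is that after multiplication by $(1+\psi_x)$ the non-conservative terms $-ru+f(u)$ generate extra contributions $\psi_x((1-r)u+f(u))$, which cannot be absorbed into a total $\partial_x$ or $\partial_t$ derivative. Linearizing around $\bar u$ and using the profile equation to cancel the piece $\psi_x[(1-r)\bar u+f(\bar u)]=\psi_x[c\bar u_x-2\bar u_{xx}-\bar u_{xxxx}]$ against further $\partial_x$-derivative terms in $\mathcal{R}$ leaves exactly the residual $\mathcal{T}=-\psi_x((1-r)v+f(\bar u+v)-f(\bar u))$ displayed in the statement. Finally, to put the equation in the form stated, I would add and subtract $(\partial_t-L)(\psi\bar u_x)$ and use the translational identity $L\bar u_x=0$, together with $(\psi\bar u_x)_t=\psi_t\bar u_x$, to move $\psi\bar u_x$ into the argument on the left-hand side.

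The step that requires the most care, and I expect to be the main obstacle, is not any single computation but the bookkeeping of which fourth-order iterated-derivative terms cancel against which profile terms. In particular, one must carefully distribute commutators of $\partial_x$ with the factors $\frac{1}{1+\psi_x}$ so that the leading linear-in-$v$ operator reassembles exactly as $-v_{xxxx}-2v_{xx}$, and one must verify that the $\bar u$-dependent pieces of $\mathcal{R}$ together with those generated by the non-conservative terms combine to leave precisely $\mathcal{T}$. This is a purely algebraic bookkeeping task that parallels the proof of Lemma \ref{l:cancel1}, with the simplification that no $\epsilon\partial_x^3$-terms are present and the complication that the SH nonlinear/reaction terms do not sit inside a divergence.
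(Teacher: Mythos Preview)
Your approach is correct and matches the paper's own treatment, which simply states that the lemma follows by ``direct calculations similar to those detailed in the proof of Lemma~\ref{l:cancel1}''; your outline is in fact considerably more detailed than what the paper provides. One remark: your computation yields $\mathcal{T}=-\psi_x\big((1-r)v+f(\bar u+v)-f(\bar u)\big)$, whereas the displayed statement has $f(v)$ in place of $f(\bar u)$; your version is the one that naturally emerges from the calculation and is what is actually needed for the quadratic bound $\|\mathcal{T}\|_{L^1\cap L^2}\lesssim\eta^2(1+t)^{-3/2}$ used downstream, so the discrepancy appears to be a typo in the statement rather than an error on your part.
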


By Duhamel's principle, we obtain the announced implicit representation \eqref{vintegral-sh}, so that we may express
the phase $\psi$ and the nonlinear perturbation variable $v$ implicitly as
\begin{equation}\label{psi1-sh}
\psi(x,t)=-\int_\RM e(x,t;y)v(y,0)dy-\int_0^t\int_\RM e(x,t-s;y)\cN(y,s)dyds
\end{equation}
and
\begin{equation}\label{vimplicit-sh}
v(x,t)=\int_\RM \widetilde{G}(x,t;y)v(y,0)dy+\int_0^t\int_\RM \widetilde{G}(x,t-s;y)\cN(y,s)dy~ds.
\end{equation}
Furthermore, recalling that $e(x,s;y)=0$ for $0\leq s\leq 1$, we find by differentiating \eqref{psi1-sh} that
\begin{equation}\label{psider-sh}
\partial_t^j\partial_x^k\psi(x,t)=-\int_\RM\partial_t^j\partial_x^k e(x,t;y)v(y,0)dy-\int_0^t\int_\RM\partial_t^j\partial_x^ke(x,t-s;y)\cN(y,s)dy~ds.
\end{equation}
for $0\leq j\leq 1$ and $0\leq k\leq K+1$. To apply a standard contraction-mapping argument and solve locally in time our Cauchy problem \eqref{vimplicit-sh}-\eqref{psider-sh} with $(v,\psi_t,\psi_x)\in H^K(\RM)\times H^K(\RM)\times H^{K+1}(\RM)$, we only need besides our linear bounds the following nonlinear damping energy estimate whose proof is entirely similar to the one of Proposition \ref{p:damping}.

\begin{prop}\label{p:damping-sh}
Assuming (H1'), there exist positive constants $\theta$, $C$ and $\eps_0$ such that if $v$ and $\psi$ solve \eqref{veq-sh} on $[0,T]$ for some $T>0$ and 
$$
\sup_{t\in[0,T]}\|(v,\psi_x)(t)\|_{H^K(\RM)}+\sup_{t\in[0,T]}\|\psi_t(t)\|_{H^{K-1}(\RM)}\leq\eps_0
$$ 
then, for all $0\leq t\leq T$,
\be\label{e:damping-sh}
\begin{aligned}
\|v(t)\|^2_{H^K(\RM)}
&\leq Ce^{-\theta t}\|v(0)\|_{H^K(\RM)}^2\\
&+C\int_0^t e^{-\theta(t-s)}
\left(\|v(s)\|^2_{L^2(\RM)}+\|\psi_x(s)\|_{H^{K+1}(\RM)}^2+\|\psi_t(s)\|_{H^{K-2}(\RM)}^2\right)ds.
\end{aligned}
\ee
\end{prop}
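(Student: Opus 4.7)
The proof proposal follows the same blueprint as Proposition \ref{p:damping} for the generalized Kuramoto-Sivashinsky equation, with modifications that take into account the non-conservative character of the Swift-Hohenberg equation. The plan is to derive a direct equation for $v$ (rather than for $v-\psi\bar u_x$) in a form amenable to a Friedrichs-type energy estimate, exploit the damping provided by the fourth-order principal part $-\d_x^4 v$, control all other contributions under the smallness assumption, and absorb top-order derivatives via Sobolev interpolation.

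First I would use $\tilde u(x+\psi(x,t),t)=\bar u(x)+v(x,t)$ together with the chain-rule identities $\tilde u_z=\frac{u_x}{1+\psi_x}$ and $\tilde u_t=u_t-\psi_t\tilde u_z$ to substitute into the traveling Swift-Hohenberg equation for $\tilde u$, subtract the profile equation for $\bar u$, and solve for $v_t+\d_x^4 v$. The outcome has the schematic form
\begin{equation*}
v_t+\d_x^4 v=-2v_{xx}+cv_x+(r-1-f'(\bar u))v+\mathcal{F}[v,\psi],
\end{equation*}
where $\mathcal{F}[v,\psi]$ collects all nonlinear terms in $v$ and all terms involving $\psi_x,\psi_t,\psi_{xx},\ldots,\psi_{xxxxx}$, exactly in the spirit of the expansion carried out in the proof of Proposition \ref{p:damping}, with the additional lower-order contribution reflecting $\mathcal{T}$ and $(r-1)v$.

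Next I would take the $L^2(\RM)$ inner product of this equation against $\sum_{j=0}^{K}(-1)^j\d_x^{2j}v$. The contribution of $v_t$ yields $\tfrac12\tfrac{d}{dt}\|v\|_{H^K(\RM)}^2$, and the $\d_x^4 v$ term produces, after integration by parts, the coercive piece $\|v\|_{H^{K+2}(\RM)}^2$. The bounded, lower-order linear terms are absorbed by Cauchy-Schwarz into $\|v\|_{H^{K+1}(\RM)}^2$. Every remaining contribution, produced either by the nonlinear terms in $v$ or by the inhomogeneous terms depending on $\psi$, is handled using integration by parts to shuffle derivatives so that at most $v_{H^{K+2}}$-type norms appear multiplied by something small, combined with the Sobolev embedding $\|g\|_{L^\infty(\RM)}\le C\|g\|_{H^1(\RM)}$ and the smallness hypothesis $\|(v,\psi_x)\|_{H^K}+\|\psi_t\|_{H^{K-1}}\le\varepsilon_0$. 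Exactly as in the generalized KS case, these manipulations yield
\begin{equation*}
\tfrac12\tfrac{d}{dt}\|v(t)\|_{H^K(\RM)}^2+\tfrac12\|v(t)\|_{H^{K+2}(\RM)}^2\le C\bigl(\|v(t)\|_{H^{K+1}(\RM)}^2+\|\psi_x(t)\|_{H^{K+1}(\RM)}^2+\|\psi_t(t)\|_{H^{K-2}(\RM)}^2\bigr).
\end{equation*}

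Finally, I would apply the Sobolev interpolation inequality $\|v\|_{H^{K+1}(\RM)}^2\le\eta\|v\|_{H^{K+2}(\RM)}^2+C_\eta\|v\|_{L^2(\RM)}^2$ with $\eta$ sufficiently small to absorb the top-order piece into the left-hand side, producing a differential inequality of the form $\tfrac{d}{dt}E+\theta E\le C(\|v\|_{L^2}^2+\|\psi_x\|_{H^{K+1}}^2+\|\psi_t\|_{H^{K-2}}^2)$ with $E\sim\|v\|_{H^K}^2$. Integrating this via an integrating factor yields the claimed bound \eqref{e:damping-sh}. The main obstacle, as in the conservation-law setting, is bookkeeping: several of the $\psi$-forced correction terms contain up to five $x$-derivatives of $\psi$, and one must integrate by parts carefully and use the smallness assumption on $\psi_x$ to keep every resulting estimate at the right level, without generating a loss at the critical order $K+2$ that the damping cannot pay for. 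Once this algebraic bookkeeping is arranged, the rest of the argument is standard.
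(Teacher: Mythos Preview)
Your proposal is correct and follows essentially the same approach as the paper, which simply states that the proof is entirely similar to that of Proposition \ref{p:damping}. Your outline---rewriting the equation as $v_t+\d_x^4v=\cdots$, testing against $\sum_{j=0}^K(-1)^j\d_x^{2j}v$, absorbing top-order terms via Sobolev interpolation, and integrating the resulting differential inequality---is exactly the intended argument.
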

%ENDCHANGED-MR

With the above preparations in hand, we are now prepared to state the main technical lemma leading to the proof of Theorem \ref{main-sh}.
To this end, associated with the solution $(u,\psi_t,\psi_x)$ of the integral system \eqref{vimplicit-sh} and \eqref{psider-sh} considered
in the previous section, define
%CHANGED-MR
\begin{equation}\label{eta-sh}
%\eta(t):=\sup_{0\leq s\leq t}\left\|(v,\psi_t,\psi_x)(s)\right\|_{H^K(\RM)}(1+s)^{3/4}.
\eta(t):=\sup_{0\leq s\leq t}\left\|(v,\psi_t,\psi_x,\psi_{xx})(s)\right\|_{H^K(\RM)}(1+s)^{3/4}.
\end{equation}
By standard short-time $H^K(\RM)$ existence theory, the function 
%CHANGED-MR
%$\RM\ni t\mapsto \left\|(v,\psi_t,\psi_x)\right\|_{H^K(\RM)}(t)$ 
$\eta$ 
is continuous so long as it remains sufficiently small.  Using the linearized estimates of Section \ref{s:linearizedestimates-sh} we now
prove that if $\eta(0)$ is 
%CHANGED-MR
sufficiently 
small then $\eta(t)$ remains small for all $t>0$.

%CHANGED-MR
%\begin{lemma}\label{l:iteration-sh}
%Let $\eta$ be defined as in \eqref{eta-sh} and suppose $E_0:=\|v(0)\|_{L^1(\RM)\cap H^K(\RM)}$ is sufficiently small.  Then
%for all  $t\geq 0$ for which $\eta(t)$ is finite and sufficiently small, we have the estimate
%\[
%\eta(t)\leq C\left(E_0+\eta(t)^2\right)
%\]
%for some constant $C>0$.
%\end{lemma}
\begin{lemma}\label{l:iteration-sh}
Under assumptions (H1')-(H2') and (D1')--(D3'), there exist positive constants $C$ and $\varepsilon_0$ such that if $v(0)$ is such that for some $T>0$
$$
E_0:=\|v(0)\|_{L^1(\RM)\cap H^K(\RM)} \leq \varepsilon_0\quad\textrm{and}\quad\eta(T) \leq \varepsilon_0
$$ 
then, for all $0\leq t\leq T$,
\be\label{eq:sclaim-sh}
\eta(t)\le C(E_0+\eta(t)^2).
\ee
\end{lemma}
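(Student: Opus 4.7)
The plan is to mimic the strategy of Lemma \ref{l:iteration}, substituting the faster linear decay rates $(1+t)^{-\frac12(1-\frac1p)-\frac12}$ from Corollary \ref{e:Gbdsfinal-sh} and exploiting the fact that every summand of the nonlinear remainder $\mathcal{N}=\mathcal{Q}+\partial_x\mathcal{R}+\partial_t\mathcal{S}+\mathcal{T}$ in Lemma \ref{l:cancel1-sh} is at least quadratic in the small quantities $(v,\psi_x,\psi_t)$. The key simplification relative to the conservative case is that the resulting nonlinear source decays at an integrable rate $(1+s)^{-3/2}$, so all time convolutions against the Green kernel are controlled uniformly in $t$ without logarithmic corrections, and the same exponent is reproduced on the left.

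First I would use the smallness assumption $\eta(T)\leq\varepsilon_0$ together with Sobolev embedding $H^1(\RM)\hookrightarrow L^\infty(\RM)$ and the explicit expressions in Lemma \ref{l:cancel1-sh}, in which $\psi_x$ enters smoothly through $(1+\psi_x)^{-1}$ (a legitimate geometric expansion under the smallness hypothesis), to bound
$$
\|(\mathcal{Q},\mathcal{R},\mathcal{S},\mathcal{T})(t)\|_{L^1(\RM)\cap H^1(\RM)}\ \leq\ C\,\eta(t)^2\,(1+t)^{-\frac32},
$$
and, using \eqref{veq-sh} to substitute for $v_t$ in time derivatives of $\mathcal{S}=-v\psi_x$,
$$
\|\partial_t\mathcal{S}(t)\|_{H^1(\RM)}\ \leq\ C\,\eta(t)^2\,(1+t)^{-\frac32}.
$$

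Next I would insert these bounds into the integral representations \eqref{vimplicit-sh} and \eqref{psider-sh}, using Corollary \ref{e:Gbdsfinal-sh} and handling the $\partial_t\mathcal{S}$ contribution by integration by parts in time, exactly as in the proof of Lemma \ref{l:iteration}; the boundary term at $s=0$ vanishes since $\mathcal{S}(\cdot,0)\equiv 0$, while the one at $s=t$ is controlled by $\|\mathcal{S}(t)\|_{L^2(\RM)}$. The critical time-convolution estimate is
$$
\int_0^t (1+t-s)^{-\frac12(1-\frac1p)-\frac12}(1+s)^{-\frac32}\,ds\ \leq\ C\,(1+t)^{-\frac12(1-\frac1p)-\frac12},
$$
obtained by splitting at $s=t/2$ and using integrability of $(1+s)^{-3/2}$ near $0$ together with $(1+s)^{-3/2}\leq(1+t)^{-3/2}$ on $[t/2,t]$. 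This yields, uniformly in $2\leq p\leq\infty$,
$$
\|v(t)\|_{L^p(\RM)}+\|(\psi_t,\psi_x)(t)\|_{W^{K+1,p}(\RM)}\ \leq\ C(E_0+\eta(t)^2)(1+t)^{-\frac12(1-\frac1p)-\frac12}.
$$

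Finally, to upgrade the $L^2$ bound on $v$ to the required $H^K$ control, I apply the nonlinear damping estimate of Proposition \ref{p:damping-sh}: the $(1+s)^{-3/2}$ decay of $\|v(s)\|_{L^2}^2+\|\psi_x(s)\|_{H^{K+1}}^2+\|\psi_t(s)\|_{H^{K-2}}^2$ convolved with the exponential kernel $e^{-\theta(t-s)}$ gives $\|v(t)\|_{H^K(\RM)}^2\leq C(E_0+\eta(t)^2)^2(1+t)^{-\frac32}$, i.e.\ $\|v(t)\|_{H^K(\RM)}\leq C(E_0+\eta(t)^2)(1+t)^{-\frac34}$. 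Combining this with the $p=2$ bounds of the previous step on $\psi_t,\psi_x,\psi_{xx}$ (noting that control of $\psi_x$ in $H^{K+1}$ automatically subsumes $\psi_{xx}$ in $H^K$) and taking the supremum in $s\in[0,t]$ proves \eqref{eq:sclaim-sh}. The main obstacle is really bookkeeping: verifying that each of the many summands of $\mathcal{R}$ in Lemma \ref{l:cancel1-sh}, some carrying up to three derivatives falling on products of $\psi_x/(1+\psi_x)$, profile derivatives, and $v$, genuinely contributes only quadratic $\eta$-smallness with the claimed $(1+t)^{-3/2}$ rate; this is straightforward term-by-term but tedious, and is the only place where the argument differs in content (rather than just exponents) from the conservative case.
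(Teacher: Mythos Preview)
Your proposal is correct and follows essentially the same route as the paper's proof: bound the nonlinear source at rate $(1+s)^{-3/2}$, feed it through the Green-kernel estimates of Corollary~\ref{e:Gbdsfinal-sh}, and close in $H^K$ via Proposition~\ref{p:damping-sh}. The one difference is that you carry over from the conservative case the integration-by-parts-in-time treatment of $\partial_t\mathcal{S}$, whereas the paper simply bounds $\|\mathcal{N}(t)\|_{L^1\cap L^2}\leq C\eta(t)^2(1+t)^{-3/2}$ directly (using \eqref{veq-sh} to control $v_t$ inside $\partial_t\mathcal{S}$) and applies \eqref{finalGbdyder-sh} with $s=0$; since the $(1+s)^{-3/2}$ rate is already integrable, your extra step is harmless but unnecessary here.
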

%ENDCHANGED-MR

\begin{proof}
First, 
%CHANGED-MR
%notice that by \eqref{e:rt-sh}, \eqref{eqn:Q-sh}, and \eqref{eta-sh} we have
note that under the above smallness assumption
%
%CHANGED-MR
%\[
%\left\|(Q,R_x,S_t,T)\right\|_{L^1(\RM)\cap L^2(\RM)}\lesssim\|(v,\psi_t,\psi_x)\|_{H^5(\RM)}^2\lesssim\eta(t)^2(1+t)^{3/2}
%\]
%so long as $\eta(t)$ remains small.
\[
\left\|\cN(t)\right\|_{L^1(\RM)\cap L^2(\RM)}\leq C\eta(t)^2(1+t)^{-3/2}.
\]
%ENDCHANGED-MR
%CHANGED-MR:removed
%Now, by standard semigroup theory \cite{Pa,He} the full solution operator $S(t)=e^{Lt}$ satisfies
%\[
%|S(t)g\|_{L^p(\RM)}\leq C\|f\|_{L^p(\RM)},
%\]
%hence, by applying this short-time bound in conjunction with estimate \eqref{finalGbd1-sh} of Corollary \ref{e:Gbdsfinal-sh}, % when $q=1$,
%we obtain for any $2\leq p<\infty$ the estimate
%\[
%\left\|\int_{-\infty}^\infty \widetilde{G}(\cdot,t;y)v(y,0)dy\right\|_{L^p(\RM)}\leq C (1+t)^{-\frac{1}{2}\left(1-\frac{1}{p}\right)-\frac{1}{2}}E_0.
%\]
%ENDCHANGED-MR
Applying now the bounds of Corollary 
%CHANGED-MR
%\eqref{e:Gbdsfinal-sh} 
\ref{e:Gbdsfinal-sh} 
to representations \eqref{vimplicit-sh}--\eqref{psider-sh}, we obtain
for any $2\leq p\leq\infty$ the bounds
\begin{equation}\label{vintbd1-sh}
\begin{aligned}
\left\|v(t)\right\|_{L^p(\RM)}&\leq C\left(1+t\right)^{-\frac{1}{2}\left(1-\frac{1}{p}\right)-\frac{1}{2}}E_0\\
&\quad +C\eta(t)^2\int_0^t\left(t-s\right)^{-\frac{1}{4}\left(\frac{1}{2}-\frac{1}{p}\right)}
               \left(1+t-s\right)^{-\frac{1}{4}\left(\frac{3}{2}-\frac{1}{p}\right)-\frac{1}{2}}(1+s)^{-\frac{3}{2}}ds\\
&\leq C\left(E_0+\eta(t)^2\right)(1+t)^{-\frac{1}{2}\left(1-\frac{1}{p}\right)-\frac{1}{2}}
\end{aligned}
\end{equation}
and
\begin{equation}\label{psiintbd1-sh}
\begin{aligned}
\left\|(\psi_t,\psi_x)\right\|_{W^{K+1,p}(\RM)}&\leq C(1+t)^{-\frac{1}{2}(1-\frac{1}{p})-\frac{1}{2}}E_0\\
&\quad+C\eta(t)^2\int_0^t\left(1+t-s\right)^{-\frac{1}{2}(1-1/p)-\frac{1}{2}}(1+s)^{-3/2}ds\\
&\leq C\left(E_0+\eta(t)^2\right)(1+t)^{-\frac{1}{2}\left(1-\frac{1}{p}\right)-\frac{1}{2}}.
\end{aligned}
\end{equation}
%CHANGED-MR
%In particular, the previous estimate guarantees that $\|(\psi_t,\psi_x)\|_{H^{K+1}(\RM)}$ can be taken to be small for all $t\geq 0$.
%Finally, by 
Finally, since the smallness assumption guarantees that we can apply 
Proposition \eqref{p:damping-sh} it follows that
\begin{align*}
\|v(t)\|_{H^K(\RM)}^2&\leq Ce^{-\theta t}E_0^2+C\left(E_0+\eta(t)^2\right)^2\int_0^t e^{-\theta(t-s)}(1+s)^{-3/2}ds\\
&\leq Ce^{-\theta t}E_0^2+C\left(E_0+\eta(t)^2\right)^s(1+t)^{-3/2}\\
&\leq C\left(E_0+\eta(t)^2\right)^2(1+t)^{-3/2},
\end{align*}
which, together with \eqref{psiintbd1-sh} ,
%CHANGED-MR
%and the definition of $\eta(t)$ 
completes the proof.
\end{proof}

%CHANGED-MR
%\noindent
%{\bf Proof of  Theorem \ref{main-sh}:} Recalling that $t\to\eta(t)$ is a continuous map so long as it remains small,
%it follows by continuous induction and Lemma \ref{l:iteration-sh} that for $C$ as in Lemma \ref{l:iteration-sh}
%\begin{equation}\label{etabd-sh}
%\eta(0)\leq\frac{1}{2C}\quad\Rightarrow \quad\eta(t)\leq 2CE_0 \quad\forall t\geq 0.
%\end{equation}
%By \eqref{eta-sh} this yields \eqref{eq:smallsest-sh} in the case $p=2$.
%Similarly, using \eqref{vintbd1-sh} and \eqref{psiintbd1-sh} directly, we obtain \eqref{eq:smallsest-sh} for
%all $p\in[2,\infty]$.
Assuming without loss of generality that the constant $C$ in \eqref{eq:sclaim-sh} is larger than $1$, we obtain, since $\eta$ is continuous and $\eta(0)=\|v(0)\|_{H^K(\RM)}\leq E_0$, by continuous induction that if $4C^2E_0<1$ then $\eta(t)\leq 2CE_0$ for all $t\geq0$.
Recalling \eqref{vintbd1-sh}, \eqref{psiintbd1-sh} and the definition of $\eta$ yields \eqref{eq:smallsest-sh}.
%ENDCHANGED

%CHANGED-MR
%Finally, notice that by definition we have that
%\[
%\tilde{u}(x,t)-\bar{u}(x)=v(x,t)+\left(\tilde{u}(x,t)-\tilde{u}(x+\psi(x,t),t)\right),
%\]
%hence the size of $\tilde{u}(\cdot,t)-\bar{u}$ in $L^p$ or $H^K$ can be controlled by the size of $(v+\tilde{u}\psi)(x,t)$
%in the respective norm, where we note that $\|\tilde{u}_x(\cdot,t)\|_{L^\infty(\RM)}\lesssim\|\bar{u}_x+v(\cdot,t)\|_{L^\infty(\RM)}$.
%Using \eqref{vintbd1-sh} along with the estimate
%\begin{align*}
%\left\|\psi(t)\right|_{L^p(\RM)}&\leq CE_0(1+t)^{-\frac{1}{1}\left(1-\frac{1}{p}\right)}+C\eta(t)^2\int_0^t\left(1+t-s\right)^{-\frac{1}{1}\left(1-\frac{1}{p}\right)}(1+s)^{-\frac{3}{2}}ds\\
%&\leq C(1+t)^{-\frac{1}{1}\left(1-\frac{1}{p}\right)}\left(E_0+\eta(t)^2\right),
%\end{align*}
%which follows directly from \eqref{finalebds-sh}, we obtain the final $L^1\cap H^K\to L^\infty$ bound \eqref{eq:stab-sh} of Theorem \ref{main-sh}.
%This completes the proof of the main theorem, establishing the nonlinear $L^1\cap H^K\to L^\infty$ stability of the underlying
%periodic traveling wave $\bar{u}$ under the structural and spectral assumptions (H1')--(H2') and (D1')--(D2').
Using Corollary \ref{e:Gbdsfinal-sh} again gives for $2\leq p\leq\infty$
\begin{align*}
\left\|\psi(t)\right\|_{L^p(\RM)}&\leq CE_0(1+t)^{-\frac{1}{2}\left(1-\frac{1}{p}\right)}+CE_0^2\int_0^t\left(1+t-s\right)^{-\frac{1}{2}\left(1-\frac{1}{p}\right)}(1+s)^{-\frac{3}{2}}ds\\
&\leq CE_0(1+t)^{-\frac{1}{2}\left(1-\frac{1}{p}\right)}.
\end{align*}
Now by definition we have that
\[
\tilde{u}(x,t)-\bar{u}(x)=v(x,t)+\left(\tilde{u}(x,t)-\tilde{u}(x+\psi(x,t),t)\right),
\]
hence 
$$
\|\tilde{u}(t)-\bar{u}\|_{L^p(\RM)}\leq \|v(t)\|_{L^p(\RM)}
+\|\bar u_x\|_{L^\infty([0,1])}\|\psi(t)\|_{L^p(\RM)}
\leq C E_0(1+t)^{-\frac{1}{2}\left(1-\frac{1}{p}\right)}.
$$
This completes the proof of 
%CHANGED-MJ-MARCH being more specific 
%the main theorem of this Appendix,
Theorem \ref{main-sh}, establishing the nonlinear $L^1\cap H^K\to L^p$ asymptotic stability of the underlying
periodic traveling wave $\bar{u}$ under the structural and spectral assumptions (H1')-(H2') and (D1')--(D3').
%ENDCHANGED-MR

%-----------------------------------------------------------------------------------------
%              SH appendix subsection: Numerical stability analysis
%
%              SH stability boundary, eigenvalue pic, describe numerics
%-----------------------------------------------------------------------------------------

\subsection{Numerical stability analysis}\label{a:shnum}
%TODO: STOPPED HERE-KZ

We demonstrate the numerical stability verification method described in Section \ref{sec:lowfreqanal} in the context of the Swift-Hohenberg equation.
%CHANGED-MR
To match with \cite{Sc}, we choose $f(u)=u^3$, set $\eps=\sqrt{r}$ and rewrite \eqref{e:sh} as
\be
\label{SHpde}
\partial_tu= (\varepsilon^2-1)u-2\partial_x^2u-\partial_x^4u-u^3.
\ee
%CHANGED-MR
%examined in \cite{Sc}. 
Stationary traveling wave solutions of \eqref{SHpde} satisfy,
\be
(\varepsilon^2-1)u-2u''-u''''-u^3=0.
\ee
When $\varepsilon$ is small there is a three parameter family of 
%CHANGED-MR
stationary periodic 
solutions approximated by
\be
u_0(\omega,\phi,\varepsilon)[x]= 2\Re(\varepsilon(\sqrt{1-4\omega^2}/\sqrt{3})e^{i(1+\varepsilon\omega)x}e^{i\phi})
\ee
%CHANGED-MR
and 
with period $X = 2\pi/(1+\varepsilon\omega)$; see \cite{Sc}.
%CHANGED-MR: order
%The linearized eigenvalue problem is
%\be
%\label{SHeigequ}
%\lambda u = L[\bar u]u:= (\varepsilon^2-1)u-2u_{xx}-u_{xxxx}-3\bar u^2 u,
%\ee
%where $\bar u$ is a periodic stationary wave solution of \eqref{SHpde}. 
About $\bar u$, a periodic stationary wave solution of \eqref{SHpde}, the linearized evolution is described by $(\d_t-L)=0$ where
\be
\label{SHeigequ}
L u:= (\varepsilon^2-1)u-2u_{xx}-u_{xxxx}-3\bar u^2 u,
\ee
%ENDCHANGED-MR
%CHANGED-MR
%It is easily proved that the spectrum of $L$ lies in $\R$,

Note that the fact we deal with stationary solutions make $L$ and $L_0$ self-adjoint. In particular, assumption (D1') of the previous section is a consequence of (D2') and the spectrum of $L$ lies in $\RM$
so that any transition to stability/instability must be marked by an eigenvalue  passing through the origin for
some possibly non-zero Bloch frequency. Furthermore, in \cite{Eck,M1,MS} it was analytically verified that for $\varepsilon>0$ small in \eqref{SHpde} the solutions 
%CHANGED-MR
%$u_0(\omega,\phi,\varepsilon)$ are spectrally stable, i.e. $\sigma_{L^2(\RM)}(L[u_0(\omega,\phi,\varepsilon])=(-\infty,0]$, 
approximated by $u_0(\omega,\phi,\varepsilon)$ are spectrally stable, i.e. $\sigma_{L^2(\RM)}(L)\subset(-\infty,0]$, 
provided that
\begin{equation}\label{eckbdry}
\left|4\omega^2\right|<\frac{1}{3}+\mathcal{O}(\varepsilon).
\end{equation}
Below, we apply an appropriate modification of the numerical protocol introduced in Section \ref{sec:lowfreqanal} above to
analyze the stability of such small amplitude periodic wave trains of \eqref{SHpde}.

%CHANGED-MR
%Applying the proof of Lemma \ref{l:hfbds} to this system, we find that any eigenvalue of the linearized system \eqref{SHeigequ}
%satisfies the high frequency bound $\lambda \leq \varepsilon^2$. 
Modifying adequately the proof of Lemma \ref{l:hfbds}, we find that $\sigma_{L^2(\RM)}(L)\subset (-\infty,\varepsilon^2]$. 
%
%CHANGED-MR
%To track the location of the unique spectral branch
%of the associated Bloch operators $L_\xi[\bar u]$ bifurcating from the origin at $\xi=0$, we again make use of
%the Evans function.  As there is only one such special branch, the
We first check that there is only one spectral branch of the associated Bloch operators $L_\xi$ bifurcating from the origin at $\xi=0$,
and then track its location. In this case, the 
Evans function $D(\lambda,\xi)$ can be expanded to 
second order as
%CHANGED-MR
%\[
%D(\lambda,\xi)=a\lambda+b\xi+c\lambda^2+d\lambda\xi+e\xi^2+\mathcal{O}\left(|\lambda|^3+|\xi|^3\right).
%\]
\[
D(\lambda,\xi)=a_0\lambda+a_1\xi+a_2\lambda^2+a_3\lambda\xi+a_4\xi^2+\mathcal{O}\left(|\lambda|^3+|\xi|^3\right).
\]
%
%CHANGED too many thens-KZ:
%Expanding then this critical spectral branch as $\lambda(\xi)=\alpha\xi+\beta\xi^2+\mathcal{O}(|\xi|^3)$
%CHANGED-MJ-MARCH changed slightly to conform with above changes of Miguel
Assuming that indeed only one critical spectral branch bifurcates from the $(\lambda,\xi)=(0,0)$ state, 
expanding 
%
%CHANGED-MR
%this 
the
critical spectral branch as $\lambda(\xi)=\alpha\xi+\beta\xi^2+\mathcal{O}(|\xi|^3)$
%ENDCHANGED
it follows 
%CHANGED-MR
%then 
%
that the coefficients $\alpha$ and $\beta$ are related to the above Evans function
expansion via the formulas 
%CHANGED-MR
%\[
%\alpha=-\frac{b}{a}\quad\textrm{ and } \quad\beta=-\frac{c\alpha^2+d\alpha+e}{a}.
%\]
\[
\alpha=-\frac{a_1}{a_0}\quad\textrm{ and } \quad\beta=-\frac{a_2\alpha^2+a_3\alpha+a_4}{a_0}.
\]
Recalling Remark \ref{r:evenoddspec} 
%CHANGED-MR
together with the fact that here the spectrum is real, 
we note that $\alpha=0$ so that 
%CHANGED-MR
%$\beta=-\frac{e}{a}$, 
$\beta=-\frac{a_4}{a_0}$, 
and thus there are only two coefficients of the Taylor expansion of the Evans function that need to be computed in order to check
stability for $|(\lambda,\xi)|\ll 1$ with stability in this 
%CHANGED-MR
%long-wavelength
small-Floquet 
regime corresponding to the condition that $\beta<0$.

%\br\label{armk}\textup{
%TODO (here and before): 
%Change protocol slightly in obvious (in hindsight) way, namely, before
%computing $\alpha$, $\beta$, we must first check that $a$ remains bounded
%from zero, that is, test for co-periodic instability $a=0$ corresponding
%to bifurcation of periodic solution to a new branch (transcritical bif.).
%NEWKZ
%}
%\er

For $\varepsilon = 0.187$, $\omega = 0.1$, and $X = 6.1678$ we compute the Evans function on a contour 
%TODO-MR: correct interpretation of \lambda\geq 0 ?  Yes... sorry! (MJ)
%CHANGED-MR
%$\partial (B(0,R)/B(0,r)\cap \{\lambda \geq 0\})$ 
$\partial (B(0,R)/B(0,r_0)\cap \left\{\lambda\,|\,\Re(\lambda)\geq0\right\})$ 
where $R = \varepsilon^2$ and 
%CHANGED-MR
%$r=10^{-3}$ 
$r_0=10^{-3}$ 
using 1001 Floquet parameters, $\xi$, evenly spaced in the interval $[-\pi/X,\pi/X]$, and an adaptive mesh in the $\lambda$ contour requiring relative error be less than 0.2 between consecutive contour points. We then use the Taylor coefficients method to find $\beta = -3.874+0.000i$ via a convergence study requiring convergence in relative error between successive iterations of $\beta$ be less than 0.01. As a check on the accuracy of our computation, we note that $\alpha = 5.2664e-11+1.9970 e-07i  \approx 0$. Next we determine the maximum modulus root of the Evans function for 
%CHANGED-MR: k->\xi
$|\xi| = 0.2$, $\xi\in \mathbb{C}$ using a $\lambda$ contour of radius 2, yielding the bound, $\max(\lambda(\xi)) =1.1099$. 

We find that $k_0 = 1.7420e-4$ suffices for breaking the Floquet parameter interval into small and large modulus values. For 
%CHANGED-MR: k->\xi
$|\xi|>k_0$ 
we compute the Evans function on a semicircle of radius 0.1 passing through the origin requiring that the relative error between consecutive contour points not exceed 0.2. Then we compute the Evans function on the same semicircle shifted left by  $ 1e-4$, this time taking 
%CHANGED-MR: k->\xi
$|\xi|< k_0$. 
In all computations, we find the winding number is consistent with spectral stability.

We also find the stability boundary, though this time we do not use Taylor coefficients because the algorithm for $\alpha$ and $\beta$ becomes numerically poor conditioned since $a$ becomes small near much of the stability boundary; see Remark \ref{r:degenerate} above.
%(TODO: KZ: here, refer to some earlier discussion to be inserted on 
%the lines of Rmk \ref{armk}.)
To circumvent this difficulty, we find the stability boundary as given by computing the 
Evans function on a contour 
%TODO-MR: same as above
%CHANGED-MR
%$\partial (B(0,\varepsilon^2)/B(0,r)\cap \{\lambda \geq 0\}), \ r< \varepsilon^2,$ 
$\partial (B(0,\varepsilon^2)/B(0,r_0)\cap \left\{\lambda\,|\,\Re(\lambda)\geq0\right\}), \ r_0< \varepsilon^2,$ 
to determine the presence of spectra as $\omega$ varies, and then take the limit of the location of the 
stability boundary found in this way as 
%CHANGED-MR
%$r\to 0$. 
$r_0\to 0$. 
We take care to choose via a convergence study a 
sufficiently tight tolerance setting in the RKF integration routine to obtain accurate results. 
We plot the stability boundary and a comparison with the analytical bound given by Eckhaus, Schneider, and 
Mielke \cite{Eck,M1,Sc} in Figure \ref{f:SHstabbd}.
%
%(TODOKZnew. question: is spectrum not nec. real?  In that case, should
%mention that passage of root(s) through the origin is all that can happen,
%seems rather likely- though not necessarily- that this could be at $\xi=0$...) Yes, spectrum is real.
%(TODOKZnew: In Figure 10a) mention that on certain parameter range, this
%is $a=0$, or co-periodic instab. boundary as well...  we could even
%consider plotting values of a? NO..)

Finally, we verify numerically the eigenvalue picture given in Figure 1 of \cite{Sc} by 
computing the Evans function on a circle and then solving for the roots using the method of 
moments as described in Section \ref{meth:mom}; see Figure \ref{f:SHeigpic}.

\begin{figure}[htbp]
\begin{center}
(a)\includegraphics[scale=.35]{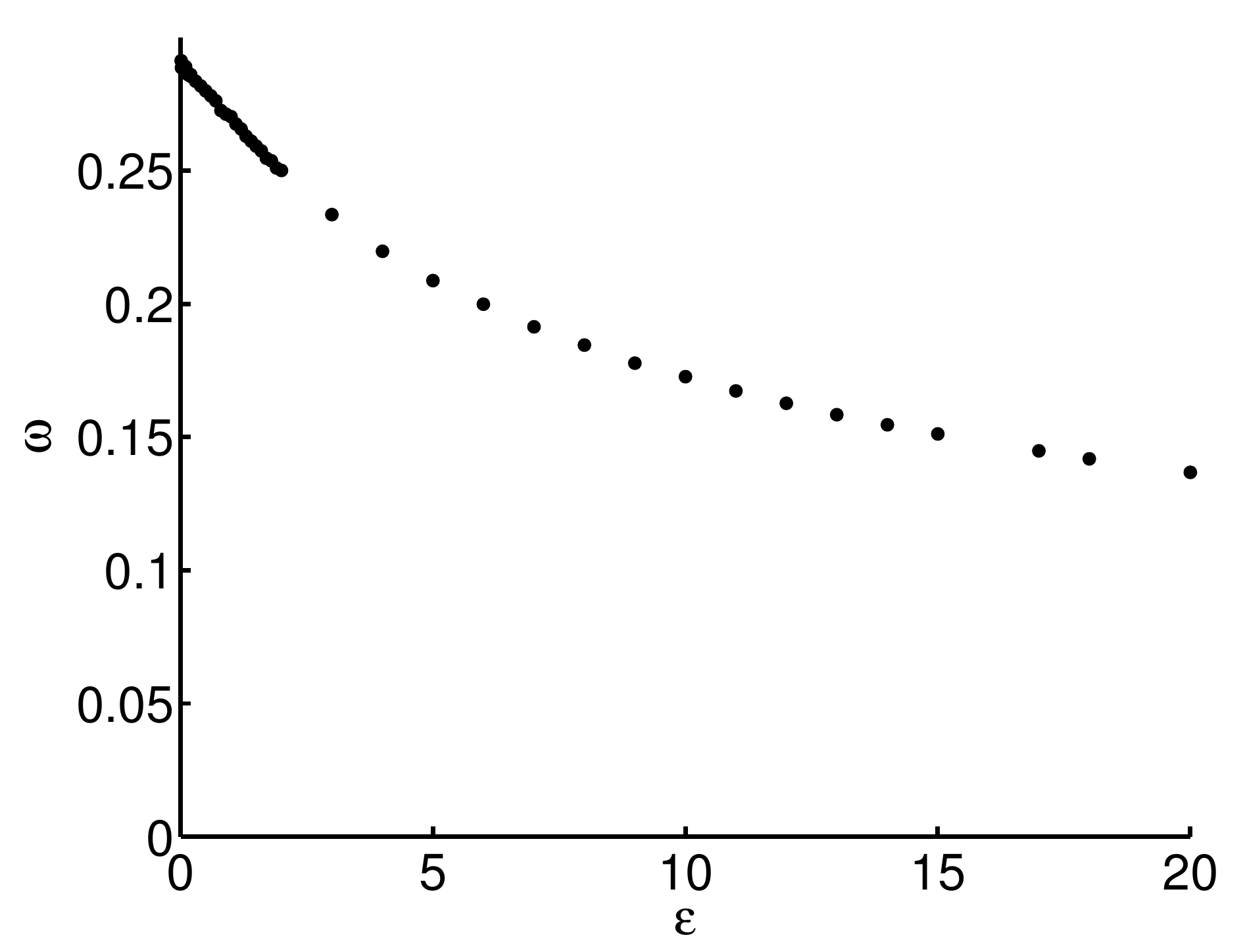}\quad (b)\includegraphics[scale=0.35]{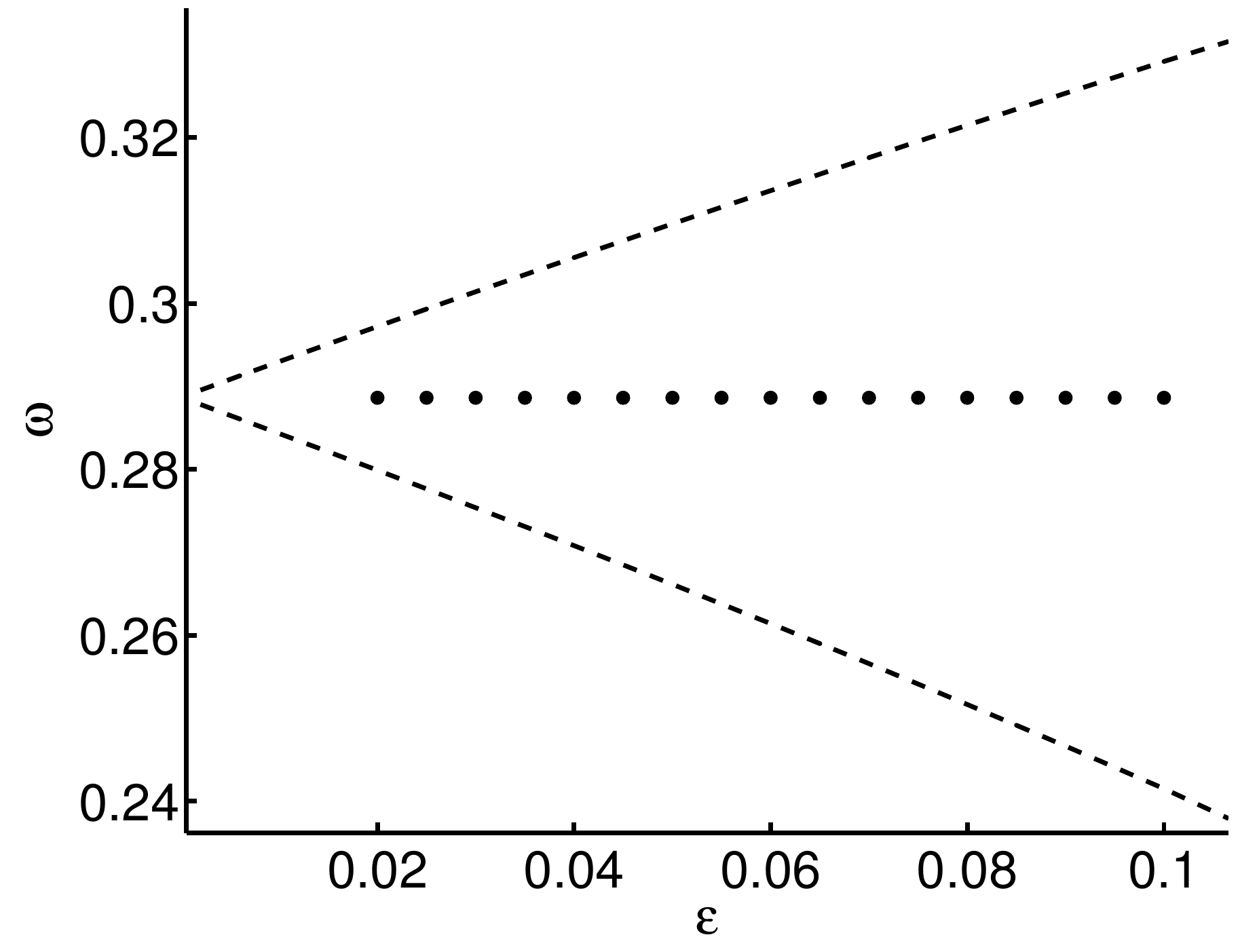}
\caption{In (a) we plot the stability boundary for the Swift-Hohenberg equation, and in (b) we compare the numerical stability boundary with the analytic one in \eqref{eckbdry}, plotted here using dashed lines, found by Eckhaus, Schneider, and Mielke
%CHANGED-MJ
%(TODO Blake: cite paper. It may be Eckhaus should be cited as well) 
\cite{M1,MS}.
%ENDCHANGED
This provides a check on our numerics against a well known result. 
The analytic curve is found by rigorous perturbation analysis in \cite{Eck,M1,MS}. 
Our numerics  give a nice extension to large amplitudes in the non-perturbative regime. 
We find the stability boundary with relative error tolerance of 0.01. In (a) we do a convergence study as 
%CHANGED-MR
%$r\to 0$ 
$r_0\to 0$
as described previously. In (b) we compute the Evans function on the contour 
%TODO-MR: as above
%CHANGED-MR
%$\partial (B(0,\varepsilon^2)/B(0,r)\cap \{\lambda \geq 0\}), \ r = 1e-8,$ 
$\partial (B(0,\varepsilon^2)/B(0,r_0)\cap\left\{\lambda\,|\,\Re(\lambda)\geq0\right\}), \ r_0 = 1e-8,$ 
with tolerance set at 1e-12 in the RKF routine. 
}\label{f:SHstabbd}
\end{center}
\end{figure}

\begin{figure}[htbp]
\begin{center}
\includegraphics[scale=.35]{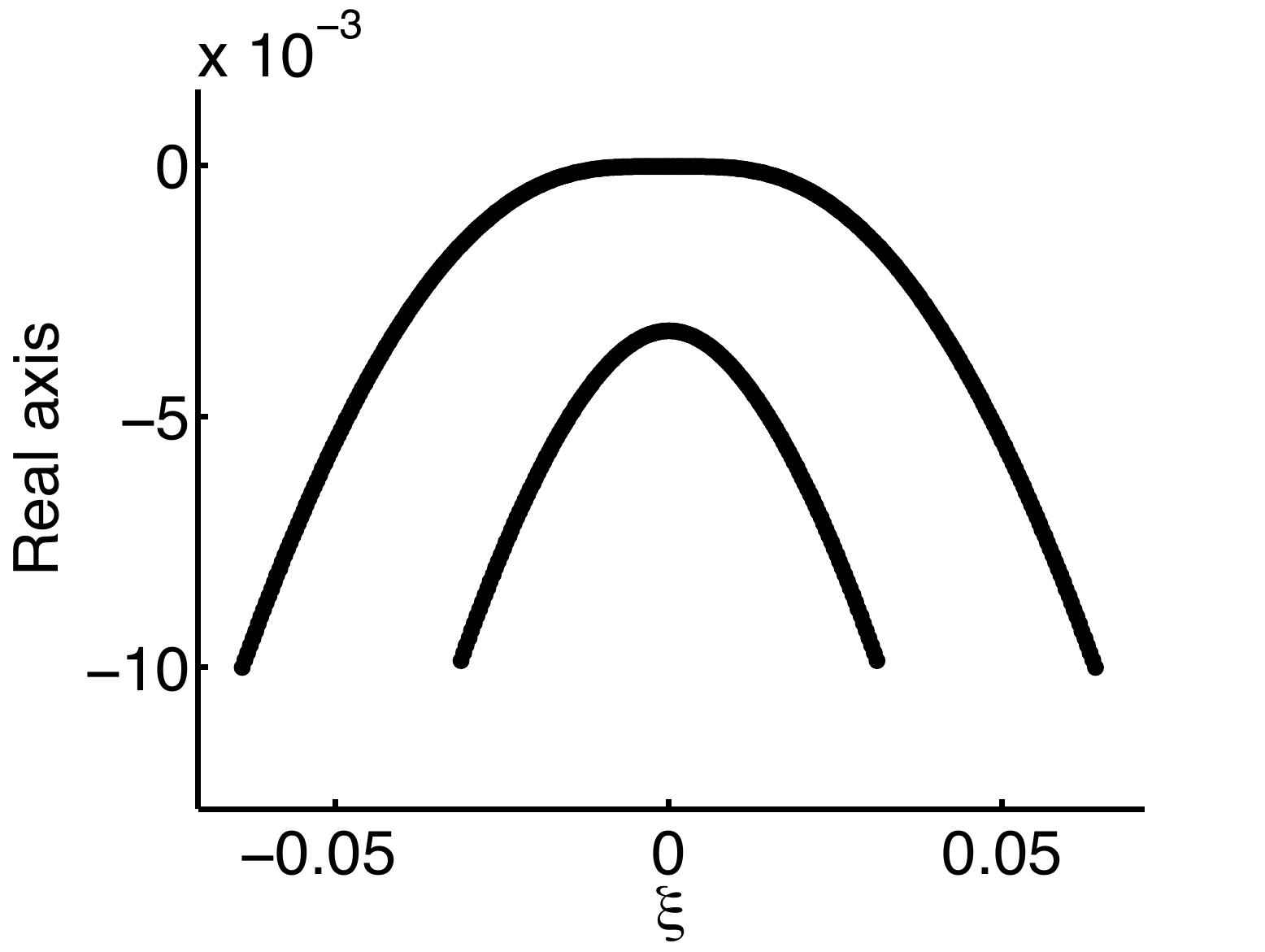}
\caption{Plot of two smallest eigenvalues of the Swift-Hohenberg system against the Floquet parameter 
%CHANGED-MR
%$k$ 
$\xi$ 
for $\varepsilon = 0.05$, $\omega = 0.28858$; cf. Figure 1 in \cite{Sc}.}\label{f:SHeigpic}  
\end{center}
\end{figure}

\section{Appendix: Computational statistics}\label{s:stats}

In this short appendix we detail computational statistics for some typical values 
in the numerical studies reported in previous sections. 
All numerical computations were performed on either a Mac Pro with 2 Quad-Core Intel Xeon processors with speed 2.26 GHz,
the super computer Quarry\footnote{This is a cluster composed of IBM HS21 Bladeservers and IBM iDataPlex dx340 rack-mounted servers.}
at Indiana University, an Intel(R) Pentium(R) 4 CPU with speed 2.80 GHz running Windows XP, or a MacBook 
with an Intel Core 2 Duo processor with speed 2.0 GHz.

Carrying out the stability study for the classical Kuramoto-Sivashinsky equation with $q = 6$ and $X=6.3$, corresponding
to the $q=6$ row of Table \ref{stabstudy:0eps}, with the MacPro using 8-cores via MATLAB's parallel computing toolbox, it took 11.46 seconds to perform the High Frequency Evans function computation, 159.0 seconds to compute the Taylor coefficients $\{\alpha_j\}_{j=1,2}$ and $\{\beta_j\}_{j=1,2}$, 26.85 seconds
to compute $\max_{\xi} | \lambda_j(\xi)|$, 169.5 seconds to compute the Evans function when\footnote{Here, we are following
Section \ref{sec:lowfreqanal}.  In particular, $k_0$ is the low-frequency cutoff in Step 2(b) of the numerical procedure
outlined in Section \ref{sec:lowfreqanal}.} $k_0<|\xi|< \pi/X$, and 40.89 sec to compute the Evans function for $|\xi|<k_0$. For these same parameters computing SpectrUW on the Windows machine using the Maple kernel took 26 seconds using 5 Fourier modes and 30 Floquet parameters. It took 55 seconds using 10 Fourier modes and 30 Floquet parameters. 

The stability study computation times for the Swifth-Hohenberg equation, those discussed in Appendix \ref{a:shnum} 
using the MacPro with 8-cores are as follows. The High Frequency Evans function computation took 5.638 seconds, the Taylor coefficients took 81.73 seconds to compute,  finding $\max |\lambda_j(\xi)|$ took 1.110 seconds, computing the Evans function for  $k_0<|\xi|<\pi/X$ took 11.30 seconds, and computing the Evans function for  $|\xi| < k_0$ took 8.589 seconds. 

The stability diagram in Figure \ref{f:gKSstabislands} for the full parameter study of the generalized
Kuramoto-Sivashinsky equation took roughly 6 days on 6 nodes with 8 cores each on 
Quarry using MATLAB'S parallel computing capabilities. 
We took steps sizes of 0.25 in $\varepsilon$ and 0.1 in period $X$ to check for stability.
Furthermore, the stability picture in Figure \ref{f:SHstabbd} for the Swift-Hohenberg equation
took roughly 3 days running on the MacPro.  Finally, concerning the time evolution 
studies in Section \ref{num:whitham}, those over time intervals of less than 50 took
between 5-30 minutes each to run while those on larger time intervals took approximately 5 hours each to complete
on the Macbook described at the beginning of this section.

%\section{Numerical Evans algorithm}\label{s:alg}
%HERE, NEW MATERIAL TO BE ADDED (brief).
%
%Reference \cite{Br,BrZ} \cite{HuZ} and \cite{Z6} (centered methods).
%
%Also, Gardner \cite{G1}, and his simple but important observation
%$$
%E(\xi,\lambda):=\det(\Psi(X)-e^{i\xi X}\Id)
%=\det \bp \Psi(X)& e^{i\xi X}\Id\\ \Id & \Id \ep
%$$
%expressing $E$ as a wedge product of evolving subspaces for equations
%$$
%\bp w\\\alpha\ep'= \bp A(\lambda, x)w\\0\ep,
%$$
%with data $\bp \Id \\\Id\ep$ at $x=0$ and $\bp e^{i\xi X}\Id\\\Id\ep$
%at $x=X$.
%
%Detail: simpler still- we can initialize as 
%$\bp \Id\\\Id\ep$ at $x=X$, then multiply first entry of resulting solution
%by $e^{i\xi X}\Id$ to get general case.  Thus, need only actually compute
%ODE for one value of $\xi$, a considerable savings.

\section{Appendix: Numerical Evans algorithm}\label{s:alg}
Here, we briefly describe the numerical algorithm on which our
Evans function computations are based, and explain the terminology
(scaled lifted polar coordinate method; unscaled lifted polar coordinate 
method) used elsewhere in the paper.
Recall the standard definition \eqref{e:evans} of the periodic Evans function
as
$ D(\xi,\lambda):=\det(\Psi(X)-e^{i\xi X}{\bf I}),$
where $\Psi(x,\lambda)$ is the monodromy matrix of
eigenvalue ODE \eqref{e:system}, that is, a
 matrix-valued solution with initial condition $\Psi(0,\lambda)={\bf I}$.

As pointed out in \cite{OZ2}, this is deceptively simple to code
as compared to the homoclinic or front-type Evans function defined
on the whole line \cite{AGJ,PW,GZ}, in the sense that there are no issues with infinite domains and imposition of asymptotic boundary conditions at 
$x=\pm \infty$, so that one may in a matter of moments develop a naive
algorithm that is 
%CHANGED-MR
%servicable 
serviceable 
for moderate values of $\lambda$ and $X$,
using any standard ODE evolution algorithm.

However, as described in \cite{BJNRZ1}, 
for the type of global parameter exploration carried out here,
inherently involving large $\lambda$ and $X$ values, such a naive algorithm
is essentially useless.
This is most easily seen by the asymptotic relation \cite{G2,SS,Z4}
between the periodic and homoclinic Evans functions as period $X\to \infty$,
as we now describe.

\subsection{Balanced and rescaled Evans functions}
Start with the eigenvalue equation
${\bf Y}'(x;\lambda)=\mathbb{H}(x,\lambda){\bf Y}(x;\lambda),$ 
$x\in [0,X]$,
Away from a finite set of curves $\zeta_j(k)$ determined
by the dispersion relation $ik\in \sigma(\mathbb{H}(0,\lambda))$,
$k\in \RR$, where $ \sigma$ denotes spectrum, the eigenvalues
of $\mathbb{H}(0,\lambda)$
have non-vanishing real part.
%CHANGED-MJnew
For each $j\in\NM$, set 
%CHANGED-MR: middle
\[
\mathcal{C}_j:=\left\{\lambda\in\CM\ \middle|\ \lambda=\zeta_j(k)\textrm{ for some }k\in\RM\right\}
\]
and let $\mathcal{C}$ denotes the compliment in $\mathbb{C}$ of the set $\cup_j\mathcal{C}_j$.
On $\mathcal{C}$, we can define the real-valued function $n$
such that $n(\lambda)$ equals the number
of eigenvalues of $\mathbb{H}(0,\lambda)$ with negative real part noting, in particular, that
$n$ is locally constant on each connected component of $\mathcal{C}$.  Similarly, we define
the real valued functions $\alpha_\pm$ such that $\alpha_{+}(\lambda)$, respectively $\alpha_-(\lambda)$,
equals the sum of the positive, respectively negative, real part eigenvalues of $\mathbb{H}(0,\lambda)$: as
above, the functions $\alpha_{\pm}$ are locally constant on each connected component of $\mathcal{C}$.
%
%Denote by $n_j$ the number of negative real part eigenvalues of
%$\mathbb{H}(0,\lambda)$, $\alpha_+$ the sum of positive real part eigenvalues,
%and $\alpha_-$ the sum of negative real part eigenvalues.
%ENDCHANGED
Following \cite{G2}, we note that, by Abel's formula,
$D(\lambda, \xi)$ may be written alternatively as 
%TODO-MR: check that there is no \eps (correct, there should be no \eps... meant to take that out... sorry! -- mj)
%CHANGED-MR: \eps ?
\be\label{balancrel}
D(\lambda, \xi)=
\tilde D(\lambda, \xi) e^{-\int_0^{X/2} 
\tr\left( \mathbb{H}(y,\lambda)\right) dy},
\ee
where
\ba\label{balancedef}
\tilde D(\lambda, \xi)&:=\det (\Psi(0)\Psi(-X/2)^{-1}
- e^{i\xi X}\Psi(0)\Psi(X
%^\eps
/2)^{-1})
\ea
is a ``balanced'' periodic Evans function defined symmetrically
about $x=0$.
% similarly as the homoclinic Evans function.

\begin{definition}\label{e:balD}
%CHANGED-MJnew
%On $\Lambda_j$, 
For $\lambda\in\mathcal{C}$ and $\xi\in[-\pi/X,\pi/X)$, 
%ENDCHANGED
we define the {\it rescaled balanced
periodic Evans function} as
\be\label{e:rescaled}
\check D(\lambda, \xi):=
e^{(\alpha_-(\lambda)- \alpha_+(\lambda)) X/2} e^{-n(\lambda) i\xi X} \tilde D(\lambda, \xi).
\ee
\end{definition}

The rescaling in \eqref{e:rescaled} is designed to cancel
the exponential growth in \eqref{balancedef} with respect to the period $X$ of the pieces
%CHANGED-MR
%$\Psi(0)\Psi(-X^\eps/2)^{-1}$ and $ \Psi(0)\Psi(X^\eps/2)^{-1})$
$\Psi(0)\Psi(-X/2)^{-1}$ and $ \Psi(0)\Psi(X/2)^{-1}$
in the constant-coefficient case $\mathbb{H}\equiv \const$.
Indeed, suppose that $\bar u^X$ are a sequence of periodic waves
with period $X$, converging as $X\to \infty$ to a solitary wave,
or homoclinic, solution $\bar u^\infty$, and index the associated sequence of periodic
Evans functions by $D^X(\lambda, \xi)$.
Then, under mild assumptions, we have \cite{Z4}:
\be\label{homlim}
\lim_{X\to \infty} \check
D^X(\lambda, \xi)=D_{\rm hom}(\lambda),
\ee
where $D_{\rm hom}(\lambda)$ denotes the associated homoclinic Evans
function defined in \cite{PW,GZ}.

Comparing with \eqref{balancrel} and \eqref{e:rescaled}, we see that
if $\check D^X$ is uniformly bounded
then $D^X$ and $\tilde D^X$ exhibit exponential growth with respect to $X$.
Thus, by \eqref{homlim}, together with the well-known fact 
(see, e.g., \cite{GZ}) that the homoclinic
Evans function $D_{\rm hom}$, hence also $\check D^X$, 
is uniformly bounded on compact $\lambda$-domains, 
that both the usual and balanced Evans functions $D^X$ and $\tilde D^X$
exhibit exponential growth with respect to $X$ as $X\to \infty$, hence
become numerically impractical for large $X$ or
for moderate periods $X$ and large $\lambda$ (leading to large $\alpha_\pm$)
in the sense that even small variations in $\lambda$ will lead to excessive
winding in the image $D(\lambda, \xi)$ that requires fine resolution to track
for purposes of winding number computations.

{\it For our high-frequency winding number estimates, therefore, it is crucial
to use the rescaled Evans function $\check D$ in place of $D$ or $\tilde D$.}
Near $\lambda=0$ on the other hand there are several curves on which
the spectra of $\mathbb{H}(0)$ becomes imaginary, across which $n$,
$\alpha_-$, and $\alpha_+$ undergo discontinuities.
This does not occur on 
%CHANGED-MR: |
$\{\lambda\,|\,\Re \lambda \geq 0 \}\setminus \{0\}$, 
so does not come into play in high-frequency estimates; however, for
our tracking of spectral curves $\zeta_j(\xi)$ near zero using
analyticity/the method of moments, it is evidently a problem.
{\it Thus, for our low-frequency computations, we use the unrescaled
Evans function, to avoid loss of analyticity that would otherwise
occur near $\lambda=0$,}
a subtle but important point.

\subsection{The lifted Evans function}
Up to this point we have only discussed winding with respect to $\lambda$,
ignoring questions of numerical conditioning.
However, these are quite relevant also for large $X$, given the exponential
growth already described.
Indeed, one faces all of the issues described for homoclinic or front-type
Evans functions in \cite{Z1}, arising from variations in growth rate of
various scalar modes in the solution operator $\Psi(\lambda,x)$, in 
which faster-growing modes dominate slower-growing modes through 
cancellation/loss of significant digits.

%Compare naive algorithm of [OZ2] (analytic.. Evans.. TODO, add this ref.  to bib).

Similarly as in the homoclinic case \cite{Br,BrZ,HuZ},
these can be avoided by the device introduced in \cite{BJNRZ1} of
working with ``lifted equations''
for which the periodic Evans function appears as a {\it Wronskian}
of certain bases of solutions.
Namely, we may recall Gardner \cite{G1}, and his simple 
but important observation
\be\label{lift}
D(\xi,\lambda):=\det(\Psi(X)-e^{i\xi X}\Id)
=\det \bp \Psi(X)& e^{i\xi X}\Id\\ \Id & \Id \ep
\ee
expressing $D$ as an exterior product of evolving solutions for equations
%CHANGED-MR: dot
\be\label{lifteq}
\bp Y\\\alpha\ep'= \bp \mathbb{H}(\lambda,\cdot\,)Y \\0\ep,
\ee
with data $\bp \Id \\\Id\ep$ at $x=0$ and $\bp e^{i\xi X}\Id\\\Id\ep$
at $x=X$.

Working with \eqref{lift} allows us to apply any of the efficient algorithms that have been developed in the homoclinic Evans function setting for evaluation of Wronskians, in particular the {\it exterior product method} of
\cite{Br,BrZ} in which minors of columns in \eqref{lift} are computed
as evolving exterior products, or the {\it polar coordinate method}
of \cite{HuZ}, in which they are computed instead as evolving orthonormal
subspace/scalar ``radius'' pairs; see \cite{HuZ} for further details.
Both of these methods are by now standard and have been implemented in STABLAB,
a Matlab-based platform for numerical Evans function computations; see \cite{BHZ2}.
These methods can thus readily be implemented once we have made the reformulation
\eqref{lift}, and the resulting algorithm is numerically well-conditioned
by the same analysis 
sed in \cite{Z1} to study the homoclinic case.

The results of this apparently simple change are dramatic, {\it extending the
range of $|\lambda|$ values that can be computed by up to three orders
of magnitude}; see \cite{BJNRZ1}. 
Indeed, the efficiency of the lifted balanced
periodic method appears to be quite comparable with that of the well-established
algorithms used to evaluate the homoclinic Evans function, making the 
numerical Evans function approach a practical alternative to the 
Galerkin methods that have been used in past literature \cite{FST,CKTR,CDK}.

\subsection{Optimization across $\xi$}
Finally, we mention for the reader who may wish to perform such computations independently a
final detail that greatly speeds up computations with varying 
%CHANGED-MR2: form KZ
%the 
%
Bloch frequency $\xi$.
Namely, rather than computing a new solution of \eqref{lifteq} for each
value of $\xi$, we may, by the decoupled nature of the flow in first and
second coordinates, compute the result for a single solution initialized as
$\bp \Id\\\Id\ep$ at $x=X$, then multiply the first entry of the
resulting solution by $e^{i\xi X}$ to obtain the desired suite of solutions
for varying $\xi$.
This means that we need in practice only compute solutions of the
eigenvalue ODE once for each value of $\lambda$, with variation in 
$\xi$ introduced by computationally negligible linear algebraic manipulations,
for a considerable savings.

See \cite{BJNRZ1} for further discussion/comparison of results.

%\begin{figure}[htbp]
%\begin{center}
%(a)\includegraphics[scale=0.3]{pix/ks563}\quad\quad(b)\includegraphics[scale=0.3]{pix/ks562}
%\caption{
%Sample Evans function output:
%We view in (a) a domain contour on which we evaluate the Evans function, and in (b) the associated range for $\xi = 0$. Here, $\varepsilon = 0$, $q = 5$, $X = 6.3$, and $\delta = 1.4627$.
%}
%\label{f:contours}
%\end{center}
%\end{figure}

\section{Appendix: Behavior near stability boundaries}\label{s:behavior}

Finally, we consider the difference in behavior expected as
we cross a stability boundary across which hyperbolicity of
the first-order Whitham system is lost vs. behavior expected
as we cross a stability boundary where strict hyperbolicity is
maintained but the second-order diffusion coefficient changes sign.

Introduce a bifurcation parameter $\eta$, with the stability boundary
assumed to correspond to $\eta=0$.  In the second case, by standard
spectral perturbation theory/separation to first order of modes, we
have
%CHANGED-MR
%$\lambda_j(\xi,\eta)=ia_j(\eta)\xi - b_j(\xi)\xi^2+ ic_j\xi^3-d_j\xi^4$,
$\lambda_j(\xi,\eta)=ia_j(\eta)\xi - b_j(\eta)\xi^2+ ic_j(\eta)\xi^3-d_j(\eta)\xi^4+\mathcal{O}(\xi^5)$, 
where $a_j$, $b_j$, $c_j$, $d_j$ (by complex symmetry of eigenvalues of real-valued operators) are real and 
%CHANGED-MR: _
$$
b_j= -b_* \eta + o(\eta),
\quad
d_j= d_* + o(1),
$$
so that $\Re \lambda_j(\xi,\eta)\sim \eta b_* \xi^2-d_*\xi^4 $
and (after a brief calculation)  
\be\label{2c}
\max_\xi 
\Re \lambda_j(\xi,\eta)\sim 
\eta^2 .
\ee

In the first case, we note, rather, that, by the connection to the
second-order Whitham approximation,
%TODO-MR: OK ?
%CHANGED-MR: \xi
%$\lambda_j(\eta)= \pm \sqrt{\eta} a_* + b_* \xi^2 +o(\xi^2)$,
$\lambda_j(\xi,\eta)= \pm \sqrt{\eta} a_*\xi+ b_* \xi^2 +o(\xi^2)$,
whence 
\be\label{3c}
\max \Re \lambda_j(\xi,\eta)\sim  \eta.
\ee
We may conclude that the transition involving loss of hyperbolicity is
more drastic, featuring exponential growth at rate roughly the square root
of the rate expected for a comparable transition across a boundary involving
loss of diffusivity.
%clearpage
%-----------------------------------------------------------------------------------------
%             BIBLIOGRAPHY
%-----------------------------------------------------------------------------------------

\end{document}